\documentclass[11pt,a4paper]{article}

\usepackage{microtype}
\usepackage{graphicx}
\usepackage{subfigure}
\usepackage{booktabs} 
\usepackage{multirow}
\usepackage{makecell}

\usepackage{hyperref}
\usepackage{undertilde}

\usepackage{algorithm}
\usepackage{amsthm}
\usepackage{amsmath}
\usepackage{amssymb}
\usepackage{graphicx}
\usepackage{color}
\usepackage{ifpdf}
\usepackage{url}
\usepackage{algorithm}
\usepackage[usenames,dvipsnames]{xcolor}
\usepackage{paralist}

\usepackage{algorithm}
\usepackage{algorithmic}
\usepackage[T1]{fontenc} 
\usepackage[letterpaper, margin=1.2in]{geometry}
\usepackage{multicol} 
\usepackage[hang, small,labelfont=bf,up,textfont=it,up]{caption} 
\usepackage{booktabs} 
\usepackage{float} 
\theoremstyle{plain}
\newtheorem{theorem}{Theorem}[section]
\newtheorem{corollary}{Corollary}[section]
\newtheorem{lemma}{Lemma}[section]

\theoremstyle{definition}
\newtheorem{definition}{Definition}[section]
\newtheorem{assumption}{Assumption}[section]
\newtheorem{example}{Example}[section]
\newtheorem{remark}{Remark}[section]
\usepackage{todonotes}

\newcommand{\Id}{\mathbb{I}}

\newcommand{\R}{\mathbb{R}}

\newcommand{\Rext}{\R\cup\{+\infty\}}

\newcommand{\set}[1]{\left\{#1\right\}}
\newcommand{\sets}[1]{\{#1\}}
\newcommand{\norm}[1]{\left\Vert#1\right\Vert}
\newcommand{\norms}[1]{\Vert#1\Vert}

\newcommand{\Eproof}{\hfill $\square$}

\newcommand{\proj}{\mathrm{proj}}

\newcommand{\argmin}{\mathrm{arg}\!\displaystyle\min}

\newcommand{\dom}[1]{\mathrm{dom}(#1)}

\newcommand{\zero}[1]{{\boldsymbol{0}}}

\newcommand{\zer}[1]{\mathrm{zer}(#1)}

\newcommand{\gra}[1]{\mathrm{gra}(#1)}
\newcommand{\range}[1]{\mathrm{range}(#1)}
\newcommand{\mcal}[1]{\mathcal{#1}}

\newcommand{\xopt}{x^{\star}}

\newcommand{\Hc}{\mathcal{H}}

\newcommand{\Bc}{\mathcal{B}}

\newcommand{\Xc}{\mathcal{X}}

\newcommand{\Dc}{\mathcal{D}}
\newcommand{\Lc}{\mathcal{L}}
\newcommand{\Qc}{\mathcal{Q}}
\newcommand{\Gc}{\mathcal{G}}

\newcommand{\Tc}{\mathcal{T}}

\newcommand{\Vc}{\mathcal{V}}
\newcommand{\Cc}{\mathcal{C}}
\newcommand{\Ec}{\mathcal{E}}
\newcommand{\Nc}{\mathcal{N}}
\newcommand{\Pc}{\mathcal{P}}
\newcommand{\Rc}{\mathcal{R}}

\newcommand{\iprod}[1]{\left\langle #1\right\rangle}
\newcommand{\iprods}[1]{\langle #1\rangle}

\newcommand{\ri}[1]{\mathrm{ri}\left(#1\right)}

\newcommand{\kron}{\otimes}
\newcommand{\dist}[1]{\mathrm{dist}\left(#1\right)}

\newcommand{\diag}[1]{\mathrm{diag}\left(#1\right)}

\newcommand{\BigOs}[1]{\mathcal{O}\big(#1\big)}
\newcommand{\SmallOs}[1]{o\big(#1\big)}

\newcommand{\mbf}[1]{\mathbf{#1}}
\newcommand{\mbb}[1]{\mathbb{#1}}

\newcommand{\beforesubsec}{\vspace{-1.5ex}}
\newcommand{\aftersubsec}{\vspace{-1ex}}
\newcommand{\beforesec}{\vspace{-1.25ex}}
\newcommand{\aftersec}{\vspace{-1ex}}
\newcommand{\beforesubsubsec}{\vspace{-1.5ex}}
\newcommand{\aftersubsubsec}{\vspace{-1.5ex}}

\usepackage{bbm}

\title{Distributed Fast Fixed-Point Algorithms for Composite Monotone Inclusions over Networks}

\author{Nghia Nguyen-Trung$^{\star}$, Ion Necoara$^{\dagger}$, and Quoc Tran-Dinh$^{\star\ddagger}$ \vspace{0.25ex}\\
\newline {$^{\star}$Department of Statistics and Operations Research}\\
\newline The University of North Carolina at Chapel Hill\\
318 Hanes Hall, UNC-Chapel Hill, NC 27599-3260. ($^{\ddagger}$\textit{Corresponding author}).\\
\newline {$^{\dagger}$Automatic Control and Systems Engineering Department},\\ 
\newline {National University of Science and Technology Politehnica Bucharest},\\ 
\newline {060042, Bucharest, Romania.}\\
\newline \textit{Email:} \url{nghiant@unc.edu}, \url{ion.necoara@upb.ro}, \url{quoctd@email.unc.edu}.}

\date{}

\begin{document}
\maketitle

\begin{abstract}
\normalfont
This paper aims to develop new and efficient distributed algorithms for solving a class of monotone inclusions, $0 \in \sum_{i=1}^n (G_ix + T_ix)$, over a connected network of $n$ agents, where the single-valued operator $G_i$ and the possibly multivalued operator $T_i$ remain private to agent $i$. 
Existing distributed algorithms for this problem class are primarily non-accelerated, and their exact convergence rates in the original primal space are largely unexplored. 
To bridge this gap, we propose two Decentralized Fast Fixed-Point-based algorithms, \texttt{ND-DFFP} and \texttt{NI-DFFP}, which integrate Nesterov-type acceleration with primal-dual techniques under two prominent settings: (i) \textit{Lipschitz continuity of $G_i$ and maximal monotonicity of $G_i+T_i$}; and (ii) \textit{co-coercivity of $G_i$ and maximal monotonicity of $T_i$}. 
While \texttt{ND-DFFP} utilizes a homogeneous network-dependent stepsize, \texttt{NI-DFFP} reformulates the problem into a three-operator inclusion to decouple the network topology, enabling heterogeneous network-independent stepsizes. 
Under appropriate assumptions, we establish an $\mathcal{O}(1/k)$ convergence rate for the consensus error and an $\mathcal{O}(1/k)$ rate for both the restricted gap function and the squared forward-backward splitting residual, with the latter two metrics evaluated at the network-average iterate or its projection onto the effective domain.
Finally, numerical experiments on distributed bilinear matrix games and a virtual power plant problem demonstrate the competitive performance and computational efficiency of our methods over recent decentralized baselines in the literature.

\vspace{0.75ex}
\noindent
\textbf{Keywords:}
Fast fixed-point method; 
distributed algorithm;
network-independent stepsize;
maximal monotonicity; 
co-coercivity; 
monotone inclusion.
\end{abstract}

\beforesec
\section{Introduction}\label{sec:intro}
\aftersec

The \textit{composite monotone inclusion} provides a powerful and unifying mathematical framework for modeling a vast range of complex problems in computational mathematics and related disciplines.
By formulating such a problem as a root-finding problem of the sum of multivalued operators, one can capture challenging structural properties such as nonsmoothness, constraints, and asymmetry commonly appearing in optimization, minimax optimization, variational inequalities, complementarity problems, game theory, and fixed-point problems within a single theoretical paradigm, see, e.g., \cite{Bauschke2011,reginaset2008,Facchinei2003,phelps2009convex,Rockafellar2004,Rockafellar1976b,ryu2016primer}.
This remarkable versatility has led to the widespread adoption of this template across diverse foundational and applied fields such as operations research, economics, uncertainty quantification, and transportation, see, e.g., \cite{Ben-Tal2009,Facchinei2003,giannessi1995variational,harker1990finite,Konnov2001}.
For instance, in modern machine learning, the composite inclusion template naturally captures robust training formulations, adversarial training, regularized empirical risk minimization, and the training dynamics of generative adversarial networks (GANs), see, e.g., \cite{arjovsky2017wasserstein,Ben-Tal2009,goodfellow2014generative,levy2020large,madry2018towards,rahimian2019distributionally}.
In game theory and economics, it serves as the mathematical backbone for identifying Nash equilibria in multi-agent environments, see, e.g., \cite{Bauschke2011,briceno2013monotone,Facchinei2003,scutari2014real}.
Furthermore, in physical sciences and engineering, composite inclusions are often utilized in signal and image recovery, optimal control, and wireless communication, see, e.g., \cite{condat2022distributed,Luo2006Analysis,Odonoghue2012a,pesquet2021learning,Yu2002Distributed}.

Despite the theoretical elegance of these formulations, traditional centralized approaches for solving composite inclusions are increasingly inadequate for modern applications.
The rapid growth in dataset sizes often renders it computationally intractable or physically impossible to store all data on a single centralized server or device.
Furthermore, in domains such as healthcare, finance, or telecommunications, strict privacy regulations and communication bottlenecks severely restrict the sharing of raw, local data to a central coordinator or to other entities in the network.
To address these critical challenges, there has been a growing shift toward decentralized architectures.
In a decentralized network, a collection of agents must efficiently coordinate and collaboratively solve a global mathematical problem by relying exclusively on local computations and peer-to-peer communication with their immediate neighbors, entirely eliminating the need for a central coordinator.

\vspace{1ex}
\noindent$\textrm{(a)}$~\textbf{Problem statement.}
In this paper, we study a \textbf{\textit{distributed composite inclusion}} over a networked system consisting of $n$ agents that communicate and cooperate to solve a common problem.
The communication network is modelled as an undirected connected graph $\Gc = (\Vc, \Ec)$, where $\Vc = [n] := \sets{1, \cdots, n}$ represents the set of computing agents and $\Ec$ represents the available communication links.
The global objective, which is more general than optimization, of this multi-agent system is to collaboratively find a consensus root of the aggregated sum of network-wide operators in the following form:
\begin{equation}\label{eq:DGE}
	\textrm{Find $x^{\star} \in \R^p$ such that:}~ 0 \in Gx^{\star} + Tx^{\star} \equiv \sum_{i=1}^n\big(G_ix^{\star} + T_ix^{\star}\big).
	\tag{DCI}
\end{equation}
Here, we denote by $\Phi := G + T$, and for each agent $i \in \Vc$, $G_i : \R^p  \to \R^p$ is a single-valued operator and $T_i : \R^p\rightrightarrows\R^p$ is a possibly multivalued operator.
Because the operators $G_i$ and $T_i$ represent private local data or structural constraints that must be kept private to agent $i$, the agents must reach a consensus solution without sharing their underlying mathematical operators.
Throughout this paper, we assume that the solution set $\zer{\Phi} := \sets{x^{\star} \in \R^p : 0 \in \Phi{x}^{\star}}$ of \eqref{eq:DGE} is nonempty.

The distributed composite inclusion formulation is sufficiently general and encapsulates several fundamental problem classes as special cases.
Here, we mention some concrete examples.

\vspace{1ex}
\noindent$\mathrm{(i)}$~\textbf{Distributed nonlinear equations.}
When $T_i = 0$, \eqref{eq:DGE} reduces to the following \textit{distributed [non]linear equation}:
\begin{equation}\label{eq:DNE}
	\textrm{Find $x^{\star} \in \R^p$ such that:}~ \sum_{i=1}^n G_ix^{\star} = 0.
	\tag{DNE}
\end{equation}
In particular, if $F_i := \Id - \lambda G_i$ for some $\lambda > 0$ and $i \in [n]$, then \eqref{eq:DNE} is equivalent to the following \textit{distributed fixed-point problem}:
\begin{equation}\label{eq:DFP}
	\textrm{Find $x^{\star} \in \R^p$ such that:}~ x^{\star} =  \frac{1}{n}\sum_{i=1}^n F_ix^{\star}.
	\tag{DFP}
\end{equation}
These formulations naturally arise in networked multi-agent systems and federated machine learning, see, e.g., \cite{chraibi2019distributed, kovalev2020optimal,malinovsky2020local,nguyen2025class}.

\vspace{1ex}
\noindent$\mathrm{(ii)}$~\textbf{Distributed convex optimization problems.}
This mathematical formulation is perhaps the most attractive problem class in the multi-agent community, which considers a network of agents cooperatively minimizing a common additive convex objective function:
\begin{equation}\label{eq:DO}\tag{DOP}
	\text{Find } x^{\star} \in \argmin_{x \in \R^p} \Big\{ f(x) := \sum_{i=1}^n \big( f_i(x) + g_i(x) \big) \Big\}.
\end{equation}
Here, $f_i : \R^p \to \R \cup \sets{+\infty}$ is a local convex objective function with $L$-Lipschitz gradient of agent $i$ and $g_i : \R^p \to \R \cup \sets{+\infty}$ is a nonsmooth component (e.g., the regularizer or constraints) of agent $i$, which must be kept private from all other agents (see, e.g., \cite{chen2012fast,Jakovetic2011,Nedic2009,qu2018harnessing,shi2015extra}).
If $f_i$ is differentiable and $g_i$ is a proper, closed, and convex function, $i \in [n]$, then under appropriate qualification conditions, the common goal is to approximate a solution of \eqref{eq:DO} satisfying its optimality condition: $0 \in \sum_{i=1}^n (G_i x^{\star} + T_i x^{\star})$, where $G_i := \nabla f_i$, the gradient of $f_i$, and $T_i := \partial g_i$, the subdifferential of $g_i$, respectively.
This is a special case of \eqref{eq:DGE}.

\vspace{1ex}
\noindent$\mathrm{(iii)}$~\textbf{Distributed variational inequalities.}
If $T_i = \partial g_i$, the subdifferential of a proper, closed, and convex function $g_i$, then \eqref{eq:DGE} reduces to a \textit{distributed mixed variational inequality problem} (MVIP):
\begin{equation}\label{eq:DMVIP}
	\textrm{Find $x^{\star} \in \R^p$ such that:}~ \Big\langle{\sum_{i=1}^n G_ix^{\star}, x - x^{\star}}\Big\rangle + \sum_{i=1}^n \left(g_i(x) - g_i(x^{\star})\right) \geq 0, \ \forall x \in \R^p.
	\tag{DMVIP}
\end{equation}
In particular, if $T_i = \Nc_{\Xc_i}$, the normal cone of the convex set $\Xc_i$ (i.e., $g_i := \delta_{\Xc_i}$, the indicator function of $\Xc_i$), then, under appropriate constraint qualification conditions, \eqref{eq:DMVIP} recovers the following \textit{distributed variational inequality problem} (see, e.g., \cite{beznosikov2022decentralized,beznosikov2023similarity}):
\begin{equation}\label{eq:DVIP}
	\textrm{Find $x^{\star} \in \Xc$ such that:}~ \Big\langle{\sum_{i=1}^n G_ix^{\star}, x - x^{\star}}\Big\rangle \geq 0, \quad\textrm{for all} \ x \in \Xc := \bigcap_{i=1}^n \Xc_i.
	\tag{DVIP}
\end{equation}
These distributed variational inequality templates cover a broad spectrum of fundamental problems, from constrained convex minimization and saddle-point problems to complex multi-agent network games, including the computation of generalized Nash equilibria across distributed systems.

\vspace{1ex}
\noindent\textrm{(iv)}~\textbf{Distributed convex-concave minimax problem.}
This problem considers a network of $n$ agents working cooperatively to solve a convex-concave minimax problem of the form:
\begin{equation}\label{eq:Dminmax}
	\min_{u \in \R^{p_1}} \max_{v \in \R^{p_2}} \Big\{\Phi(u, v) :=  \sum_{i=1}^n \set{f_i(u) + \Lc_i(u,v) - g_i(v)} \Big\},
\end{equation}
where $f_i : \R^{p_1} \to \R \cup \sets{+\infty}$ and $g_i : \R^{p_2} \to \R \cup \sets{+\infty}$ are proper, closed, and convex functions, and $\Lc_i : \R^{p_1} \times \R^{p_2} \to \R$ is a convex-concave saddle function, often assumed differentiable.
All three functions are known only to node $i$ (see, e.g., \cite{malitsky2026firstorder,sharma2022federated,tam2025decentralised,xian2021faster,Yang2019cooperative}).
If we define $x := [u,v] \in \R^p$ with $p := p_1 + p_2$, $G_ix := [\nabla_u \Lc_i(u,v), -\nabla_v \Lc_i(u,v)]$, and $T_ix := [\partial_u f_i(u), \partial_v g_i(v)]$, then the optimality condition of \eqref{eq:Dminmax} can be written exactly as a special case of \eqref{eq:DGE}.

\vspace{1ex}
\noindent$\textrm{(b)}$~\textbf{Related work.}
Most existing results in the literature are devoted to distributed [un]constrained optimization problems.
These methods typically allow each agent $i$ to maintain a local copy of the common decision variable, with the goal of driving all local copies to an optimal consensus state.
A fundamental approach is the distributed [sub]gradient descent (DGD) scheme \cite{chen2012fast,Jakovetic2011,Nedic2009}, which intertwines local gradient steps with a consensus mixing step over the network.
However, it is well-known that when a constant stepsize is utilized, the iterates generated by DGD only converge to a neighborhood of the exact solution, achieving a sublinear $\BigOs{1/k}$ convergence rate on the optimality gap, see, e.g., \cite{shi2015extra}.
While exact convergence can be recovered by employing diminishing stepsizes, this approach inevitably degrades the practical convergence speed.

To achieve exact convergence with fixed stepsizes, a major development is EXTRA and its variants \cite{shi2015extra,shi2015proximal}.
These methods modify the standard DGD scheme by utilizing two different weight matrices and performing updates based on the difference between two consecutive historical gradients, which effectively cancels out the steady-state error.
Another dominant direction for achieving exact convergence is the gradient tracking (GT) method \cite{dilorenzo2015distributed,dilorenzo2016next,nedic2017achieving,qu2018harnessing}.
Motivated by the discrete-time dynamic average consensus problem \cite{zhu2010discrete}, GT algorithms augment the local state with a tracking variable  designed to dynamically estimate the global average gradient.
By carefully injecting the difference of consecutive local gradients as a correction term, GT ensures that the sum of the tracking estimators perfectly matches the sum of the true gradients at every iteration.
Because the tracking direction eventually converges to the true average gradient, these methods successfully achieve exact linear convergence rates for strongly convex objective functions.

While decentralized optimization is extensively studied, extending these techniques to distributed minimax optimization and composite inclusions introduces significant structural challenges.
Unlike standard optimization, monotone operators, particularly those arising in minimax games and saddle-point problems, often exhibit asymmetric Jacobians, making direct adaptation of optimization methods inapplicable.
Early works in this domain primarily focused on distributed projection and subgradient methods for solving saddle-point problems over networks \cite{mateos2015distributed,mateos2017distributed}.
More recently, driven by the massive demand for several modern machine learning applications such as adversarial training, GANs, or robust multi-agent reinforcement learning, decentralized minimax optimization has garnered significant attention.
For instance, \cite{malitsky2026firstorder} has successfully adapted advanced operator splitting techniques, combining an EXTRA-type consensus scheme with the forward-reflected-backward splitting method to obtain the following algorithm for monotone inclusions:
\begin{equation}\label{eq:malitsky2026_alg}\tag{ND-DFRB}
\arraycolsep=0.2em
\left\{
\begin{array}{lcl}
	\mbf{v}^k &=& 2\mbf{G}\mbf{x}^k - \mbf{G}\mbf{x}^{k-1}, \vspace{1ex}\\
	
	\mbf{u}^{k+1} &=& \mbf{u}^k + W\mbf{x}^{k} - \frac{1}{2}(\Id + W)\mbf{x}^{k-1} - \tau (\mbf{v}^k - \mbf{v}^{k-1}), \vspace{1ex}\\
	
	\mbf{x}^{k+1} &=& J_{\tau \mbf{T}}(\mbf{u}^{k+1}),
\end{array}
\right.
\end{equation}
where $W$ is the mixing matrix (see Definition~\ref{def:mixing_matrix}), $\tau \in \left(0, \frac{1 + \lambda_{\min}(W)}{4\max_i L_i}\right)$ in which $\lambda_{\min}(W)$ denotes the minimum eigenvalue of $W$, and $\mbf{G}$ and $\mbf{T}$ are defined as in \eqref{eq:GE_notation}.

This method relaxes the co-coercivity requirement on the $G_i$ operators to monotonicity and $L_i$-Lipschitz continuity.
However, a limitation of this method is its reliance on a fixed, homogeneous stepsize across all agents, which is tightly coupled with the network topology through $\lambda_{\min}(W)$.
This coupling not only forces all agents to adopt a uniform, conservative stepsize restricted by the worst local operator, but it also requires global knowledge of the graph's spectral properties prior to execution.

To overcome these severe coordination bottlenecks, a notable theoretical breakthrough was recently achieved by developing a decentralized forward-backward-type algorithm that permits heterogeneous local stepsizes \cite{tam2025decentralised}.
In this framework, agents can independently select their fixed stepsizes $\alpha_i < \frac{1}{8L_i}$ with upper bounds that have absolutely no dependence on the global communication graph.
The update scheme of this method can be written as
\begin{equation}\label{eq:tam2026_alg}\tag{NI-DFB}
\arraycolsep=0.2em
\left\{
\begin{array}{lcl}
	\mbf{v}^k &=& 2\mbf{G}\mbf{y}^k - \mbf{G}\mbf{y}^{k-1}, \vspace{1ex}\\
	\mbf{z}^{k+1} &=& \mbf{z}^k - \mbf{x}^k + \widetilde{\mbf{W}}(2\mbf{x}^k - \mbf{x}^{k-1} - \boldsymbol{\Lambda}(\mbf{v}^k - \mbf{v}^{k-1})), \vspace{1ex}\\
	\mbf{x}^{k+1} &=& J_{\boldsymbol{\Lambda} \mbf{T}}(\mbf{z}^{k+1}), \vspace{1ex}\\
	\mbf{y}^{k+1} &=& \mbf{x}^k + \mbf{z}^{k+1} - \mbf{z}^k,
\end{array}
\right.
\end{equation}
where $\Lambda := \diag{\alpha_1, \cdots, \alpha_n}$ , $\boldsymbol{\Lambda} := \Lambda \kron \Id$, $\widetilde{\mbf{W}} := (\Id - \frac{\beta}{2}\Lambda(\Id - W)) \kron \Id$ for some parameter $\beta$ such that $0 < \beta < \norms{\Lambda^{1/2}\left(\frac{\Id - W}{2}\right)\Lambda^{1/2}}^{-1}$.

However, despite the significant progress in eliminating network dependencies, the literature on \textbf{accelerated} exact methods for distributed composite inclusions remains surprisingly limited. 
Most existing exact decentralized algorithms for monotone inclusions, including \eqref{eq:malitsky2026_alg} and \eqref{eq:tam2026_alg}, are fundamentally built upon \textbf{non-accelerated} operator-splitting techniques. 
As a result, their best-known worst-case convergence rate for the squared residual norm is typically no better than $\BigOs{1/k}$, where $k$ denotes the number of iterations. 
By contrast, accelerated methods have the potential to improve this rate to $\BigOs{1/k^2}$, thereby substantially reducing the number of communication rounds required to achieve a desired accuracy.

Beyond computational efficiency, the current theoretical analysis remains incomplete. 
Existing convergence results are largely restricted to asymptotic guarantees or convergence rates established in reformulated primal-dual spaces. 
Explicit non-asymptotic convergence guarantees for the original primal iterates are still largely unavailable. These observations naturally motivate the following two research questions:
\begin{center}
\textit{\textbf{
\begin{enumerate}
\item How can we design distributed accelerated algorithms for efficiently solving the distributed monotone composite inclusion problem \eqref{eq:DGE}?
\item Can we establish explicit non-asymptotic convergence rates in the original primal space, rather than only in an equivalent primal-dual formulation?
\end{enumerate}
}}
\end{center}
This paper addresses both questions by developing two new decentralized accelerated fixed-point methods based on the primal-dual frameworks of \cite{malitsky2026firstorder} and \cite{tam2025decentralised}. 
We establish the convergence of the generated iterate sequences and derive explicit non-asymptotic convergence rates directly in the original primal space.

\vspace{1ex}
\noindent$\mathrm{(c)}$~\textbf{Our contributions.}
Our contributions in this paper can be summarized as follows:
\begin{compactitem}
	\item[$\mathrm{(i)}$] 
	First, we develop two centralized Fast Fixed-Point (FFP) algorithms for solving \eqref{eq:DGE}, without yet exploiting its distributed structure.
	These algorithms are new and serve as the base methods for deriving the distributed algorithms presented later.
	The first algorithm addresses \eqref{eq:DGE} under the Lipschitz continuity of $G$ and the maximal monotonicity of $G+T$.
	The second algorithm considers a different setting in which $G$ is co-coercive and $T$ is co-hypomonotone.
	These two settings overlap, but neither is a special case of the other.
	For both algorithms, we rigorously establish an $\BigOs{1/k^2}$ convergence rate for the squared norm of the operator residual.
	Furthermore, using a primal-dual approach, we extend this framework to the three-operator inclusion \eqref{eq:3op_inclusion}, which provides the theoretical foundation for developing our decentralized algorithms in the sequel.
	
	\item[$\mathrm{(ii)}$]
	Second, we reformulate \eqref{eq:DGE} as a standard composite inclusion \eqref{eq:GE} in a primal-dual space. 
	By applying the proposed FFP framework to this reformulation, we derive a novel decentralized algorithm, denoted as \texttt{ND-DFFP}, to solve \eqref{eq:DGE}.
	We then establish convergence of the original primal iterates and derive explicit non-asymptotic rates for consensus error and aggregated operator residuals, including $\SmallOs{1/k}$ rates in norm and $\SmallOs{1/k^2}$ rates for their squared counterparts.
	We can also prove an $\SmallOs{1/k}$ rate for the restricted dual gap function and for the squared forward-backward splitting residual.
	
	\item[$\mathrm{(iii)}$]
	Third, we employ an alternative reformulation that transforms \eqref{eq:DGE} into a three-operator inclusion \eqref{eq:3op_inclusion}.
	This critical transformation successfully decouples the local problem operators from the network communication information.
	By applying the three-operator variant of FFP to this decoupled structure, we obtain \texttt{NI-DFFP}, a fundamentally new decentralized algorithm.
	In this scheme, each agent can independently utilize a heterogeneous stepsize that is completely free from any dependence on the network topology.
	Finally, we demonstrate that this network-independent algorithm simultaneously achieves an exact $\BigOs{1/k^2}$ convergence rate in the original primal space. 
\end{compactitem}
Let us further elaborate on our contributions and highlight the key aspects that distinguish our work from the existing literature.

First, the centralized algorithms developed in $\mathrm{(i)}$ differ from existing methods, including those in our recent works \cite{TranDinh2025a,tran2025accelerated}.
For instance, the second and last steps of \eqref{eq:FKM_scheme2} differ from the method in \cite[Section 5.1]{tran2025accelerated}, particularly through the stepsize $\eta\gamma_k-\beta_k$ and the use of $\hat{w}^{k+1}$ instead of $\hat{w}^k$ in the last step.
These modifications fundamentally change the underlying convergence analysis.
Similarly, the main step in the second line of \eqref{eq:FKM2_scheme} differs from the scheme in \cite{TranDinh2025a}, where $G$ is evaluated at the intermediate state $\hat{x}^k$ instead of the primal iterate $x^k$.
This seemingly small modification again requires a substantially different analysis.

Second, to the best of our knowledge, our distributed methods are the first accelerated algorithms for solving \eqref{eq:DGE}, achieving faster convergence rates than existing non-accelerated methods such as \cite{malitsky2026firstorder,tam2025decentralised}.
We emphasize that developing accelerated methods in this setting is nontrivial, since much of the existing analysis for accelerated methods in convex optimization does not directly extend to monotone inclusions due to the absence of objective functions.

Third, unlike \cite{malitsky2026firstorder,tam2025decentralised}, we establish not only convergence guarantees for the reformulated problem, but also convergence rates and complexity guarantees for the original problem \eqref{eq:DGE}.

Finally, since our methods address the general problem \eqref{eq:DGE}, they can be specialized to obtain suitable variants for monotone variational inequalities and convex-concave saddle-point problems.
In particular, some of these specializations can lead to new distributed primal-dual variants for solving convex-concave minimax problems as well as convex optimization.

\vspace{1ex}
\noindent$\mathrm{(d)}$ \textbf{Paper organization.}
The remainder of this paper is organized as follows.
Section~\ref{sec:background} presents some necessary mathematical background used in this paper.
Section~\ref{sec:FKM} develops two new centralized algorithms for \eqref{eq:DGE}, which serve as the base methods for developing distributed algorithms in the sequel. 
Section~\ref{sec:ND_DFFP_alg} develops our first class of distributed algorithms with network-dependent stepsizes.
Section~\ref{sec:NI-DFKM} derives a novel class of distributed FFP algorithms with network-independent stepsizes.
In Section~\ref{sec:numerical_experiments}, we implement our proposed algorithms to evaluate their efficacy and compare them with some state-of-the-art benchmarks.
All the technical proofs of the results in the main text are given in the appendix.

\beforesec
\section{Mathematical Background and Preliminaries}\label{sec:background}
\aftersec
In this section, we briefly recall necessary notions and concepts which will be used in the sequel.
Then, we recall the restricted gap function and its properties for our model \eqref{eq:DGE}.

\beforesubsec
\subsection{Notation and mathematical tools}\label{subsec:sec2_math_background}
\aftersubsec
\noindent$\mathrm{(a)}$~\textbf{Notations and basic concepts.}
Let $\R^p$ be a finite-dimensional Euclidean space equipped with an inner product $\iprods{\cdot, \cdot}$ and a norm $\norms{\cdot}$.
Let $u$ be a vector in $\R^p$, and $u_{[i]} \in \R^p$ be the local vector of agent $i \in [n] := \sets{1,\cdots, n}$ of a distributed system.
For $u_{[i]} \in \R^p$ (the local copy of $u$ at node $i$), and given the operators $G_i$ and $T_i$ in \eqref{eq:DGE}, $i = 1, \cdots, n$, let us define an element and operators in the product space $\R^{n \times p}$ as
\begin{equation}\label{eq:GE_notation}
\arraycolsep=0.2em
\begin{array}{c}
	\mbf{u} = (u_{[1]}, \cdots, u_{[n]})^\top = \begin{bmatrix} u_{[1]}^{\top} \\ \vdots \\ u_{[n]}^{\top}\end{bmatrix} \in \R^{n\times p}, \vspace{1ex}\\
	\mbf{G}\mbf{u} =\left( G_1u_{[1]}, \cdots, G_nu_{[n]}\right) \qquad\textrm{and} \qquad
	\mbf{T}\mbf{u} = T_1u_{[1]} \times \cdots\times T_nu_{[n]}.
\end{array}
\end{equation}
Here, $T_1u_{[1]}\times\cdots\times T_nu_{[n]}$ represents the Cartesian product mapping.
We define
\begin{equation}\label{eq:consensus_space}
	\Dc := \sets{\mbf{u} \in \R^{n\times p} : u_{[1]} = \cdots = u_{[n]}},
\end{equation}
the linear subspace in the product space $\R^{n\times p}$, often called the \textit{consensus space} as all rows of a matrix in this space are identical.

For a single-valued or multivalued mapping $T : \R^p \rightrightarrows \R^p$, we denote by $\dom{T} := \set{x \in \R^p : Tx \neq\emptyset}$ the domain of $T$ and by $\gra{T} := \sets{(x, u) \in \R^p\times \R^p : u \in Tx}$ the graph of $T$.
The inverse mapping $T^{-1}$ of $T$ is defined as $T^{-1}u := \sets{x \in \R^p : u \in Tx}$.
The resolvent of $T$ is defined by $J_Tu := \set{ x \in \R^p : u \in x + Tx} = (\Id + T)^{-1}u$.
We say that $T$ is \emph{closed} if $\gra{T}$ is closed.
For a nonempty, closed, and convex set $\Xc$ in $\R^p$, we denote by $\Nc_{\Xc}$ the normal cone of $\Xc$.
We use $\mathrm{ri}(\Xc)$ and $\mathrm{int}(\Xc)$ to denote the relative interior and the interior of $\Xc$, respectively.
For a proper, closed, and convex function $f : \R^p\to\Rext$, $\dom{f} := \sets{x \in \R^p : f(x) < +\infty}$ denotes the domain of $f$, $\partial{f}$ denotes the subdifferential of $f$, and $\nabla{f}$ stands for the [sub]gradient of $f$.
We also use $\R_{+}$ and $\R_{++}$ to denote the set of nonnegative and positive real numbers, respectively.
Given two functions $g, h : [\underline{t}, +\infty) \to \R$, we write $g(t) = \BigOs{h(t)}$ if there exist constants $M > 0$ and $t_0 \geq \underline{t} > 0$ such that $g(t) \leq Mh(t)$ for all $t \geq t_0$.
We write $g(t) = o(h(t))$ if $\lim_{t\to +\infty}g(t)/h(t) = 0$.

For a linear mapping $X : \R^p\to\R^p$, $\mathrm{Null}(X) := \sets{u \in \R^p : Xu = 0}$ denotes its null space and $\norms{X}$ denotes its spectral norm.
The notation $X \succeq 0$ (resp., $X \succ 0$) means $X$ is positive semidefinite (resp., positive definite).
For two linear mappings $X$ and $Y$, we say that $X \succeq Y$ (resp., $X \succ Y$) if $X - Y \succeq 0$ (resp., $X-Y\succ 0$).
For a symmetric matrix $X$ in $\R^{p\times p}$, $\lambda_{\max}(X)$ and $\lambda_{\min}(X)$ denote its largest and smallest eigenvalues, respectively.
For a symmetric positive definite matrix $P \in \R^{p\times p}$, we define the corresponding weighted inner product $\iprods{x, y}_P := \iprods{Px, y}$ and  weighted norm $\norms{x}_P := \iprods{Px, x}^{1/2}$ for any $x, y \in \R^p$. 

\vspace{0.75ex}
\noindent$\mathrm{(b)}$~\textbf{Monotone and Lipschitz continuous operators.}
A mapping $T: \R^p \rightrightarrows \R^p$ is called $\rho$-co-monotone if there exists $\rho \in \R$ such that $\iprods{u - v, x - y} \geq \rho \norms{u - v}^2$ for all $(x, u), (y, v) \in \gra{T}$.
When $T$ is single-valued, the last inequality becomes $\iprods{Tx - Ty, x - y} \geq \rho \norms{Tx - Ty}^2$ for all $x, y \in \dom{T}$.
If $\rho > 0$, then $T$ is called $\rho$-co-coercive.
If $\rho = 0$, then $T$ is said to be monotone.
If $\rho < 0$, then $T$ is called $\vert\rho\vert$-co-hypomonotone.
A mapping $T$ is called maximally $\rho$-[co-]monotone if its graph is not contained in the graph of any other $\rho$-[co-]monotone operator.
Clearly, $T$ is $\mu$-monotone iff $T^{-1}$ is $\mu$-co-monotone.

A single-valued operator $G: \R^p \to \R^p$ is $L$-Lipschitz continuous if $\norms{Gx - Gy} \leq L \norms{x - y}$ for all $x, y \in \R^p$.
By the triangle inequality, we can prove that if each $G_i: \R^p \to \R^p$ is $L_i$-Lipschitz continuous, then $\sum_{i=1}^{n} G_i$ is $L$-Lipschitz continuous with $L = \sum_{i=1}^n L_i$.
Similarly, if each $G_i: \R^p \to \R^p$ is $\frac{1}{L_i}$-co-coercive, then $\sum_{i=1}^{n} G_i$ is $\frac{1}{L}$-co-coercive with $L = \sum_{i=1}^n L_i$.

\vspace{0.75ex}
\noindent$\mathrm{(c)}$~\textbf{Mixing matrices.}
The agents in a given network communicate to exchange information, where the communication is characterized by a \textit{mixing matrix}, defined as follows.
\begin{definition}\label{def:mixing_matrix}
	Given an undirected connected graph $\Gc = (\Vc, \Ec)$, where $\Vc = \sets{1, 2, \cdots, n}$ is the set of vertices and $\Ec$ is the set of edges.
	A matrix $\mbf{W} = (W_{ij}) \in \R^{n \times n}$ is called a mixing matrix if it satisfies the following four conditions:
	\begin{compactenum}[(1)]
		\item $W_{ij} = 0$ for every $i \ne j$ with $(i, j) \notin \Ec$.
		\item $\mbf{W}$ is symmetric.
		\item $\operatorname{Null}\big((\Id_n -\mbf{W})\otimes \Id_p\big) = \mathcal D$, where $\Id_n$ is the $n \times n$ identity matrix.
		\item $\Id_n \succeq \mbf{W} \succ -\Id_n$.
	\end{compactenum}
\end{definition}
\begin{example}
	We recall here some standard constructions of mixing matrices satisfying Definition~\ref{def:mixing_matrix}.
	These matrices have been widely used in the literature, including \cite{chen2012fast,Jakovetic2011,mateos2015distributed,mateos2017distributed,Nedic2009,shi2015extra,shi2015proximal}.
	\begin{compactitem}
		\item[(i)] Symmetric doubly stochastic matrix: 
		Let $\mbf{W} = \mbf{W}^\top$ be a nonnegative doubly stochastic
		matrix compatible with $\Gc$, i.e.,
		$W_{ij}=0$ whenever $i\neq j$ and $(i,j) \notin \Ec$.
		If its support graph is connected and $W_{ii} > 0$ for all $i\in[n]$, then $\mbf{W}$ satisfies Definition~\ref{def:mixing_matrix}.
		
		\item[(ii)] Laplacian-based matrix: $\mbf{W} = \Id - \frac{\Lc}{\gamma}$, where $\Lc$ is the Laplacian matrix of the graph $\Gc$ and $\gamma > \frac{1}{2}\lambda_{\max}(\Lc)$ is a scaling parameter.
		Let $\Vc_i := \sets{j \in \Vc : (i, j) \in \Ec}$ be the neighbor set of node $i$ and $d_i = \vert \Vc_i \vert$ denote the degree of node $i$.
		When $\lambda_{\max}(\Lc)$ is unavailable, $\gamma = \max_{i \in \Vc}\sets{d_i} + \epsilon$ for some small $\epsilon > 0$ (e.g., $\epsilon = 1$) can be used.
		
		\item[(iii)] Metropolis constant edge weight matrix:
		\begin{equation*}
			\arraycolsep=0.2em
			W_{ij} = \left\{ 
			\begin{array}{ll}
				\frac{1}{\epsilon + \max\sets{d_i, d_j}} & \text{if } (i,j) \in \Ec \ \text{and} \ i \neq j \vspace{1ex}\\
				1 - \sum_{k \in \Nc_i} \frac{1}{\epsilon + \max\sets{d_i, d_k}} \quad & \text{if } i = j, \vspace{1ex}\\
				0 & \text{otherwise},
			\end{array}
			\right.
		\end{equation*}
		for some small $\epsilon > 0$ (e.g., $\epsilon = 1$), where $\Vc_i$ and $d_i$ are defined in (ii).
	\end{compactitem}
\end{example}

\beforesec
\subsection{Restricted gap function and its properties}\label{subsec:sec2_restricted_gap_func}
\aftersec
To characterize the properties of the restricted gap function defined below, we impose the following assumption on \eqref{eq:DGE}.

\begin{assumption}[\textbf{Restricted gap regularity}]\label{as:restricted_dual_gap}
	For \eqref{eq:DGE}, there exists an interior solution $u^{\dagger}$ of \eqref{eq:DGE}, i.e., $u^{\dagger} \in \zer{\Phi} \cap \bigcap_{i=1}^n \mathrm{int}(\dom{T_i})$.
\end{assumption}

\begin{assumption}[\textbf{Boundary regularity}]\label{as:boundary_regularity}
	For \eqref{eq:DGE}, the following conditions hold:
	\begin{compactitem}
	\item[$\mathrm{(i)}$] $\bigcap_{i=1}^n \dom{T_i}$ is bounded and $\bigcap_{i=1}^n \ri{\dom{T_i}} \ne \emptyset$.

	\item[$\mathrm{(ii)}$] There exist nonempty closed convex sets $\Cc_i \subseteq \R^p$ and a mapping $S_i: \Cc_i \rightrightarrows \R^p$ with $\dom{S_i} = \Cc_i$, $i = 1, \cdots, n$, such that $T_i = S_i + \Nc_{\Cc_i}$.
	Moreover, $S_i$ satisfies
	\begin{equation}\label{eq:S_bounded}
		M_S := \max_{i=1, \cdots, n} \sup\Big\{ \norms{s_i} : u \in \bigcap_{i=1}^n \Cc_i, \ s_i \in S_iu \Big\} < +\infty.
	\end{equation}
	\end{compactitem}
\end{assumption}
Assumption~\ref{as:restricted_dual_gap} is a standard interiority condition and is automatically satisfied whenever all $T_i$ have full domain as long as the solution set $\zer{\Phi}$ is nonempty.
The first condition in Assumption~\ref{as:boundary_regularity} accommodates constrained problems over compact convex sets (e.g., variational inequalities, matrix games, etc.), whose solutions may lie on the boundary of the common domain, including lower-dimensional compact feasible set such as simplices.
In this situation, Assumption~\ref{as:boundary_regularity}(ii) serves as a boundary regularity to handle the unboundedness of the set-valued operators $T_i$ on the boundary of its domain.
It is automatically satisfied for variational inequalities and matrix games with $S_i = 0$, and more generally, for constrained composite optimization problems with $T_i = \partial g_i + \Nc_{\Cc_i}$, whenever $\Cc_i \subseteq \dom{\partial g_i}$ and $\partial g_i$ is uniformly bounded on the common domain.
Note that we only require these assumptions when we characterize the convergence of our method through the restricted gap function.

Motivated by \cite{Nesterov2007a}, for a given nonempty subset $\Bc \subseteq \bigcap_{i=1}^n \dom{T_i}$, let us define the restricted dual (or Minty) gap function $\mathrm{Gap}_{\Bc}(\cdot)$ for \eqref{eq:DGE} as follows:
\begin{equation}\label{eq:res_gap_func}
	\mathrm{Gap}_{\Bc}(x) := \sup_{u \in \Bc}\Big\{  \sum_{i=1}^n \iprods{G_i u + \xi_i, x - u}, \quad\text{subject to} \quad \xi_i \in T_iu, \ i \in [n] \Big\}.
\end{equation}
The following lemma states the characterizations of $\mathrm{Gap}_{\Bc}$, whose proof is given in Appendix~\ref{apdx:le:gap_function}.

\begin{lemma}\label{le:gap_function}
	For \eqref{eq:DGE}, suppose that $\Phi_i := G_i + T_i$ is maximally monotone.
	Furthermore, suppose that the set $\Bc$ is chosen as follows:
	\begin{compactitem}
		\item Under Assumption~\ref{as:restricted_dual_gap}, let $\Bc$ be  a nonempty compact convex set satisfying $u^{\dagger} \in \mathrm{int}(\Bc)$ and $\Bc \subset \bigcap_{i=1}^n \mathrm{int}(\dom{T_i})$.
		
		\item Under Assumption~\ref{as:boundary_regularity}$\mathrm{(i)}$, let $\Bc := \bigcap_{i=1}^n \dom{T_i}$.
	\end{compactitem}
	Then, 	$\mathrm{Gap}_{\Bc}(\cdot)$ defined by \eqref{eq:res_gap_func} has the following properties:
	\begin{compactitem}
		\item[$\mathrm{(i)}$] $\mathrm{Gap}_{\Bc}$ is well-defined on $\dom{\Phi}$. 
		Moreover, $\mathrm{Gap}_{\Bc}(u) \geq 0$ for all $u \in \dom{\Phi}$;
		\item[$\mathrm{(ii)}$]
		If $u^{\star} \in \zer{\Phi}$, then $\mathrm{Gap}_{\Bc}(u^{\star}) = 0$.
		\item[$\mathrm{(iii)}$]
		If $u^{\star} \in \Bc$ and $\mathrm{Gap}_{\Bc}(u^{\star}) = 0$, then $u^{\star} \in \zer{\Phi}$.
	\end{compactitem}
\end{lemma}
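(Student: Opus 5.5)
We prove the three items separately. Throughout, we use that $\Phi := \sum_{i=1}^n \Phi_i$ is monotone because each $\Phi_i$ is. We also use the standard fact that a maximally monotone operator whose domain has nonempty interior is locally bounded on that interior.

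\emph{Well-definedness in (i).} The task is to show the supremum in \eqref{eq:res_gap_func} is finite for every $x \in \dom{\Phi}$.

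In the first case, $\Bc$ is compact and contained in $\bigcap_i \mathrm{int}(\dom{T_i}) = \bigcap_i \mathrm{int}(\dom{\Phi_i})$. Local boundedness of each maximally monotone $\Phi_i$, combined with compactness of $\Bc$, gives
\begin{equation*}
\sup\Big\{\norms{G_iu + \xi_i} : u \in \Bc,\ \xi_i \in T_iu\Big\} < +\infty \quad \text{for each } i.
\end{equation*}
The objective is therefore bounded by $n$ times this constant times $\sup_{u\in\Bc}\norms{x-u}$, which is finite.

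In the second case, $\Bc = \bigcap_i \dom{T_i}$ is bounded, but $T_iu$ may be unbounded at boundary points, so this bound is unavailable. Instead, fix $x \in \dom{\Phi}$ and pick $\zeta_i \in \Phi_i x$. Monotonicity of $\Phi_i$ gives, for every $u$ and every $\xi_i \in T_iu$,
\begin{equation*}
\iprods{G_iu + \xi_i, x-u} \le \iprods{\zeta_i, x-u}.
\end{equation*}
Summing over $i$, the objective is at most $\norms{\sum_i\zeta_i}\,\sup_{u\in\Bc}\norms{x-u}$, which is finite because $\Bc$ is bounded. This monotonicity bound works in the first case as well, so a single argument covers both cases.

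\emph{Nonnegativity in (i).} Take $u = x$ in the supremum, which gives a value of $0$. This requires $x \in \Bc$ and $T_ix \neq \emptyset$.
- In the second case, $x \in \dom{\Phi}$ means $T_ix \neq \emptyset$ for all $i$, so $x \in \Bc$.
- In the first case, $\Bc$ may not contain $x$. Here we use $u^{\dagger} \in \zer{\Phi} \cap \mathrm{int}(\Bc)$. Choose $\xi_i^{\dagger} \in T_iu^{\dagger}$ with $\sum_i (G_iu^{\dagger} + \xi_i^{\dagger}) = 0$. Then the point $u = u^{\dagger}$ gives objective value $0$, so $\mathrm{Gap}_{\Bc}(x) \ge 0$.

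\emph{Item (ii).} Let $u^{\star} \in \zer{\Phi}$, so there are $\zeta_i^{\star} \in \Phi_iu^{\star}$ with $\sum_i \zeta_i^{\star} = 0$. By monotonicity, each term satisfies $\iprods{G_iu+\xi_i, u^{\star}-u} \le \iprods{\zeta_i^{\star}, u^{\star}-u}$. Summing, the objective is at most $\iprods{\sum_i\zeta_i^{\star}, u^{\star}-u} = 0$. Hence $\mathrm{Gap}_{\Bc}(u^{\star}) \le 0$, and (i) gives equality.

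\emph{Item (iii), the main step.} We use a Minty-type argument. From $\mathrm{Gap}_{\Bc}(u^{\star}) = 0$ we get
\begin{equation*}
\iprods{\textstyle\sum_i(G_iu+\xi_i),\, u-u^{\star}} \ge 0 \quad \text{for all } u \in \Bc \text{ and all } \xi_i \in T_iu.
\end{equation*}
For the sum $\Phi$ to be maximally monotone, we need a qualification condition. In the first case it follows from $\bigcap_i \mathrm{int}(\dom{\Phi_i}) \neq \emptyset$. In the second case it follows from $\bigcap_i \ri{\dom{T_i}} \neq \emptyset$ in Assumption~\ref{as:boundary_regularity}(i), using the finite-dimensional Rockafellar sum theorem.

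Consider the operator $\Phi + \Nc_{\Bc}$, which is also maximally monotone. In the first case $u^{\dagger} \in \mathrm{int}(\Bc)$; in the second case $\Nc_{\Bc}$ is absorbed since $\Bc = \dom{\Phi}$. The inequality above holds for all $(u,w) \in \gra{\Phi}$ with $u \in \Bc$. Adding normal-cone elements $n \in \Nc_{\Bc}(u)$ preserves it, because $\iprods{n, u-u^{\star}} \ge 0$ when $u^{\star} \in \Bc$. Thus $(u^{\star}, 0)$ is monotonically related to the entire graph of $\Phi + \Nc_{\Bc}$. By maximality, $0 \in \Phi u^{\star} + \Nc_{\Bc}(u^{\star})$.

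It remains to remove the normal cone.
- In the second case, $\Nc_{\Bc}$ is already contained in $\sum_i T_i$ through the normal-cone parts of the $T_i$ (in the sense that $\Phi + \Nc_{\Bc} = \Phi$ under the sum rule), so $0 \in \Phi u^{\star}$ directly.
- In the first case, $u^{\star}$ may lie on $\partial\Bc$. We combine with $u^{\dagger}$: for $u_t := u^{\star} + t(u^{\dagger} - u^{\star})$ with $t$ small, use local boundedness at interior points of $\Bc$. Alternatively, we argue by monotonicity that $\iprods{w, u^{\dagger} - u^{\star}} = 0$ for the relevant $w \in \Phi u^{\star}$, which forces the normal component to vanish because $u^{\dagger}$ is interior. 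This requires checking that a normal vector $n$ with $\iprods{n, u^{\dagger} - u^{\star}} = 0$ and $u^{\dagger} \in \mathrm{int}(\Bc)$ must be $n = 0$.

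This last verification, handling boundary points in the first case, is the step I expect to be most delicate.
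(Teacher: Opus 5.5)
Your proposal follows the paper's proof essentially step for step. Well-definedness uses the monotonicity bound. Nonnegativity uses a zero of $\Phi$ lying in $\Bc$ (or $u=x$ itself in the second case). Item (ii) uses the same comparison with $\zeta_i^\star$. For (iii), you use the Minty inequality, maximality of $\Phi+\Nc_{\Bc}$, and removal of the normal-cone component via $u^{\dagger}\in\mathrm{int}(\Bc)$. Two points need fixing.

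\emph{Second case of (iii).} Do not route this through $\Nc_{\Bc}$. Your justification fails on three counts:
\begin{itemize}
\item Under Assumption~\ref{as:boundary_regularity}(i) alone, $\Bc=\bigcap_i\dom{T_i}$ need not be closed, since domains of maximally monotone operators need not be. So $\Nc_{\Bc}$ need not be maximally monotone.
\item The identity $\Phi+\Nc_{\Bc}=\Phi$ is not a consequence of the sum rule.
\item The $T_i$ are not assumed here to have normal-cone parts. The splitting $T_i=S_i+\Nc_{\Cc_i}$ is Assumption~\ref{as:boundary_regularity}(ii), which the lemma does not use.
\end{itemize}
The correct argument is shorter and is what the paper does. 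Since $\gra{\Phi}\subseteq\Bc\times\R^p$, your Minty inequality says exactly that $(u^\star,0)$ is monotonically related to every point of $\gra{\Phi}$. Maximality of $\Phi$ alone then gives $0\in\Phi u^\star$.

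\emph{First case of (iii).} The step you call delicate is a one-liner. Take $w^\star\in\Phi u^\star$ and $n:=-w^\star\in\Nc_{\Bc}(u^\star)$. Testing the normal-cone inequality at $u^{\dagger}$ and using monotonicity against $(u^{\dagger},0)\in\gra{\Phi}$ gives $\iprods{n,u^{\dagger}-u^\star}=0$. Suppose the closed ball of radius $\varepsilon$ around $u^{\dagger}$ lies in $\Bc$. If $n\neq 0$, the point $u=u^{\dagger}+\varepsilon n/\norms{n}\in\Bc$ yields $0\ge\iprods{n,u-u^\star}=\varepsilon\norms{n}$, a contradiction; hence $n=0$. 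This is precisely the paper's choice $u_\varepsilon=u^{\dagger}-\varepsilon\sum_i(G_iu^\star+\xi_i^\star)$. Your alternative via local boundedness along $u_t$ can be dropped.
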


For a given tolerance $\epsilon > 0$, from Lemma~\ref{le:gap_function}, since $\mathrm{Gap}_{\Bc}(x) \geq 0$ for all $x\in\dom{\Phi}$, we can  state that if $\tilde{x}^{*} \in \Bc$ and $\mathrm{Gap}_{\Bc}(\tilde{x}^{*}) \leq \epsilon$, then $\tilde{x}^{*}$ is an $\epsilon$-solution of \eqref{eq:DGE}.
Due to the construction of $\Bc$, it is clear that $\mathrm{Gap}_{\Bc}$ is a restricted merit function under Assumption~\ref{as:restricted_dual_gap} and a global merit function under Assumption~\ref{as:boundary_regularity}(i), thus it can be used to characterize convergence and convergence rates of algorithms.

\beforesec
\section{New Centralized Fast Fixed-Point-Based Methods}\label{sec:FKM}
\aftersec
To develop distributed algorithms for solving \eqref{eq:DGE}, we first derive two new centralized fast fixed-point methods to solve a composite monotone inclusion, when we do not take into account its distributed structure.
Our algorithm relies on Nesterov's acceleration techniques \cite{Nesterov2004}.

For this purpose, we first state the composite monotone inclusion as follows:
\begin{equation}\label{eq:GE}
	\textrm{Find $x^{\star} \in \R^p$ such that: }~ 0 \in \Phi x^{\star}, \quad \text{where} \quad \Phi x := Gx + Tx.
	\tag{CI}
\end{equation}
Here, $G: \R^p \to \R^p$ is single-valued and $T: \R^p \rightrightarrows \R^p$ is [possibly] multivalued.
Let $\zer{\Phi} := \sets{x^{\star} \in \R^p : 0 \in \Phi{x}^{\star}}$ be the solution set of \eqref{eq:GE}.
Throughout this section, we assume that $\zer{\Phi} \neq \emptyset$ (i.e., \eqref{eq:GE} has a solution).

\beforesubsec
\subsection{The derivation of the algorithm}\label{subsec:alg1_derivation}
\aftersubsec
\noindent\textbf{$\mathrm{(a)}$~Proposed algorithm.}
We propose the following algorithm to solve \eqref{eq:GE}.
Our algorithm mimics Nesterov's acceleration technique \cite{Nesterov2004} and Popov's past-extragradient method \cite{popov1980modification}.
It is presented as follows:
Starting from an initial point $x^0 \in \dom{T}$, we set $y^{-1} = z^0 = x^0$, and at each iteration $k \geq 0$, given $\xi^k \in Tx^k$, we update
\begin{equation}\label{eq:FKM_scheme}
\tag{FFP}
\arraycolsep=0.2em
\left\{\begin{array}{lcl}
	\hat{x}^k &=& \frac{t_k - r}{t_k} x^k + \frac{r}{t_k} z^k \vspace{1ex}\\
	y^k &=& \hat{x}^k - (\eta \gamma_k - \beta_k)(Gy^{k-1} + \xi^k) \vspace{1ex}\\
	x^{k+1} &=& \hat{x}^k - \eta (Gy^k + \xi^{k+1}) + \beta_k(Gy^{k-1} + \xi^k) \vspace{1ex}\\
	z^{k+1} &=& z^k - \frac{\nu_k}{r}(Gy^k + \xi^{k+1}),
\end{array}
\right.
\end{equation}
where $r > 0$, $\eta > 0$, $t_k > 0$, $\gamma_k > 0$, $\beta_k > 0$, and $\nu_k > 0$ are given parameters, determined later.
\begin{compactitem}
\item 
The first and last lines of \eqref{eq:FKM_scheme} implement a Nesterov-type acceleration mechanism as often seen in convex optimization, see, e.g., \cite{Nesterov2004}.
One major difference is that $\nu_k = \BigOs{1}$ in the last line instead of $\nu_k = \BigOs{t_k}$ as in convex optimization.

\item 
The second and third lines represent Popov's past-extragradient update \cite{popov1980modification}, a variant of the well-known extragradient method.
\end{compactitem}
For convenience of our analysis, given $\xi^k \in Tx^k$, we define the following quantities
\begin{equation}\label{eq:FKM_w_quantities}
\arraycolsep=0.2em
\begin{array}{lcl}
	w^k := Gx^k + \xi^k, \quad \hat{w}^k := Gy^{k-1} + \xi^k, \quad \text{and} \quad e^k := Gy^{k-1} - Gx^k = \hat{w}^k - w^k.
\end{array}
\end{equation}
Then,   \eqref{eq:FKM_scheme} can be rewritten equivalently as
\begin{equation}\label{eq:FKM_scheme2}
\arraycolsep=0.2em
\left\{\begin{array}{lcl}
	\hat{x}^k &=& \frac{t_k - r}{t_k} x^k + \frac{r}{t_k} z^k \vspace{1ex}\\
	y^k &=& \hat{x}^k - (\eta \gamma_k - \beta_k)\hat{w}^k \vspace{1ex}\\
	x^{k+1} &=& \hat{x}^k - \eta \hat{w}^{k+1} + \beta_k\hat{w}^k \vspace{1ex}\\
	z^{k+1} &=& z^k - \frac{\nu_k}{r}\hat{w}^{k+1}.
\end{array}
\right.
\end{equation}
\noindent\textbf{$\mathrm{(b)}$~Equivalent form.}
While \eqref{eq:FKM_scheme2} is convenient for our convergence analysis in the centralized setting, we will need to derive a simpler form that is equivalent to \eqref{eq:FKM_scheme2} to develop distributed algorithms.
To this end, first, it follows from the first line of \eqref{eq:FKM_scheme2} that $r(z^k - \hat{x}^k)  = (t_k - r)(\hat{x}^k - x^k)$.
Then, from the second and the third lines of \eqref{eq:FKM_scheme2}, we have $x^{k+1} = y^k - \eta \hat{w}^{k+1} + \eta \gamma_k \hat{w}^k$.
Using these expressions, we can derive from the second line of \eqref{eq:FKM_scheme2} that
\begin{equation*} 
	\arraycolsep=0.2em
	\begin{array}{lcl}
		y^{k+1} &=& y^k + \theta_k \left(y^k - y^{k-1} \right) - \hat{\eta}_k\hat{w}^{k+1} + \hat{\gamma}_k \hat{w}^k  - \hat{\lambda}_k \hat{w}^{k-1},
	\end{array}
\end{equation*}
where $\theta_k := \frac{t_k - r}{t_{k+1}}$, $\hat{\eta}_k := \frac{\eta (t_{k+1} - r)}{t_{k+1}} + \frac{\nu_k}{t_{k+1}} + \eta \gamma_{k+1} - \beta_{k+1}$, $\hat{\gamma}_k := \frac{\beta_k (t_{k+1} - r)}{t_{k+1}} + (\eta \gamma_k - \beta_k)\left(1 + \frac{t_k - r}{t_{k+1}}\right) + \frac{\eta(t_k - r)}{t_{k+1}}$, and $\hat{\lambda}_k := \frac{\eta\gamma_{k-1}(t_k - r)}{t_{k+1}}$.
Combining this relation, $x^{k+1} = y^k - \eta \hat{w}^{k+1} + \eta \gamma_k \hat{w}^k$ from the first and the second line of \eqref{eq:FKM_scheme2}, and $\hat{w}^{k+1} = Gy^k + \xi^{k+1}$, we obtain the following scheme:
\begin{equation}\label{eq:FKM_scheme3}
	\tag{FFP$_{+}$}
	\arraycolsep=0.2em
	\left\{
	\begin{array}{lcl}
		x^{k+1} &=& y^k - \eta \hat{w}^{k+1} + \eta \gamma_k \hat{w}^k, \quad \xi^{k+1} \in Tx^{k+1},  \vspace{1ex}\\
		\hat{w}^{k+1} &=& Gy^k + \xi^{k+1}, \vspace{1ex}\\
		y^{k+1} &=& y^k + \theta_k \left(y^k - y^{k-1} \right) - \hat{\eta}_k\hat{w}^{k+1} + \hat{\gamma}_k \hat{w}^k  - \hat{\lambda}_k \hat{w}^{k-1}.
	\end{array}
	\right.
\end{equation}
This is an equivalent form of \eqref{eq:FKM_scheme2} and will be used to develop distributed algorithms.
Here, we need to choose $y^{-1} = x^0$, $\hat{w}^{-1} = \hat{w}^0 = w^0 = Gx^0 + \xi^0$ for $\xi^0 \in Tx^0$, and $y^0 := x^0 - (\eta\gamma_0 - \beta_0)\hat{w}^0$.

\beforesubsec
\subsection{Maximally monotone and Lipschitz continuous setting}
\aftersubsec

In this subsection, we consider the composite inclusion \eqref{eq:GE} under the following assumption.
\begin{assumption}\label{as:FKM_assumption}
	The composite inclusion \eqref{eq:GE} satisfies the following conditions:
	\begin{compactitem}
		\item[(i)] (\textbf{\textit{Maximal monotonicity}}) $G + T$ is maximally monotone.
		\item[(ii)] (\textbf{\textit{Lipschitz continuity}}) $G$ is $L$-Lipschitz continuous.
	\end{compactitem}
\end{assumption}
In Assumption~\ref{as:FKM_assumption}(ii), the Lipschitz continuity of $G$ is standard in optimization and operator theory.
Regarding (i), since \eqref{eq:FKM_scheme2} is an abstract theoretical framework and does not involve the resolvent operator, its convergence can be established using the maximal monotonicity of the composite operator $\Phi := G + T$ rather than requiring a strict condition on the multivalued operator $T$.
However, practical implementations may necessitate computing its resolvent.
Under Assumption~\ref{as:FKM_assumption}, the resolvent of $T$ is still well-defined and has nice properties, although $T$ may not be necessarily maximally monotone but maximally hypomonotone.
\begin{lemma}\label{le:resolvent_properties}
	Let $\lambda \in (0, \frac{1}{L})$.
	Under Assumption~\ref{as:FKM_assumption}, the resolvent $J_{\lambda T} := (\Id + \lambda T)^{-1}$ is well-defined, single-valued, and $\frac{1}{1-\lambda L}$-Lipschitz continuous.
\end{lemma}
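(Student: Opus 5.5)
The idea is to rewrite the resolvent of $\lambda T$ as the resolvent of $\lambda\Phi$ composed with a contraction, and then use Minty's theorem for the maximally monotone operator $\Phi := G+T$. Fix $u \in \R^p$. By definition, $x \in J_{\lambda T}u$ iff $u \in x + \lambda Tx$. Since $G$ is single-valued with full domain, $Tx = \Phi x - Gx$, so this is equivalent to
\begin{equation*}
u + \lambda Gx \in x + \lambda \Phi x, \qquad\text{i.e.,}\qquad x = J_{\lambda\Phi}(u + \lambda Gx).
\end{equation*}
By Minty's theorem, $J_{\lambda\Phi}$ is single-valued, firmly nonexpansive, and defined on all of $\R^p$, because $\lambda\Phi$ is maximally monotone for every $\lambda > 0$. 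Hence $x \in J_{\lambda T}u$ holds if and only if $x$ is a fixed point of the map $\Psi_u(x) := J_{\lambda\Phi}(u + \lambda Gx)$.

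Next I will show that $\Psi_u$ is a contraction. For any $x, x'$,
\begin{equation*}
\norms{\Psi_u(x) - \Psi_u(x')} \leq \lambda \norms{Gx - Gx'} \leq \lambda L \norms{x - x'},
\end{equation*}
and $\lambda L < 1$. By the Banach fixed-point theorem, $\Psi_u$ has exactly one fixed point on $\R^p$. Therefore $J_{\lambda T}u$ is nonempty and a singleton for every $u$, so $J_{\lambda T}$ is well-defined on all of $\R^p$ and single-valued.

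For the Lipschitz bound, take $x = J_{\lambda T}u$ and $x' = J_{\lambda T}u'$. Using the fixed-point identities and the nonexpansiveness of $J_{\lambda\Phi}$,
\begin{equation*}
\norms{x - x'} \leq \norms{u - u'} + \lambda\norms{Gx - Gx'} \leq \norms{u - u'} + \lambda L\norms{x - x'}.
\end{equation*}
Rearranging gives $\norms{x - x'} \leq \frac{1}{1-\lambda L}\norms{u - u'}$.

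The only delicate point is the equivalence in the first step: it relies on $G$ being single-valued and everywhere defined, so that $T = \Phi - G$ holds on $\dom{T} = \dom{\Phi}$. The rest is routine.
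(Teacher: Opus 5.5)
Your proof is correct, and it takes a different route from the paper.

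The paper argues structurally. Since $G$ is $L$-Lipschitz, $L\Id - G$ is monotone and continuous with full domain, so $T + L\Id = \Phi + (L\Id - G)$ is maximally monotone; this step tacitly invokes a sum rule for a maximally monotone operator plus a continuous, monotone, everywhere-defined one. Hence $\Id + \lambda T = (1-\lambda L)\Id + \lambda(T + L\Id)$ is maximally $(1-\lambda L)$-strongly monotone, and its inverse is everywhere defined, single-valued and $\frac{1}{1-\lambda L}$-Lipschitz.

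You instead apply Minty's theorem only to $\Phi$ itself. You recast $J_{\lambda T}u$ as the fixed point of $x \mapsto J_{\lambda\Phi}(u + \lambda Gx)$. Existence and uniqueness then come from the Banach contraction principle, and the Lipschitz bound comes from nonexpansiveness of $J_{\lambda\Phi}$.

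Your argument is more elementary and self-contained: it needs no sum theorem and no surjectivity result for strongly monotone operators. It is even constructive, giving a convergent iteration for $J_{\lambda T}u$, though only through $J_{\lambda\Phi}$, which is usually harder to evaluate than $J_{\lambda T}$.

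The paper's route, in exchange, exposes the structural fact that $T$ is maximally $L$-hypomonotone. That is exactly the remark made just before the lemma, and it fits the co-hypomonotone framework used later. Both routes give the same constant $\frac{1}{1-\lambda L}$.
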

\begin{proof}
	Since $G$ is $L$-Lipschitz continuous, using Cauchy-Schwarz inequality, we can prove that
	\begin{equation*}
		\arraycolsep=0.2em
		\begin{array}{lcl}	
			\iprods{(L\Id - G)u - (L\Id - G)u', u - u'} &=& L\norms{u - u'}^2 - \iprods{Gu - Gu', u - u'} \vspace{1ex}\\
			&\geq& L\norms{u - u'}^2 - \norms{Gu - Gu'}\norms{u - u'} \vspace{1ex}\\
			&\geq& 0, \quad \forall u, u' \in \R^p.
		\end{array}
	\end{equation*}
	Hence, $L\Id - G$ is monotone and continuous, and has full domain.
	Moreover, since $\Phi = G + T$ is maximally monotone, it also follows that $T + L\Id = \Phi + (L\Id - G)$ is maximally monotone.
	Then, since $0 < \lambda < \frac{1}{L}$, it is easy to show that the operator $\Id + \lambda T = (1 - \lambda L)\Id + \lambda(T + L\Id)$ is maximally $(1-\lambda L)$-strongly monotone. Therefore, the resolvent $J_{\lambda T} = (\Id + \lambda T)^{-1}$ is single-valued on $\R^p$ and $\frac{1}{1 - \lambda L}$-Lipschitz continuous.
\end{proof}

\beforesubsubsec
\subsubsection{Convergence analysis}
\aftersubsubsec
\noindent\textbf{$\mathrm{(a)}$~Technical lemmas.}
First, we define the Lyapunov function $\Pc_k$ and derive a descent property of this function in the following lemma, whose proof is given in Appendix~\ref{apdx:le:FKM_key_est1}.

\begin{lemma}\label{le:FKM_key_est1}
	Suppose that Assumption~\ref{as:FKM_assumption} holds for \eqref{eq:GE} and $\zer{\Phi}\neq\emptyset$.
	For given $r > 0$, $\eta > 0$, and $\nu > 0$, let the parameters in \eqref{eq:FKM_scheme} be updated by
	\begin{equation}\label{eq:FKM_key_est1_params}
	\arraycolsep=0.2em
	\begin{array}{lcl}
		t_{k+1} = t_k + 1,  \quad \beta_k := \frac{c_1 \eta(t_k - r)}{t_k}, \quad \nu_k := \frac{\nu(t_k - 1)}{t_k}, \quad \text{and} \quad \gamma_k := 1,
	\end{array}
	\end{equation}
	for some $c_1 > 0$ and $t_0 > \max\sets{1, r, \frac{\nu}{\eta}}$.
	Let us define the following \textbf{Lyapunov function}:
	\begin{equation}\label{eq:FKM_Lyapunov_func}
		\arraycolsep=0.2em
		\begin{array}{lcl}
			\Pc_k &:=& \frac{a_k}{2}\norms{w^k}^2 + r(t_k - r) \iprods{w^k, x^k - z^k} + \frac{r^2(r-1)}{2\nu_k}\norms{z^k - \xopt}^2 +  \frac{d_k}{2} \norms{e^k}^2 ,
		\end{array}
	\end{equation}
	then we have
	\begin{equation}\label{eq:FKM_key_est1}
		\arraycolsep=0.2em
		\begin{array}{lcl}
			\Pc_k - \Pc_{k+1} &\geq& \frac{\hat{a}_{k+1} - a_{k+1}}{2} \norms{w^{k+1}}^2 + r(r-1)\iprods{w^{k+1}, x^{k+1} - \xopt} + \frac{b_k}{2} \norms{\hat{w}^{k+1}}^2  \vspace{1ex}\\ 
			&& + {~} \frac{\phi_k}{2} \norms{\hat{w}^{k+1} - w^k}^2 + \frac{\hat{d}_{k+1} - d_{k+1}}{2}\norms{e^{k+1}}^2,
		\end{array}
	\end{equation}
	where $x^{\star} \in \zer{\Phi}$,  $M^2 := 2(1+\omega)L^2$ for some $\omega \geq 0$, and 
	\begin{equation}\label{eq:FKM_key_est1_coeffs}
	\arraycolsep=0.2em
	\left\{
	\begin{array}{lcl}
		a_k &:=& \eta t_k (t_k - r) \big[ 1 - \frac{c_1 (t_k - r)}{t_k} \big], \vspace{1ex}\\
		\hat{a}_{k+1} &:=& t_k [(1-c_1)\eta t_k - \nu_k], \vspace{1ex}\\
		b_k &:=& t_k (r \eta - \nu_k) + \frac{(r-1)\nu (t_k-1)}{t_k}, \vspace{1ex}\\
		\phi_k &:=& t_k[\eta (t_k - r) - M^2 \eta^2 (\eta t_k - \nu_k)], \vspace{1ex}\\
		d_k &:=& t_k \big[ \frac{\beta_k^2 t_k}{c_1 \eta} + M^2 \eta^2 (\eta t_k - \nu_k) \big], \vspace{1ex}\\
		\hat{d}_{k+1} &:=& t_k \big[\omega (\eta t_k - \nu_k) - (r-1)\nu (t_k-1)\big].
	\end{array}
	\right.
	\end{equation}
\end{lemma}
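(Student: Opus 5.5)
The plan is a direct Lyapunov-difference computation: expand $\Pc_k - \Pc_{k+1}$ term by term, use the update rules of \eqref{eq:FKM_scheme2} to express all differences through $\hat{w}^k$ and $\hat{w}^{k+1}$, and then close the estimate with monotonicity of $\Phi$ and Lipschitz continuity of $G$.

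First, from the first and last lines of \eqref{eq:FKM_scheme2}, together with $t_{k+1} = t_k + 1$, I will derive the identities
\begin{equation*}
t_k(x^{k+1} - x^k) = r(z^k - x^k) - \eta t_k\hat{w}^{k+1} + \beta_k t_k\hat{w}^k,
\end{equation*}
and
\begin{equation*}
(t_{k+1} - r)(x^{k+1} - z^{k+1}) = (t_k - r)(x^k - z^k) + t_k(x^{k+1} - x^k) - (t_k - r)(x^k - z^k) + \cdots,
\end{equation*}
that is, an expression of $x^{k+1} - z^{k+1}$ as a combination of $x^k - z^k$, $\hat{w}^{k+1}$, and $\hat{w}^k$ with coefficients built from $\eta t_k - \nu_k$ and $\beta_k t_k$. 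With these I will expand:
\begin{itemize}
\item the cross terms $r(t_k - r)\iprods{w^k, x^k - z^k} - r(t_{k+1} - r)\iprods{w^{k+1}, x^{k+1} - z^{k+1}}$;
\item the distance term $\frac{r^2(r-1)}{2}\big[\frac{1}{\nu_k}\norms{z^k - \xopt}^2 - \frac{1}{\nu_{k+1}}\norms{z^{k+1} - \xopt}^2\big]$, using $z^{k+1} - z^k = -\frac{\nu_k}{r}\hat{w}^{k+1}$ and $\nu_k$ nondecreasing, which produces $r(r-1)\iprods{\hat{w}^{k+1}, z^k - \xopt}$ minus $\frac{(r-1)\nu_k}{2}\norms{\hat{w}^{k+1}}^2$.
\end{itemize}
The term $\iprods{\hat{w}^{k+1}, z^k - \xopt}$ is then rewritten via $z^k$ in terms of $x^{k+1}$, giving the target $r(r-1)\iprods{w^{k+1}, x^{k+1} - \xopt}$ plus a remainder $r(r-1)\iprods{e^{k+1}, \cdot}$, which will be absorbed later.

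Next, I will invoke monotonicity of $\Phi$ on the pair $(x^k, w^k), (x^{k+1}, w^{k+1})$, scaled by $t_k(t_k - r)$ (or $rt_k$): $\iprods{w^{k+1} - w^k, x^{k+1} - x^k} \ge 0$. This kills the inner products $\iprods{w^{k+1}, x^{k+1} - x^k}$ and $\iprods{w^k, x^{k+1} - x^k}$ generated above, after substituting $x^{k+1} - x^k$ from the first identity. What remains is a quadratic form in $(w^k, w^{k+1}, \hat{w}^k, \hat{w}^{k+1})$, equivalently in $(w^k, w^{k+1}, e^k, e^{k+1})$ via $\hat{w} = w + e$. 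I will complete squares to isolate
\begin{equation*}
\tfrac{\hat{a}_{k+1} - a_{k+1}}{2}\norms{w^{k+1}}^2, \qquad \tfrac{b_k}{2}\norms{\hat{w}^{k+1}}^2, \qquad \tfrac{\phi_k}{2}\norms{\hat{w}^{k+1} - w^k}^2 .
\end{equation*}
The $\beta_k$-cross terms are handled by Young's inequality with weight $c_1\eta$, which produces the $\frac{\beta_k^2 t_k^2}{c_1\eta}\norms{e^k}^2$ part of $d_k$ and the factor $(1 - c_1)$ in $\hat{a}_{k+1}$.

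Finally, I will bound the error terms through Lipschitz continuity. By the second and third lines of \eqref{eq:FKM_scheme2}, $x^{k+1} - y^k = -\eta(\hat{w}^{k+1} - \hat{w}^k) + \ldots = -\eta(\hat{w}^{k+1} - w^k) - \eta e^k$ (using $\gamma_k = 1$). Hence
\begin{equation*}
\norms{e^{k+1}}^2 \le L^2\norms{x^{k+1} - y^k}^2 \le 2L^2\eta^2\big(\norms{\hat{w}^{k+1} - w^k}^2 + \norms{e^k}^2\big).
\end{equation*}
Multiplying by $(1+\omega)t_k(\eta t_k - \nu_k)$ and adding $0 \le$ this slack, the $\norms{\hat{w}^{k+1} - w^k}^2$ cost explains the $-M^2\eta^2(\eta t_k - \nu_k)$ in $\phi_k$. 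The $\norms{e^k}^2$ cost explains the $M^2\eta^2(\eta t_k - \nu_k)$ in $d_k$, while the $\omega$ part of the gained $\norms{e^{k+1}}^2$, minus the $(r-1)\nu(t_k - 1)$ used to absorb the $e^{k+1}$ remainder from the distance term, gives $\hat{d}_{k+1}$.

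The main obstacle is the bookkeeping in the middle step: choosing exactly how to split the cross terms $\iprods{w^k, \hat{w}^{k+1}}$, $\iprods{w^{k+1}, \hat{w}^k}$, and $\iprods{e^{k+1}, \hat{w}^{k+1}}$ so that the leftover coefficients match $a_k$, $\hat{a}_{k+1}$, $b_k$, and $\phi_k$ exactly. My approach there is to first verify the coefficient identities symbolically with $\beta_k$ and $\nu_k$ kept general, and only at the end substitute \eqref{eq:FKM_key_est1_params}.
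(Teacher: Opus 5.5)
Your skeleton matches the paper's: monotonicity of $\Phi$ on $(x^k,w^k),(x^{k+1},w^{k+1})$ scaled by $t_k(t_k-r)$; Young's inequality with weight $c_1\eta$, with $\beta_k$ chosen to cancel $\iprods{w^{k+1},w^k}$; and a $(1+\omega)$-split Lipschitz bound on $\norms{e^{k+1}}^2$. Feeding your recursion for $x^{k+1}-z^{k+1}$ into the cross term, and the first identity into the monotonicity inequality, is an equivalent way of organizing the paper's two representations of $t_k(t_k-r)(x^{k+1}-x^k)$, so that part is fine.

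\textbf{The gap: the $e^{k+1}$ remainder from the distance term.} However you regroup, once everything is collected there remains a term $r(r-1)\iprods{e^{k+1},z^{k+1}-\xopt}$, modulo inner products among $w^{k+1},\hat{w}^{k+1},e^{k+1}$. Splitting it as $r(r-1)\iprods{e^{k+1},x^{k+1}-\xopt}$ ``via $x^{k+1}$'', as you propose, does not help. This term is linear in an uncontrolled direction. Nothing on the right-hand side of \eqref{eq:FKM_key_est1} is quadratic in a distance to $\xopt$, so no Young's inequality can trade it for $\norms{e^{k+1}}^2$ alone. The only possible partner is the surplus $\frac{r^2(r-1)}{2}\big(\frac{1}{\nu_k}-\frac{1}{\nu_{k+1}}\big)\norms{z^{k+1}-\xopt}^2\ge 0$ created by the change of weight from $\Pc_k$ to $\Pc_{k+1}$. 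Your plan discards exactly this term when it invokes ``$\nu_k$ nondecreasing''.

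\textbf{How the paper handles it.} The paper keeps the surplus and expands around $z^{k+1}$, so the weight-$\frac{1}{\nu_k}$ part of the distance difference equals $r(r-1)\iprods{\hat{w}^{k+1},z^{k+1}-\xopt}+\frac{(r-1)\nu_k}{2}\norms{\hat{w}^{k+1}}^2$. Your $z^k$-expansion agrees after substituting $z^k=z^{k+1}+\frac{\nu_k}{r}\hat{w}^{k+1}$; this is what turns your $-\frac{(r-1)\nu_k}{2}\norms{\hat{w}^{k+1}}^2$ into the $+\frac{(r-1)\nu_k}{2}\norms{\hat{w}^{k+1}}^2$ inside $\frac{b_k}{2}\norms{\hat{w}^{k+1}}^2$. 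The paper then splits $\hat{w}^{k+1}=w^{k+1}+e^{k+1}$ and applies Young's inequality to $r(r-1)\iprods{e^{k+1},z^{k+1}-\xopt}$ with weight $c_k:=\frac{1}{\nu_k}-\frac{1}{\nu_{k+1}}=\frac{1}{\nu t_k(t_k-1)}$.
\begin{itemize}
\item The resulting $-\frac{r^2(r-1)c_k}{2}\norms{z^{k+1}-\xopt}^2$ cancels the surplus exactly.
\item The price $\frac{r-1}{2c_k}\norms{e^{k+1}}^2=\frac{(r-1)\nu t_k(t_k-1)}{2}\norms{e^{k+1}}^2$ is precisely the $-(r-1)\nu(t_k-1)$ in $\hat{d}_{k+1}$. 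Your last paragraph names this coefficient but never generates it.
\item The leftover $r(r-1)\iprods{w^{k+1},z^{k+1}-\xopt}$ combines with the $r(r-1)\iprods{w^{k+1},x^{k+1}-z^{k+1}}$ coming from $t_{k+1}-r=t_k-(r-1)$ to give the target inner product.
\end{itemize}

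\textbf{A smaller slip.} To cancel $t_k(\eta t_k-\nu_k)\iprods{w^{k+1},\hat{w}^{k+1}}$ via $2\iprods{w^{k+1},\hat{w}^{k+1}}=\norms{w^{k+1}}^2+\norms{\hat{w}^{k+1}}^2-\norms{e^{k+1}}^2$, the bound $(1+\omega)\norms{e^{k+1}}^2\le M^2\eta^2\big(\norms{\hat{w}^{k+1}-w^k}^2+\norms{e^k}^2\big)$ must be multiplied by $\frac{t_k(\eta t_k-\nu_k)}{2}$. Your multiplier doubles these costs, so the stated $\phi_k$ and $d_k$ would not come out.
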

Next, we can further simplify Lemma~\ref{le:FKM_key_est1} by choosing concrete values of $\omega$ and $c_1$ to achieve a simpler bound for $\Pc_k - \Pc_{k+1}$.
To this end, for given $r > 2$ and $\delta \in (0,r-2)$, let us define
\begin{equation*}
\arraycolsep=0.2em
\begin{array}{lcl}		
	c_1 := \frac{\delta}{2(r-1)} \quad \text{and} \quad \omega := (r-1)(r-2) + 1 = r^2 - 3r + 3, 
\end{array}
\end{equation*}
and choose $t_0$ such that
\begin{equation}\label{eq:FKM_key_est2_t0}
\arraycolsep=0.2em
\begin{array}{lcl}		
	t_0 &:=& \max\set{\frac{\nu}{\eta}, r\eta - 2\nu, 4r\omega}.
\end{array}
\end{equation}
Then, we have the following lemma, whose proof is given in Appendix~\ref{apdx:le:FKM_key_est2}.

\begin{lemma}\label{le:FKM_key_est2}
	Under the same settings as in Lemma~\ref{le:FKM_key_est1}, suppose that $t_0$ satisfies \eqref{eq:FKM_key_est2_t0} and other parameters are chosen such that
	\begin{equation}\label{eq:FKM_key_est2_params}
	\arraycolsep=0.2em
	\hspace{-1ex}
	\begin{array}{lcl}
		r > 2, \quad \delta \in (0, r - 2), \quad \omega = r^2 - 3r + 3, \quad 0 < \eta \leq \frac{1}{2\sqrt{1+\omega}L}, \ \text{and} \ 0 < \nu < (r - 2 - \delta)\eta.
	\end{array}
	\hspace{-1ex}
	\end{equation}
	Then, $\Pc_k$ defined in \eqref{eq:FKM_Lyapunov_func} satisfies
	\begin{equation}\label{eq:FKM_key_est2}
	\arraycolsep=0.2em
	\begin{array}{lcl}
		\Pc_k - \Pc_{k+1} &\geq& \frac{[(r-2-\delta)\eta - \nu]t_{k+1}}{2} \norms{w^{k+1}}^2 + \frac{\eta t_{k+1}}{2} \norms{\hat{w}^{k+1}}^2 \vspace{1ex}\\
		&&  + {~} \frac{\eta t_k^2}{8} \norms{\hat{w}^{k+1} - w^k}^2 + \frac{\eta t_{k+1}^2}{8}\norms{e^{k+1}}^2.
	\end{array}
	\end{equation}
\end{lemma}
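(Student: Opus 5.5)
The plan is to start from \eqref{eq:FKM_key_est1} of Lemma~\ref{le:FKM_key_est1}. We drop the inner-product term and lower-bound each remaining coefficient using the concrete choices $c_1 = \frac{\delta}{2(r-1)}$, $\omega = r^2-3r+3$, $M^2 = 2(1+\omega)L^2$, $\gamma_k=1$, $t_{k+1}=t_k+1$, and $\nu_k = \frac{\nu(t_k-1)}{t_k}\le \nu$.

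\textbf{The inner-product term.} Since $w^{k+1} = Gx^{k+1}+\xi^{k+1} \in \Phi x^{k+1}$, $0\in\Phi\xopt$, and $\Phi$ is monotone by Assumption~\ref{as:FKM_assumption}, we get $\iprods{w^{k+1}, x^{k+1}-\xopt}\ge 0$. Because $r>2$, the term $r(r-1)\iprods{w^{k+1},x^{k+1}-\xopt}$ is nonnegative and can be discarded. It then remains to prove four scalar inequalities, one for each remaining squared norm.

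\textbf{The four coefficient bounds.} Each is an explicit comparison of quadratics in $t_k$.

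(a) $\hat a_{k+1}-a_{k+1}\ge [(r-2-\delta)\eta-\nu]t_{k+1}$. Expand $a_{k+1} = \eta t_{k+1}(t_{k+1}-r) - c_1\eta(t_{k+1}-r)^2$ and $\hat a_{k+1} = (1-c_1)\eta t_k^2 - t_k\nu_k$, with $t_{k+1}=t_k+1$. The $\eta t_k^2$ terms cancel exactly, and the $c_1$-terms leave $c_1\eta[(t_k+1-r)^2 - t_k^2] = c_1\eta(1-r)(2t_k+1-r)$, which is linear in $t_k$. With $c_1 = \frac{\delta}{2(r-1)}$, this linear term is $-\frac{\delta}{2}\eta(2t_k+1-r) \ge -\delta\eta t_{k+1}$ up to lower-order terms. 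The $\eta$-terms from $a_{k+1}$ contribute $\eta(r-2)t_k + O(1)$. Finally $t_k\nu_k = \nu(t_k-1)\le \nu t_{k+1}$. The constants are absorbed using $t_0\ge r\eta-2\nu$ and the like.

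(b) $b_k \ge \eta t_{k+1}$. We have $b_k = t_k(r\eta-\nu_k) + (r-1)\nu\frac{t_k-1}{t_k} \ge t_k(r\eta-\nu)$. Since $\nu<(r-2)\eta$, this gives $b_k \ge 2\eta t_k \ge \eta(t_k+1)$ for $t_k\ge1$.

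(c) $\phi_k \ge \frac{\eta t_k^2}{4}$. This needs $\eta(t_k-r) - M^2\eta^2(\eta t_k-\nu_k) \ge \frac{\eta t_k}{4}$. With $\eta\le \frac{1}{2\sqrt{1+\omega}L}$ we have $M^2\eta^2 \le \frac12$, so the left side is at least $\eta(t_k-r) - \frac{\eta t_k}{2} = \frac{\eta t_k}{2}-r\eta$. This is at least $\frac{\eta t_k}{4}$ when $t_k\ge 4r$, which holds because $t_0\ge 4r\omega \ge 4r$ (as $\omega\ge1$).

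(d) $\hat d_{k+1}-d_{k+1}\ge \frac{\eta t_{k+1}^2}{4}$. This is the main obstacle, since $d_{k+1}$ contains $M^2\eta^2(\eta t_{k+1}-\nu_{k+1})t_{k+1}$ and $\frac{\beta_{k+1}^2t_{k+1}^2}{c_1\eta}$. For the first piece, use $M^2\eta^2\le\frac12$. For the second, $\beta_{k+1} = c_1\eta\frac{t_{k+1}-r}{t_{k+1}}$ gives $\frac{\beta_{k+1}^2 t_{k+1}^2}{c_1\eta} \le c_1\eta t_{k+1}^2$ with $c_1 = \frac{\delta}{2(r-1)} < \frac12$. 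Meanwhile $\hat d_{k+1} \approx \omega\eta t_k^2 - [\omega+(r-1)]\nu t_k$. Since $\omega = (r-1)(r-2)+1$, the leading coefficient $\omega\eta$ exceeds $\eta(\frac12 + c_1 + \frac14)$ by a margin. The mismatch between $t_k^2$ and $t_{k+1}^2$, and the $\nu t_k$ terms (where $\nu<(r-2)\eta$), are lower order. They are absorbed by $t_0\ge 4r\omega$ together with $t_0\ge \nu/\eta$.

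I expect the $t_k$ versus $t_{k+1}$ bookkeeping in (a) and (d) to be the delicate part. If the constants come out tight, I would first try to sharpen by bounding $\nu_{k+1}\ge\nu_k$ and $c_1<\frac12$ more carefully before changing the requirement on $t_0$. Substituting (a) through (d) into \eqref{eq:FKM_key_est1} yields \eqref{eq:FKM_key_est2}.
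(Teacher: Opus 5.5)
Your route is the paper's: discard $r(r-1)\iprods{w^{k+1},x^{k+1}-\xopt}\ge 0$, then bound the four coefficients of \eqref{eq:FKM_key_est1} after inserting $c_1=\frac{\delta}{2(r-1)}$ and $\omega=r^2-3r+3$. Your bounds (a)--(c) are correct. In (a) the leftover constant $\frac{[2+(r+1)\delta]\eta}{2}+2\nu$ is positive, so no condition on $t_0$ is needed there; your (b) and (c) are slightly cleaner than the paper's.

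The gap is in (d). You write $\hat{d}_{k+1}\approx\omega\eta t_k^2-[\omega+(r-1)]\nu t_k$ and treat all $\nu$-terms as lower order. But $\hat{d}_{k+1}=t_k\big[\omega(\eta t_k-\nu_k)-(r-1)\nu(t_k-1)\big]$ contains $-(r-1)\nu t_k(t_k-1)$, which is quadratic. This term is the price of the Young's inequality applied to $r(r-1)\iprods{e^{k+1},z^{k+1}-\xopt}$ in the proof of Lemma~\ref{le:FKM_key_est1}. The exact expansion is $\hat{d}_{k+1}=[\omega\eta-(r-1)\nu]t_k^2-(r-2)^2\nu t_k+\omega\nu$. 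Hence the leading coefficient of $\hat{d}_{k+1}-d_{k+1}$ is $\eta(\omega-c_1-M^2\eta^2)-(r-1)\nu$, and $(r-1)\nu$ may be almost $(r-1)(r-2-\delta)\eta$.

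Your margin argument therefore does not prove (d). It would go through verbatim with, say, $\omega=2$. Yet for $\omega=2$, $r=4$, small $\delta$, and $\nu$ close to $(2-\delta)\eta$, the true leading coefficient $\eta(2-c_1-M^2\eta^2)-3\nu$ is negative, so (d) is false there.

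This missing term is exactly what the specific value of $\omega$ and the $\delta$ in the bound on $\nu$ are for. With $M^2\eta^2\le\frac12$, $\nu<(r-2-\delta)\eta$ and $\omega=(r-1)(r-2)+1$, the leading coefficient exceeds $\eta\big[\frac12+(r-1)\delta-\frac{\delta}{2(r-1)}\big]>\frac{\eta}{2}$.

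Now write everything in $t_{k+1}$. The linear coefficient $-\big[2(\omega-rc_1)\eta+(\omega-3(r-1)-M^2\eta^2)\nu\big]$ is at least $-r\omega\eta$. To see this, treat the two signs of $\omega-3(r-1)-M^2\eta^2$ separately and use $\nu<(r-2)\eta$. The constant $(\omega-r^2c_1)\eta+\big[2(r-2)^2-M^2\eta^2\big]\nu$ is positive, using $\delta<r-2$.

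Hence $\hat{d}_{k+1}-d_{k+1}\ge\frac{\eta t_{k+1}^2}{2}-r\omega\eta t_{k+1}\ge\frac{\eta t_{k+1}^2}{4}$ whenever $t_{k+1}\ge t_0\ge 4r\omega$. This is how the paper closes (d).
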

We highlight that while the parameter choice in Lemma~\ref{le:FKM_key_est2} yields a clean lower bound for $\Pc_k - \Pc_{k+1}$, it is not guaranteed to be optimal.
In practice, exploring parameters slightly outside the conditions in \eqref{eq:FKM_key_est2_params} may be used if it improves empirical performance.

Finally, we derive a lower bound for the Lyapunov function $\Pc_k$ as in the following lemma, whose proof is given in Appendix~\ref{apdx:le:FKM_key_est3}.
\begin{lemma}\label{le:FKM_key_est3}
	Under the same settings as in Lemmas~\ref{le:FKM_key_est1} and \ref{le:FKM_key_est2}, $\Pc_k$ given in \eqref{eq:FKM_Lyapunov_func} satisfies
	\begin{equation}\label{eq:FKM_key_est3}
	\arraycolsep=0.2em
	\begin{array}{lcl}
		\Pc_k &\geq& \frac{\delta \eta}{2(r-1)}(t_k - r)^2 \norms{w^k}^2 + \frac{r^2}{2\nu}\norms{z^k - \xopt}^2 + \frac{d_k}{2}\norms{Gy^{k-1} - Gx^k}^2.
	\end{array}
	\end{equation}
\end{lemma}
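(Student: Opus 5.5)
The plan is to bound the cross term $r(t_k - r)\iprods{w^k, x^k - z^k}$ from below by splitting it through $\xopt$:
\begin{equation*}
x^k - z^k = (x^k - \xopt) - (z^k - \xopt).
\end{equation*}
The piece $\iprods{w^k, x^k - \xopt}$ is nonnegative. Indeed, $w^k = Gx^k + \xi^k \in \Phi x^k$ and $0 \in \Phi\xopt$, so monotonicity of $\Phi$ (Assumption~\ref{as:FKM_assumption}(i)) gives this sign. Since $t_k \geq t_0 > r$, the coefficient $r(t_k - r)$ is positive, and this piece can be dropped. We are left with
\begin{equation*}
\Pc_k \geq \tfrac{a_k}{2}\norms{w^k}^2 - r(t_k - r)\iprods{w^k, z^k - \xopt} + \tfrac{r^2(r-1)}{2\nu_k}\norms{z^k - \xopt}^2 + \tfrac{d_k}{2}\norms{e^k}^2 .
\end{equation*}
The last term already appears in \eqref{eq:FKM_key_est3}, since $e^k = Gy^{k-1} - Gx^k$.

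Next, I would handle the remaining negative inner product with Young's inequality. Write $\nu_k = \nu(t_k-1)/t_k \le \nu$, so that $\tfrac{r^2(r-1)}{2\nu_k} \geq \tfrac{r^2(r-1)}{2\nu}$. Keep $\tfrac{r^2}{2\nu}\norms{z^k - \xopt}^2$ for the target, and use the other $\tfrac{r^2(r-2)}{2\nu}\norms{z^k-\xopt}^2$ (positive since $r>2$) to absorb the cross term. This gives
\begin{equation*}
r(t_k - r)\iprods{w^k, z^k - \xopt} \leq \tfrac{r^2(r-2)}{2\nu}\norms{z^k - \xopt}^2 + \tfrac{\nu (t_k - r)^2}{2(r-2)}\norms{w^k}^2 .
\end{equation*}
It then remains to show the coefficient inequality
\begin{equation*}
a_k - \tfrac{\nu(t_k-r)^2}{r-2} \geq \tfrac{\delta\eta}{r-1}(t_k-r)^2 .
\end{equation*}

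For this, compute $a_k$ with $c_1 = \frac{\delta}{2(r-1)}$. First, $t_k - c_1(t_k - r) \geq (1-c_1)(t_k - r) + r \geq (1-c_1)(t_k-r)$. Hence
\begin{equation*}
a_k = \eta(t_k-r)\big[t_k - c_1(t_k-r)\big] \geq \eta(1-c_1)(t_k - r)^2 .
\end{equation*}
It therefore suffices to check
\begin{equation*}
\eta(1 - c_1) - \tfrac{\nu}{r-2} \geq \tfrac{\delta\eta}{r-1}, \qquad\text{i.e.}\qquad \tfrac{\nu}{r-2} \leq \eta\Big(1 - \tfrac{3\delta}{2(r-1)}\Big).
\end{equation*}
From \eqref{eq:FKM_key_est2_params} we have $\nu < (r-2-\delta)\eta$, so $\tfrac{\nu}{r-2} < \eta\big(1 - \tfrac{\delta}{r-2}\big)$. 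The inequality then follows if $\tfrac{\delta}{r-2} \geq \tfrac{3\delta}{2(r-1)}$, i.e., $2(r-1) \geq 3(r-2)$, i.e., $r \leq 4$. That restriction is not part of the lemma.

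This coefficient balancing is the main obstacle, and I expect the Young split may need rebalancing. The fallback I would try first is to stop using $a_k \geq \eta(1-c_1)(t_k-r)^2$ and instead exploit the extra $r\eta(t_k-r)$ in $a_k$. A second option is to split the quadratic term with a weight other than $(r-2)$ versus $1$: put weight $r^2(r-1-\epsilon)/(2\nu)$ into Young and keep only $r^2/(2\nu)$ plus slack. I would also apply the simpler bound $\nu_k \le \nu$ more carefully, using $t_k \ge t_0$ large. The target constants ($\delta/(r-1)$ rather than $\delta/(2(r-1))$, and $r^2/(2\nu)$) indicate that the intended computation gets exactly this margin from the condition $\nu<(r-2-\delta)\eta$. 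So I would solve for the Young weight that makes the $\norms{w^k}^2$ coefficient equal $\tfrac{\delta\eta}{r-1}(t_k-r)^2$, and verify it is admissible under \eqref{eq:FKM_key_est2_params}.
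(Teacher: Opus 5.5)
Your route is the same as the paper's. You drop $r(t_k-r)\iprods{w^k,x^k-\xopt}\ge 0$, absorb $-r(t_k-r)\iprods{w^k,z^k-\xopt}$ by completing the square with weight $c_2=\nu/(r-2)$ (your Young split), and keep $\frac{r^2}{2\nu}\norms{z^k-\xopt}^2$ via $\frac{(r-1)t_k}{\nu(t_k-1)}\ge\frac{r-1}{\nu}$. Your arithmetic is correct, and the obstruction you found is real. With $\nu<(r-2-\delta)\eta$ this split yields
\begin{equation*}
\Pc_k \;\ge\; \tfrac{r\delta\eta}{4(r-1)(r-2)}(t_k-r)^2\norms{w^k}^2 + \tfrac{r^2}{2\nu}\norms{z^k-\xopt}^2 + \tfrac{d_k}{2}\norms{Gy^{k-1}-Gx^k}^2,
\end{equation*}
which dominates the stated coefficient $\frac{\delta\eta}{2(r-1)}$ only when $r\le 4$. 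The paper does not get around this. It proves $a_k-c_2(t_k-r)^2\ge\frac{\delta\eta}{2(r-1)}(t_k-r)^2$, but that quantity enters \eqref{eq:FKM_key_est3_proof1} multiplied by $\frac12$. Its argument therefore delivers the coefficient $\frac{\delta\eta}{4(r-1)}$, and the $\frac{\delta\eta}{2(r-1)}$ in \eqref{eq:FKM_key_est3} is a lost factor of $2$.

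So your proposal, as it stands, leaves $r>4$ unproved, and none of your fallbacks can close that case. After dropping the monotone term, $\Pc_k$ minus the right-hand side of \eqref{eq:FKM_key_est3} is the quadratic form
\begin{equation*}
\tfrac{A_k}{2}\norms{w^k}^2-r(t_k-r)\iprods{w^k,z^k-\xopt}+\tfrac{r^2B_k}{2}\norms{z^k-\xopt}^2,
\end{equation*}
with $A_k:=a_k-\frac{\delta\eta}{r-1}(t_k-r)^2$ and $B_k:=\frac{r-1}{\nu_k}-\frac1\nu=\frac{(r-2)t_k+1}{\nu(t_k-1)}$. Nonnegativity for all $(w^k,z^k-\xopt)$, which is what the best Young weight can achieve, is exactly $A_k,B_k\ge0$ and $A_kB_k\ge(t_k-r)^2$. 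As $t_k\to\infty$, $A_k/(t_k-r)^2\to\eta\big(1-\frac{3\delta}{2(r-1)}\big)$ and $B_k\to\frac{r-2}{\nu}$. The extra $r\eta(t_k-r)$ in $a_k$ and the gap between $\nu_k$ and $\nu$ are lower order, and $\frac{r^2B_k}{2}$ is all the $z$-weight available to spend. The condition therefore requires $(r-2)\eta\big(1-\frac{3\delta}{2(r-1)}\big)\ge\nu$. Since $(r-2)\big(1-\frac{3\delta}{2(r-1)}\big)-(r-2-\delta)=\frac{\delta(4-r)}{2(r-1)}$, this fails for large $k$ when $r>4$ and $\nu$ is close to $(r-2-\delta)\eta$.

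Hence no argument that uses only $\iprods{w^k,x^k-\xopt}\ge0$ gives the stated constant in that regime. The correct fix is one of the following:
\begin{itemize}
\item State the bound with $\frac{\delta\eta}{4(r-1)}$. Your computation gives this for all $r>2$, since $\frac{r}{r-2}>1$.
\item Keep $\frac{\delta\eta}{2(r-1)}$ under the extra assumption $r\le4$.
\end{itemize}
Downstream, either fix only doubles the constant in the first estimate of \eqref{eq:FKM_th1_BigO_rates}; the second estimate and all rates are unchanged.
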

\vspace{0.75ex}
\noindent\textbf{$\mathrm{(b)}$~Convergence guarantees.}
Now, we state the convergence of \eqref{eq:FKM_scheme2} in the following theorem.
\begin{theorem}\label{th:FKM1_convergence}
	Suppose that Assumption~\ref{as:FKM_assumption} holds for \eqref{eq:GE}, $\zer{\Phi} \neq\emptyset$, and $t_0$ is given by \eqref{eq:FKM_key_est2_t0}. 
	Let $\sets{(\hat{x}^k, x^k, y^k, z^k)}$ be generated by \eqref{eq:FKM_scheme} where the parameters are chosen as
	\begin{equation}\label{eq:FKM_th1_params}
		\arraycolsep=0.2em
		\begin{array}{c}
			r > 2, \quad \delta \in (0, r-2), \quad \omega := r^2 - 3r + 3, \quad 0 < \eta \leq \frac{1}{2\sqrt{1+\omega}L}, \quad 0 < \nu <  (r - 2 - \delta)\eta,
			\vspace{1ex}\\
			t_{k} := k + t_0, 
			\quad \beta_k := \frac{\delta \eta (t_k - r)}{2(r-1)t_k}, \quad \nu_k := \frac{\nu(t_k - 1)}{t_k}, \quad \text{and} \quad \gamma_k := 1.
		\end{array}
	\end{equation}
	Then, the following statements hold. 
	\begin{itemize}
	\itemsep=-0.3em
	\item[$\mathrm{(i)}$]~\textbf{$($The $\BigOs{1/k^2}$ convergence rates$)$} 
	There exists $\xi^k \in Tx^k$ such that
	\begin{equation}\label{eq:FKM_th1_BigO_rates}
	\arraycolsep=0.2em
	\begin{array}{lcllcl}		
		\norms{Gx^k + \xi^k}^2 &\leq& \dfrac{r-1}{\delta \eta(k + t_0 - r)^2}\Rc_0^2 \ \ \text{and} \ \
		\norms{Gy^{k-1} + \xi^k}^2 &\leq& \dfrac{4(r-1)}{\delta\eta(k + t_0 - r)^2}\Rc_0^2,
	\end{array}
	\end{equation}
	where $\Rc_0^2 := \frac{\eta (t_0 - r) [(2(r-1) - \delta)t_0 + r\delta]}{2(r-1)}\norms{w^0}^2 + \frac{r^2(r-1)t_0}{\nu (t_0 - 1)}\norms{x^0 - \xopt}^2$.
	
	\item[$\mathrm{(ii)}$]~\textbf{$($The $\SmallOs{1/k^2}$ convergence rates$)$} 
	There exist $\xi^k \in Tx^k$ for all $k \geq 0$ such that 
	\begin{equation}\label{eq:FKM_th1_SmallO_rates}
	\arraycolsep=0.2em
	\begin{array}{lcl}		
		\lim_{k \to \infty} k^2 \norms{Gx^k + \xi^k}^2 = 0 \qquad \text{and} \qquad 
		\lim_{k \to \infty} k^2 \norms{Gy^{k-1} + \xi^k}^2 = 0.
	\end{array}
	\end{equation}
	\item[$\mathrm{(iii)}$]~\textbf{$($The convergence of iterates$)$} 
	All $\sets{x^k}$, $\sets{\hat{x}^k}$, $\sets{y^k}$, and $\sets{z^k}$ converge to $\xopt \in \zer{\Phi}$.
	\end{itemize}
\end{theorem}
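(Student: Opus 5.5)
The plan is to combine the three preceding lemmas. With the parameters in \eqref{eq:FKM_th1_params}, we have $c_1 = \frac{\delta}{2(r-1)}$, so Lemmas~\ref{le:FKM_key_est1}--\ref{le:FKM_key_est3} all apply.

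\textbf{Part (i).} Lemma~\ref{le:FKM_key_est2} makes $\{\Pc_k\}$ nonincreasing, so $\Pc_k \le \Pc_0$. Lemma~\ref{le:FKM_key_est3} then gives
\[
\frac{\delta\eta}{2(r-1)}(t_k-r)^2\norms{w^k}^2 \le \Pc_0 .
\]
It remains to bound $\Pc_0$ by $\frac{1}{2}\Rc_0^2$. At $k=0$ we have $z^0 = x^0$, so the cross term vanishes. Moreover $y^{-1} = x^0$, so $e^0 = 0$ and $\hat{w}^0 = w^0$. Hence
\[
\Pc_0 = \frac{a_0}{2}\norms{w^0}^2 + \frac{r^2(r-1)t_0}{2\nu(t_0-1)}\norms{x^0-\xopt}^2 .
\]
Expanding $a_0 = \eta t_0(t_0-r)\bigl[1-\frac{\delta(t_0-r)}{2(r-1)t_0}\bigr] = \frac{\eta(t_0-r)[(2(r-1)-\delta)t_0 + r\delta]}{2(r-1)}$ yields exactly $2\Pc_0 = \Rc_0^2$. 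This proves the first bound in \eqref{eq:FKM_th1_BigO_rates}, since $t_k - r = k + t_0 - r$.

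For the second bound, write $\hat{w}^k = w^k + e^k$, so $\norms{\hat{w}^k}^2 \le 2\norms{w^k}^2 + 2\norms{e^k}^2$. The $e^k$ term is controlled by the $\frac{d_k}{2}\norms{e^k}^2$ part of Lemma~\ref{le:FKM_key_est3}. Using $\beta_k = c_1\eta(t_k-r)/t_k$, one gets $d_k \ge \frac{\beta_k^2 t_k^2}{c_1\eta} = c_1\eta(t_k-r)^2$. This gives $\norms{e^k}^2 \le \frac{2\Pc_0}{c_1\eta(t_k-r)^2}$, and the factor $4$ follows. The one thing to check is that the remaining terms of $\Pc_k$ are nonnegative, which is part of what Lemma~\ref{le:FKM_key_est3} asserts.

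\textbf{Part (ii).} Sum \eqref{eq:FKM_key_est2} over $k$ to get
\[
\sum_k t_{k+1}\bigl(\norms{w^{k+1}}^2 + \norms{\hat{w}^{k+1}}^2\bigr) < \infty, \qquad
\sum_k t_k^2\bigl(\norms{\hat{w}^{k+1}-w^k}^2 + \norms{e^{k+1}}^2\bigr) < \infty .
\]
Next, show that $\lim_k \Pc_k$ exists; this holds because $\Pc_k$ is monotone and bounded below by Lemma~\ref{le:FKM_key_est3}. Then show that $t_k^2\norms{w^k}^2$ has a limit, which must be zero because $\sum_k t_k\norms{w^k}^2 < \infty$.

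To get that limit, the standard route is to write $\Pc_k$ as $t_k^2\norms{w^k}^2$-type terms plus terms that converge. The inner product $r(t_k-r)\iprods{w^k, x^k - z^k}$ is the troublesome term. I would rewrite it using $x^{k+1}-z^{k+1}$ recursions to show that $\{t_k^2\norms{w^k}^2\}$ is a convergent sequence. Alternatively, use that $t_k^2\norms{w^{k+1}}^2 - t_k^2\norms{w^k}^2$ is summably controlled by $t_k^2\norms{\hat{w}^{k+1} - w^k}^2 + t_k^2\norms{e^{k+1}}^2$ and $t_k\norms{w^k}^2$, so the sequence $t_k^2\norms{w^k}^2$ converges. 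Its limit is zero since $\sum_k \frac{1}{t_k}\, t_k^2\norms{w^k}^2 < \infty$. The claim for $\hat{w}^k$ then follows from $t_k^2\norms{e^k}^2 \to 0$.

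\textbf{Part (iii).} Apply an Opial-type argument.

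First, $\norms{z^k-\xopt}$ converges for every $\xopt \in \zer{\Phi}$. This uses the existence of $\lim_k \Pc_k$, $t_k^2\norms{w^k}^2 \to 0$, $e^k$ terms tending to zero, and boundedness of $t_k\norms{x^k - z^k}$, which makes the cross term vanish in the limit. The last point can be obtained since $x^k - z^k$ satisfies a recursion driven by $\hat{w}$ with $\sum_k t_k\norms{\hat{w}^k}^2 < \infty$.

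Second, cluster points of $\{x^k\}$ are solutions. Indeed $w^k = Gx^k + \xi^k \to 0$, and the graph of the maximally monotone $\Phi$ is closed.

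Third, $x^k - z^k \to 0$, because $(t_k-r)(x^k - z^k)$ is bounded. Hence $\{z^k\}$ and $\{x^k\}$ share cluster points, and Opial's lemma gives $z^k \to \xopt$ and $x^k \to \xopt$. Finally, $\hat{x}^k \to \xopt$ because it is a convex combination of $x^k$ and $z^k$, and $y^k = \hat{x}^k - O(\norms{\hat{w}^k}) \to \xopt$.

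\textbf{Main obstacle.} I expect the hardest step to be controlling the cross term $r(t_k-r)\iprods{w^k, x^k-z^k}$ and showing that $t_k\norms{x^k - z^k}$ stays bounded. Both the $o(1/k^2)$ limit in Part (ii) and the Opial argument in Part (iii) hinge on it.
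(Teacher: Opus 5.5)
Part (i) is essentially the paper's argument, with one small correction. If you bound $\norms{w^k}^2$ and $\norms{e^k}^2$ separately and then use $\norms{\hat{w}^k}^2\le 2\norms{w^k}^2+2\norms{e^k}^2$, you get the constant $6(r-1)$ rather than $4(r-1)$. To get the stated constant, use both lower-bound terms of Lemma~\ref{le:FKM_key_est3} at once: $\Pc_k\ge\frac{c_1\eta}{2}(t_k-r)^2(\norms{w^k}^2+\norms{e^k}^2)\ge\frac{c_1\eta}{4}(t_k-r)^2\norms{\hat{w}^k}^2$.

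\textbf{The gap in part (ii).} The real problem is in (ii), and it carries over into (iii).
\begin{itemize}
\item Your ``alternative'' argument fails. We have $t_k^2\bigl|\norms{w^{k+1}}^2-\norms{w^k}^2\bigr|\le t_k\norms{w^{k+1}-w^k}\cdot t_k(\norms{w^{k+1}}+\norms{w^k})$. With $t_k\norms{w^k}=O(1)$ you would therefore need $\sum_k t_k\norms{w^{k+1}-w^k}<\infty$. This does not follow from $\sum_k t_k^2\norms{w^{k+1}-w^k}^2<\infty$; take $\norms{w^{k+1}-w^k}\approx 1/(t_k^{3/2}\log t_k)$. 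Young's inequality instead leaves a term $t_k^3\norms{w^{k+1}-w^k}^2$, which is not known to be summable.
\item Your ``standard route'' through $\Pc_k$ is circular. It can isolate $t_k^2\norms{w^k}^2$ only if you already know that $\lim_k\norms{z^k-\xopt}$ exists and that the cross term converges. In your plan, the existence of $\lim_k\norms{z^k-\xopt}$ is derived in (iii) \emph{from} $t_k^2\norms{w^k}^2\to0$.
\item Boundedness of $t_k\norms{x^k-z^k}$ is not what the recursion gives. For $v^k:=r(z^k-x^k)$ one has $v^{k+1}=(1-\frac{r}{t_k})v^k+(r\eta-\nu_k)\hat{w}^{k+1}-r\beta_k\hat{w}^k$. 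With $\sum_k t_k\norms{\hat{w}^k}^2<\infty$, Lemma~\ref{le:A1} only yields that $\lim_k\norms{v^k}$ exists and $\sum_k\norms{v^k}^2/t_k<\infty$. Lemma~\ref{le:A2} then gives $x^k-z^k\to0$, with no $O(1/t_k)$ rate.
\end{itemize}

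\textbf{The fix.} This is what the paper does: reverse the pairing and add an independent Fej\'er-type step.
\begin{enumerate}
\item Obtain $\norms{x^k-z^k}\to0$ as just described.
\item Prove directly that $\lim_k\norms{z^k-\xopt}$ exists. Expand $\norms{z^{k+1}-\xopt}^2$ using $z^{k+1}-z^k=-\frac{\nu_k}{r}(w^{k+1}+e^{k+1})$, and drop $\iprods{w^{k+1},x^{k+1}-\xopt}\ge0$. Bound $\iprods{w^{k+1},z^{k+1}-x^{k+1}}$ and $\iprods{e^{k+1},z^{k+1}-\xopt}$ by Young's inequality. For this use the boundedness of $\{z^k\}$ (from $\Pc_k\le\Pc_0$ and Lemma~\ref{le:FKM_key_est3}) and the summability of $t_k\norms{w^k}^2$, $\norms{x^k-z^k}^2/t_k$, $t_k^2\norms{e^k}^2$ and $1/t_k^2$. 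Lemma~\ref{le:A1} then applies.
\item The cross term vanishes because $(t_k-r)\norms{w^k}=O(1)$ by part (i), while $\norms{x^k-z^k}\to0$.
\end{enumerate}
Only then does $\lim_k a_k\norms{w^k}^2$ exist, and Lemma~\ref{le:A2} together with $\sum_k t_k\norms{w^k}^2<\infty$ forces the limit to be zero. With these facts your Opial argument in (iii) goes through. You must justify $x^k-z^k\to0$ as in step 1, not via boundedness of $(t_k-r)(x^k-z^k)$.
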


\begin{proof}
	For readability, we divide the proof of this theorem into the following steps.
	
	\vspace{1ex}
	\noindent\textbf{Step 1. (\textit{The summability bounds}).}
	Summing up \eqref{eq:FKM_key_est2} from $k := 0$ to $k := K$ and noting that $\Pc_k \geq 0$ for all $k \geq 0$ (see \eqref{eq:FKM_key_est3}), taking the limit of the result as $K \to \infty$, we can show that
	\begin{equation}\label{eq:FKM_th1_proof1}
	\arraycolsep=0.2em
	\begin{array}{lcl}		
		\sum_{k=1}^\infty  (k + t_0) \norms{w^k}^2 &\leq& \frac{2}{(r-2-\delta)\eta - \nu}\Pc_0, \vspace{1ex}\\
		\sum_{k=1}^\infty (k + t_0) \norms{\hat{w}^k}^2 &\leq& \frac{2}{\eta}\Pc_0, \vspace{1ex}\\
		\sum_{k=0}^\infty (k + t_0)^2 \norms{\hat{w}^{k+1} - w^k}^2 &\leq& \frac{8}{\eta}\Pc_0, \vspace{1ex}\\
		\sum_{k=1}^\infty (k + t_0)^2 \norms{Gy^{k-1} - Gx^k}^2 &\leq& \frac{8}{\eta} \Pc_0.
	\end{array}
	\end{equation}
	Since $y^{-1} = z^0 = x^0$, utilizing $\Rc_0$ from \eqref{eq:FKM_th1_BigO_rates}, we can prove that
	\begin{equation*}
	\arraycolsep=0.2em
	\begin{array}{lcl}		
		\Pc_0 &=& \frac{a_0}{2}\norms{w^0}^2 + \frac{r^2(r-1)}{2\nu_0}\norms{x^0 - \xopt}^2 \vspace{1ex}\\
		
		&=& \frac{\eta t_0 (t_0 - r)}{2} \left[ 1 - \frac{\delta (t_0 - r)}{2(r-1)t_0}\right]\norms{w^0}^2 + \frac{r^2(r-1)t_0}{2\nu (t_0 - 1)}\norms{x^0 - \xopt}^2 \vspace{1ex}\\
		
		& = & \frac{\eta (t_0 - r) [(2(r-1) - \delta)t_0 + r\delta]}{4(r-1)}\norms{w^0}^2 + \frac{r^2(r-1)t_0}{2\nu (t_0 - 1)}\norms{x^0 - \xopt}^2 = \frac{\Rc_0^2}{2}.
	\end{array}
	\end{equation*}
	Substituting this bound into \eqref{eq:FKM_th1_proof1}, we obtain 
	\begin{equation}\label{eq:FKM_th1_summability_bounds}
	\arraycolsep=0.2em
	\begin{array}{lclcl}		
		\sum_{k=1}^\infty  (k + t_0) \norms{Gx^k + \xi^k}^2 &\leq& \frac{1}{(r-2-\delta)\eta - \nu} \Rc_0^2 &<& +\infty, \vspace{1ex}\\
		\sum_{k=1}^\infty (k + t_0) \norms{Gy^{k-1} + \xi^k}^2 &\leq& \frac{1}{\eta}\Rc_0^2 &<& +\infty, \vspace{1ex}\\
		\sum_{k=1}^\infty (k + t_0)^2 \norms{Gy^{k-1} - Gx^k}^2 &\leq& \frac{4}{\eta} \Rc_0^2 &<& +\infty,
	\end{array}
	\end{equation}
	
	\vspace{1ex}
	\noindent\textbf{Step 2. (\textit{The $\BigOs{1/k^2}$ convergence rates in \eqref{eq:FKM_th1_BigO_rates}}).}
	Now, from \eqref{eq:FKM_key_est2} of Lemma~\ref{le:FKM_key_est2}, we can use induction to show that $\Pc_k \leq \Pc_0 \leq \frac{\Rc_0^2}{2}$ for all $k \geq 0$.
	Moreover, from \eqref{eq:FKM_key_est3}, we have
	\begin{equation*}
	\arraycolsep=0.2em
	\begin{array}{lcl}
		\Pc_k &\geq& \frac{\delta \eta}{2(r-1)}(t_k - r)^2 \norms{w^k}^2 + \frac{r^2}{2\nu}\norms{z^k - \xopt}^2 + \frac{d_k}{2}\norms{Gy^{k-1} - Gx^k}^2 \geq \frac{\delta \eta}{2(r-1)}(t_k - r)^2 \norms{w^k}^2.
	\end{array}
	\end{equation*}
	Combining these facts, we get
	\begin{equation*}
	\arraycolsep=0.2em
	\begin{array}{lcl}
		\norms{Gx^k + \xi^k}^2 = \norms{w^k}^2 &\leq& \frac{2(r-1)}{\delta \eta (k + t_0 - r)^2}\Pc_k \leq \frac{r-1}{\delta \eta(k + t_0 - r)^2}\Rc_0^2,
	\end{array}
	\end{equation*}
	which is the first bound of \eqref{eq:FKM_th1_BigO_rates}, where $\xi^k \in Tx^k$.
	
	Next, from \eqref{eq:FKM_key_est1_coeffs}, using $\beta_k$ from \eqref{eq:FKM_th1_params} and $\eta t_k - \nu_k \geq 0$, we achieve
	\begin{equation*}
	\arraycolsep=0.2em
	\begin{array}{lcl}
		d_k &=&  t_k \left[ \frac{\beta_k^2 t_k}{c_1\eta} + M^2 \eta^2 \left(\eta t_k - \nu_k\right) \right] 
		= \frac{\delta \eta}{2(r-1)}(t_k - r)^2 + M^2\eta^2 t_k (\eta t_k - \nu_k) 
		\geq \frac{\delta \eta}{2(r-1)}(t_k - r)^2.
	\end{array}
	\end{equation*}
	Then, utilizing \eqref{eq:FKM_key_est3} and Young's inequality as $ 2\norms{w^k}^2 + 2\norms{\hat{w}^k - w^k}^2 \geq \norms{\hat{w}^k}^2$, we can show that
	\begin{equation*}
	\arraycolsep=0.2em
	\begin{array}{lcl}
		\Pc_k &\geq& \frac{\delta \eta}{2(r-1)}(t_k - r)^2 \norms{w^k}^2 + \frac{d_k}{2}\norms{Gy^{k-1} - Gx^k}^2 \vspace{1ex}\\
		&\geq& \frac{\delta \eta}{4(r-1)}(t_k - r)^2 \norms{w^k}^2 + \frac{\delta \eta}{4(r-1)}(t_k - r)^2 \norms{\hat{w}^k - w^k}^2 \vspace{1ex}\\
		&\geq& \frac{\delta \eta}{8(r-1)}(t_k - r)^2\norms{\hat{w}^k}^2.
	\end{array}
	\end{equation*}
	Combining this inequality and $\Pc_k \leq \frac{\Rc_0^2}{2}$, we obtain	
	\begin{equation*}
	\arraycolsep=0.2em
	\begin{array}{lcl}
		\norms{Gy^{k-1} + \xi^k}^2 = \norms{\hat{w}^k}^2 &\leq& \frac{8(r-1)}{\delta \eta (k + t_0 - r)^2}\Pc_k = \frac{4(r-1)}{\delta\eta(k + t_0 - r)^2}\Rc_0^2,
	\end{array}
	\end{equation*}
	which is the second bound of \eqref{eq:FKM_th1_BigO_rates}, where $\xi^k \in Tx^k$.
	
	\vspace{1ex}
	\noindent\textbf{Step 3. (\textit{Some intermediate results}).}
	Rearranging \eqref{eq:FKM_key_est2}, noting that $e^k = \hat{w}^k - w^k$, and then applying Lemma~\ref{le:A1}, we can prove that $\lim_{k \to \infty} \Pc_k$ exists, and
	\begin{equation}\label{eq:FKM_th1_proof2}
	\arraycolsep=0.2em
	\begin{array}{lcl}
		\sum_{k=0}^\infty t_k \norms{w^k}^2 &<& +\infty, \vspace{1ex}\\
		\sum_{k=0}^\infty t_k \norms{\hat{w}^k}^2 &<& +\infty, \vspace{1ex}\\
		\sum_{k=0}^\infty t_k^2 \norms{\hat{w}^{k+1} - w^k}^2 &<& +\infty, \vspace{1ex}\\
		\sum_{k=0}^\infty t_k^2 \norms{\hat{w}^k - w^k}^2 &<& +\infty.
	\end{array}
	\end{equation}
	By Young's inequality, we have
	\begin{equation*}
	\arraycolsep=0.2em
	\begin{array}{lcl}
		\norms{w^{k+1} - w^k}^2 &\leq& 2\norms{w^{k+1} - \hat{w}^{k+1}}^2 + 2\norms{\hat{w}^{k+1} - w^k}^2, \vspace{1ex}\\
		\norms{\hat{w}^{k+1} - \hat{w}^k}^2 &\leq& 2\norms{\hat{w}^{k+1} - w^k}^2 + 2\norms{\hat{w}^k - w^k}^2.
	\end{array}
	\end{equation*}
	Utilizing these relations, we can easily derive from \eqref{eq:FKM_th1_proof2} that
	\begin{equation}\label{eq:FKM_th1_proof3}
		\arraycolsep=0.2em
		\begin{array}{lcllcl}
			\sum_{k=0}^\infty t_k^2 \norms{w^{k+1} - w^k}^2 &<& +\infty
			\qquad \text{and} \qquad 
			\sum_{k=0}^\infty t_k^2 \norms{\hat{w}^{k+1} - \hat{w}^k}^2 &<& +\infty.
		\end{array}
	\end{equation}
	
	\vspace{1ex}
	\noindent\textbf{Step 4. (\textit{The $\SmallOs{1/k^2}$ convergence rates}).}
	Let us denote $v^k := r(z^k - x^k)$.
	Then, from the first and the second lines of \eqref{eq:FKM_scheme2}, we can derive that
	\begin{equation}\label{eq:FKM_th1_proof4}
	\arraycolsep=0.2em
	\begin{array}{lcl}
		v^k = t_k (\hat{x}^k - x^k) = t_k(y^k - x^k) + t_k (\eta - \beta_k)\hat{w}^k.
	\end{array}
	\end{equation}
	Using this expression and $x^{k+1} - y^k = \eta (\hat{w}^k - \hat{w}^{k+1})$ from the second and the third lines of \eqref{eq:FKM_scheme2}, we can derive
	\begin{equation*}
	\arraycolsep=0.2em
	\begin{array}{lcl}
		v^{k+1} - \frac{t_k - r}{t_k} v^k &=& r(z^{k+1} - z^k) - r(x^{k+1} - y^k) + r(\eta - \beta_k)\hat{w}^k \vspace{1ex}\\
		&=& -\nu_k \hat{w}^{k+1} - r\eta(\hat{w}^k - \hat{w}^{k+1})  + r(\eta - \beta_k)\hat{w}^k \vspace{1ex}\\
		&=& (r\eta - \nu_k)\hat{w}^{k+1} - r\beta_k \hat{w}^k.
	\end{array}
	\end{equation*}
	This is equivalent to $v^{k+1} = \left(1 - \frac{r}{t_k}\right)v^k + \frac{r}{t_k} \cdot \frac{t_k}{r}\left[ (r\eta - \nu_k)\hat{w}^{k+1} - r\beta_k \hat{w}^k \right]$.
	Since $\frac{r}{t_k} \in \left(0, 1\right)$, from the convexity of $\norms{\cdot}^2$, Young's inequality, $\nu_k = \frac{\nu(t_k - 1)}{t_k} \leq \nu$, and $\beta_k = \frac{\delta \eta (t_k - r)}{2(r-1)t_k} \leq \frac{\eta}{2}$, we can show that
	\begin{equation*}
	\arraycolsep=0.2em
	\begin{array}{lcl}
		\norms{v^{k+1}}^2 &\leq&  \left(1 - \frac{r}{t_k}\right) \norms{v^k}^2 + \frac{t_k}{r} \norms{(r\eta - \nu_k)\hat{w}^{k+1} - r\beta_k \hat{w}^k}^2 \vspace{1ex} \\
		&\leq&  \norms{v^k}^2 - \frac{r}{t_k} \norms{v^k}^2 + \frac{2t_k(r\eta - \nu_k)^2}{r}\norms{\hat{w}^{k+1}}^2 + 2r\beta_k^2 t_k \norms{\hat{w}^k}^2 \vspace{1ex}\\
		&\leq& \norms{v^k}^2 - \frac{r}{t_k} \norms{v^k}^2 + \frac{4(r^2\eta^2 + \nu^2)t_k}{r}\norms{\hat{w}^{k+1}}^2 + \frac{r\eta^2}{4} t_k \norms{\hat{w}^k}^2.
	\end{array}
	\end{equation*}
	Applying Lemma~\ref{le:A1} to the last inequality and using the second bound in \eqref{eq:FKM_th1_summability_bounds}, we get
	\begin{equation}\label{eq:FKM_th1_proof5}
	\arraycolsep=0.2em
	\begin{array}{lcl}
		\lim\limits_{k \to \infty} \norms{v^k}^2 = r^2 \lim\limits_{k \to \infty} \norms{x^k - z^k}^2 \text{ exists \quad and \quad } \sum\limits_{k=0}^\infty \frac{r}{t_k}\norms{x^k - z^k}^2 < \infty.
	\end{array}
	\end{equation}
	Utilizing Lemma~\ref{le:A2}, these relations imply
	\begin{equation}\label{eq:FKM_th1_proof5.1}
	\arraycolsep=0.2em
	\begin{array}{lcl}
		\lim\limits_{k \to \infty} \norms{x^k - z^k}^2 = 0.
	\end{array}
	\end{equation}
	Next, using $x^{k+1} - y^k = -\eta (\hat{w}^{k+1} - \hat{w}^k)$ from \eqref{eq:FKM_scheme2} and the second bound in \eqref{eq:FKM_th1_proof3}, we have
	\begin{equation}\label{eq:FKM_th1_proof6}
	\arraycolsep=0.2em
	\begin{array}{lcl}
		\sum_{k=0}^\infty t_k^2\norms{x^{k+1} - y^k}^2 = \eta^2 \sum_{k=0}^{\infty} t_k^2 \norms{\hat{w}^{k+1} - \hat{w}^k}^2 < +\infty.
	\end{array}
	\end{equation}
	Using again \eqref{eq:FKM_th1_proof4}, by Young's inequality and $\beta_k \leq \frac{\eta}{2}$, we can show that
	\begin{equation*}
	\arraycolsep=0.2em
	\begin{array}{lcl}
		t_k \norms{y^k - x^k}^2 &\leq& \frac{2r^2}{t_k}\norms{z^k - x^k}^2 + 2t_k(\eta - \beta_k)^2\norms{\hat{w}^k}^2 \leq \frac{2r^2}{t_k}\norms{z^k - x^k}^2 + 2\eta^2 t_k \norms{\hat{w}^k}^2, \vspace{1ex}\\
		t_k \norms{x^{k+1} - x^k}^2 &\leq& 2t_k\norms{x^{k+1} - y^k}^2 + 2t_k \norms{y^k - x^k}^2.
	\end{array}
	\end{equation*}
	Using the second summability bound in \eqref{eq:FKM_th1_proof2}, \eqref{eq:FKM_th1_proof5}, and \eqref{eq:FKM_th1_proof6}, we can derive from the last inequalities that
	\begin{equation}\label{eq:FKM_th1_proof7}
	\arraycolsep=0.2em
	\begin{array}{lcl}
		\sum_{k=0}^\infty t_k \norms{y^k - x^k}^2 < +\infty
		\text{\quad and \quad}
		\sum_{k=0}^\infty t_k \norms{x^{k+1} - x^k}^2 < +\infty.
	\end{array}
	\end{equation}
	Now, since $\Pc_k \leq \frac{\Rc_0^2}{2} < +\infty$, from \eqref{eq:FKM_key_est3} of Lemma~\ref{le:FKM_key_est3}, we have
	\begin{equation*}
	\arraycolsep=0.2em
	\begin{array}{lcl}
		\frac{\delta \eta}{2(r-1)}(t_k-r)^2 \norms{w^k}^2 + \frac{r^2}{2\nu}\norms{z^k - \xopt}^2 + \frac{d_k}{2}\norms{Gy^{k-1} - Gx^k}^2 &\leq& \Pc_k \leq \frac{\Rc_0^2}{2} < +\infty,
	\end{array}
	\end{equation*}
	which implies that $\norms{z^k - \xopt}^2 \leq M := \frac{\nu \Rc_0^2}{r^2}$.
	Using this bound, Young's inequality, and $\nu_k \leq \nu$, we can show that
	\begin{equation*}
	\arraycolsep=0.1em
	\begin{array}{lcl}
		-\frac{2\nu_k}{r} \iprods{e^{k+1}, z^{k+1} - \xopt} &\leq& \frac{\nu_k t_k^2}{r}\norms{e^{k+1}}^2 + \frac{\nu_k}{rt_k^2}\norms{z^{k+1} - \xopt}^2 \leq \frac{\nu t_k^2}{r}\norms{e^{k+1}}^2 + \frac{M\nu}{rt_k^2}, \vspace{1ex}\\
		-\frac{2\nu_k}{r} \iprods{w^{k+1}, z^{k+1} - x^{k+1}} &\leq& \frac{\nu_k t_k}{r} \norms{w^{k+1}}^2 + \frac{\nu_k}{rt_k}\norms{z^{k+1} - x^{k+1}}^2 \leq \frac{\nu t_k}{r} \norms{w^{k+1}}^2 + \frac{\nu}{rt_k}\norms{z^{k+1} - x^{k+1}}^2.
	\end{array}
	\end{equation*}
	Using the last line of \eqref{eq:FKM_scheme2}, $\nu_k \leq \nu$, and the last two relations, we have
	\begin{equation*}
	\arraycolsep=0.2em
	\begin{array}{lcl}
		\norms{z^{k+1} - \xopt}^2 &=& \norms{z^k - \xopt}^2 + 2\iprods{z^{k+1} - z^k, z^{k+1} - \xopt} - \norms{z^{k+1} - z^k}^2 \vspace{1ex}\\
		&=& \norms{z^k - \xopt}^2 - \frac{2\nu_k}{r}\iprods{w^{k+1} + e^{k+1}, z^{k+1} - \xopt} - \frac{\nu_k^2}{r^2}\norms{w^{k+1} + e^{k+1}}^2 \vspace{1ex}\\
		&\leq& \norms{z^k - \xopt}^2 - \frac{2\nu_k}{r}\iprods{w^{k+1}, z^{k+1} - \xopt} - \frac{2\nu_k}{r}\iprods{e^{k+1}, z^{k+1} - \xopt} \vspace{1ex}\\
		&=& \norms{z^k - \xopt}^2 - \frac{2\nu_k}{r}\iprods{w^{k+1}, x^{k+1} - \xopt} - \frac{2\nu_k}{r}\iprods{w^{k+1}, z^{k+1} - x^{k+1}}\vspace{1ex}\\
		&& - {~} \frac{2\nu_k}{r}\iprods{e^{k+1}, z^{k+1} - \xopt}  \vspace{1ex}\\
		&\leq&  \norms{z^k - \xopt}^2 + \left(\frac{2\nu}{r} + \frac{\nu t_k}{r}\right) \norms{w^{k+1}}^2 + \frac{\nu}{rt_k}\norms{z^{k+1} - x^{k+1}}^2 + \frac{\nu t_k^2}{r}\norms{e^{k+1}}^2 + \frac{M\nu}{rt_k^2}.
	\end{array}
	\end{equation*}
	Since the last four terms on the right-hand side are summable due to \eqref{eq:FKM_th1_proof2} and \eqref{eq:FKM_th1_proof5}, applying Lemma~\ref{le:A1}, we conclude that $\lim_{k \to \infty} \norms{z^k - \xopt}^2$ exists.
	However, since $\lim_{k \to \infty} \norms{x^k - z^k}^2 = 0$ due to \eqref{eq:FKM_th1_proof5.1}, using the triangle inequality as $|\norms{x^k - \xopt} - \norms{z^k - \xopt}| \leq \norms{x^k - z^k}$, we conclude that $\lim_{k \to \infty} \norms{x^k - \xopt}^2$ also exists.
	
	From the first line of \eqref{eq:FKM_th1_BigO_rates}, we have $\sets{(t_k - r)^2 \norms{w^k}^2}$ is bounded, i.e., there exists $\bar{M} > 0$ such that $(t_k - r)^2 \norms{w^k}^2 \leq \bar{M}^2$ for all $k \geq 0$.
	Using this inequality and $\lim_{k \to \infty} \norms{x^k - z^k}^2 = 0$  from \eqref{eq:FKM_th1_proof5.1}, we can show that as $k \to \infty$,
	\begin{equation*}
	\arraycolsep=0.2em
	\begin{array}{lcl}
		(t_k - r)^2[\iprods{w^k, x^k - z^k}]^2 \leq (t_k - r)^2 \norms{w^k}^2 \norms{x^k - z^k}^2 \leq \bar{M}^2\norms{x^k - z^k}^2 \to 0.
	\end{array}
	\end{equation*}
	This inequality implies that $\lim_{k \to \infty} (t_k - r)\iprods{w^k, x^k - z^k} = 0$.
	Moreover, from the last line of \eqref{eq:FKM_th1_proof2}, we also have $\lim_{k \to \infty} t_k^2 \norms{e^k}^2 = 0$.
	
	Now, from \eqref{eq:FKM_Lyapunov_func}, we have
	\begin{equation*}
	\arraycolsep=0.2em
	\begin{array}{lcl}
		\Pc_k &:=& \frac{a_k}{2}\norms{w^k}^2 + r(t_k - r) \iprods{w^k, x^k - z^k} + \frac{r^2(r-1)}{2\nu_k}\norms{z^k - \xopt}^2 +  \frac{d_k}{2} \norms{e^k}^2 ,
	\end{array}
	\end{equation*}
	Since $\lim_{k \to \infty} \Pc_k$ and $\lim_{k \to \infty}\norms{z^k - \xopt}^2$ exist, and $\lim_{k \to \infty} (t_k - r)\iprods{w^k, x^k - z^k} = 0$ and $\lim_{k \to \infty} d_k \norms{e^k}^2 = 0$ due to $d_k = \BigOs{t_k^2}$, we conclude that $\lim_{k \to \infty} a_k \norms{w^k}^2$ exists.
	By the definition of $a_k$, one can easily show that $\frac{t_k^2}{a_k} \to \frac{1}{\eta(1-c_1)}$ as $k \to \infty$.
	Therefore, the existence of $\lim_{k \to \infty} a_k \norms{w^k}^2$ implies the existence of $\lim_{k \to \infty} t_k^2 \norms{w^k}^2$.
	However, since $\sum_{k=0}^\infty t_k \norms{w^k}^2 < +\infty$ by the first line of \eqref{eq:FKM_th1_proof2}, by Lemma~\ref{le:A2}, we can argue that $\lim_{k \to \infty} t_k^2 \norms{w^k}^2 = 0$, which proves the first limit of \eqref{eq:FKM_th1_SmallO_rates}.
	
	Using Young's inequality as $\norms{\hat{w}^k}^2 \leq 2\norms{w^k}^2 + 2\norms{\hat{w}^k - w^k}^2$, the first limit of \eqref{eq:FKM_th1_SmallO_rates}, and the last line of \eqref{eq:FKM_th1_proof2}, we obtain the second limit of \eqref{eq:FKM_th1_SmallO_rates}.
	
	\vspace{1ex}
	\noindent\textbf{Step 5. (\textit{The convergence of the iterates}).}
	We have proved in \textbf{Step 4} that $\lim_{k \to \infty} \norms{x^k - \xopt}$ exists for every $\xopt \in \zer{\Phi}$. 
	Moreover, from the first line of \eqref{eq:FKM_th1_proof1}, we also have $\lim_{k \to \infty} \norms{Gx^k + \xi^k} = 0$ for some $\xi^k \in Tx^k$, which means that $\lim_{k \to \infty} \norms{w^k} = 0$ for $(x^k, w^k) \in \gra{\Phi}$.
	Applying Opial's lemma, we conclude that $\sets{x^k}$ converges  to a solution $\xopt \in \zer{\Phi}$.
	Next, since $\lim_{k \to \infty} \norms{x^k - z^k} = 0$ and $\lim_{k \to \infty} \norms{x^k - y^k} = 0$ due to \eqref{eq:FKM_th1_proof5.1} and \eqref{eq:FKM_th1_proof7}, we conclude that $\sets{y^k}$ and $\sets{z^k}$ also converge  to a solution $\xopt \in \zer{\Phi}$.
	Finally, from the first line of \eqref{eq:FKM_scheme}, we can also show that $\sets{\hat{x}^k}$ converges to $x^{\star} \in \zer{\Phi}$.
\end{proof}

\beforesubsubsec
\subsubsection{Convergence of \eqref{eq:FKM_scheme3}}
\aftersubsubsec
Since \eqref{eq:FKM_scheme3} is mathematically equivalent to \eqref{eq:FKM_scheme2}, its convergence properties are immediately inherited from the latter, leading to the following corollary.

\begin{corollary}\label{co:FKM_scheme3_convergence}
	Suppose that Assumption~\ref{as:FKM_assumption} holds for \eqref{eq:GE}, $\zer{\Phi} \neq \emptyset$, and $t_0$ satisfies \eqref{eq:FKM_key_est2_t0}. 
	Let $\sets{(x^k, y^k)}$ be generated by \eqref{eq:FKM_scheme3} where the parameters are chosen as
	\begin{equation}\label{eq:FKM_scheme3_params}
		\arraycolsep=0.2em
		\hspace{-2ex}
		\begin{array}{c}
			r > 2, \quad \delta \in (0, r-2), \quad \omega := r^2 - 3r + 3,
				\quad
			0 < \eta \leq \frac{1}{2\sqrt{1+\omega}L}, 
			\vspace{1ex}\\
			0 < \nu < (r-2-\delta)\eta, \qquad t_{k} := k + t_0,  \qquad \theta_k := \frac{t_k - r}{t_{k+1}}, \qquad \hat{\lambda}_k := \eta\theta_k, \vspace{1ex}\\
			\hat{\eta}_k := \frac{[2(r-1)-\delta]\eta(t_{k+1} - r)}{2(r-1)t_{k+1}} + \frac{\nu(t_k - 1)}{t_k t_{k+1}} + \eta, \quad \text{and} \quad
			\hat{\gamma}_k := \frac{[4(r-1) - \delta]\eta (t_k - r)}{2(r-1)t_{k+1}} + \eta.
		\end{array}
		\hspace{-2ex}
	\end{equation}
	Then, all the $\BigOs{1/k^2}$ and $\SmallOs{1/k^2}$ convergence rates in \eqref{eq:FKM_th1_BigO_rates} and \eqref{eq:FKM_th1_SmallO_rates} still hold.
	Moreover, the sequences $\sets{x^k}$ and $\sets{y^k}$ converge to a solution $\xopt \in \zer{\Phi}$.
\end{corollary}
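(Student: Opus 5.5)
The plan is to show that, under the parameter choice \eqref{eq:FKM_scheme3_params}, the sequences produced by \eqref{eq:FKM_scheme3} coincide with the $x^k$- and $y^k$-components of the sequences produced by \eqref{eq:FKM_scheme2}. The parameters there are the ones in \eqref{eq:FKM_th1_params}, and the two schemes share the same initialization. Once this is established, every conclusion follows from Theorem~\ref{th:FKM1_convergence}. The claims that transfer are the $\BigOs{1/k^2}$ bounds on $\norms{Gx^k+\xi^k}^2$ and $\norms{Gy^{k-1}+\xi^k}^2$, the $\SmallOs{1/k^2}$ limits, and the convergence of $\sets{x^k}$ and $\sets{y^k}$ to some $\xopt\in\zer{\Phi}$. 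All of these are statements about $x^k$, $y^k$ and $\xi^k\in Tx^k$ only.

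First I would verify the parameter correspondence. Substituting $\gamma_k=1$, $\beta_k=\frac{\delta\eta(t_k-r)}{2(r-1)t_k}$ and $\nu_k=\frac{\nu(t_k-1)}{t_k}$ into the general formulas for $\theta_k,\hat\eta_k,\hat\gamma_k,\hat\lambda_k$ of Subsection~\ref{subsec:alg1_derivation} should reproduce \eqref{eq:FKM_scheme3_params}.

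For $\hat\lambda_k$ this is immediate: $\eta\gamma_{k-1}(t_k-r)/t_{k+1}=\eta\theta_k$.

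For $\hat\eta_k$, combine $\eta(t_{k+1}-r)/t_{k+1}$ with $-\beta_{k+1}$. This gives $\frac{[2(r-1)-\delta]\eta(t_{k+1}-r)}{2(r-1)t_{k+1}}$. The remaining terms $\nu_k/t_{k+1}$ and $\eta$ complete the formula.

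For $\hat\gamma_k$, the $\beta_k$-terms collapse to $-\beta_k(t_k-r)/t_{k+1}$. The remaining terms give $\eta+2\eta(t_k-r)/t_{k+1}$. Together these match $\frac{[4(r-1)-\delta]\eta(t_k-r)}{2(r-1)t_{k+1}}+\eta$.

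Next comes the structural equivalence, checked by induction on $k$. The base case is the initialization $y^{-1}=x^0$, $\hat w^{-1}=\hat w^0=w^0$, $y^0=x^0-(\eta\gamma_0-\beta_0)\hat w^0$. This is consistent with \eqref{eq:FKM_scheme2} at $k=0$, because $\hat x^0=x^0$ (since $z^0=x^0$) and $\hat w^0=Gy^{-1}+\xi^0=w^0$.

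For the induction step, suppose the iterates agree up to index $k$. Then the first line of \eqref{eq:FKM_scheme3} is exactly $x^{k+1}=y^k-\eta\hat w^{k+1}+\eta\gamma_k\hat w^k$, obtained by subtracting the second and third lines of \eqref{eq:FKM_scheme2}. It is an implicit equation, since $\hat w^{k+1}=Gy^k+\xi^{k+1}$ with $\xi^{k+1}\in Tx^{k+1}$. It is the same implicit equation in both schemes, so the same selection $\xi^{k+1}$ is admissible. By Lemma~\ref{le:resolvent_properties} it is uniquely solvable as $x^{k+1}=J_{\eta T}(y^k-\eta Gy^k+\eta\gamma_k\hat w^k)$ when $\eta L<1$, which holds under the stepsize bound. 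The $y^{k+1}$-recursion then follows from the elimination already carried out in Subsection~\ref{subsec:alg1_derivation}. That elimination uses $r(z^k-\hat x^k)=(t_k-r)(\hat x^k-x^k)$ to remove $z^k$ and $\hat x^k$.

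The main obstacle is this elimination step: confirming that the three-term recursion for $y^{k+1}$ holds identically for all $k\ge 0$, including $k=0$. At $k=0$ it involves $y^{-1}$ and $\hat w^{-1}$, so those initial conventions must be consistent with $\hat x^0$. I would redo the algebra explicitly. Write $\hat x^{k+1}=x^{k+1}+\frac{r}{t_{k+1}}(z^{k+1}-x^{k+1})$, and express $z^{k+1}-x^{k+1}$ through $z^k-x^k=\frac{t_k}{r}(\hat x^k-x^k)$. Then use $\hat x^k=y^k+(\eta-\beta_k)\hat w^k$, and $x^k=y^{k-1}-\eta\hat w^k+\eta\gamma_{k-1}\hat w^{k-1}$ for $k\ge1$. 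Collecting coefficients of $y^k-y^{k-1}$, $\hat w^{k+1}$, $\hat w^k$ and $\hat w^{k-1}$ yields the stated recursion.

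For $k=0$, the identity $x^0=y^{-1}-\eta\hat w^0+\eta\gamma_{-1}\hat w^{-1}$ holds because $y^{-1}=x^0$ and $\hat w^{-1}=\hat w^0$ with $\gamma_{-1}=1$. So the same formula applies and the induction closes. The corollary then follows directly from Theorem~\ref{th:FKM1_convergence}.
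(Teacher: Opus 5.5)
Your proposal is correct and follows the paper's proof. Like the paper, you check that the parameters \eqref{eq:FKM_th1_params} reproduce \eqref{eq:FKM_scheme3_params}, and you use the equivalence of \eqref{eq:FKM_scheme3} with \eqref{eq:FKM_scheme2} to inherit Theorem~\ref{th:FKM1_convergence}; your explicit induction and resolvent-uniqueness remark just make precise what the paper asserts. One small slip: in $\hat\gamma_k$ the $\beta_k$-terms collapse to $-\beta_k t_k/t_{k+1} = -\frac{\delta\eta(t_k-r)}{2(r-1)t_{k+1}}$, not to $-\beta_k(t_k-r)/t_{k+1}$, and with this corrected expression the stated formula for $\hat\gamma_k$ does follow.
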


\begin{proof}
Utilizing the parameters in \eqref{eq:FKM_th1_params}, we can compute that $\hat{\eta}_k = \frac{[2(r-1) - \delta]\eta (t_{k+1} - r)}{2(r-1) t_{k+1}} + \frac{\nu(t_k - 1)}{t_k t_{k+1}} + \eta$, and $\hat{\gamma}_k = \frac{[4(r-1) - \delta]\eta (t_k - r)}{2(r-1)t_{k+1}} + \eta$,  which proves \eqref{eq:FKM_scheme3_params}.
Then, since \eqref{eq:FKM_scheme3} is equivalent to \eqref{eq:FKM_scheme}, all the results of Theorem~\ref{th:FKM1_convergence} still hold.
\end{proof}

\beforesubsec
\subsection{Co-coercive and maximally co-hypomonotone setting}\label{subsec:cocoercive_methods}
\aftersubsec
\noindent\textbf{$\mathrm{(a)}$~Co-coercivity and co-hypomonotonicity assumptions.}
Next, we consider a special subclass of \eqref{eq:GE} that satisfies the following assumptions.
\begin{assumption}\label{as:FKM2_cocoercive}
	For problem \eqref{eq:GE}, there exist $L > 0$ and $\rho \geq 0$ with $L\rho < 1$ such that:
	\begin{compactitem}
		\item[(i)] (\textbf{Co-coercivity}) $G$ is $\frac{1}{L}$-co-coercive.
		\item[(ii)] (\textbf{Maximal co-hypomonotonicity}) $T$ is maximally $\rho$-co-hypomonotone.
	\end{compactitem}
\end{assumption}
The co-coercivity of $G$ naturally arises in numerous practical applications, such as the minimization of smooth convex functions.
A function $f$ is convex and $L$-smooth iff $\nabla{f}$ is $\frac{1}{L}$-co-coercive \cite{Nesterov2004}.
For a general operator, the co-coercivity condition is more restrictive than the Lipschitz continuity required in Assumption~\ref{as:FKM_assumption}.
However, as a compensation, it facilitates the use of simpler algorithms with fewer parameters.
The maximal co-hypomonotonicity of $T$ in Assumption~\ref{as:FKM2_cocoercive}(ii) is milder than the maximal monotonicity commonly used in the literature and covers a class of nonmonotone operators.
A number of examples can be found in \cite{TranDinh2025a}.

\vspace{0.75ex}
\noindent\textbf{$\mathrm{(b)}$~The derivation of the algorithm.}
To handle the co-hypomonotonicity of $T$, we do not directly apply the accelerated method to \eqref{eq:GE}, but rather to its \textbf{forward-backward splitting (FBS)} reformulation.
More specifically, for a given $\eta > 0$, we define a FBS residual:
\begin{equation}\label{eq:FBS_mapping}
\Gc_{\eta}x := \tfrac{1}{\eta}\left(x - J_{\eta T}(x - \eta Gx) \right).
\end{equation}
Then, it is well-known that $x^{\star} \in \zer{G+T}$ iff $\Gc_{\eta}x^{\star} = 0$.
Moreover, the following lemma provides an important property of $\Gc_{\eta}$, whose proof can be found in \cite[Lemma 3]{TranDinh2025a}. 

\begin{lemma}\label{le:FB_cocoercive}
Suppose that Assumption~\ref{as:FKM2_cocoercive} holds for \eqref{eq:GE}.
Then, for any $\eta$ such that $\frac{2(1 - \sqrt{1 - L\rho})}{L} < \eta < \frac{2(1 + \sqrt{1 - L\rho})}{L}$, $\Gc_{\eta}$ defined by \eqref{eq:FBS_mapping} is $\bar{\beta}$-co-coercive with $\bar{\beta} := \frac{\eta (4 - L\eta) - 4\rho}{4(1 - L\rho)} > 0$.
\end{lemma}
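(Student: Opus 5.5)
We prove that $\Gc_{\eta}$ is $\bar{\beta}$-co-coercive by writing it as $\frac{1}{\eta}(\Id - J_{\eta T}\circ(\Id - \eta G))$ and combining a firm-nonexpansiveness-type inequality for the resolvent of the co-hypomonotone operator $T$ with the co-coercivity of $G$. Fix $x, y \in \R^p$ and set $u := x - \eta Gx$, $v := y - \eta Gy$, $p := J_{\eta T}u$, $q := J_{\eta T}v$. Since $L\rho<1$ and the lower bound $\eta > \frac{2(1-\sqrt{1-L\rho})}{L}$ forces $\eta > \rho$, the operator $\Id + \eta T$ is maximally strongly monotone enough to make $J_{\eta T}$ single-valued with full domain. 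This follows from the standard fact that $T$ maximally $\rho$-co-hypomonotone with $\eta>\rho$ gives $J_{\eta T}$ well-defined. Hence $\Gc_\eta$ is well-defined.

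\textbf{Step 1 (resolvent inequality).} From $u - p \in \eta Tp$ and $v - q \in \eta Tq$, the $\rho$-co-hypomonotonicity of $T$ gives
\begin{equation*}
\iprods{(u-p)-(v-q), p-q} \geq -\tfrac{\rho}{\eta}\norms{(u-p)-(v-q)}^2 .
\end{equation*}

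\textbf{Step 2 (substitution).} Write $a := x - y$, $g := Gx - Gy$ and $d := \eta(\Gc_\eta x - \Gc_\eta y) = a - (p-q)$. Then $p - q = a - d$ and $(u-p)-(v-q) = d - \eta g$. Substituting into Step 1 yields
\begin{equation*}
\iprods{d - \eta g, a - d} + \tfrac{\rho}{\eta}\norms{d-\eta g}^2 \geq 0 .
\end{equation*}
Expanding, this is a quadratic inequality in $d$, $a$ and $g$ that contains the term $\iprods{d,a}$, which is what we want to bound below by $\frac{\bar\beta}{\eta}\norms{d}^2$ (recall $\iprods{\Gc_\eta x - \Gc_\eta y, x-y} = \frac{1}{\eta}\iprods{d,a}$ and $\norms{\Gc_\eta x-\Gc_\eta y}^2 = \frac{1}{\eta^2}\norms{d}^2$). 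The remaining terms involving $g$ are $\eta\iprods{g,d} - \eta\iprods{g,a} - 2\rho\iprods{d,g} + \rho\eta\norms{g}^2$.

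\textbf{Step 3 (eliminating $g$).} We use co-coercivity of $G$ in the form $\iprods{g,a} \geq \frac{1}{L}\norms{g}^2$, multiplied by a nonnegative weight. This turns $-\eta\iprods{g,a}$ into $-\frac{\eta}{L}\norms{g}^2$ plus a nonnegative slack. We are then left with
\begin{equation*}
\iprods{d,a} \geq \big(1-\tfrac{\rho}{\eta}\big)\norms{d}^2 - (\eta - 2\rho)\iprods{g,d} + \big(\tfrac{\eta}{L} - \rho\eta\big)\norms{g}^2 .
\end{equation*}
Minimizing the right-hand side over $g$ (completing the square, valid since $\frac{\eta}{L} - \rho\eta > 0$ because $L\rho<1$) gives
\begin{equation*}
\iprods{d,a} \geq \Big[1-\tfrac{\rho}{\eta} - \tfrac{L(\eta-2\rho)^2}{4\eta(1-L\rho)}\Big]\norms{d}^2 .
\end{equation*}
Simplifying the bracket, $\frac{4(\eta-\rho)(1-L\rho) - L(\eta-2\rho)^2}{4\eta(1-L\rho)} = \frac{4\eta - L\eta^2 - 4\rho}{4\eta(1-L\rho)}$, because the cross terms $4L\rho^2$ cancel. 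Dividing by $\eta^2$, we obtain exactly co-coercivity of $\Gc_\eta$ with constant $\bar\beta = \frac{\eta(4-L\eta)-4\rho}{4(1-L\rho)}$.

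\textbf{Step 4 (positivity).} Finally, $\bar\beta>0$ is equivalent to $L\eta^2 - 4\eta + 4\rho<0$. The roots of this quadratic in $\eta$ are $\frac{2(1\pm\sqrt{1-L\rho})}{L}$, so the stated interval for $\eta$ gives $\bar\beta>0$.

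The main obstacle is Step 3: the weights must be chosen so that the algebra closes exactly to $\bar\beta$. The key point is to use co-coercivity with full weight on $-\eta\iprods{g,a}$ and then optimize over $g$. A secondary point is confirming the single-valuedness of $J_{\eta T}$ for $\eta>\rho$ under maximality, which we take from the cited reference.
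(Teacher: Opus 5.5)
Your proof is correct. The paper gives no argument for this lemma and only cites \cite[Lemma 3]{TranDinh2025a}, so your proposal supplies a self-contained verification. It applies $\rho$-co-hypomonotonicity of $T$ at the pairs $\bigl(p,\tfrac{u-p}{\eta}\bigr)$ and $\bigl(q,\tfrac{v-q}{\eta}\bigr)$, uses $\tfrac{1}{L}$-co-coercivity of $G$ with weight $\eta$, and completes the square in $g$. I checked each step:
\begin{itemize}
\item the expansion in Step~2;
\item the coefficient $\tfrac{\eta}{L}-\rho\eta=\tfrac{\eta(1-L\rho)}{L}>0$;
\item the simplification of the bracket, where both the $4L\eta\rho$ and the $4L\rho^2$ terms cancel;
\item the conversion back to $\Gc_\eta$, where the bracket is multiplied by $\eta$ and gives exactly $\bar\beta$;
\item the root computation in Step~4.
\end{itemize}
Your claim that the lower endpoint forces $\eta>\rho$ also holds, since $2(1-\sqrt{1-s})\ge s$ for $s=L\rho\in[0,1)$. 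Compared with the bare citation, writing the argument out shows that the interval for $\eta$ is used only to get $\bar\beta>0$. It also shows that the inequality holds for arbitrary selections $p\in J_{\eta T}u$, $q\in J_{\eta T}v$, so single-valuedness follows by taking $x=y$. Maximality of $T$ is then needed only for full domain of $J_{\eta T}$.

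One sentence in your well-definedness remark is wrong. For $\rho>0$, the operator $\Id+\eta T$ need not be strongly monotone, or even monotone. For instance, $T=-\rho^{-1}\Id$ is maximally $\rho$-co-hypomonotone, because $T^{-1}+\rho\Id=0$. Yet $\Id+\eta T=(1-\eta/\rho)\Id$ is negative definite when $\eta>\rho$, even though $J_{\eta T}=\tfrac{\rho}{\rho-\eta}\Id$ is well defined. The correct justification goes through the inverse. Set $w:=\tfrac{1}{\eta}(u-p)$; then $u\in p+\eta Tp$ becomes $u\in (T^{-1}+\rho\Id)w+(\eta-\rho)w$. Since $T^{-1}+\rho\Id$ is maximally monotone, the operator on the right is maximally $(\eta-\rho)$-strongly monotone. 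Hence $w$ is uniquely determined for every $u$, and $J_{\eta T}u=u-\eta w$. This is the standard fact you invoke anyway, so with that sentence replaced the proof is complete.
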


We derive an algorithm to solve \eqref{eq:GE} under Assumption~\ref{as:FKM2_cocoercive}, which is in fact a special variant of \eqref{eq:FKM_scheme}.
By setting $\gamma_k = \beta_k = 0$, \eqref{eq:FKM_scheme} (or its equivalent form \eqref{eq:FKM_scheme2}) reduces to  
\begin{equation}\label{eq:FKM2_scheme} 
\arraycolsep=0.2em
\left\{\begin{array}{lcl}
	y^k &=& \frac{t_k - r}{t_k} x^k + \frac{r}{t_k} z^k \vspace{1ex}\\
	x^{k+1} &=& y^k - \eta \hat{w}^{k+1} \vspace{1ex}\\
	z^{k+1} &=& z^k - \frac{\nu_k}{r}\hat{w}^{k+1},
\end{array}
\right.
\end{equation}
where $\hat{w}^k := Gy^{k-1} + \xi^k$ for $\xi^k \in Tx^k$.

Since $x^{k+1}$ appears on both sides of the second line of \eqref{eq:FKM2_scheme} in $\hat{w}^{k+1}$, we can use the resolvent operator $J_{\eta T}$ of $T$ and get $x^{k+1} = J_{\eta T}(y^k - \eta Gy^k)$.
Then, $\hat{w}^{k+1}$ can be written as
\begin{equation*}
	\arraycolsep=0.2em
	\begin{array}{lcl}
		\hat{w}^{k+1} = \frac{1}{\eta}(y^k - x^{k+1}) = \frac{1}{\eta}(y^k - J_{\eta T}(y^k - \eta Gy^k)) = \Gc_{\eta}y^k,
	\end{array}
\end{equation*}
where $\Gc_{\eta}$ is the \textbf{forward-backward splitting (FBS)} residual defined by \eqref{eq:FBS_mapping}.
In this case, the scheme \eqref{eq:FKM2_scheme} can be rewritten equivalently to
\begin{equation}\label{eq:FKM2_scheme_reformulation}\tag{FFP2}
	\arraycolsep=0.2em
	\left\{\begin{array}{lcl}
		y^k &=& \frac{t_k - r}{t_k} x^k + \frac{r}{t_k} z^k \vspace{1ex}\\
		x^{k+1} &=& y^k - \eta \Gc_{\eta}y^k \vspace{1ex}\\
		z^{k+1} &=& z^k - \frac{\nu_k}{r} \Gc_{\eta}y^k.
	\end{array}
	\right.
\end{equation}
This scheme has a form similar to Nesterov's accelerated gradient method in convex optimization, see, e.g., \cite{Nesterov2004}. 
However, as discussed in \eqref{eq:FKM_scheme}, its parameter update for $z^k$ is different. 
Compared to \eqref{eq:FKM_scheme}, this scheme is much simpler and uses fewer parameters.

\beforesubsubsec
\subsubsection{Convergence analysis}
\aftersubsubsec
\noindent\textbf{$\mathrm{(a)}$~Technical lemmas.}
For given $r > 1$, $\eta > 0$, and $0 < \nu < (r-1)\eta$, let us choose a lower bound constant $t_0$ such that
\begin{equation}\label{eq:t0_cond2}
	\arraycolsep=0.2em
	\begin{array}{lcl}
		t_0 \geq \max\set{r, \frac{(r-1)(2r-1)\eta}{(r-1)\eta - \nu}, \frac{r-1}{r+1}}.
	\end{array}
\end{equation}
We also denote $a_k := t_k (t_k - r)$, and consider the following \textbf{Lyapunov function}:
\begin{equation}\label{eq:FKM2_Lyapunov_function}
	\arraycolsep=0.2em
	\begin{array}{lcl}
		\Lc_k &:=& \beta a_k \norms{\Gc_{\eta}y^k}^2 + rt_k \iprods{\Gc_{\eta}y^k, y^k - z^k} + \frac{r^2(r-1)}{2\nu_k} \norms{z^k - \xopt}^2,
	\end{array}
\end{equation}
where $\beta$, $r$, and $\nu_k$ are given, specified below.
Then, the following lemma states a  descent property of  $\Lc_k$, whose proof is deferred to Appendix~\ref{apdx:le:FKM2_key_est1}.
\begin{lemma}\label{le:FKM2_key_est1}
	Suppose that Assumption~\ref{as:FKM2_cocoercive} holds for \eqref{eq:GE}, $\zer{\Phi} \neq \emptyset$, and \eqref{eq:t0_cond2} holds.
	Let
	\begin{equation}\label{eq:FKM2_key_est1_params}
		\arraycolsep=0.2em
		\begin{array}{lcl}
			r > 1, \quad t_{k+1} = t_k + 1 , \quad 2\rho < \eta < 2\bar{\beta}, \quad 0 < \nu < (r-1)\eta, \quad \text{and} \quad \nu_k := \frac{\nu(t_k - 1)}{t_k}.
		\end{array}
	\end{equation}
	Then, $\Lc_k$ defined in \eqref{eq:FKM2_Lyapunov_function} with $\beta := \frac{\eta}{2}$ satisfies
	\begin{equation}\label{eq:FKM2_key_est1}
		\arraycolsep=0.2em
		\begin{array}{lcl}
			\Lc_k - \Lc_{k+1} &\geq& \left[ \frac{(r-1)\eta - \nu}{2}t_k + r(r-1)\bar{\beta} \right] \norms{\Gc_{\eta}y^k}^2 + \frac{(2\bar{\beta} - \eta) t_{k+1}(t_{k+1} - r)}{2} \norms{\Gc_{\eta}y^{k+1} - \Gc_{\eta}y^k}^2\vspace{1ex}\\
			&& + {~} \frac{r^2(r-1)}{2\nu t_k (t_k - 1)} \norms{z^{k+1} - \xopt}^2.
		\end{array}
	\end{equation}
\end{lemma}

The next lemma  provides a lower bound of $\Lc_k$, whose proof  can be found in Appendix~\ref{apdx:le:FKM2_key_est2}.
\begin{lemma}\label{le:FKM2_key_est2}
	Under the same settings as in Lemma~\ref{le:FKM2_key_est1},  $\Lc_k$ in \eqref{eq:FKM2_Lyapunov_function} satisfies
	\begin{equation}\label{eq:FKM2_key_est2}
		\arraycolsep=0.2em
		\begin{array}{lcl}
			\Lc_k &\geq& \frac{A_k}{4} \norms{\Gc_{\eta}y^k}^2 + \frac{r^2[((r-1)\eta  - 2\nu) t_k + 2\nu]}{2\nu\eta(t_k - 1)} \norms{z^k - \xopt}^2,
		\end{array}
	\end{equation}
	where $A_k := \eta(2a_k - t_k^2) + 4r\bar{\beta}t_k \geq \eta t_k^2$.
\end{lemma}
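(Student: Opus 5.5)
Let $g^k := \Gc_{\eta}y^k$. The plan is to lower bound the cross term $rt_k\iprods{g^k, y^k - z^k}$ in $\Lc_k$ by splitting $y^k - z^k = (y^k - \xopt) + (\xopt - z^k)$. The first piece is handled by co-coercivity and the second by Young's inequality. The leftover quadratic in $\norms{z^k-\xopt}$ is then absorbed into the term $\frac{r^2(r-1)}{2\nu_k}\norms{z^k-\xopt}^2$, and the leftover in $\norms{g^k}^2$ is combined with $\beta a_k\norms{g^k}^2 = \frac{\eta}{2}t_k(t_k-r)\norms{g^k}^2$.

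\textbf{Step 1.} Since $\Gc_{\eta}\xopt = 0$, Lemma~\ref{le:FB_cocoercive} gives $\iprods{g^k, y^k - \xopt} \geq \bar{\beta}\norms{g^k}^2$. Hence
\begin{equation*}
rt_k\iprods{g^k, y^k-\xopt} \geq r\bar{\beta}t_k\norms{g^k}^2 .
\end{equation*}

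\textbf{Step 2.} For the term $rt_k\iprods{g^k, \xopt - z^k}$, apply Young's inequality with a weight $c>0$:
\begin{equation*}
rt_k\iprods{g^k,\xopt-z^k} \geq -\frac{c\,t_k^2}{2}\norms{g^k}^2 - \frac{r^2}{2c}\norms{z^k-\xopt}^2 .
\end{equation*}
Choosing $c = \eta/2$ gives the $\norms{g^k}^2$ coefficient
\begin{equation*}
\frac{\eta}{2}t_k(t_k-r) - \frac{\eta t_k^2}{4} + r\bar{\beta}t_k = \frac{1}{4}\big[\eta(2a_k - t_k^2) + 4r\bar{\beta}t_k\big] = \frac{A_k}{4},
\end{equation*}
which is exactly the claimed constant.

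\textbf{Step 3.} The $\norms{z^k-\xopt}^2$ coefficient is then
\begin{equation*}
\frac{r^2(r-1)t_k}{2\nu(t_k-1)} - \frac{r^2}{\eta}.
\end{equation*}
Over the common denominator $2\nu\eta(t_k-1)$, the numerator is $r^2[(r-1)\eta t_k - 2\nu t_k + 2\nu] = r^2[((r-1)\eta-2\nu)t_k+2\nu]$, matching \eqref{eq:FKM2_key_est2}. Here I use $\nu_k = \nu(t_k-1)/t_k$.

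\textbf{Step 4.} It remains to show $A_k \geq \eta t_k^2$. We have $A_k - \eta t_k^2 = \eta(2a_k - 2t_k^2) + 4r\bar{\beta}t_k = -2r\eta t_k + 4r\bar{\beta}t_k = 2rt_k(2\bar{\beta}-\eta)$. This is nonnegative because $\eta < 2\bar{\beta}$ by \eqref{eq:FKM2_key_est1_params}.

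\textbf{Main obstacle.} The only delicate point is choosing the Young weight so that both coefficients come out exactly as stated; $c=\eta/2$ does this. Positivity of the $z$-coefficient is not claimed in the lemma, so no condition on $t_0$ is needed beyond $t_k > 1$, which \eqref{eq:t0_cond2} guarantees via $t_0 \geq r > 1$.
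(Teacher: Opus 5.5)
Your proof is correct and follows essentially the same route as the paper. The paper adds and subtracts $\frac{1}{4\eta}\norms{2r(z^k-\xopt) - \eta t_k\Gc_{\eta}y^k}^2$, which is exactly your Young's inequality with weight $c=\eta/2$. It then applies the $\bar{\beta}$-co-coercivity of $\Gc_{\eta}$ to $\iprods{\Gc_{\eta}y^k, y^k-\xopt}$ and uses the same identity $A_k = \eta t_k^2 + 2r(2\bar{\beta}-\eta)t_k$.
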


\noindent\textbf{$\mathrm{(b)}$~Convergence guarantees.}
Now, we state the convergence of \eqref{eq:FKM2_scheme_reformulation} in Theorem~\ref{th:FKM2_convergence}.

\begin{theorem}\label{th:FKM2_convergence}
	For \eqref{eq:GE}, suppose that Assumption~\ref{as:FKM2_cocoercive} holds, $\bar{\beta}$ is given in Lemma~\ref{le:FB_cocoercive}, and $t_0$ satisfies \eqref{eq:t0_cond2}. 
	Let $\sets{ (x^k, y^k, z^k)}$ be generated by \eqref{eq:FKM2_scheme_reformulation} using the following parameters
	\begin{equation}\label{eq:FKM_th2_params}
		\arraycolsep=0.2em
		\begin{array}{lcl}
			r > 1, \quad t_{k} := k + t_0 , \quad 2\rho < \eta < 2\bar{\beta}, \quad 0 < \nu < (r-1)\eta, \quad \text{and} \quad \nu_k := \frac{\nu(t_k - 1)}{t_k}.
		\end{array}
	\end{equation}
	Then, the following statements hold.
	\begin{itemize}
	\itemsep=-0.3em
	\item[$\mathrm{(i)}$]~\textbf{\textit{$($The $\BigOs{1/k^2}$ rates$)$}} 
	There exist $\xi^k \in Tx^k$ for all $k \geq 0$ such that:
	\begin{equation}\label{eq:FKM2_BigO_convergence}
		\arraycolsep=0.2em
		\begin{array}{lcl}
			\norms{Gy^{k-1} + \xi^k}^2 \leq \dfrac{2\mcal{R}_0^2}{\eta (k + t_0 - 1)^2} \quad \text{and} \quad \norms{Gx^k + \xi^k}^2 \leq \dfrac{2(1+L\eta)^2 \Rc_0^2}{\eta (k + t_0 - 1)^2},
		\end{array}
	\end{equation}
	where $\mcal{R}_0^2 :=  \eta t_0 (t_0 - r) \norms{\Gc_{\eta}y^0}^2 + \frac{r^2(r-1)t_0}{\nu (t_0 - 1)}\norms{x^0 - \xopt}^2$.
	
	\item[$\mathrm{(ii)}$]~\textbf{\textit{$($The $\SmallOs{1/k^2}$ rates$)$}}
	There exist $\xi^k \in Tx^k$ for all $k \geq 0$ such that:
	\begin{equation}\label{eq:FKM2_SmallO_convergence} 
	\arraycolsep=0.2em
	\begin{array}{lcl}
		\lim_{k \to \infty} k^2 \norms{Gy^{k-1} + \xi^k}^2 = 0 \qquad \text{and} \qquad
		\lim_{k \to \infty} k^2 \norms{Gx^k + \xi^k}^2 = 0.
	\end{array}
	\end{equation}
	
	\item[$\mathrm{(iii)}$]~\textbf{$($The convergence of iterates$)$} 
	All $\sets{x^k}$, $\sets{y^k}$, and $\sets{z^k}$ converge to $\xopt \in \zer{\Phi}$.
	\end{itemize}
\end{theorem}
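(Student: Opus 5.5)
We follow the same template as the proof of Theorem~\ref{th:FKM1_convergence}, now built on Lemmas~\ref{le:FKM2_key_est1} and \ref{le:FKM2_key_est2}. The whole argument reduces the statements about $Gx^k+\xi^k$ and $Gy^{k-1}+\xi^k$ to statements about $\Gc_{\eta}y^k$.

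\textbf{Identifying the residuals.} From the second line of \eqref{eq:FKM2_scheme_reformulation} we have $x^{k+1}=J_{\eta T}(y^k-\eta Gy^k)$. Hence $\xi^{k+1}:=\frac{1}{\eta}(y^k-\eta Gy^k-x^{k+1})\in Tx^{k+1}$ and $\hat w^{k+1}=Gy^k+\xi^{k+1}=\Gc_{\eta}y^k$. For the primal residual we write $Gx^{k+1}+\xi^{k+1}=\Gc_\eta y^k+(Gx^{k+1}-Gy^k)$. Co-coercivity gives $L$-Lipschitz continuity of $G$, and $\norms{x^{k+1}-y^k}=\eta\norms{\Gc_\eta y^k}$, so
\[
\norms{Gx^{k+1}+\xi^{k+1}}\le(1+L\eta)\norms{\Gc_\eta y^k}.
\]
With this, every claim follows from corresponding bounds on $\norms{\Gc_\eta y^k}$.

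\textbf{$\BigOs{1/k^2}$ rates.} Lemma~\ref{le:FKM2_key_est1} makes $\Lc_k$ nonincreasing, so $\Lc_k\le\Lc_0$. At $k=0$ we have $y^0=x^0=z^0$, because $y^0$ is a convex combination of $x^0=z^0$. The inner product term in $\Lc_0$ therefore vanishes, and $\Lc_0=\frac{\eta}{2}t_0(t_0-r)\norms{\Gc_\eta y^0}^2+\frac{r^2(r-1)t_0}{2\nu(t_0-1)}\norms{x^0-\xopt}^2=\Rc_0^2/2$. Lemma~\ref{le:FKM2_key_est2} with $A_k\ge\eta t_k^2$ gives $\frac{\eta t_k^2}{4}\norms{\Gc_\eta y^k}^2\le\Lc_k\le\Rc_0^2/2$. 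This yields $\norms{\Gc_\eta y^{k-1}}^2\le 2\Rc_0^2/(\eta t_{k-1}^2)$ with $t_{k-1}=k+t_0-1$. Combined with the residual identification, this gives both bounds in \eqref{eq:FKM2_BigO_convergence}.

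\textbf{$\SmallOs{1/k^2}$ rates.} Summing \eqref{eq:FKM2_key_est1} gives three summability facts, since $\Lc_k\ge0$:
\[
\sum t_k\norms{\Gc_\eta y^k}^2<\infty,\qquad \sum t_k^2\norms{\Gc_\eta y^{k+1}-\Gc_\eta y^k}^2<\infty,\qquad \sum t_k^{-2}\norms{z^{k+1}-\xopt}^2<\infty.
\]
The first two use $\eta<2\bar\beta$ and $\nu<(r-1)\eta$. Lemma~\ref{le:A1} gives that $\lim\Lc_k$ exists. As before, set $v^k:=r(z^k-y^k)$; using the three lines of \eqref{eq:FKM2_scheme_reformulation} we derive a recursion $v^{k+1}=(1-\frac{r}{t_{k+1}})v^k+(\text{terms in }\Gc_\eta y^k,\ \Gc_\eta y^{k+1})$. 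Exactly as in Step~4 of Theorem~\ref{th:FKM1_convergence}, this yields $\lim\norms{z^k-y^k}$ exists and $\sum\frac{1}{t_k}\norms{z^k-y^k}^2<\infty$, hence $z^k-y^k\to0$ by Lemma~\ref{le:A2}. Since $t_k\norms{\Gc_\eta y^k}$ is bounded, the cross term $t_k\iprods{\Gc_\eta y^k,y^k-z^k}$ tends to $0$.

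Next we show $\lim\norms{z^k-\xopt}$ exists, by expanding $\norms{z^{k+1}-\xopt}^2$ as in Theorem~\ref{th:FKM1_convergence}. The key input is monotonicity of $\Gc_\eta$ (Lemma~\ref{le:FB_cocoercive}) applied at $y^{k}$ against $\xopt$; the error terms are then summable. With the cross term vanishing and both $\lim\Lc_k$ and $\lim\norms{z^k-\xopt}$ existing, $\lim a_k\norms{\Gc_\eta y^k}^2$ exists. Together with $\sum t_k\norms{\Gc_\eta y^k}^2<\infty$, Lemma~\ref{le:A2} forces $t_k^2\norms{\Gc_\eta y^k}^2\to0$. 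The Lipschitz transfer then gives \eqref{eq:FKM2_SmallO_convergence}.

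\textbf{Iterates, and the main obstacle.} Since $y^k-z^k\to0$ and $\lim\norms{z^k-\xopt}$ exists, $\lim\norms{y^k-\xopt}$ exists for every $\xopt$. Also $\Gc_\eta y^k\to0$ with $\Gc_\eta$ continuous, so cluster points of $\{y^k\}$ lie in $\zer{\Phi}$, and Opial's lemma gives $y^k\to\xopt$. Then $z^k\to\xopt$, and $x^{k+1}=y^k-\eta\Gc_\eta y^k\to\xopt$. The main obstacle is the recursion for $v^k$ together with the existence of $\lim\norms{z^k-\xopt}$. Here the bookkeeping differs from Step~4 because $\hat x^k$ is replaced by $y^k$ and $\nu_k\ne\nu$. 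If the direct expansion fails, I would extract the limit of $\norms{z^k-\xopt}$ from the extra $\norms{z^{k+1}-\xopt}^2$ term in \eqref{eq:FKM2_key_est1} together with the existence of $\lim\Lc_k$.
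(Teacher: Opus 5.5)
Your argument follows the paper's route. Part (i) is proved there exactly as you do it: telescoping Lemma~\ref{le:FKM2_key_est1}, using $\Lc_0=\Rc_0^2/2$ because $y^0=x^0=z^0$, applying the lower bound of Lemma~\ref{le:FKM2_key_est2}, and transferring via $\norms{Gx^{k+1}+\xi^{k+1}}\le(1+L\eta)\norms{\Gc_\eta y^k}$. Parts (ii)--(iii) are only asserted there to be ``very similar'' to Theorem~\ref{th:FKM1_convergence}.

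Your filling-in of (ii)--(iii) is correct and in fact lighter than Step~4 of that proof:
\begin{itemize}
\item The recursion is simply $v^{k+1}=(1-\frac{r}{t_{k+1}})\big[v^k+(r\eta-\nu_k)\Gc_\eta y^k\big]$.
\item The expansion of $\norms{z^{k+1}-\xopt}^2$ around $z^k$ only needs $\iprods{\Gc_\eta y^k,y^k-\xopt}\ge 0$ plus Young's inequality on $\iprods{\Gc_\eta y^k,z^k-y^k}$. Your fallback, which would not give a limit anyway, is therefore unnecessary.
\item Running Opial on $\{y^k\}$ through the continuous map $\Gc_\eta$, whose zeros are $\zer{\Phi}$, avoids any graph-closedness argument for the possibly nonmonotone $\Phi$.
\end{itemize}

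One step is not justified as written, and the paper's proof has the same defect. You obtain both $\Lc_k\ge0$ (needed for every summability claim) and $\frac{\eta t_k^2}{4}\norms{\Gc_\eta y^k}^2\le\Lc_k$ by dropping the $\norms{z^k-\xopt}^2$ term in \eqref{eq:FKM2_key_est2}. That term's coefficient is proportional to $((r-1)\eta-2\nu)t_k+2\nu$. It is negative for all large $k$ whenever $\nu>(r-1)\eta/2$, and \eqref{eq:FKM_th2_params} allows this case.

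The repair is to complete the square with weight $\nu_k/(r-1)$ rather than $\eta/2$:
\[
\Lc_k=\Big[\tfrac{\eta t_k(t_k-r)}{2}-\tfrac{\nu_k t_k^2}{2(r-1)}\Big]\norms{\Gc_\eta y^k}^2+rt_k\iprods{\Gc_\eta y^k,y^k-\xopt}+\tfrac{r-1}{2\nu_k}\norm{r(z^k-\xopt)-\tfrac{\nu_k t_k}{r-1}\Gc_\eta y^k}^2 .
\]
Using $\iprods{\Gc_\eta y^k,y^k-\xopt}\ge\bar\beta\norms{\Gc_\eta y^k}^2$, $2\bar\beta>\eta$ and $\nu_k<\nu$, this gives $\Lc_k\ge\frac{t_k^2}{2}\big(\eta-\frac{\nu}{r-1}\big)\norms{\Gc_\eta y^k}^2\ge0$ for every $\nu<(r-1)\eta$.

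With this bound, all of (ii)--(iii) and the $\BigOs{1/k^2}$ rate go through unchanged. The constant, however, becomes $\norms{\Gc_\eta y^{k}}^2\le\frac{(r-1)\Rc_0^2}{((r-1)\eta-\nu)t_{k}^2}$. This is at most the stated $\frac{2\Rc_0^2}{\eta t_k^2}$ exactly when $\nu\le(r-1)\eta/2$. So the explicit bounds in \eqref{eq:FKM2_BigO_convergence} need either that extra restriction on $\nu$ or the modified constant.
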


\begin{proof}
	\noindent$\mathrm{(i)}$~\textbf{\textit{$($The $\BigOs{1/k^2}$ convergence rates$)$.}}
	First, telescoping \eqref{eq:FKM2_key_est1} from $k:=0$ to $k := K$, then taking the limit of the resulting inequality as $K \to \infty$ and noticing that $\Lc_k \geq 0$, we get
	\begin{equation}\label{eq:FKM2_convergence1_proof1}
		\arraycolsep=0.2em
		\begin{array}{lcl}
			\Lc_k &\leq& \Lc_0, \vspace{1ex}\\
			\sum_{k=0}^\infty \left[ \frac{(r-1)\eta - \nu}{2}t_k + r(r-1)\bar{\beta} \right] \norms{\Gc_{\eta}y^k}^2 &\leq& \Lc_0, \vspace{1ex}\\
			\sum_{k=0}^\infty \frac{(2\bar{\beta} - \eta) t_{k+1}(t_{k+1} - r)}{2} \norms{\Gc_{\eta}y^{k+1} - \Gc_{\eta}y^k}^2 &\leq& \Lc_0.
		\end{array}
	\end{equation}
	Since $x^0 = z^0 = y^0 = y^{-1}$, $\nu_0 = \frac{\nu(t_0 - 1)}{t_0}$, and $\Rc_0$ given in \eqref{eq:FKM2_BigO_convergence}, we have
	\begin{equation*}
		\arraycolsep=0.2em
		\begin{array}{lcl}
			\Lc_0 = \frac{\eta t_0 (t_0 - r)}{2} \norms{\Gc_{\eta}y^0}^2 + \frac{r^2(r-1)t_0}{2\nu (t_0 - 1)}\norms{x^0 - \xopt}^2 = \frac{\mcal{R}_0^2}{2}.
		\end{array}
	\end{equation*}
	Substituting this expression into the last two lines of \eqref{eq:FKM2_convergence1_proof1}, we obtain
	\begin{equation}\label{eq:FKM2_summability_bounds}
	\arraycolsep=0.2em
	\begin{array}{lcl}
		\sum_{k=1}^\infty (k + t_0 - 1) \norms{Gy^{k-1} + \xi^k}^2 &\leq& \frac{\mcal{R}_0^2}{(r-1)\eta - \nu}, \vspace{1ex}\\
		\sum_{k=1}^\infty (k + t_0) (k + t_0 - r)\norms{\Gc_{\eta}y^k - \Gc_{\eta}y^{k-1}}^2 &\leq& \frac{\mcal{R}_0^2}{2\bar{\beta} - \eta}.
	\end{array}
	\end{equation}
	Now, since $\hat{w}^{k+1} = Gy^k + \xi^{k+1}$, we have $\xi^{k+1} = \hat{w}^{k+1} - Gy^k = \Gc_{\eta}y^k - Gy^k \in Tx^{k+1}$.
	Moreover, since $G$ is $\frac{1}{L}$-co-coercive, it is also $L$-Lipschitz continuous.
	Using the triangle inequality, the $L$-Lipschitz continuity of $G$, and the second line of \eqref{eq:FKM2_scheme_reformulation}, we can show that
	\begin{equation}\label{eq:FKM2_convergence1_proof2}
		\arraycolsep=0.2em
		\begin{array}{lcl}
			\norms{Gx^{k+1} + \xi^{k+1}} &=& \norms{\Gc_{\eta}y^k + (Gx^{k+1} - Gy^k)} \vspace{1ex}\\
			&\leq& \norms{\Gc_{\eta}y^k} + \norms{Gx^{k+1} - Gy^k} \vspace{1ex}\\
			&\leq& \norms{\Gc_{\eta}y^k} + L\norms{x^{k+1} - y^k} \vspace{1ex}\\
			&=& (1 + L\eta) \norms{\Gc_{\eta}y^k} = (1 + L\eta) \norms{Gy^{k-1} + \xi^k}.
		\end{array}
	\end{equation}
	Putting this relation into the first line of \eqref{eq:FKM2_summability_bounds}, we get  
	\begin{equation*} 
	\arraycolsep=0.2em
	\begin{array}{lcl}
		\sum_{k=1}^\infty (k+t_0) \norms{Gx^k + \xi^k}^2 &\leq& \frac{(1+L\eta)^2\mcal{R}_0^2}{(r-1)\eta - \nu},
	\end{array}
	\end{equation*}
	Moreover, from \eqref{eq:FKM2_key_est2}, we also have
	\begin{equation*}
		\arraycolsep=0.2em
		\hspace{-1ex}\begin{array}{lcl}
			\Lc_k \geq \frac{A_k}{4} \norms{\Gc_{\eta}y^k}^2 \geq \frac{\eta t_k^2}{4} \norms{\Gc_{\eta}y^k}^2 = \frac{\eta (k + t_0)^2}{4}\norms{\Gc_{\eta}y^k}^2. 
		\end{array}\hspace{-1ex}
	\end{equation*}
	Combining the last two expressions and the first line of \eqref{eq:FKM2_convergence1_proof1}, we obtain
	\begin{equation*}
		\arraycolsep=0.2em
		\hspace{-1ex}\begin{array}{lcl}
			\eta (k + t_0)^2\norms{\Gc_{\eta}y^k}^2 \leq 4 \Lc_k \leq 4 \Lc_0 = 2\mcal{R}_0^2,
		\end{array}\hspace{-1ex}
	\end{equation*}
	which proves the first bound of \eqref{eq:FKM2_BigO_convergence}.
	Utilizing this bound and \eqref{eq:FKM2_convergence1_proof2}, we obtain the second bound of  \eqref{eq:FKM2_BigO_convergence}.
	
	\vspace{1ex}
	\noindent$\mathrm{(ii)}$ and $\mathrm{(iii)}$~\textbf{\textit{$($The $\SmallOs{1/k^2}$ rates and convergence of iterates$)$.}}
	The proof of the claims in (ii) and (iii) is very similar to those in Theorem~\ref{th:FKM1_convergence}.
	Thus, we omit them here.
\end{proof}

\beforesubsubsec
\subsubsection{Equivalent form of \eqref{eq:FKM_scheme3}}
\aftersubsubsec
Since $\beta_k = \gamma_k = 0$, the scheme \eqref{eq:FKM_scheme3} reduces to
\begin{equation}\label{eq:FKM_scheme2+}\tag{FFP2$_{+}$}
\arraycolsep=0.2em
\left\{
\begin{array}{lcl}
	x^{k+1} &=& y^k - \eta \hat{w}^{k+1}, \quad \xi^{k+1} \in Tx^{k+1},  \vspace{1ex}\\
	\hat{w}^{k+1} &=& Gy^k + \xi^{k+1}, \vspace{1ex}\\
	y^{k+1} &=& y^k + \theta_k \left(y^k - y^{k-1} \right) - \hat{\eta}_k\hat{w}^{k+1} + \hat{\gamma}_k \hat{w}^k,
\end{array}
\right.
\end{equation}
where $\theta_k = \frac{t_k - r}{t_{k+1}}$, $\hat{\eta}_k = \frac{\eta (t_{k+1} - r)}{t_{k+1}} + \frac{\nu (t_k - 1)}{t_k t_{k+1}}$, and $\hat{\gamma}_k = \eta \theta_k$.
The scheme is initialized at $y^{-1} = y^0 := x^0$ and $\hat{w}^0 := 0$.
The convergence of this scheme can be stated in the following corollary.

\begin{corollary}\label{co:FKM_scheme2+_convergence}
	Suppose that Assumption~\ref{as:FKM2_cocoercive} holds for \eqref{eq:GE}, $\zer{\Phi} \neq\emptyset$, and $t_0$ satisfies \eqref{eq:t0_cond2}. 
	Let $\sets{(x^k, y^k)}$ be generated by \eqref{eq:FKM_scheme2+} where the parameters are chosen as
	\begin{equation*} 
		\arraycolsep=0.2em
		\begin{array}{c}
			r > 1, \qquad t_k = k + t_0, \qquad 2\rho < \eta < \frac{2}{L}, \qquad 0 < \nu < (r-1)\eta, 
			\vspace{1ex}\\
			\theta_k := \frac{t_k - r}{t_{k+1}}, \qquad  \hat{\eta}_k = \frac{\eta (t_{k+1} - r)}{t_{k+1}} + \frac{\nu (t_k - 1)}{t_k t_{k+1}}, \qquad \text{and} \qquad \hat{\gamma}_k = \eta \theta_k.
		\end{array}
	\end{equation*}
	Then, all the $\BigOs{1/k^2}$ and $\SmallOs{1/k^2}$ convergence rates in \eqref{eq:FKM2_BigO_convergence} and \eqref{eq:FKM2_SmallO_convergence} still hold.
	Moreover, $\sets{x^k}$ and $\sets{y^k}$ converge to $\xopt \in \zer{\Phi}$.
\end{corollary}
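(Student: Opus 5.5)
The plan is to reduce Corollary~\ref{co:FKM_scheme2+_convergence} to Theorem~\ref{th:FKM2_convergence}. We show that \eqref{eq:FKM_scheme2+} generates exactly the same sequences $\sets{(x^k, y^k)}$ as \eqref{eq:FKM2_scheme_reformulation}, with $z^k$ eliminated, and then transfer the conclusions.

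\textbf{Step 1: eliminate $z^k$ to recover the recursion.} Start from \eqref{eq:FKM2_scheme}. The first line gives $r z^k = t_k y^k - (t_k - r)x^k$. Apply this at $k+1$ and substitute $z^{k+1} = z^k - \frac{\nu_k}{r}\hat{w}^{k+1}$. This gives
\[
t_{k+1}y^{k+1} - (t_{k+1}-r)x^{k+1} = t_k y^k - (t_k - r)x^k - \nu_k \hat{w}^{k+1}.
\]
Now replace $x^{k+1} = y^k - \eta\hat{w}^{k+1}$ and $x^k = y^{k-1} - \eta\hat{w}^k$, and divide by $t_{k+1}$. Since $t_{k+1} = t_k + 1$, the coefficient of $y^k$ equals $\frac{t_{k+1} - r + t_k}{t_{k+1}} = 1 + \theta_k$. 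The coefficient of $y^{k-1}$ is $-\theta_k$. The coefficient of $\hat{w}^{k+1}$ is $-\frac{\eta(t_{k+1}-r) + \nu_k}{t_{k+1}} = -\hat{\eta}_k$, using $\nu_k = \frac{\nu(t_k-1)}{t_k}$. The coefficient of $\hat{w}^k$ is $\frac{\eta(t_k - r)}{t_{k+1}} = \eta\theta_k = \hat{\gamma}_k$. This reproduces the third line of \eqref{eq:FKM_scheme2+}. The first two lines coincide with the second line of \eqref{eq:FKM2_scheme} and the definition of $\hat{w}^{k+1}$. Moreover, $x^{k+1} = J_{\eta T}(y^k - \eta G y^k)$ is well-defined, because $J_{\eta T}$ is single-valued for $\eta > \rho$ under Assumption~\ref{as:FKM2_cocoercive}. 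Hence $\hat{w}^{k+1} = \Gc_{\eta}y^k$.

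\textbf{Step 2: the initialization.} We must check that $y^{-1} = y^0 = x^0$ and $\hat{w}^0 := 0$ are consistent with $x^0 = z^0 = y^0$ in \eqref{eq:FKM2_scheme_reformulation}. For $k=0$, the elimination above uses $x^0 = y^{-1} - \eta\hat{w}^0$, and this holds exactly when $y^{-1} = x^0$ and $\hat{w}^0 = 0$. Also, $z^0 = x^0$ combined with the first line gives $y^0 = x^0$. So the $k=0$ step of \eqref{eq:FKM_scheme2+} also matches. By induction, both schemes produce identical $\sets{x^k}$, $\sets{y^k}$, and $\sets{\hat{w}^k}$. Here $z^k$ is defined implicitly by $z^k = \frac{t_k y^k - (t_k-r)x^k}{r}$.

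\textbf{Step 3: transfer the conclusions.} Apply Theorem~\ref{th:FKM2_convergence} to get the rates \eqref{eq:FKM2_BigO_convergence} and \eqref{eq:FKM2_SmallO_convergence}, together with convergence of $\sets{x^k}$ and $\sets{y^k}$.

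\textbf{Main obstacle: the stepsize range.} The corollary states $2\rho < \eta < \frac{2}{L}$, whereas the theorem requires $2\rho < \eta < 2\bar{\beta}$. So we must verify that $\eta < 2\bar{\beta}$ holds in the stated regime. With $\bar{\beta} = \frac{\eta(4-L\eta) - 4\rho}{4(1-L\rho)}$, the inequality $\eta < 2\bar{\beta}$ is equivalent to
\[
2\eta(1 - L\rho) < \eta(4 - L\eta) - 4\rho,
\]
that is, $L\eta^2 - 2\eta(1 + L\rho) + 4\rho < 0$. The left-hand side factors as $(L\eta - 2)(\eta - 2\rho)$, which is negative precisely when $2\rho < \eta < \frac{2}{L}$. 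This interval is nonempty because $L\rho < 1$.

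We also confirm that this range lies inside the admissible range of Lemma~\ref{le:FB_cocoercive}, so that $\bar{\beta} > 0$. Indeed, $\frac{2(1-\sqrt{1-L\rho})}{L} \le 2\rho$ is equivalent to $1 - L\rho \le \sqrt{1-L\rho}$, and $\frac{2}{L} \le \frac{2(1+\sqrt{1-L\rho})}{L}$ holds trivially. With this, all hypotheses of Theorem~\ref{th:FKM2_convergence} are met and the corollary follows.
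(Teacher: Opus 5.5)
Your proposal is correct and follows the same route as the paper: the paper notes that \eqref{eq:FKM_scheme2+} is equivalent to \eqref{eq:FKM2_scheme} and then invokes Theorem~\ref{th:FKM2_convergence}. You additionally spell out the elimination of $z^k$, the role of the convention $\hat{w}^0 = 0$ at $k=0$, and the equivalence $\eta < 2\bar{\beta} \iff 2\rho < \eta < \frac{2}{L}$, which the paper records only inside the proof of Lemma~\ref{le:FKM2_key_est1}.
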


\begin{proof}
	Since \eqref{eq:FKM_scheme2+} is equivalent to \eqref{eq:FKM2_scheme}, this corollary follows directly from Theorem~\ref{th:FKM2_convergence}.
\end{proof}

\beforesubsec
\subsection{Fast fixed-point-based methods for three-operator inclusions}\label{subsec:3o_methods}
\aftersubsec
The next step of our development is to apply our methods \eqref{eq:FKM_scheme3} and \eqref{eq:FKM_scheme2+} developed above to the following three-operator inclusion:
\begin{equation}\label{eq:3op_inclusion}\tag{CI3}
	\arraycolsep=0.2em
	\begin{array}{lcl}
		\text{Find $u^{\star} \in \R^p$ such that:} \quad 0 \in Au^{\star} + Bu^{\star} + Cu^{\star},
	\end{array}
\end{equation}
where $A, C: \R^p \rightrightarrows \R^p$ are two possibly set-valued mappings and $B: \R^p \to \R^p$ is a single-valued mapping.
The key step here is to reformulate \eqref{eq:3op_inclusion} into \eqref{eq:GE} by expanding the space.

\beforesubsubsec
\subsubsection{Composite inclusion reformulation and its preconditioned form}
\aftersubsubsec
To reformulate \eqref{eq:3op_inclusion} into \eqref{eq:GE}, we adopt a primal-dual approach as in \cite{malitsky2026firstorder,tam2025decentralised}.
Let $v^{\star} \in Cu^{\star}$ be an auxiliary variable.
Then, $0 \in C^{-1}v^{\star} - u^{\star}$.
Consequently, \eqref{eq:3op_inclusion} is equivalent to 
\begin{equation}\label{eq:3op_inclusion2}
	\arraycolsep=0.2em
	\left\{\begin{array}{lcl}
		0 \in Au^{\star} + Bu^{\star} + v^{\star}, \vspace{1ex}\\
		0 \in C^{-1}v^{\star} - u^{\star}.
	\end{array}\right.
\end{equation}
Let $x := [u, v] \in \R^{2p}$, and  $G: \R^{2p} \to \R^{2p}$ and $T: \R^{2p} \rightrightarrows \R^{2p}$ respectively be defined by
\begin{equation}\label{eq:3op_reform2}
	\arraycolsep=0.2em
	\begin{array}{lcl}
		Gx := \begin{bmatrix}
			B & 0 \\ 0 & 0
		\end{bmatrix} \begin{bmatrix}
			u \\ v
		\end{bmatrix} = \begin{bmatrix}
			Bu \\ 0
		\end{bmatrix} \quad \text{and} \quad
		Tx := \begin{bmatrix}
			A & 0 \\ 0 & C^{-1}
		\end{bmatrix}
		\begin{bmatrix}
			u \\ v
		\end{bmatrix}
		+ \begin{bmatrix}
			0 & \Id \\ -\Id & 0
		\end{bmatrix}
		\begin{bmatrix}
			u \\ v
		\end{bmatrix} 
		= \begin{bmatrix}
			Au + v \\ C^{-1}v - u
		\end{bmatrix}.
	\end{array}
\end{equation}
Then, \eqref{eq:3op_inclusion2} becomes a composite inclusion $0 \in Gx^{\star} + Tx^{\star}$ as in \eqref{eq:GE}.

Next, for $\tau > 0$ and  $\sigma > 0$, we consider the following symmetric matrix in $\R^{2p\times 2p}$: 
\begin{equation}\label{eq:3op_P} 
P = \begin{bmatrix}
	\frac{1}{\tau} \Id & -\Id \\ -\Id & \frac{1}{\sigma} \Id
\end{bmatrix}.
\end{equation}
It is straightforward to verify that  if $\tau\sigma < 1$, then $P$ is positive definite.
Then, \eqref{eq:3op_inclusion} is equivalent to the following preconditioned composite inclusion:
\begin{equation}\label{eq:3op_inclusion3}
	\arraycolsep=0.2em
	\begin{array}{lcl}
	\textrm{Find $x^{\star} \in \R^{2p}$ such that:}~ 0 \in P^{-1}Gx^{\star} + P^{-1}Tx^{\star}.
	\end{array}
\end{equation}
Our goal is to apply our FFP algorithmic framework developed in Subsections~\ref{subsec:alg1_derivation} and \ref{subsec:cocoercive_methods}  to \eqref{eq:3op_inclusion3}.
The first step is to verify that this transformation satisfies the underlying assumptions required by these algorithms.
To this end, we establish some structural properties of the operators $P^{-1}G$ and $P^{-1}T$ in the following lemma, whose proof is similar to the proof of Lemmas~\ref{le:DGE_assumption} and \ref{le:DGE_cocoercive} below with $\mbf{K} = \Id$, and thus we omit.

\begin{lemma}\label{le:3op_inclusion_assumptions}
	For \eqref{eq:3op_inclusion}, suppose that $A: \R^p \rightrightarrows \R^p$ and $C: \R^p \rightrightarrows \R^p$ are maximally monotone, and $B : \R^p \to \R^p$ is single-valued.
    Suppose further that $\tau > 0$ and $\sigma > 0$ in matrix $P$ from \eqref{eq:3op_P} satisfy $\tau \sigma < 1$.
	Then, for $T$ and $G$ defined by \eqref{eq:3op_reform2}, the following statements hold.
	\begin{itemize}
	\itemsep=-0.3em
		\item[$\mathrm{(i)}$] 
		The mapping $P^{-1}T: \R^{2p} \rightrightarrows \R^{2p}$ is maximally monotone w.r.t. $\iprods{\cdot, \cdot}_P$.
		
		\item[$\mathrm{(ii)}$] 
		If $B$ is monotone and $L_B$-Lipschitz continuous, then $P^{-1}G$ is monotone w.r.t. $\iprods{\cdot, \cdot}_P$ and $L_G$-Lipschitz continuous w.r.t. $\norms{\cdot}_P$, where $L_G := \frac{\tau L_B}{1 - \tau \sigma}$.
		
		\item[$\mathrm{(iii)}$] 
		If $B$ is $\frac{1}{L_B}$-co-coercive, then $P^{-1}G$ is $\frac{1}{L_G}$-co-coercive w.r.t. $\norms{\cdot}_P$, where $L_G := \frac{\tau L_B}{1 - \tau \sigma}$.		
	\end{itemize}
\end{lemma}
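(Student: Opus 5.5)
The key observation is that for any operator $F$ and any $x,x'$, $u\in Fx$, $u'\in Fx'$, we have $\iprods{P^{-1}u - P^{-1}u', x - x'}_P = \iprods{u - u', x - x'}$. Hence monotonicity (or co-coercivity) of $P^{-1}F$ with respect to $\iprods{\cdot,\cdot}_P$ reduces to a statement about $F$ in the Euclidean inner product. Similarly, $\norms{P^{-1}u}_P^2 = \iprods{u, P^{-1}u} = \norms{u}_{P^{-1}}^2$. So the plan is to check the three claims in the original metric, using $P^{-1}$-norms where needed. We also need the spectral bounds on $P$. Writing $P = \begin{bmatrix} \frac1\tau & -1\\ -1 & \frac1\sigma\end{bmatrix}\kron \Id$, positivity follows from $\tau\sigma<1$. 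We will use the inequality $\iprods{Px,x} \ge \frac{1-\tau\sigma}{\tau}\norms{u}^2$ for $x=[u,v]$, obtained by minimizing over $v$: $\frac1\tau\norms{u}^2 - 2\iprods{u,v} + \frac1\sigma\norms{v}^2 \ge (\frac1\tau-\sigma)\norms{u}^2$.

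For (i), $T = \mathrm{diag}(A, C^{-1}) + S$, where $S$ is the skew-symmetric linear map $[u,v]\mapsto[v,-u]$. The operator $\mathrm{diag}(A,C^{-1})$ is maximally monotone, since $A$ and $C^{-1}$ are maximally monotone and products preserve maximality. $S$ is monotone, linear, and has full domain. Hence $T$ is maximally monotone by the standard sum rule (a maximally monotone operator plus a continuous monotone operator with full domain). Monotonicity of $P^{-1}T$ in $\iprods{\cdot,\cdot}_P$ then follows from the identity above. For maximality, it suffices to show that $\mathrm{range}(\Id + P^{-1}T) = \R^{2p}$ (Minty's theorem in the Hilbert space $(\R^{2p},\iprods{\cdot,\cdot}_P)$). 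This is equivalent to $\mathrm{range}(P+T)=\R^{2p}$. That holds because $P+T = (P - \lambda_{\min}(P)\Id) + (T + \lambda_{\min}(P)\Id)$ is a sum of a maximally strongly monotone operator and a continuous monotone linear map, which is surjective. Alternatively, one can argue directly that $\gra{P^{-1}T}$ is the image of $\gra{T}$ under an invertible linear map that preserves the monotonicity pairing, so maximality transfers.

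For (ii), monotonicity follows from $\iprods{Gx-Gx',x-x'} = \iprods{Bu-Bu',u-u'}\ge0$. For Lipschitz continuity, we compute $\norms{P^{-1}(Gx-Gx')}_P^2 = \norms{(Bu-Bu',0)}_{P^{-1}}^2$. The $(1,1)$ block of $P^{-1}$ is $\frac{\tau}{1-\tau\sigma}\Id$, which gives $\frac{\tau}{1-\tau\sigma}\norms{Bu-Bu'}^2 \le \frac{\tau L_B^2}{1-\tau\sigma}\norms{u-u'}^2$. The Schur-complement bound gives $\norms{u-u'}^2 \le \frac{\tau}{1-\tau\sigma}\norms{x-x'}_P^2$. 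Combining, we get the Lipschitz constant $\frac{\tau L_B}{1-\tau\sigma} = L_G$.

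For (iii), we need $\iprods{P^{-1}Gx-P^{-1}Gx',x-x'}_P \ge \frac1{L_G}\norms{P^{-1}(Gx-Gx')}_P^2$. The left side equals $\iprods{Bu-Bu',u-u'}\ge\frac1{L_B}\norms{Bu-Bu'}^2$. The right side equals $\frac{1}{L_G}\cdot\frac{\tau}{1-\tau\sigma}\norms{Bu-Bu'}^2 = \frac1{L_B}\norms{Bu-Bu'}^2$, so the inequality holds exactly.

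The main obstacle is the maximality in (i). I would handle it via the surjectivity of $P+T$, since the algebra in (ii) and (iii) is routine once the $(1,1)$ entry of $P^{-1}$ and the Schur-complement bound are in hand.
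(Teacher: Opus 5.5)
Your proposal is correct and takes essentially the same route as the paper, which omits this proof and points to Lemmas~\ref{le:DGE_assumption} and \ref{le:DGE_cocoercive} with $\mbf{K}=\Id$. Those proofs rest on the same three facts you use: the identity $\iprods{P^{-1}u-P^{-1}u',x-x'}_P=\iprods{u-u',x-x'}$, the $(1,1)$ block $\frac{\tau}{1-\tau\sigma}\Id$ of $P^{-1}$, and the Young-type bound $\norms{x-x'}_P^2\ge\frac{1-\tau\sigma}{\tau}\norms{u-u'}^2$. The one minor difference is that the paper transfers maximality to $P^{-1}T$ by citing \cite[Lemma 3.7(i)]{combettes2014variable}, whereas you prove it directly (via Minty and surjectivity of $P+T$, or via the graph map $(x,u)\mapsto(x,P^{-1}u)$), which is exactly what that lemma says.
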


Now, to characterize a solution of \eqref{eq:3op_inclusion}, we define the following FBS residual:
\begin{equation}\label{eq:FBS_residual3}
	\arraycolsep=0.2em
	\begin{array}{lcl}
		\Gc_{\sigma^{-1}}u := \sigma \left(u - J_{\sigma^{-1}C}(u - \sigma^{-1} (\zeta + Bu))\right), \quad \text{for a given }\zeta \in Au.
	\end{array}
\end{equation}
Then, by the definition of the resolvent $J_{\sigma^{-1}C}$, we can easily show that $\Gc_{\sigma^{-1}}u^{\star} = 0$ if and only if $0 \in \zeta^{\star} + Bu^{\star} + Cu^{\star}$, where $\zeta^{\star} \in Au^{\star}$.
This means that $u^{\star}$ is a solution of \eqref{eq:3op_inclusion}.

\beforesubsubsec
\subsubsection{Derivation of the algorithm}
\aftersubsubsec
Now, we apply our method \eqref{eq:FKM_scheme} with  $\eta = 1$ to derive new algorithms for solving \eqref{eq:3op_inclusion3}.
We first denote $\hat{x}^k := [ \hat{u}^k, \hat{v}^k]$ and  $z^k :=  [z_u^k, z_v^k]$.
Then, we perform the following steps.
\begin{compactitem}
	\item \textbf{\textit{Step 1:}} 
	Update
	\begin{equation*}
		\arraycolsep=0.2em
		\begin{array}{lcl}
			\hat{u}^k := \frac{t_k - r}{t_k} u^k + \frac{r}{t_k} z_u^k 
			\qquad \text{and} \qquad 
			\hat{v}^k := \frac{t_k - r}{t_k} v^k + \frac{r}{t_k} z_v^k.
		\end{array}
	\end{equation*}
	\item \textbf{\textit{Step 2:}} For $\hat{w}^k = P^{-1}Gy^{k-1} + \xi^k$ with $\xi^k \in P^{-1}Tx^k$, we define $\tilde{w}^k = P\hat{w}^k = Gy^{k-1} + P\xi^k$.
	Then, using $P^{-1} = \frac{1}{1 - \tau \sigma} \begin{bmatrix}
		\tau \Id & \tau \sigma \Id \\ 
		\tau \sigma \Id & \sigma \Id
	\end{bmatrix}$, we have $\hat{w}^k = P^{-1}\tilde{w}^k = \frac{1}{1 - \tau\sigma} \begin{bmatrix}
		\tau \tilde{w}_u^k + \tau \sigma \tilde{w}_v^k \\
		\tau \sigma \tilde{w}_u^k + \sigma \tilde{w}_v^k
	\end{bmatrix}$.
	Then, we update
	\begin{equation*}
		\arraycolsep=0.2em
		\begin{array}{lcl}
			y_u^k = \hat{u}^k - \frac{\gamma_k - \beta_k}{1 - \tau\sigma} (\tau \tilde{w}_u^k + \tau \sigma \tilde{w}_v^k). 
		\end{array}
	\end{equation*}
	\item \textbf{\textit{Step 3:}} From the third line of \eqref{eq:FKM_scheme}, we have
	$x^{k+1} = \hat{x}^k - (P^{-1}Gy^k + \xi^{k+1}) + \beta_k \hat{w}^k$.
	Using the definitions of $G$, $T$, and $P$, we can derive from the last relation that
	\begin{equation*}
		\begin{cases}
			u^{k+1} = J_{\tau A} (\hat{u}^k - \tau \hat{v}^k - \tau By_u^k + \tau \beta_k \tilde{w}_u^k) \vspace{1ex}\\
			v^{k+1} = J_{\sigma C^{-1}} (\hat{v}^k + \sigma (2u^{k+1} - \hat{u}^k) + \sigma \beta_k \tilde{w}_v^k)
		\end{cases}
	\end{equation*}
	Since $C$ is maximally monotone, we have the inverse-resolvent identity $J_{\sigma C^{-1}}(x) = x - \sigma J_{\sigma^{-1} C} \left(\frac{x}{\sigma}\right)$.
	Thus,  the update of $v^{k+1}$ can be expressed as
	\begin{equation*}
		\arraycolsep=0.2em
		\begin{array}{lcl}
			\begin{cases}
				\tilde{v}^{k+1} = \hat{v}^k + \sigma (2u^{k+1} - \hat{u}^k) + \sigma \beta_k \tilde{w}_v^k \vspace{1ex}\\
				v^{k+1} = \tilde{v}^{k+1} - \sigma J_{\sigma^{-1} C} (\frac{1}{\sigma} \tilde{v}^{k+1}).
			\end{cases}
		\end{array}
	\end{equation*}
	
	\item \textbf{\textit{Step 4:}} 
	From the third line of \eqref{eq:FKM_scheme2}, we have $\hat{w}^{k+1} = \hat{x}^k - x^{k+1} + \beta_k \hat{w}^k$.
	Multiplying both sides of this equation by $P$ and using $\tilde{w}^k = P\hat{w}^k$, we get $\tilde{w}^{k+1} = P(\hat{x}^k - x^{k+1}) + \beta_k \tilde{w}^k$, which leads to the following updates
	\begin{equation*}
		\arraycolsep=0.2em
		\left\{\begin{array}{lcl}
				\tilde{w}_u^{k+1} & = &  \frac{1}{\tau} (\hat{u}^k - u^{k+1}) - (\hat{v}^k - v^{k+1}) + \beta_k \tilde{w}_u^k \vspace{1ex}\\ 
				\tilde{w}_v^{k+1}  & = & - (\hat{u}^k - u^{k+1}) + \frac{1}{\sigma} (\hat{v}^k - v^{k+1}) + \beta_k \tilde{w}_v^k.
		\end{array}\right.
	\end{equation*}
	\item \textbf{\textit{Step 5:}} 
	Using this $\tilde{w}^{k+1}$, we have $\hat{w}^{k+1} = P^{-1}\tilde{w}^{k+1} = \frac{1}{1 - \tau\sigma} \begin{bmatrix}
		\tau \tilde{w}_u^{k+1} + \tau \sigma \tilde{w}_v^{k+1} \\
		\tau \sigma \tilde{w}_u^{k+1} + \sigma \tilde{w}_v^{k+1}
	\end{bmatrix}$.
	Then, from the fourth line of \eqref{eq:FKM_scheme2}, we update
	\begin{equation*}
		\arraycolsep=0.2em
		\begin{array}{lcl}
			z_u^{k+1} = z_u^k - \frac{\nu_k}{r} \hat{w}_u^{k+1} 
			\qquad \text{and} \qquad
			z_v^{k+1} = z_v^k - \frac{\nu_k}{r} \hat{w}_v^{k+1}.
		\end{array}
	\end{equation*}
\end{compactitem}
\textbf{Variant of \eqref{eq:FKM_scheme}.}
Putting the above steps together, we obtain the following method for solving  the three-operator inclusion \eqref{eq:3op_inclusion}:
\begin{equation}\label{eq:FKM_3OP_scheme}\tag{FFP3}
	\arraycolsep=0.2em
	\left\{\begin{array}{lcl}
		\hat{u}^k  & := & \frac{t_k - r}{t_k} u^k + \frac{r}{t_k} z_u^k, \vspace{1ex}\\
		\hat{v}^k & := & \frac{t_k - r}{t_k} v^k + \frac{r}{t_k} z_v^k, \vspace{1ex}\\
		y_u^k & := & \hat{u}^k - \frac{\gamma_k - \beta_k}{1 - \tau\sigma} (\tau \tilde{w}_u^k + \tau \sigma \tilde{w}_v^k), \vspace{1ex}\\
		u^{k+1} & := & J_{\tau A} (\hat{u}^k - \tau \hat{v}^k - \tau By_u^k + \tau \beta_k \tilde{w}_u^k), \vspace{1ex}\\
		\tilde{v}^{k+1} & :=  & \hat{v}^k + \sigma (2u^{k+1} - \hat{u}^k) + \sigma \beta_k \tilde{w}_v^k, \vspace{1ex}\\
		v^{k+1} & := & \tilde{v}^{k+1} - \sigma J_{\sigma^{-1} C} (\frac{1}{\sigma} \tilde{v}^{k+1}), \vspace{1ex}\\
		\tilde{w}_u^{k+1} & := & \frac{1}{\tau} (\hat{u}^k - u^{k+1}) - (\hat{v}^k - v^{k+1}) + \beta_k \tilde{w}_u^k, \vspace{1ex}\\ 
		\tilde{w}_v^{k+1} & := & - (\hat{u}^k - u^{k+1}) + \frac{1}{\sigma} (\hat{v}^k - v^{k+1}) + \beta_k \tilde{w}_v^k, \vspace{1ex}\\ 
		z_u^{k+1} & := & z_u^k - \frac{\nu_k}{r (1 - \tau \sigma)} (\tau \tilde{w}_u^{k+1} + \tau \sigma \tilde{w}_v^{k+1}), \vspace{1ex}\\
		z_v^{k+1} & := & z_v^k - \frac{\nu_k}{r (1 - \tau \sigma)} (\tau \sigma \tilde{w}_u^{k+1} + \sigma \tilde{w}_v^{k+1}).
	\end{array}\right.
\end{equation}
\noindent\textbf{Variant of \eqref{eq:FKM2_scheme_reformulation}.}
In particular, if $\gamma_k = \beta_k = 0$, then our scheme \eqref{eq:FKM_3OP_scheme} reduces to
\begin{equation}\label{eq:FKM_3OP_scheme2}\tag{FFP3$_+$}
	\arraycolsep=0.2em
	\left\{\begin{array}{lcl}
		\hat{u}^k & := & \frac{t_k - r}{t_k} u^k + \frac{r}{t_k} z_u^k, \vspace{1ex}\\
		\hat{v}^k & := & \frac{t_k - r}{t_k} v^k + \frac{r}{t_k} z_v^k, \vspace{1ex}\\
		u^{k+1} & := & J_{\tau A} (\hat{u}^k - \tau \hat{v}^k - \tau B\hat{u}^k), \vspace{1ex}\\
		\tilde{v}^{k+1} & := & \hat{v}^k + \sigma (2u^{k+1} - \hat{u}^k), \vspace{1ex}\\
		v^{k+1} & := & \tilde{v}^{k+1} - \sigma J_{\sigma^{-1} C} (\frac{1}{\sigma} \tilde{v}^{k+1}), \vspace{1ex}\\
		\tilde{w}_u^{k+1} & := & \frac{1}{\tau} (\hat{u}^k - u^{k+1}) - (\hat{v}^k - v^{k+1}), \vspace{1ex}\\ 
		\tilde{w}_v^{k+1} & := & - (\hat{u}^k - u^{k+1}) + \frac{1}{\sigma} (\hat{v}^k - v^{k+1}), \vspace{1ex}\\ 
		z_u^{k+1} & := & z_u^k - \frac{\nu_k}{r (1 - \tau \sigma)} (\tau \tilde{w}_u^{k+1} + \tau \sigma \tilde{w}_v^{k+1}), \vspace{1ex}\\
		z_v^{k+1} & := & z_v^k - \frac{\nu_k}{r (1 - \tau \sigma)} (\tau \sigma \tilde{w}_u^{k+1} + \sigma \tilde{w}_v^{k+1}).
	\end{array}\right.
\end{equation}
Clearly, both schemes require one evaluation of $B$, one resolvent $J_{\tau A}$ and one resolvent $J_{\sigma^{-1}C}$.
Therefore, their per-iteration complexity is essentially the same as that of the non-accelerated operator splitting methods for solving \eqref{eq:3op_inclusion}.

\beforesubsubsec
\subsubsection{Convergence analysis}
\aftersubsubsec
By viewing \eqref{eq:FKM_3OP_scheme} as a direct application of our master template \eqref{eq:FKM_scheme} to the preconditioned problem \eqref{eq:3op_inclusion3}, we can establish its convergence by invoking Theorem~\ref{th:FKM1_convergence} and Theorem~\ref{th:FKM2_convergence}.

\begin{theorem}[\textbf{Monotone and Lipschitz continuous setting}]\label{th:FKM_3OP_convergence}
	For Inclusion \eqref{eq:3op_inclusion}, suppose that $A$ and $C$ are maximally monotone and $B$ is monotone and $L_B$-Lipschitz continuous.
	Suppose further that $\zer{A+B+C} \ne \emptyset$ and $t_0$ is given by \eqref{eq:FKM_key_est2_t0}.
	Let $\sets{(u^k, \hat{u}^k, y_u^k, z_u^k)}$ and $\sets{(v^k, \hat{v}^k, z_v^k)}$ be generated by \eqref{eq:FKM_3OP_scheme} where the parameters are chosen as
	\begin{equation}\label{eq:FKM_3OP_th1_params}
		\arraycolsep=0.2em
		\begin{array}{c}
			r > 2, \quad \delta \in (0,r-2), \quad \omega := r^2 - 3r + 3, \quad 0 < \nu < r - 2 - \delta, \vspace{1ex}\\
			2\tau L_B\sqrt{1+\omega} + \tau \sigma \leq 1, \quad t_{k} := k + t_0, \quad \beta_k := \frac{\delta (t_k - r)}{2(r-1)t_k}, \quad \nu_k := \frac{\nu(t_k - 1)}{t_k}, \quad \gamma_k := 1.
		\end{array}
	\end{equation}
	Then, the following statements hold.
	\begin{itemize}
	\itemsep=-0.3em 
	\item[$\mathrm{(i)}$] $\mathrm{(\text{\textbf{The $\BigOs{1/k^2}$ and $\SmallOs{1/k^2}$ rates}})}$ 
	For $\Gc_{\sigma^{-1}}$ defined by \eqref{eq:FBS_residual3},  $\norms{\Gc_{\sigma^{-1}}u^k}^2$ converges to zero at the rates of $\BigOs{1/k^2}$ and $\SmallOs{1/k^2}$.
	
	\item[$\mathrm{(ii)}$] $\mathrm{(\text{\textbf{The convergence of the iterates}})}$ 
	The iterate sequences  $\sets{u^k}$, $\sets{\hat{u}^k}$, $\sets{y_u^k}$, and $\sets{z_u^k}$ all converge  to $u^{\star}$, and the iterate sequences $\sets{v^k}$, $\sets{\hat{v}^k}$, and $\sets{z_v^k}$ all converge to $v^{\star}$, where $u^{\star} \in \zer{A+B+C}$.
	Consequently, all these sequences are bounded.
	\end{itemize}
\end{theorem}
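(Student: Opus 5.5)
The plan is to regard \eqref{eq:FKM_3OP_scheme} as exactly \eqref{eq:FKM_scheme} with $\eta = 1$, applied to the preconditioned inclusion \eqref{eq:3op_inclusion3} in the Hilbert space $(\R^{2p}, \iprods{\cdot,\cdot}_P)$, and then to invoke Theorem~\ref{th:FKM1_convergence} there. The proof has four parts: verify the assumptions, match the parameters, transport the conclusions, and translate the residual into $\Gc_{\sigma^{-1}}u^k$.

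\emph{Assumptions.} By Lemma~\ref{le:3op_inclusion_assumptions}(i)--(ii), since $\tau\sigma<1$, the operator $P^{-1}T$ is maximally monotone with respect to $\iprods{\cdot,\cdot}_P$. The operator $P^{-1}G$ is monotone with respect to $\iprods{\cdot,\cdot}_P$, single-valued, full-domain and continuous, and it is $L_G$-Lipschitz in $\norms{\cdot}_P$ with $L_G = \frac{\tau L_B}{1-\tau\sigma}$. Hence $P^{-1}G + P^{-1}T$ is maximally monotone, being the sum of a maximally monotone operator and a continuous monotone full-domain operator. Thus Assumption~\ref{as:FKM_assumption} holds in the $P$-geometry. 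Moreover, $\zer{P^{-1}(G+T)} = \{[u^\star, v^\star] : u^\star\in\zer{A+B+C},\ v^\star\in Cu^\star\}$ is nonempty.

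\emph{Parameters.} Theorem~\ref{th:FKM1_convergence} with $\eta = 1$ requires $1 \le \frac{1}{2\sqrt{1+\omega}L_G}$, that is, $2\sqrt{1+\omega}\,\tau L_B \le 1-\tau\sigma$. This is precisely the condition in \eqref{eq:FKM_3OP_th1_params}. The choices $0<\nu<r-2-\delta$, $\beta_k$, $\nu_k$, $\gamma_k$ and $t_0$ coincide with \eqref{eq:FKM_th1_params} at $\eta=1$. All lemmas behind Theorem~\ref{th:FKM1_convergence} use only inner-product identities, Cauchy--Schwarz, Young's inequality and Opial's lemma, so they hold verbatim in any finite-dimensional inner-product space, in particular with $\norms{\cdot}_P$. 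Since the derivation in Steps 1--5 shows that \eqref{eq:FKM_3OP_scheme} reproduces \eqref{eq:FKM_scheme} exactly, we obtain the following. With $w^k := P^{-1}Gx^k + \xi^k$ and $\xi^k\in P^{-1}Tx^k$, we have $\norms{w^k}_P^2 = \BigOs{1/k^2}$ and $k^2\norms{w^k}_P^2\to 0$. In addition, $x^k,\hat x^k,y^k,z^k$ converge to some $x^\star=[u^\star,v^\star]$ in the solution set. Norm equivalence ($P\succ 0$) transfers all of this to the Euclidean norm. This gives (ii) immediately, with boundedness following from convergence. Here $y^k = [y_u^k, y_v^k]$, and only $y_u^k$ is computed.

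\emph{Residual translation (main obstacle).} Set $\tilde w^k := Pw^k$, so that $\tilde w^k = (a^k + Bu^k + v^k,\ c^k - u^k)$ with $a^k\in Au^k$ and $c^k\in C^{-1}v^k$; note $\norms{\tilde w^k}\le\norms{P}^{1/2}\norms{w^k}_P$. I will prove a bound
\begin{equation*}
\norms{\Gc_{\sigma^{-1}}u^k} \le c\,\norms{\tilde w^k},
\end{equation*}
where $\Gc_{\sigma^{-1}}$ is evaluated with $\zeta = a^k$, and $c$ depends only on $\sigma$. The argument has three steps. First, $v^k\in C(c^k)$, where $c^k = u^k + \tilde w^k_v$. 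Second, let $q := J_{\sigma^{-1}C}(u^k - \sigma^{-1}(a^k+Bu^k))$. Then $u^k - \sigma^{-1}(a^k+Bu^k) - q \in \sigma^{-1}Cq$. Writing $a^k+Bu^k = \tilde w^k_u - v^k$, this reads $u^k - q + \sigma^{-1}v^k - \sigma^{-1}\tilde w^k_u \in \sigma^{-1}Cq$. Third, the pair $(c^k, \sigma^{-1} v^k)$ lies in $\gra{\sigma^{-1}C}$, and comparing it with the pair at $q$ via monotonicity of $C$ yields
\begin{equation*}
\iprods{u^k - q - \sigma^{-1}\tilde w^k_u,\ q - c^k}\ge 0.
\end{equation*}
Since $u^k - c^k = -\tilde w^k_v$, this gives $\norms{q-c^k}\le \norms{\tilde w^k_v}+\sigma^{-1}\norms{\tilde w^k_u}$, and therefore $\norms{u^k-q}\le 2\norms{\tilde w^k_v}+\sigma^{-1}\norms{\tilde w^k_u}$. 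Multiplying by $\sigma$ gives $\norms{\Gc_{\sigma^{-1}}u^k}^2 \le c\,\norms{w^k}_P^2$. The $\BigOs{1/k^2}$ and $\SmallOs{1/k^2}$ rates then follow from Theorem~\ref{th:FKM1_convergence}, which proves (i). I expect this residual comparison, and the careful bookkeeping that \eqref{eq:FKM_3OP_scheme} matches \eqref{eq:FKM_scheme} in the $P$-metric, to be the only nonroutine parts of the proof.
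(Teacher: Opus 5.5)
Your proposal is correct and matches the paper's proof essentially step for step: you treat \eqref{eq:FKM_3OP_scheme} as \eqref{eq:FKM_scheme} with $\eta=1$ applied to \eqref{eq:3op_inclusion3} in the $P$-geometry, reduce the parameter condition to $1\le \frac{1}{2\sqrt{1+\omega}L_G}$, transfer the rates of Theorem~\ref{th:FKM1_convergence} through norm equivalence, and bound $\norms{\Gc_{\sigma^{-1}}u^k}\le 2\sigma\norms{\vartheta^k-u^k}+\norms{\zeta^k+Bu^k+v^k}$ to get (i), with (ii) following from Theorem~\ref{th:FKM1_convergence}(iii). The only cosmetic difference is that you obtain this last bound directly from the monotonicity of $C$, whereas the paper uses the nonexpansiveness of $J_{\sigma^{-1}C}$ via $\vartheta^k=J_{\sigma^{-1}C}(\vartheta^k+\sigma^{-1}v^k)$; both give the same estimate.
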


\begin{proof}
	First, since $2\tau L_B\sqrt{1+\omega} + \tau \sigma \leq 1$, we can show that $1 \leq \frac{1 - \tau \sigma}{2\sqrt{1+\omega}\tau L_B} = \frac{1}{2\sqrt{1+\omega}L_G}$.
	Thus, all the conditions in \eqref{eq:FKM_3OP_th1_params} are satisfied for $\eta = 1$ by \eqref{eq:FKM_th1_params}.
	
	Now, for $x^k = [u^k,  v^k]$, by definition, we have $Gx^k = \begin{bmatrix}
		Bu^k \\ 0
	\end{bmatrix}$ and $Tx^k = \begin{bmatrix}
		Au^k + v^k \\ C^{-1}v^k - u^k
	\end{bmatrix}$.
	Take $\zeta^k \in Au^k$ and $\vartheta^k \in C^{-1}v^k$, then we have $\xi^k := \begin{bmatrix}
		\zeta^k + v^k \\ \vartheta^k - u^k
	\end{bmatrix} \in Tx^k$, and thus $Gx^k + \xi^k = \begin{bmatrix}
		\zeta^k + Bu^k + v^k \\ \vartheta^k - u^k
	\end{bmatrix}$, leading to
	\begin{equation}\label{eq:FKM_3OP_convergence_proof1}
		\norms{Gx^k + \xi^k}^2 = \norms{\zeta^k + Bu^k + v^k}^2 + \norms{\vartheta^k - u^k}^2.
	\end{equation}
	Since \eqref{eq:FKM_3OP_scheme} is equivalent to \eqref{eq:FKM_scheme} (or \eqref{eq:FKM_scheme2}), applying \eqref{eq:FKM_th1_BigO_rates} and \eqref{eq:FKM_th1_SmallO_rates}, we have
	\begin{equation*}
		\arraycolsep=0.2em
		\begin{array}{ccc}
			\norms{P^{-1}Gx^k + P^{-1}\xi^k}_P^2 = \BigOs{1/k^2} \quad \text{and} \quad \norms{P^{-1}Gx^k + P^{-1}\xi^k}_P^2 = \SmallOs{1/k^2}.
		\end{array}
	\end{equation*}
	Applying Lemma~\ref{le:norm_equivalence}, we can derive that
	\begin{equation*}
		\arraycolsep=0.2em
		\begin{array}{ccc}
			\norms{Gx^k + \xi^k}^2 = \BigOs{1/k^2} \quad \text{and} \quad \norms{Gx^k + \xi^k}^2 = \SmallOs{1/k^2}.
		\end{array}
	\end{equation*}
	Combining these results and \eqref{eq:FKM_3OP_convergence_proof1}, we obtain	
	\begin{equation}\label{eq:FKM_3OP_convergence_proof2}
		\arraycolsep=0.2em
		\begin{array}{ccc}
			\norms{\zeta^k + Bu^k + v^k}^2 = \BigOs{1/k^2} \quad &\text{and}& \quad \norms{\zeta^k + Bu^k + v^k}^2 = \SmallOs{1/k^2}, \vspace{1ex}\\
			\norms{\vartheta^k - u^k}^2 = \BigOs{1/k^2} \quad &\text{and}& \quad \norms{\vartheta^k - u^k}^2 = \SmallOs{1/k^2}.
		\end{array}
	\end{equation}
	Now, since $\vartheta^k \in C^{-1}v^k$, we have $v^k \in C\vartheta^k$, or equivalently, $\vartheta^k = J_{\sigma^{-1}C}(\vartheta^k + \sigma^{-1}v^k)$.
	Let us recall the FBS residual from \eqref{eq:FBS_residual3}:
	\begin{equation*}
		\arraycolsep=0.2em
		\begin{array}{lcl}
			\Gc_{\sigma^{-1}}u^k &:=& \sigma \left(u^k - J_{\sigma^{-1}C}(u^k - \sigma^{-1} (\zeta^k + Bu^k))\right) \vspace{1ex}\\
			&=& \sigma (u^k - \vartheta^k) + \sigma\left(\vartheta^k - J_{\sigma^{-1}C}(u^k - \sigma^{-1} (\zeta^k + Bu^k))\right).
		\end{array}
	\end{equation*}
	By the nonexpansiveness of $J_{\sigma^{-1}C}$ and the triangle inequality, we can easily show that
	\begin{equation*}
		\arraycolsep=0.2em
		\begin{array}{lcl}
			\norms{\vartheta^k - J_{\sigma^{-1}C}(u^k - \sigma^{-1} (\zeta^k + Bu^k))} &=& \norms{J_{\sigma^{-1}C}(\vartheta^k + \sigma^{-1}v^k) -  J_{\sigma^{-1}C}(u^k - \sigma^{-1} (\zeta^k + Bu^k))} \vspace{1ex}\\
			&\leq& \norms{\vartheta^k + \sigma^{-1}v^k - u^k + \sigma^{-1} (\zeta^k + Bu^k)} \vspace{1ex}\\
			&\leq& \norms{u^k - \vartheta^k} + \sigma^{-1}\norms{\zeta^k + Bu^k + v^k}.
		\end{array}
	\end{equation*}
	Using this bound and triangle inequality, we can derive that 
	\begin{equation*}
		\arraycolsep=0.2em
		\begin{array}{lcl}
			\norms{\Gc_{\sigma^{-1}}u^k} &\leq& 2\sigma \norms{u^k - \vartheta^k} + \norms{\zeta^k + Bu^k + v^k}.
		\end{array}
	\end{equation*}
	Combining this relation and \eqref{eq:FKM_3OP_convergence_proof2}, we get
	\begin{equation*} 
		\arraycolsep=0.2em
		\begin{array}{ccc}
			\norms{\Gc_{\sigma^{-1}}u^k}^2 = \BigOs{1/k^2} \quad &\text{and}& \quad \norms{\Gc_{\sigma^{-1}}u^k}^2 = \SmallOs{1/k^2}.
		\end{array}
	\end{equation*}
	Finally, let $x^{\star} = [u^{\star}, v^{\star}]$ be a solution of \eqref{eq:3op_inclusion3}.
	By the definitions of $G$ and $T$, we have
	\begin{equation}\label{eq:FKM_3OP_convergence_proof4}
		\arraycolsep=0.2em
		\begin{array}{ccc}
			0 \in Au^{\star} + Bu^{\star} + v^{\star} \quad &\text{and}& \quad 0 \in C^{-1}v^{\star} - u^{\star}.
		\end{array}
	\end{equation}
	From the second inclusion of \eqref{eq:FKM_3OP_convergence_proof4}, we get $v^{\star} \in Cu^{\star}$.
	Substituting this relation into the first inclusion of \eqref{eq:FKM_3OP_convergence_proof4} yields $0 \in Au^{\star} + Bu^{\star} + Cu^{\star}$, which implies that $u^{\star}$ is a solution to \eqref{eq:3op_inclusion}.
	Thus, the convergence of the iterates generated by \eqref{eq:FKM_3OP_scheme} follows directly from part (iii) of Theorem~\ref{th:FKM1_convergence}.
\end{proof}

\begin{theorem}[\textbf{Co-coercive setting}]\label{th:FKM_3OP_convergence2}
	For Inclusion \eqref{eq:3op_inclusion}, suppose that $A$ and $C$ are maximally monotone  and $B$ is $\frac{1}{L_B}$-co-coercive.
	Suppose further that $\zer{A+B+C} \ne \emptyset$ and $t_0$ is given by \eqref{eq:t0_cond2}.
	Let $\sets{(u^k, \hat{u}^k, z_u^k)}$ and $\sets{(v^k, \hat{v}^k, z_v^k)}$ be generated by \eqref{eq:FKM_3OP_scheme2} where the parameters are chosen as
	\begin{equation}\label{eq:FKM_3OP_th2_params}
		\arraycolsep=0.2em
		\begin{array}{c}
			r > 1, \quad 0 < \nu < r - 1, \quad \frac{1}{2}\tau L_B + \tau \sigma < 1, \quad t_{k} := k + t_0, \quad \text{and} \quad \nu_k := \frac{\nu(t_k - 1)}{t_k}.
		\end{array}
	\end{equation}
	Then, the following statements hold. 
	\begin{itemize}
	\itemsep=-0.3em
	\item[$\mathrm{(i)}$] $\mathrm{(\text{\textbf{The $\BigOs{1/k^2}$ and $\SmallOs{1/k^2}$ rates}})}$ 
	For $\Gc_{\sigma^{-1}}$ defined by \eqref{eq:FBS_residual3}, $\norms{\Gc_{\sigma^{-1}}u^k}^2$ converges to zero at the rates of $\BigOs{1/k^2}$ and $\SmallOs{1/k^2}$.
	
	\item[$\mathrm{(ii)}$] $\mathrm{(\text{\textbf{The convergence of the iterates}})}$ 
		The iterate sequences $\sets{u^k}$, $\sets{\hat{u}^k}$, and $\sets{z_u^k}$ all converge  to $u^{\star}$ and the iterate sequences $\sets{v^k}$, $\sets{\hat{v}^k}$, and $\sets{z_v^k}$ all converge to $v^{\star}$, where $u^{\star} \in \zer{A+B+C}$.
		Consequently, all these sequences are bounded.
	\end{itemize}
\end{theorem}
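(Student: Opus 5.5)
The argument mirrors the proof of Theorem~\ref{th:FKM_3OP_convergence}, with Theorem~\ref{th:FKM2_convergence} and Corollary~\ref{co:FKM_scheme2+_convergence} replacing Theorem~\ref{th:FKM1_convergence}. First I will check that \eqref{eq:FKM_3OP_scheme2} is exactly \eqref{eq:FKM2_scheme_reformulation} (i.e.\ \eqref{eq:FKM_scheme} with $\gamma_k=\beta_k=0$ and $\eta=1$) applied to the preconditioned inclusion \eqref{eq:3op_inclusion3}. This is done in the Hilbert space $(\R^{2p},\iprods{\cdot,\cdot}_P)$, with $P^{-1}G$ and $P^{-1}T$ in place of $G$ and $T$. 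The derivation in Steps 1--5 preceding \eqref{eq:FKM_3OP_scheme} already shows this identity once $\beta_k=\gamma_k=0$. In that case $y^k=\hat{x}^k$, which is why $B$ is evaluated at $\hat{u}^k$.

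Next I verify Assumption~\ref{as:FKM2_cocoercive} in the $P$-geometry.
\begin{itemize}
\item By Lemma~\ref{le:3op_inclusion_assumptions}(i), $P^{-1}T$ is maximally monotone with respect to $\iprods{\cdot,\cdot}_P$, so $\rho=0$.
\item By Lemma~\ref{le:3op_inclusion_assumptions}(iii), $P^{-1}G$ is $\frac{1}{L_G}$-co-coercive with $L_G=\frac{\tau L_B}{1-\tau\sigma}$.
\end{itemize}
With $\rho=0$, Lemma~\ref{le:FB_cocoercive} gives $\bar\beta=\frac{\eta(4-L_G\eta)}{4}$, valid for $0<\eta<4/L_G$. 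The condition $\eta<2\bar\beta$ in \eqref{eq:FKM_th2_params} then becomes $\eta<2\eta-\frac{L_G\eta^2}{2}$, i.e.\ $L_G\eta<2$. At $\eta=1$ this reads $\tau L_B<2(1-\tau\sigma)$, i.e.\ $\frac12\tau L_B+\tau\sigma<1$, which is exactly the stepsize hypothesis in \eqref{eq:FKM_3OP_th2_params}. The remaining requirements are $2\rho=0<\eta=1$, $0<\nu<(r-1)\eta=r-1$, and $t_0$ from \eqref{eq:t0_cond2}, all of which are hypotheses. Theorem~\ref{th:FKM2_convergence} therefore applies. It yields $\norms{P^{-1}Gx^k+P^{-1}\xi^k}_P^2=\BigOs{1/k^2}$ and $\SmallOs{1/k^2}$ for some $\xi^k\in Tx^k$, together with convergence of $x^k,y^k,z^k$ to some $x^\star=[u^\star,v^\star]\in\zer{P^{-1}(G+T)}$.

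Finally I transfer these statements to the original quantities.
\begin{itemize}
\item By Lemma~\ref{le:norm_equivalence} (equivalence of $\norms{\cdot}_P$, $\norms{\cdot}_{P^{-1}}$ and $\norms{\cdot}$ when $\tau\sigma<1$), $\norms{Gx^k+\xi^k}^2=\norms{\zeta^k+Bu^k+v^k}^2+\norms{\vartheta^k-u^k}^2$ obeys the same $\BigOs{1/k^2}$ and $\SmallOs{1/k^2}$ rates. Here $\zeta^k\in Au^k$ and $\vartheta^k\in C^{-1}v^k$.
\item Exactly as in the proof of Theorem~\ref{th:FKM_3OP_convergence}, write $\vartheta^k=J_{\sigma^{-1}C}(\vartheta^k+\sigma^{-1}v^k)$. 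Nonexpansiveness of $J_{\sigma^{-1}C}$ then gives $\norms{\Gc_{\sigma^{-1}}u^k}\le 2\sigma\norms{u^k-\vartheta^k}+\norms{\zeta^k+Bu^k+v^k}$, which proves (i).
\item For (ii), the convergence $x^k\to x^\star$ and $z^k\to x^\star$, and $y^k=\hat{x}^k\to x^\star$, split into their $u$- and $v$-components. Moreover $0\in Au^\star+Bu^\star+v^\star$ and $v^\star\in Cu^\star$ give $u^\star\in\zer{A+B+C}$.
\end{itemize}

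The main obstacle I expect is the bookkeeping of the $P$-geometry. Two points need care. First, Theorem~\ref{th:FKM2_convergence}, stated in the Euclidean norm, must transfer verbatim to the inner product $\iprods{\cdot,\cdot}_P$; this holds because its proof only uses inner-product identities. Second, the co-coercivity constant of the FBS residual must be computed in that metric, which is why the stepsize threshold $L_G\eta<2$ has to be derived carefully.
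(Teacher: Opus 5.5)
Your proposal is correct and follows the paper's proof. The paper likewise observes that $\frac12\tau L_B+\tau\sigma<1$ is exactly $\eta=1<2/L_G$, i.e.\ the condition $2\rho<\eta<2\bar\beta$ with $\rho=0$, so that Theorem~\ref{th:FKM2_convergence} applies to \eqref{eq:3op_inclusion3}. It then defers the transfer to $\Gc_{\sigma^{-1}}$ and the convergence of the iterates to the argument of Theorem~\ref{th:FKM_3OP_convergence}; your explicit $\bar\beta$ computation and your remark that the analysis carries over to $\iprods{\cdot,\cdot}_P$ simply spell out what the paper leaves implicit.
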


\begin{proof}
	Since $\frac{1}{2}\tau L_B + \tau \sigma < 1$, we have $1 < \frac{2(1 - \tau\sigma)}{\tau L_B} = \frac{2}{L_G}$.
	Thus, all the conditions in \eqref{eq:FKM_3OP_th2_params} are satisfied for $\eta = 1$ by \eqref{eq:FKM_th2_params}.
	The remainder of this proof is very similar to that of Theorem~\ref{th:FKM_3OP_convergence}.
	Thus, we omit it here.
\end{proof}

\beforesec
\section{Distributed FFP Algorithms with ND Stepsizes}\label{sec:ND_DFFP_alg}
\aftersec
The goal of this section is to customize our methods \eqref{eq:FKM_scheme3} and \eqref{eq:FKM_scheme2+} to develop distributed FFP algorithms with network-dependent (ND) stepsize for solving \eqref{eq:DGE}.
However, acceleration is obtained not by directly applying FFP algorithms to \eqref{eq:DGE}, but by first constructing an appropriate primal-dual/three-operator representation in which the network dependence can be separated from the local operator information.

\beforesubsec
\subsection{The reformulation of \eqref{eq:DGE}}\label{subsec:DN_reformulation}
\aftersubsec
Our first step is to reformulate \eqref{eq:DGE} into \eqref{eq:GE}.
First, we recast \eqref{eq:DGE} into the product space as
\begin{equation}\label{eq:GE2}
	0 \in \mbf{G}\mbf{u}^{\star} + \mbf{T}\mbf{u}^{\star} + \mbf{K}^{\top}\Nc_{\sets{\mbf{0}}}(\mbf{K}\mbf{u}^{\star}),
\end{equation}
where $\mbf{K} \in \R^{n\times n}$ such that $\mathrm{Null}(\mbf{K}) = \mathrm{span}\set{\mbf{1}}$.

Next, let $\mbf{v}^{\star} \in \Nc_{\sets{0}}(\mbf{K}\mbf{u}^{\star})$.
Then, we have $\mbf{K}\mbf{u}^{\star} = 0$.
Thus, \eqref{eq:GE2} is equivalent to 
\begin{equation}\label{eq:opt_cond1}
	\arraycolsep=0.2em
	\left\{\begin{array}{lcl}
		0 & \in & \mbf{G}\mbf{u}^{\star} + \mbf{T}\mbf{u}^{\star} + \mbf{K}^{\top}\mbf{v}^{\star}, \vspace{1ex}\\
		0 & = & \mbf{K}\mbf{u}^{\star}.
	\end{array}\right.
\end{equation}
Now, for some $\tau > 0$ and $\sigma > 0$, we define 
\begin{equation}\label{eq:DFKM_components}
\arraycolsep=0.2em
\begin{array}{lcl}
	\mbf{x} := \begin{bmatrix} \mbf{u} \\ \mbf{v} \end{bmatrix}, \quad 
	\mbb{G}\mbf{x} := \begin{bmatrix} \mbf{G}\mbf{u}\\ 0 \end{bmatrix}, \quad 
	\mbb{T}\mbf{x} := \begin{bmatrix} \mbf{T}\mbf{u} + \mbf{K}^{\top}\mbf{v}  \\ -\mbf{K}\mbf{u} \end{bmatrix}, \quad \text{and} \quad
	\mbb{P} := \begin{bmatrix} \frac{1}{\tau}\Id & \mbf{K}^\top \\ \mbf{K} & \frac{1}{\sigma}\Id \end{bmatrix}.
\end{array}
\end{equation}
Then, the system  \eqref{eq:opt_cond1} can be rewritten in the form of \eqref{eq:3op_inclusion3} (a special case of \eqref{eq:GE}), i.e.:
\begin{equation}\label{eq:precond_GE}
	0 \in \mbb{P}^{-1} \mbb{G}\mbf{x}^{\star} + \mbb{P}^{-1} \mbb{T}\mbf{x}^{\star}.
\end{equation}
Finally, to measure the consensus violation of \eqref{eq:DGE}, we define the operator $\Pi := \Id - \frac{\boldsymbol{1}\boldsymbol{1}^\top}{n}$ measuring the consensus violation, i.e.:
\begin{equation*}
	\Pi {\mbf{u}} = {\mbf{u}} - \boldsymbol{1}\bar{u} = \begin{bmatrix}
		{u}_{[1]}^\top - \bar{u}^\top \\
		\vdots \\
		{u}_{[n]}^\top - \bar{u}^\top 
	\end{bmatrix}, \quad \text{where} \quad \bar{u} = \frac{1}{n}\sum_{i=1}^n {{u}}_{[i]}.
\end{equation*}
We can see that if $\Pi {\mbf{u}}^{\star} = 0$, then ${u}^{\star}_{[1]} = \cdots = {u}_{[n]}^{\star} = \bar{u}^{\star}$, i.e., the consensus $\bar{u}^{\star}$ is achieved.

The following lemma from \cite{malitsky2026firstorder} links consensus to a solution of \eqref{eq:GE2}.

\begin{lemma}[\cite{malitsky2026firstorder}, Proposition 7]\label{le:consensus}
$\mbf{u}^{\star} = (u_{[1]}^{\star}, \cdots, u_{[n]}^{\star})^\top$ is a solution of \eqref{eq:GE2} if and only if $u_{[1]}^{\star} = \cdots = u_{[n]}^{\star}$ and $u^{\star} := u_{[1]}^{\star}$ is a solution of \eqref{eq:DGE}.
\end{lemma}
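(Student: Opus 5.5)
We prove the two implications separately, working in the product space $\R^{n\times p}$. The key fact is that $\mbf{K}$ acts on $\mbf{u}$ row-wise, i.e., as $\mbf{K}\kron\Id_p$, and $\mathrm{Null}(\mbf{K}) = \mathrm{span}\{\mbf{1}\}$. Hence $\mathrm{Null}(\mbf{K}\kron\Id_p) = \Dc$ and $\range{\mbf{K}^\top\kron\Id_p} = \Dc^{\perp} = \{\mbf{w} : \sum_{i=1}^n w_{[i]} = 0\}$.

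We first compute the last term in \eqref{eq:GE2}. Since $\Nc_{\{\mbf{0}\}}(\mbf{y})$ is nonempty only when $\mbf{y} = \mbf{0}$, where it equals the whole space, we get
\[
\mbf{K}^\top\Nc_{\{\mbf{0}\}}(\mbf{K}\mbf{u}) = \Dc^\perp \text{ if } \mbf{u} \in \Dc, \qquad \mbf{K}^\top\Nc_{\{\mbf{0}\}}(\mbf{K}\mbf{u}) = \emptyset \text{ otherwise}.
\]
In other words, $\mbf{K}^\top\Nc_{\{\mbf{0}\}}\circ\mbf{K} = \Nc_{\Dc}$. The identity $\range{\mbf{K}^\top} = \mathrm{Null}(\mbf{K})^\perp$ holds because we are in finite dimensions.

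\emph{($\Rightarrow$).} Let $\mbf{u}^\star$ solve \eqref{eq:GE2}. The right-hand side must be nonempty, so $\mbf{u}^\star \in \Dc$, i.e., $u^\star_{[1]} = \cdots = u^\star_{[n]} =: u^\star$. Moreover, there exist $\xi_i \in T_i u^\star$ and $\mbf{w} \in \Dc^\perp$ such that $G_i u^\star + \xi_i + w_{[i]} = 0$ for all $i$. Summing over $i$ and using $\sum_i w_{[i]} = 0$ gives $0 = \sum_i (G_i u^\star + \xi_i) \in \sum_i (G_i + T_i)u^\star$. Thus $u^\star$ solves \eqref{eq:DGE}.

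\emph{($\Leftarrow$).} Suppose all rows equal $u^\star$ and $u^\star$ solves \eqref{eq:DGE}. Then there are $\xi_i \in T_i u^\star$ with $\sum_i (G_i u^\star + \xi_i) = 0$. Set $w_{[i]} := -(G_i u^\star + \xi_i)$. Then $\mbf{w} \in \Dc^\perp$, so $\mbf{w} = \mbf{K}^\top\mbf{v}$ for some $\mbf{v}$. Since $\mbf{K}\mbf{u}^\star = 0$, this $\mbf{v}$ belongs to $\Nc_{\{\mbf{0}\}}(\mbf{K}\mbf{u}^\star)$. Therefore $0 \in \mbf{G}\mbf{u}^\star + \mbf{T}\mbf{u}^\star + \mbf{K}^\top\Nc_{\{\mbf{0}\}}(\mbf{K}\mbf{u}^\star)$.

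The only delicate step is the range identity $\range{\mbf{K}^\top\kron\Id_p} = \Dc^\perp$. It follows from the finite-dimensional relation $\range{\mbf{K}^\top} = \mathrm{Null}(\mbf{K})^\perp = \mbf{1}^\perp$, applied columnwise to each of the $p$ coordinates. No monotonicity or constraint qualification is needed, because $\Nc_{\{\mbf{0}\}}$ is explicit and the sum rule is just the definition of the Minkowski sum of sets.
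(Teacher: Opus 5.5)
Your proof is correct and complete. You reduce the coupling term via $\mbf{K}^\top\Nc_{\{\mbf{0}\}}(\mbf{K}\mbf{u}) = \Nc_{\Dc}(\mbf{u})$, using $\mathrm{Null}(\mbf{K}\kron\Id_p) = \Dc$ and $\range{\mbf{K}^\top\kron\Id_p} = \mathrm{Null}(\mbf{K}\kron\Id_p)^{\perp} = \Dc^{\perp}$. You then sum the block equations for one direction and lift the residual $-(G_iu^\star+\xi_i)$ back through $\mbf{K}^\top$ for the other.

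The paper does not prove this lemma itself; it imports it from \cite{malitsky2026firstorder}, so there is no in-paper argument to compare against. As you note, no monotonicity is needed, which fits the lemma's purely structural use in Theorem~\ref{th:DFKM_convergence}(i).
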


\beforesubsec
\subsection{Derivation of the algorithm}\label{subsec:DN_alg_derivation}
\aftersubsec
Our derivation consists of the following steps.
\begin{compactitem}
\item\textbf{Step 1:} Applying directly \eqref{eq:FKM_scheme3} with  stepsize $\eta = 1$ to \eqref{eq:precond_GE}, we obtain
\begin{equation}\label{eq:DFKM_scheme0}
\arraycolsep=0.2em
\left\{
\begin{array}{lcl}
	\mbf{x}^{k+1} &=& \mbf{y}^k - \hat{\mbf{w}}^{k+1} + \gamma_k \hat{\mbf{w}}^k, \quad \boldsymbol{\xi}^{k+1} \in \mbb{P}^{-1}\mbb{T}\mbf{x}^{k+1}, \vspace{1ex}\\
	\hat{\mbf{w}}^{k+1} &=& \mbb{P}^{-1} \mbb{G}\mbf{y}^k + \boldsymbol{\xi}^{k+1}, \vspace{1ex}\\
	\mbf{y}^{k+1} &=& \mbf{y}^k + \theta_k \left(\mbf{y}^k - \mbf{y}^{k-1} \right) - \hat{\eta}_k\hat{\mbf{w}}^{k+1} + \hat{\gamma}_k \hat{\mbf{w}}^k  - \hat{\lambda}_k \hat{\mbf{w}}^{k-1}.
\end{array}
\right.
\end{equation}
\item\textbf{Step 2:}
Let $\mbf{x}^k :=  [\mbf{u}^k,  \mbf{v}^k]$ and  $\mbf{y}^k := [\hat{\mbf{u}}^k, \hat{\mbf{v}}^k]$. 
Since $\boldsymbol{\xi}^{k+1} \in \mbb{P}^{-1}\mbb{T}\mbf{x}^{k+1}$, we have $\mbb{P}\boldsymbol{\xi}^{k+1} \in \mbb{T}\mbf{x}^{k+1}$. 
From the second line of \eqref{eq:DFKM_scheme0}, we have $\mbb{P}\hat{\mbf{w}}^{k+1} = \mbb{G}\mbf{y}^k + \mbb{P}\boldsymbol{\xi}^{k+1}$.
From the first line of \eqref{eq:DFKM_scheme0}, we can derive that $\mbb{P} \hat{\mbf{w}}^{k+1} = \mbb{P}\mbf{y}^k - \mbb{P}\mbf{x}^{k+1} + \gamma_k \mbb{P}\hat{\mbf{w}}^k$.
Combining these two relations and $\mbb{P}\boldsymbol{\xi}^{k+1} \in \mbb{T}\mbf{x}^{k+1}$, we obtain
\begin{equation*} 
\arraycolsep=0.2em
\begin{array}{lcl}
	\mbb{P}\mbf{y}^k - \mbb{G}\mbf{y}^k + \gamma_k \mbb{P}\hat{\mbf{w}}^k &\in& \mbb{P}\mbf{x}^{k+1} + \mbb{T}\mbf{x}^{k+1}.
\end{array}
\end{equation*}
Using the definition of $\mbb{G}$, $\mbb{T}$, and $\mbb{P}$, we can derive from the last inclusion that
\begin{equation}\label{eq:DFKM_derivation1}
	\arraycolsep=0.2em
	\left\{
	\begin{array}{lcl}
		\frac{1}{\tau} \hat{\mbf{u}}^k + \mbf{K}^\top \hat{\mbf{v}}^k - \mbf{G}\hat{\mbf{u}}^k + \gamma_k (\frac{1}{\tau}\hat{\mbf{w}}_u^k + \mbf{K}^\top \hat{\mbf{w}}_v^k) &\in& \frac{1}{\tau}\mbf{u}^{k+1} + \mbf{T}\mbf{u}^{k+1} + 2 \mbf{K}^\top \mbf{v}^{k+1}, \vspace{1ex}\\
		\mbf{K} \hat{\mbf{u}}^k + \frac{1}{\sigma} \hat{\mbf{v}}^k + \gamma_k (\mbf{K} \hat{\mbf{w}}_u^k + \frac{1}{\sigma}\hat{\mbf{w}}_v^k) &=& \frac{1}{\sigma}\mbf{v}^{k+1}.
	\end{array}
	\right.
\end{equation}
\item\textbf{Step 3:}
Denote $\mbf{s}^k := \hat{\mbf{u}}^k + \gamma_k \hat{\mbf{w}}_u^k$.
Then, we can derive from the second line of \eqref{eq:DFKM_derivation1} that 
\begin{equation*} 
	\arraycolsep=0.2em
	\begin{array}{lcl}	
		\mbf{v}^{k+1} &=& \hat{\mbf{v}}^k + \gamma_k \hat{\mbf{w}}_v^k + \sigma \mbf{K}\mbf{s}^k.
	\end{array}
\end{equation*}
Substituting this into the first line of \eqref{eq:DFKM_derivation1}, then rearranging the result, we obtain
\begin{equation*} 
	\arraycolsep=0.2em
	\begin{array}{lcl}	
		\hat{\mbf{u}}^k - \tau \mbf{K}^\top \hat{\mbf{v}}^k - \tau \mbf{G}\hat{\mbf{u}}^k + \gamma_k (\hat{\mbf{w}}_u^k - \tau \mbf{K}^\top \hat{\mbf{w}}_v^k) -  2 \tau \sigma \mbf{K}^\top \mbf{K}\mbf{s}^k&\in& \mbf{u}^{k+1} + \tau \mbf{T}\mbf{u}^{k+1},
	\end{array}
\end{equation*}
which is equivalent to 
\begin{equation*} 
	\arraycolsep=0.2em
	\begin{array}{lcl}	
		\mbf{u}^{k+1} &=& J_{\tau \mbf{T}} \left(\hat{\mbf{u}}^k - \tau \mbf{K}^\top \hat{\mbf{v}}^k - \tau \mbf{G}\hat{\mbf{u}}^k + \gamma_k (\hat{\mbf{w}}_u^k - \tau \mbf{K}^\top \hat{\mbf{w}}_v^k) -  2\tau \sigma \mbf{K}^\top \mbf{K}\mbf{s}^k\right), \vspace{1ex}\\
		&=& J_{\tau \mbf{T}} \left(\mbf{s}^k - \tau \mbf{K}^\top \hat{\mbf{v}}^k - \tau \mbf{G}\hat{\mbf{u}}^k - \tau \gamma_k  \mbf{K}^\top \hat{\mbf{w}}_v^k -  2 \tau \sigma \mbf{K}^\top \mbf{K}\mbf{s}^k\right).
	\end{array}
\end{equation*}
\item\textbf{Step 4:}
Putting the relations of $\mbf{s}^k$, $\mbf{v}^{k+1}$, $\mbf{u}^{k+1}$, and \eqref{eq:DFKM_scheme0} together, we obtain
\begin{equation}\label{eq:DFKM_derivation2}
	\arraycolsep=0.2em
	\left\{
	\begin{array}{lcl}	
		\mbf{s}^k &:=& \hat{\mbf{u}}^k  + \gamma_k \hat{\mbf{w}}_u^k, \vspace{1ex}\\
		\mbf{v}^{k+1} &:=& \hat{\mbf{v}}^k + \gamma_k \hat{\mbf{w}}_v^k + \sigma \mbf{K}\mbf{s}^k, \vspace{1ex}\\
		\mbf{u}^{k+1} &:=& J_{\tau \mbf{T}} \left(\mbf{s}^k - \tau \mbf{K}^\top \hat{\mbf{v}}^k - \tau \mbf{G}\hat{\mbf{u}}^k - \tau \gamma_k  \mbf{K}^\top \hat{\mbf{w}}_v^k -  2\tau \sigma \mbf{K}^\top \mbf{K}\mbf{s}^k\right), \vspace{1ex}\\
		\hat{\mbf{w}}_u^{k+1} &:=& \hat{\mbf{u}}^k - \mbf{u}^{k+1}  + \gamma_k \hat{\mbf{w}}_u^k, \vspace{1ex}\\
		\hat{\mbf{w}}_v^{k+1} &:=& \hat{\mbf{v}}^k - \mbf{v}^{k+1}  + \gamma_k \hat{\mbf{w}}_v^k, \vspace{1ex}\\
		\hat{\mbf{u}}^{k+1} &:=& \hat{\mbf{u}}^k + \theta_k \left(\hat{\mbf{u}}^k - \hat{\mbf{u}}^{k-1} \right) - \hat{\eta}_k\hat{\mbf{w}}_u^{k+1} + \hat{\gamma}_k \hat{\mbf{w}}_u^k  - \hat{\lambda}_k \hat{\mbf{w}}_u^{k-1}, \vspace{1ex}\\
		\hat{\mbf{v}}^{k+1} &:=& \hat{\mbf{v}}^k + \theta_k \left(\hat{\mbf{v}}^k - \hat{\mbf{v}}^{k-1} \right) - \hat{\eta}_k\hat{\mbf{w}}_v^{k+1} + \hat{\gamma}_k \hat{\mbf{w}}_v^k  - \hat{\lambda}_k \hat{\mbf{w}}_v^{k-1}.
	\end{array}
	\right.
\end{equation}
Substituting $\mbf{v}^{k+1} = \hat{\mbf{v}}^k + \gamma_k \hat{\mbf{w}}_v^k + \sigma \mbf{K}\mbf{s}^k$ into the fifth line of \eqref{eq:DFKM_derivation2}, we get
\begin{equation*} 
	\arraycolsep=0.2em
	\begin{array}{lcl}	
		\hat{\mbf{w}}_v^{k+1} &=& -\sigma \mbf{K}\mbf{s}^k.
	\end{array}
\end{equation*}
Putting this expression into the third line of \eqref{eq:DFKM_derivation2}, we obtain
\begin{equation*} 
	\arraycolsep=0.2em
	\begin{array}{lcl}	
		\mbf{u}^{k+1} &:=& J_{\tau \mbf{T}} \left(\mbf{s}^k - \tau \mbf{K}^\top \hat{\mbf{v}}^k - \tau \mbf{G}\hat{\mbf{u}}^k + \tau \sigma \gamma_k  \mbf{K}^\top \mbf{K}\mbf{s}^{k-1} -  2\tau \sigma \mbf{K}^\top \mbf{K}\mbf{s}^k\right), \vspace{1ex}\\
	\end{array}
\end{equation*}
Using the transformation $\tilde{\mbf{v}}^k := \mbf{K}^\top \hat{\mbf{v}}^k$ and eliminating $\mbf{v}^k$ and $\hat{\mbf{w}}_v^k$ from \eqref{eq:DFKM_derivation2}, we have
\begin{equation*} 
	\arraycolsep=0.2em
	\left\{
	\begin{array}{lcl}	
		\mbf{s}^k &:=& \hat{\mbf{u}}^k  + \gamma_k \hat{\mbf{w}}_u^k, \vspace{1ex}\\
		\mbf{u}^{k+1} &:=& J_{\tau \mbf{T}} \left(\mbf{s}^k - \tau \tilde{\mbf{v}}^k - \tau \mbf{G}\hat{\mbf{u}}^k + \tau \sigma \gamma_k  \mbf{K}^\top \mbf{K}\mbf{s}^{k-1} -  2 \tau \sigma \mbf{K}^\top \mbf{K}\mbf{s}^k\right), \vspace{1ex}\\
		\hat{\mbf{w}}_u^{k+1} &:=& \hat{\mbf{u}}^k - \mbf{u}^{k+1} + \gamma_k \hat{\mbf{w}}_u^k, \vspace{1ex}\\
		\hat{\mbf{u}}^{k+1} &:=& \hat{\mbf{u}}^k + \theta_k \left(\hat{\mbf{u}}^k - \hat{\mbf{u}}^{k-1} \right) - \hat{\eta}_k\hat{\mbf{w}}_u^{k+1} + \hat{\gamma}_k \hat{\mbf{w}}_u^k  - \hat{\lambda}_k \hat{\mbf{w}}_u^{k-1}, \vspace{1ex}\\
		\tilde{\mbf{v}}^{k+1} &:=& \tilde{\mbf{v}}^k + \theta_k \left(\tilde{\mbf{v}}^k - \tilde{\mbf{v}}^{k-1} \right) + \sigma \mbf{K}^\top \mbf{K} (\hat{\eta}_k\mbf{s}^k - \hat{\gamma}_k \mbf{s}^{k-1}  + \hat{\lambda}_k \mbf{s}^{k-2}).
	\end{array}
	\right.
\end{equation*}
\item\textbf{Step 5:}
Observe that $\mbf{K}^\top \mbf{K}$ is only multiplied by $\mbf{s}$.
Therefore, if we define $\mbf{d}^k := \mbf{K}^\top \mbf{K} \mbf{s}^k$, then the last scheme becomes
\begin{equation*} 
	\arraycolsep=0.2em
	\left\{
	\begin{array}{lcl}	
		\mbf{s}^k &:=& \hat{\mbf{u}}^k  + \gamma_k \hat{\mbf{w}}_u^k, \vspace{1ex}\\
		\mbf{d}^k &:=& \mbf{K}^\top \mbf{K} \mbf{s}^k, \vspace{1ex}\\
		\mbf{u}^{k+1} &:=& J_{\tau \mbf{T}} \left(\mbf{s}^k - \tau \tilde{\mbf{v}}^k - \tau \mbf{G}\hat{\mbf{u}}^k + \tau \sigma \gamma_k  \mbf{d}^{k-1} -  2\tau \sigma \mbf{d}^k\right), \vspace{1ex}\\
		\hat{\mbf{w}}_u^{k+1} &:=& \hat{\mbf{u}}^k - \mbf{u}^{k+1}  + \gamma_k \hat{\mbf{w}}_u^k, \vspace{1ex}\\
		\hat{\mbf{u}}^{k+1} &:=& \hat{\mbf{u}}^k + \theta_k \left(\hat{\mbf{u}}^k - \hat{\mbf{u}}^{k-1} \right) - \hat{\eta}_k\hat{\mbf{w}}_u^{k+1} + \hat{\gamma}_k \hat{\mbf{w}}_u^k  - \hat{\lambda}_k \hat{\mbf{w}}_u^{k-1}, \vspace{1ex}\\
		\tilde{\mbf{v}}^{k+1} &:=& \tilde{\mbf{v}}^k + \theta_k \left(\tilde{\mbf{v}}^k - \tilde{\mbf{v}}^{k-1} \right) + \sigma (\hat{\eta}_k\mbf{d}^k - \hat{\gamma}_k \mbf{d}^{k-1}  + \hat{\lambda}_k \mbf{d}^{k-2}).
	\end{array}
	\right.
\end{equation*}
\item\textbf{Step 6:}
We choose the matrix $\mbf{K}$ such that $\mbf{K}^\top \mbf{K} = \frac{\Id - W}{2}$.
In this case, the condition $\sigma\tau\norms{\mbf{K}}^2 < 1$ is equivalent to $\sigma\tau\norms{\Id - W} < 2$.

\item\textbf{Initialization:}
First, starting from $\mbf{u}^0 \in \dom{\mbf{T}}$, take $\boldsymbol{\xi}^0 \in \mbf{T}\mbf{u}^0$ arbitrarily, and set $\mbf{w}_u^0 := \mbf{G}\mbf{u}^0 + \boldsymbol{\xi}^0$.
We need to guarantee that $\mbb{P}\hat{\mbf{w}}^0 \in \mbb{G}\mbf{x}^0 + \mbb{T}\mbf{x}^0$, where $\hat{\mbf{w}}^0 = [\hat{\mbf{w}}_u^0; \hat{\mbf{w}}_v^0]$ and $\mbf{x}^0 = [\mbf{u}^0, \mbf{v}^0]$. 
If we choose $\hat{\mbf{w}}_u^0 := \tau \mbf{w}_u^0$, then this condition implies $\hat{\mbf{w}}_v^0 = \mbf{v}^0$ and $\mbf{v}^0 = -\sigma\mbf{K}(\mbf{u}^0 + \tau \mbf{w}_u^0)$.

Second, since $\mbf{y}^{-1} = \mbf{x}^0$, we have $\hat{\mbf{u}}^{-1} = \mbf{u}^0$ and $\hat{\mbf{v}}^{-1} = \mbf{v}^0$.
Next, since $\mbf{y}^0 = \mbf{x}^0 - (\gamma_0 -  \beta_0)\hat{\mbf{w}}^0$, we have $\hat{\mbf{u}}^0 = \mbf{u}^0 - (\gamma_0-\beta_0)\tau\mbf{w}_u^0$ and $\hat{\mbf{v}}^0 = \mbf{v}^0 - (\gamma_0-\beta_0)\mbf{v}^0 = (1 + \beta_0 - \gamma_0) \mbf{v}^0$.

Third, let us choose $\mbf{d}^{-1} := \frac{1}{2}(\Id - \mbf{W})(\mbf{u}^0 + \tau \mbf{w}_u^0)$.
Then, since $\mbf{K}^\top \mbf{K} = \frac{\Id - \mbf{W}}{2}$, we have $\tilde{\mbf{v}}^{-1} = \mbf{K}^\top \hat{\mbf{v}}^{-1} = \mbf{K}^\top \mbf{v}^0 = -\sigma\mbf{K}^\top \mbf{K}(\mbf{u}^0 + \tau \mbf{w}_u^0) = -\sigma\mbf{d}^{-1}$.
Hence, $\tilde{\mbf{v}}^0 = \mbf{K}^\top \hat{\mbf{v}}^0  = (1 + \beta_0 - \gamma_0) \mbf{K}^\top \mbf{v}^0 = -\sigma(1 + \beta_0 - \gamma_0)\mbf{d}^{-1}$.

Finally, we set $\hat{\mbf{w}}^{-1} = \hat{\mbf{w}}^0$, leading to $\hat{\mbf{w}}_u^{-1} = \hat{\mbf{w}}_u^0 = \tau\mbf{w}_u^0$ and $\hat{\mbf{w}}_v^{-1} = \hat{\mbf{w}}_v^0 = \mbf{v}^0$, and thus $\mbf{d}^{-2} = \mbf{d}^{-1}$.
\end{compactitem}
Summarizing the above derivations, we can write the entire scheme into Algorithm~\ref{alg:ND_DFFP}.

\begin{algorithm}[hpt!]\caption{(Network-Dependent Fast Fixed-Point-Based Algorithm for solving \eqref{eq:DGE})}\label{alg:ND_DFFP}
	\normalsize
	\begin{algorithmic}[1]
		\STATE\label{step:A1_i0}{\bfseries Initialization:} Choose $\mbf{u}^0 \in \dom{\mbf{T}}$, $\boldsymbol{\xi}^0 \in \mbf{T}\mbf{u}^0$ arbitrarily, and a mixing matrix $\mbf{W}$.
		\STATE\hspace{0ex}Choose appropriate stepsizes $\tau > 0$ and $\sigma > 0$ such that $\sigma\tau\norms{\Id - \mbf{W}} < 2$.
		\STATE\hspace{0ex}Set $\mbf{w}_u^0 := \mbf{G}\mbf{u}^0 + \boldsymbol{\xi}^0$, $\hat{\mbf{u}}^{-1} = \mbf{u}^0$, $\hat{\mbf{u}}^0 = \mbf{u}^0 - (1-\beta_0)\tau \mbf{w}_u^0$, where $\beta_0 := \frac{\delta(t_0 - r)}{2(r-1)t_0}$. 
		\STATE\hspace{0ex}Set $\hat{\mbf{w}}_u^{-1} = \hat{\mbf{w}}_u^0 := \tau \mbf{w}_u^0$ and  $\mbf{d}^{-2} = \mbf{d}^{-1} := \frac{1}{2}(\Id - \mbf{W})(\mbf{u}^0 + \tau \mbf{w}_u^0)$. 
		\STATE\hspace{0ex}Set $\tilde{\mbf{v}}^{-1} = - \sigma \mbf{d}^{-1}$, $\tilde{\mbf{v}}^0 := -\sigma \beta_0 \mbf{d}^{-1}$.
		\STATE\hspace{0ex}\label{step:A1_loop}{\bfseries For $k = 0$ to $k_{\max}$ do}
		\vspace{0.25ex}   
		\STATE\hspace{3ex}Update $\gamma_k$, $\theta_k$, $\hat{\eta}_k$, $\hat{\gamma}_k$, and $\hat{\lambda}_k$ according to \eqref{eq:DFKM_params}.
		\STATE\hspace{3ex}Update the iterates as 
		\begin{equation}\label{eq:DFKM_scheme}\tag{ND-DFFP}
			\arraycolsep=0.2em
			\left\{
			\begin{array}{lcl}	
				\mbf{s}^k &:=& \hat{\mbf{u}}^k  + \gamma_k \hat{\mbf{w}}_u^k, \vspace{1ex}\\
				\mbf{d}^k &:=& \frac{1}{2}(\mbf{s}^k - \mbf{W}\mbf{s}^k), \vspace{1ex}\\
				\mbf{u}^{k+1} &:=& J_{\tau \mbf{T}} \left(\mbf{s}^k - \tau \tilde{\mbf{v}}^k - \tau \mbf{G}\hat{\mbf{u}}^k + \tau \sigma \gamma_k  \mbf{d}^{k-1} -  2 \tau \sigma \mbf{d}^k\right), \vspace{1ex}\\
				\hat{\mbf{w}}_u^{k+1} &:=& \hat{\mbf{u}}^k - \mbf{u}^{k+1} + \gamma_k \hat{\mbf{w}}_u^k, \vspace{1ex}\\
				\hat{\mbf{u}}^{k+1} &:=& \hat{\mbf{u}}^k + \theta_k \left(\hat{\mbf{u}}^k - \hat{\mbf{u}}^{k-1} \right) - \hat{\eta}_k\hat{\mbf{w}}_u^{k+1} + \hat{\gamma}_k \hat{\mbf{w}}_u^k  - \hat{\lambda}_k \hat{\mbf{w}}_u^{k-1}, \vspace{1ex}\\
				\tilde{\mbf{v}}^{k+1} &:=& \tilde{\mbf{v}}^k + \theta_k \left(\tilde{\mbf{v}}^k - \tilde{\mbf{v}}^{k-1} \right) + \sigma (\hat{\eta}_k\mbf{d}^k - \hat{\gamma}_k \mbf{d}^{k-1}  + \hat{\lambda}_k \mbf{d}^{k-2}).
			\end{array}
			\right.
		\end{equation}
		\STATE\hspace{0ex}{\bfseries End For}
	\end{algorithmic}
\end{algorithm}

Algorithm~\ref{alg:ND_DFFP} is a distributed version of \eqref{eq:DFKM_scheme0}, where the communication with neighbors only appears at the update of $\mbf{d}^k$ in the second line of \eqref{eq:DFKM_scheme}.
From conditions (ii) and (iv) of Definition~\ref{def:mixing_matrix}, we can show that the matrices $\mbf{K} = \mbf{K}^\top = \left(\frac{\Id - \mbf{W}}{2}\right)^{1/2}$ are well-defined. 
Moreover, we also have $\mathrm{Null}(\mbf{K}) = \mathrm{Null}(\Id - W)$, thus $\mathrm{Null}(\mbf{K}) = \mathrm{span}\sets{\mbf{1}}$.
Note that the stepsizes $\tau$ and $\sigma$ still depend on the mixing matrix $W$ of the network through the condition $\sigma\tau\norms{\Id - W} < 2$.
Therefore, we call this algorithm a \textit{network-dependent fast fixed-point-based algorithm}.

Finally, for implementation purposes, we can explicitly write \eqref{eq:DFKM_scheme} into the following form, where the updates can be executed at each agent $i \in [n]$:
\begin{equation*} 
	\arraycolsep=0.2em
	\left\{
	\begin{array}{lcl}	
		{s}_i^k &:=& \hat{{u}}_i^k  + \gamma_k \hat{{w}}_i^k, \vspace{1ex}\\
		{d}_i^k &:=& \frac{1}{2}({s}_i^k - \sum_{j=1}^n W_{ij} {s}_j^k), \vspace{1ex}\\
		{u}_i^{k+1} &:=& J_{\tau {T}_i} ({s}_i^k - \tau \tilde{{v}}_i^k - \tau {G}_i\hat{{u}}_i^k +  \tau \sigma \gamma_k  {d}_i^{k-1} -  2\tau \sigma {d}_i^k), \vspace{1ex}\\
		\hat{{w}}_i^{k+1} &:=& \hat{{u}}_i^k - {u}_i^{k+1}  + \gamma_k \hat{{w}}_i^k, \vspace{1ex}\\
		\hat{{u}}_i^{k+1} &:=& \hat{{u}}_i^k + \theta_k (\hat{{u}}_i^k - \hat{{u}}_i^{k-1} ) - \hat{\eta}_k\hat{{w}}_i^{k+1} + \hat{\gamma}_k \hat{{w}}_i^k  - \hat{\lambda}_k \hat{{w}}_i^{k-1}, \vspace{1ex}\\
		\tilde{{v}}_i^{k+1} &:=& \tilde{{v}}_i^k + \theta_k (\tilde{{v}}_i^k - \tilde{{v}}_i^{k-1} ) + \sigma (\hat{\eta}_k{d}_i^k - \hat{\gamma}_k {d}_i^{k-1}  + \hat{\lambda}_k {d}_i^{k-2}).
	\end{array}
	\right.
\end{equation*}
Here, $\hat{\mbf{w}}_u^k = [\hat{w}_1^k, \cdots, \hat{w}_n^k]$. Clearly, only the update of $d^k_i$ requires communication with agent $i$'s neighbors.

\beforesubsec
\subsection{Convergence analysis}\label{subsec:ND_case1_convergence_analysis}
\aftersubsec
We consider two cases: (i) $G_i$ is $L$-Lipschitz continuous and $G_i + T_i$ is maximally monotone, and (ii) $G_i$ is $\frac{1}{L}$-co-coercive and $T_i$ is maximally monotone for all $i \in [n]$.

\beforesubsubsec
\subsubsection{The monotone and Lipschitz continuous case}\label{subsubsec:ND_case1}
\aftersubsubsec
We will develop a distributed FFP method to solve \eqref{eq:DGE} under the following assumption.

\begin{assumption}\label{as:DGE_monotone_Lipschitz}
	The distributed composite inclusion \eqref{eq:DGE} satisfies the following conditions: 
	\begin{compactitem}
		\item[(i)] (\textbf{Local maximal monotonicity}) $\Phi_i := G_i + T_i$ is maximally monotone for all $i \in [n]$.
		\item[(ii)] (\textbf{Local Lipschitz continuity}) $G_i$ is $L$-Lipschitz continuous for all $i \in [n]$.
	\end{compactitem}
\end{assumption}
The following lemma transforms Assumption~\ref{as:DGE_monotone_Lipschitz} from $G$ and $T$ to $\mbb{P}^{-1} \mbb{G}$ and $\mbb{P}^{-1} \mbb{T}$, respectively, whose proof can be found in Appendix~\ref{apdx:le:DGE_assumption}. 
\begin{lemma}\label{le:DGE_assumption}
	Suppose that Assumption~\ref{as:DGE_monotone_Lipschitz} holds for \eqref{eq:DGE} and $\tau\sigma\norms{\Id - W} < 2$.  
	Then
	\begin{compactitem}
	\item[$\mathrm{(i)}$] 
	$\mbb{P}^{-1} \mbb{G}$ in \eqref{eq:precond_GE} is $\tilde{L}$-Lipschitz continuous w.r.t.  $\norms{\cdot}_{\mbb{P}}$, where $\tilde{L} := \frac{2\tau L}{2 - \tau \sigma \norms{\Id - W}}$.
	
	\item[$\mathrm{(ii)}$] 
	$\mbb{P}^{-1} \mbb{G} + \mbb{P}^{-1} \mbb{T}$ in \eqref{eq:precond_GE} is maximally monotone w.r.t. $\iprods{\cdot, \cdot}_{\mbb{P}}$.
	\end{compactitem}
\end{lemma}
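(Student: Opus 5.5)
We will prove both parts by working with the metric $\norms{\cdot}_{\mbb{P}}$ and using two facts: a lower spectral bound on $\mbb{P}$, and the identity $\iprods{\mbb{P}^{-1}a, b}_{\mbb{P}} = \iprods{a, b}$ for all $a,b$.

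\textbf{Lower bound on $\mbb{P}$.} With $\mbf{K}^\top\mbf{K} = \frac{\Id - \mbf{W}}{2}$ we have $\norms{\mbf{K}}^2 = \frac{1}{2}\norms{\Id - W}$, so the assumption is exactly $\tau\sigma\norms{\mbf{K}}^2 < 1$. For $\mbf{x} = [\mbf{u}, \mbf{v}]$, Young's inequality with weight $\epsilon>0$ gives
\begin{equation*}
\norms{\mbf{x}}_{\mbb{P}}^2 = \tfrac{1}{\tau}\norms{\mbf{u}}^2 + 2\iprods{\mbf{K}\mbf{u}, \mbf{v}} + \tfrac{1}{\sigma}\norms{\mbf{v}}^2 \geq \big(\tfrac{1}{\tau} - \epsilon\norms{\mbf{K}}^2\big)\norms{\mbf{u}}^2 + \big(\tfrac{1}{\sigma} - \tfrac{1}{\epsilon}\big)\norms{\mbf{v}}^2 .
\end{equation*}
Choosing $\epsilon = \sigma$ yields $\norms{\mbf{x}}_{\mbb{P}}^2 \geq \frac{1 - \tau\sigma\norms{\mbf{K}}^2}{\tau}\norms{\mbf{u}}^2$. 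This shows $\mbb{P} \succ 0$; if $\norms{\mbf{K}}<1$ one can also use $\epsilon$ slightly larger to get a positive bound on the $\mbf{v}$ block.

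\textbf{Part (i).} Since $\mbb{G}\mbf{x} = [\mbf{G}\mbf{u}; 0]$, we have
\begin{equation*}
\norms{\mbb{P}^{-1}(\mbb{G}\mbf{x} - \mbb{G}\mbf{x}')}_{\mbb{P}}^2 = \iprods{\mbb{G}\mbf{x}-\mbb{G}\mbf{x}', \mbb{P}^{-1}(\mbb{G}\mbf{x}-\mbb{G}\mbf{x}')} = \norms{\mbf{G}\mbf{u} - \mbf{G}\mbf{u}'}^2\, [\mbb{P}^{-1}]_{11}.
\end{equation*}
The $(1,1)$ block of $\mbb{P}^{-1}$ is the inverse Schur complement $(\frac{1}{\tau}\Id - \sigma\mbf{K}^\top\mbf{K})^{-1}$, whose norm is at most $\frac{\tau}{1 - \tau\sigma\norms{\mbf{K}}^2} = \frac{2\tau}{2 - \tau\sigma\norms{\Id - W}}$. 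Each $G_i$ is $L$-Lipschitz, so $\norms{\mbf{G}\mbf{u} - \mbf{G}\mbf{u}'} \leq L\norms{\mbf{u}-\mbf{u}'}$. The lower bound above gives $\norms{\mbf{u}-\mbf{u}'}^2 \leq \frac{2\tau}{2 - \tau\sigma\norms{\Id-W}}\norms{\mbf{x}-\mbf{x}'}_{\mbb{P}}^2$. Multiplying these,
\begin{equation*}
\norms{\mbb{P}^{-1}(\mbb{G}\mbf{x} - \mbb{G}\mbf{x}')}_{\mbb{P}}^2 \leq \Big(\frac{2\tau L}{2 - \tau\sigma\norms{\Id-W}}\Big)^2\norms{\mbf{x}-\mbf{x}'}_{\mbb{P}}^2 ,
\end{equation*}
which gives $\tilde{L}$. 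The only delicate bookkeeping here is using the Schur complement of the $\mbf{u}$-block and the Young lower bound together, each contributing one factor.

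\textbf{Part (ii).} Write $\mbb{G} + \mbb{T} = \mbb{A} + \mbb{S}$, where $\mbb{A}\mbf{x} := [(\mbf{G}+\mbf{T})\mbf{u}; 0]$ and $\mbb{S}\mbf{x} := [\mbf{K}^\top\mbf{v}; -\mbf{K}\mbf{u}]$. The operator $\mbb{A}$ is maximally monotone, being the product of the maximally monotone $\Phi_i$ with the zero operator (a subdifferential). $\mbb{S}$ is linear, bounded and skew, hence maximally monotone with full domain. The sum of a maximally monotone operator and a continuous monotone operator with full domain is maximally monotone, so $\mbb{G}+\mbb{T}$ is maximally monotone in the Euclidean inner product.

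Finally, we transfer this to the $\mbb{P}$-geometry. Monotonicity transfers through $\iprods{\mbb{P}^{-1}a - \mbb{P}^{-1}b, \mbf{x}-\mbf{x}'}_{\mbb{P}} = \iprods{a - b, \mbf{x}-\mbf{x}'}$. For maximality, we use Minty's theorem in $(\R^{2np},\iprods{\cdot,\cdot}_{\mbb{P}})$: surjectivity of $\Id + \mbb{P}^{-1}(\mbb{G}+\mbb{T})$ is equivalent to surjectivity of $\mbb{P} + \mbb{G}+\mbb{T}$. The latter follows because $\mbb{P}\succ 0$ is strongly monotone and linear, so $\mbb{P}+\mbb{G}+\mbb{T}$ is maximally strongly monotone and therefore surjective.

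This last step is the one to handle carefully: we must apply maximality to the full sum $\mbb{G}+\mbb{T}$, since $\mbf{T}$ alone need not be monotone.
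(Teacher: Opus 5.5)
Your proof is correct. Part (i) is the paper's argument: the norm bound on the $(1,1)$ Schur-complement block of $\mbb{P}^{-1}$ combined with the Young lower bound $\norms{\mbf{x}}_{\mbb{P}}^2\geq \frac{1-\tau\sigma\norms{\mbf{K}}^2}{\tau}\norms{\mbf{u}}^2$.

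Part (ii) follows the same two-stage plan as the paper (Euclidean maximal monotonicity of $\mbb{G}+\mbb{T}$, then a change of metric), but the key steps differ. The paper proves only \emph{monotonicity} of $\mbb{G}+\mbb{T}$, via the skew cancellation. It then merely asserts that maximality ``can be obtained'' from that of $\mbf{G}+\mbf{T}$, and cites a variable-metric lemma of Combettes and V\~u for the transfer to $\iprods{\cdot,\cdot}_{\mbb{P}}$. You prove both steps:
\begin{itemize}
\item you split $\mbb{G}+\mbb{T}=\mbb{A}+\mbb{S}$ into a product of maximally monotone operators plus a bounded skew linear map, and apply the sum theorem with a continuous full-domain summand;
\item you obtain the transfer from Minty's theorem in the $\mbb{P}$-geometry, using surjectivity of the maximally strongly monotone $\mbb{P}+\mbb{G}+\mbb{T}$.
\end{itemize}
Your version is self-contained and fills the one step the paper leaves unjustified. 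The paper's version is shorter only because it rests on that assertion and an external citation. You also correctly keep $\mbf{G}+\mbf{T}$ grouped, since $\mbf{T}$ alone need not be monotone.

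Two small fixes. First, the Young bound with $\epsilon=\sigma$ controls only $\norms{\mbf{u}}$, so by itself it does not show $\mbb{P}\succ 0$. You need $\mbb{P}\succ 0$ both for $\norms{\cdot}_{\mbb{P}}$ to be a norm and for the strong monotonicity in (ii). Take any $\epsilon>\sigma$ with $\epsilon\tau\norms{\mbf{K}}^2<1$; such $\epsilon$ exists precisely because the hypothesis $\tau\sigma\norms{\mbf{K}}^2<1$ is strict. The condition $\norms{\mbf{K}}<1$ you mention is not the relevant one, although it holds automatically here since $\mbf{W}\succ-\Id$. Second, the $(1,1)$-block term should be written as the quadratic form $\iprods{\mbf{G}\mbf{u}-\mbf{G}\mbf{u}', [\mbb{P}^{-1}]_{11}(\mbf{G}\mbf{u}-\mbf{G}\mbf{u}')}$, not as a squared norm times a matrix; your subsequent operator-norm bound then applies verbatim.
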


Now, we can state the convergence of \eqref{eq:DFKM_scheme} in the following theorem.

\begin{theorem}[\textbf{Monotone and Lipschitz continuous setting}]\label{th:DFKM_convergence}
	For \eqref{eq:DGE}, suppose that Assumption~\ref{as:DGE_monotone_Lipschitz} holds. 
	Let $\sets{ (\mbf{u}^k, \tilde{\mbf{v}}^k) }$ be generated by \eqref{eq:DFKM_scheme} using $t_0$ given in \eqref{eq:FKM_key_est2_t0} and
	\begin{equation}\label{eq:DFKM_params}
		\arraycolsep=0.2em
		\hspace{-2ex}
		\begin{array}{c}
			r > 2, \qquad \delta \in (0, r-2), \qquad \omega := r^2 - 3r + 3, \qquad 0 < \nu < r-2-\delta,
			\vspace{1ex}\\
			2\tau L\sqrt{1+\omega} + \frac{1}{2}\tau \sigma (1 - \lambda_{\min}(W)) \leq 1, 
			\qquad 
			t_{k} := k + t_0, 
			\qquad 
			\gamma_k := 1, \vspace{1ex}\\
			\hat{\eta}_k := \frac{[2(r-1)-\delta](t_{k+1} - r)}{2(r-1)t_{k+1}} + \frac{\nu(t_k - 1)}{t_k t_{k+1}} + 1, \quad
			\hat{\gamma}_k := \frac{[4(r-1) - \delta] (t_k - r)}{2(r-1)t_{k+1}} + 1, \  \text{and} \
			\hat{\lambda}_k = \theta_k := \frac{t_k - r}{t_{k+1}}.
		\end{array}
		\hspace{-2ex}
	\end{equation}
	Then, the following statements hold.
	\begin{compactitem}
	\item[$~~~\mathrm{(i)}$~\textbf{$($Local iterates$)$}]
	The local iterates $\sets{u_{[i]}^k}$ and $\sets{\hat{u}_{[i]}^k}$ converge to the same $u^\star \in \zer{\Phi}$ of the original \eqref{eq:DGE} problem for all nodes $i \in [n]$.
	
	\item[$~~~\mathrm{(ii)}$~\textbf{$($Consensus errors$)$}]
	The consensus errors $\norms{\Pi \mbf{u}^k }$ and $\norms{\Pi \hat{\mbf{u}}^k }$ converge to zero at the rate of $\SmallOs{1/k}$.
	
	\item[$~~~\mathrm{(iii)}$~\textbf{$($Aggregated residual$)$}]
	The aggregated residual $\norms{\sum_{i=1}^n G_i u_{[i]}^k + \xi_i^k}$ converges to zero at the rate of $\SmallOs{1/k}$, where $\xi_i^k \in Tu_{[i]}^k$.
	
	\item[$~~~\mathrm{(iv)}$~\textbf{$($Restricted gap$)$}]
	If, in addition, Assumption~\ref{as:restricted_dual_gap} holds, then the restricted gap value $\mathrm{Gap}_B(\bar{u}^k)$ converges to zero at the rate of $\SmallOs{1/k}$, where $\bar{u}^k := \frac{1}{n}\sum_{i=1}^n u_{[i]}^k$.
	Otherwise, if, in addition, Assumption~\ref{as:boundary_regularity} holds, then $\mathrm{Gap}_B(p^k)$ converges to zero at an $\SmallOs{1/k}$ convergence rate, where $p^k := \proj_{\Bc}(\bar{u}^k)$.
	
	\item[$~~~\mathrm{(v)}$~\textbf{$($FBS residual$)$}]
	For a given $\lambda \in (0, \frac{1}{L})$, if, in addition, Assumption~\ref{as:restricted_dual_gap} holds, then $\norms{\Gc_{\lambda}\bar{u}^k}^2$ converges to zero at an $\SmallOs{1/k}$ convergence rate.
	Otherwise, if, in addition, Assumption~\ref{as:boundary_regularity} holds, then $\norms{\Gc_{\lambda}p^k}^2$ converges to zero at an $\SmallOs{1/k}$ convergence rate, where $p^k := \proj_{\Bc}(\bar{u}^k)$.

	\item[$~~~\mathrm{(vi)}$~\textbf{$($Special case$)$}]
	If $T_i = 0$ for all $i \in [n]$ $($i.e., \eqref{eq:DGE} reduces to \eqref{eq:DNE}$)$, then $\norms{\sum_{i=1}^n G_i \bar{u}^k}$ converges to zero at the rate of $\SmallOs{1/k}$.
	\end{compactitem}
\end{theorem}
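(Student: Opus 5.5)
The plan is to view \eqref{eq:DFKM_scheme} as \eqref{eq:FKM_scheme3} (equivalently \eqref{eq:FKM_scheme}) with $\eta=1$, applied to the preconditioned inclusion \eqref{eq:precond_GE} in the Hilbert space $(\R^{2n\times p},\iprods{\cdot,\cdot}_{\mbb{P}})$. Then Corollary~\ref{co:FKM_scheme3_convergence} and Theorem~\ref{th:FKM1_convergence} give all the primal-dual results, and the remaining work is translating them back to $\mbf{u}$.

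\textbf{Verifying the hypotheses.} By Lemma~\ref{le:DGE_assumption}, $\mbb{P}^{-1}\mbb{G}$ is $\tilde L$-Lipschitz in $\norms{\cdot}_{\mbb{P}}$ and $\mbb{P}^{-1}(\mbb{G}+\mbb{T})$ is maximally monotone. With $\norms{\Id-W}=1-\lambda_{\min}(W)$, the stepsize condition in \eqref{eq:DFKM_params} reads $2\tau L\sqrt{1+\omega}\le 1-\tfrac12\tau\sigma\norms{\Id-W}$, which is exactly $\tilde L\le\frac{1}{2\sqrt{1+\omega}}$, i.e.\ $\eta=1\le\frac{1}{2\sqrt{1+\omega}\tilde L}$. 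The parameters $\hat\eta_k,\hat\gamma_k,\hat\lambda_k,\theta_k$ coincide with \eqref{eq:FKM_scheme3_params} at $\eta=1$, and the initialization in Algorithm~\ref{alg:ND_DFFP} was built to match. Theorem~\ref{th:FKM1_convergence} therefore gives:
\begin{itemize}
\item $\mbf{x}^k=[\mbf{u}^k,\mbf{v}^k]\to\mbf{x}^\star$;
\item the iterates $\mbf{y}^k=[\hat{\mbf{u}}^k,\hat{\mbf{v}}^k]$ also converge to $\mbf{x}^\star$;
\item $\norms{\mbb{G}\mbf{x}^k+\mbb{P}\boldsymbol{\xi}^k}=\SmallOs{1/k}$, after converting via norm equivalence.
\end{itemize}
By Lemma~\ref{le:consensus}, $\mbf{u}^\star=\mbf{1}u^{\star\top}$ with $u^\star\in\zer{\Phi}$, which proves (i) for $u^k_{[i]}$ and $\hat u^k_{[i]}$.

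\textbf{Consensus and aggregated residual.} Write the residual componentwise as
\[
\mbf{r}_u^k=\mbf{G}\mbf{u}^k+\boldsymbol{\zeta}^k+\mbf{K}^\top\mbf{v}^k, \qquad \mbf{r}_v^k=-\mbf{K}\mbf{u}^k,
\]
with $\boldsymbol{\zeta}^k\in\mbf{T}\mbf{u}^k$; both are $\SmallOs{1/k}$.
\begin{itemize}
\item \textbf{Consensus of $\mbf{u}^k$.} Since $\mathrm{Null}(\mbf{K})=\mathrm{span}\{\mbf{1}\}$, we have $\norms{\Pi\mbf{u}}\le\norms{\mbf{K}\mbf{u}}/\sqrt{\lambda_2(\mbf{K}^\top\mbf{K})}$, so $\norms{\Pi\mbf{u}^k}=\SmallOs{1/k}$.
\item \textbf{Consensus of $\hat{\mbf{u}}^k$.} Use $\hat{\mbf{u}}^k-\mbf{u}^k=(\hat{\mbf{x}}^k-\mbf{x}^k)_u-(1-\beta_k)\hat{\mbf{w}}_u^k$. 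The $\hat{\mbf{w}}$ term is $\SmallOs{1/k}$. The other term equals $\frac{r}{t_k}(z^k-x^k)$, and $z^k-x^k\to0$, so it is $\SmallOs{1/k}$ as well. I expect this step to need care; it may require citing the proof step \eqref{eq:FKM_th1_proof5.1}.
\item \textbf{Aggregated residual (iii).} Left-multiply $\mbf{r}_u^k$ by $\mbf{1}^\top$. Because $\mbf{1}^\top\mbf{K}^\top=0$, the dual term vanishes and $\sum_i(G_iu^k_{[i]}+\zeta_i^k)=\mbf{1}^\top\mbf{r}_u^k$, whose norm is at most $\sqrt n\norms{\mbf{r}_u^k}=\SmallOs{1/k}$.
\item \textbf{Special case (vi).} Using Lipschitz continuity,
\[
\Big\|\sum_i G_i\bar u^k\Big\|\le\Big\|\sum_i G_iu^k_{[i]}\Big\|+L\sqrt n\,\norms{\Pi\mbf{u}^k},
\]
and both terms are $\SmallOs{1/k}$.
\end{itemize}

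\textbf{Gap and FBS residual; main obstacle.} For (iv), fix $u\in\Bc$ and $\xi_i\in T_iu$. Monotonicity of $\Phi_i$ gives
\[
\iprods{G_iu+\xi_i,\,u^k_{[i]}-u}\le\iprods{G_iu^k_{[i]}+\zeta_i^k,\,u^k_{[i]}-u}.
\]
Replace $u^k_{[i]}$ by $\bar u^k+(\Pi\mbf{u}^k)_i$ on the left and sum over $i$. The right side becomes
\[
\iprods{\textstyle\sum_i(G_iu^k_{[i]}+\zeta_i^k),\,\bar u^k-u}+\sum_i\iprods{G_iu^k_{[i]}+\zeta_i^k,\,(\Pi\mbf{u}^k)_i},
\]
which is bounded by $\SmallOs{1/k}\cdot(\mathrm{diam}+\text{bounded iterates})$ plus $\norms{\mbf{r}_u^k+\text{bounded}}\cdot\SmallOs{1/k}$.

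The main obstacle is the left-hand correction $\sum_i\iprods{G_iu+\xi_i,(\Pi\mbf{u}^k)_i}$, which needs $\sup_{u\in\Bc,\xi_i\in T_iu}\norms{\xi_i}<\infty$, together with $\bar u^k\in\dom{\Phi}$.
\begin{itemize}
\item \textbf{Under Assumption~\ref{as:restricted_dual_gap}.} Take $\Bc$ compact in $\bigcap_i\mathrm{int}(\dom{T_i})$. Maximally monotone operators are locally bounded on the interior, so this supremum is finite, and the bound holds.
\item \textbf{Under Assumption~\ref{as:boundary_regularity}.} Here $\xi_i$ may be unbounded through $\Nc_{\Cc_i}$. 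I would instead evaluate at $p^k=\proj_\Bc(\bar u^k)$ and split $\xi_i=s_i+n_i$ with $n_i\in\Nc_{\Cc_i}(u)$. The $s_i$ part is bounded by $M_S$. For the normal-cone part, $\iprods{n_i,p^k-u}\le\iprods{n_i,u^k_{[i]}-u}$ up to $\norms{n_i}\,\mathrm{dist}$; I would handle this by rewriting using the monotone inequality at $u^k_{[i]}\in\Cc_i$ and bounding $\norms{p^k-u^k_{[i]}}\le2\norms{(\Pi\mbf{u}^k)_i}+\mathrm{dist}(\bar u^k,\Bc)$, with regularity (i) giving a Hoffman-type bound on $\mathrm{dist}(\bar u^k,\Bc)$ via $\max_i\mathrm{dist}(\bar u^k,\Cc_i)\le\norms{\Pi\mbf{u}^k}$.
\end{itemize}

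For (v), the plan is to bound $\norms{\Gc_\lambda x}^2\lesssim\mathrm{Gap}_\Bc(x)$ for $x$ in $\Bc$ (or near $u^\dagger$). This follows from the resolvent properties in Lemma~\ref{le:resolvent_properties} applied to $x^+=J_{\lambda T}(x-\lambda Gx)$, by choosing $u=x^+$ (or a convex combination with $u^\dagger$ to stay in $\Bc$) in the gap definition. This yields $\norms{\Gc_\lambda\bar u^k}^2=\SmallOs{1/k}$.
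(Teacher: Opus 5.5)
Your parts (i)--(iii), (vi), the interior case of (iv), and the boundary case of (v) follow the paper's proof. Your route to $\norms{\Pi\hat{\mbf{u}}^k}$ via $y^k-x^k=\frac{r}{t_k}(z^k-x^k)-(1-\beta_k)\hat w^k$ differs from the paper's identity $\mbf{K}\hat{\mbf{u}}^k=\mbf{K}\mbf{u}^{k+1}+\mbf{K}(\hat{\mbf{w}}_u^{k+1}-\hat{\mbf{w}}_u^k)$, but it is valid. You also do not need the internal proof step: $z^k-x^k\to0$ already follows from Theorem~\ref{th:FKM1_convergence}(iii).

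\textbf{The gap: (v) under Assumption~\ref{as:restricted_dual_gap}.} Testing $\mathrm{Gap}_{\Bc}$ with $u=x^+=q^k$ requires $q^k\in\Bc$. Here, however, $\Bc$ is a compact subset of $\bigcap_i\mathrm{int}(\dom{T_i})$ around $u^\dagger$, while the limit $u^\star$ reached by the algorithm is an arbitrary solution. It may lie outside $\Bc$, or on the boundary of the domains where no admissible $\Bc$ reaches. Moreover, $\bar u^k$ need not lie in $\dom{\Phi}$. In this regime the restricted gap does not dominate the FBS residual, and no convex combination with $u^\dagger$ repairs this. A counterexample: take $p=1$, $n=2$, $G_1=G_2=0$, $T_1=\Nc_{[0,\infty)}$, $T_2=\Nc_{(-\infty,1]}$, $u^\dagger=\frac12$, and $\Bc=[\frac14,\frac34]$. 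Then $T_iu=\{0\}$ on $\Bc$, so $\mathrm{Gap}_{\Bc}\equiv0$ on all of $\R$. Yet $\Gc_{\lambda}x=x/\lambda\neq0$ for $x<0$. Nothing in the scheme excludes $\bar u^k<0$, since only $u^k_{[1]}\ge0$ and $u^k_{[2]}\le1$ are guaranteed.

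\textbf{How the paper handles this case.} It bypasses the gap entirely. Set $q^k:=J_{\lambda\bar T}(\bar u^k-\lambda\bar G\bar u^k)$ and pick $\zeta_i^k\in T_iq^k$ with $\frac1n\sum_i\zeta_i^k=\frac{1}{\lambda}(\bar u^k-q^k)-\bar G\bar u^k$. Write $w_i^k:=G_iu^k_{[i]}+\xi_i^k$ and $\breve w_i^k:=G_iq^k+\zeta_i^k$. Sum the monotonicity inequalities of the individual $\Phi_i$ between $(u^k_{[i]},w_i^k)$ and $(q^k,\breve w_i^k)$, after splitting $u^k_{[i]}-q^k=(u^k_{[i]}-\bar u^k)+(\bar u^k-q^k)$. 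This gives
\[
n\big(\tfrac{1}{\lambda}-L\big)\norms{\bar u^k-q^k}^2\le\norms{\sum_{i=1}^n w_i^k}\norms{\bar u^k-q^k}+\sum_{i=1}^n\big(\norms{w_i^k}+\norms{\breve w_i^k}\big)\norms{u^k_{[i]}-\bar u^k}.
\]
The resolvent controls only the sum $\sum_i\breve w_i^k$, but the cross term needs each $\norms{\breve w_i^k}$ bounded. This is where $u^\dagger$ is genuinely used:
\begin{itemize}
\item All $\Phi_i$ are bounded by some $M_3$ on a ball $B_\delta(u^\dagger)\subset\bigcap_i\mathrm{int}(\dom{T_i})$, by Lemma~\ref{le:monotone_local_bounded}.
\item Testing monotonicity of $\Phi_i$ between $q^k$ and $u^\dagger+\delta\,\breve w_i^k/\norms{\breve w_i^k}$ yields $\delta\norms{\breve w_i^k}\le\iprods{\breve w_i^k,q^k-u^\dagger}+M_3(\norms{q^k-u^\dagger}+\delta)$.
\item Summing over $i$ bounds $\sum_i\norms{\breve w_i^k}$ by a bounded quantity.
\end{itemize}
Together with the $\SmallOs{1/k}$ rates of (ii)--(iii), this yields $\norms{\Gc_{\lambda}\bar u^k}^2=\lambda^{-2}\norms{\bar u^k-q^k}^2=\SmallOs{1/k}$.

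\textbf{Two smaller repairs.}
\begin{itemize}
\item $\Gc_{\lambda}$ must be built from the averages $\bar G=\frac1n\sum_iG_i$ and $\bar T=\frac1n\sum_iT_i$. With $\sum_iG_i$, whose Lipschitz constant is $nL$, the range $\lambda\in(0,\frac1L)$ does not give a positive constant.
\item In the boundary case of (iv), an error term of size $\norms{n_i}\,\mathrm{dist}$ is unusable, because $n_i$ ranges over the whole cone $\Nc_{\Cc_i}(u)$ and the supremum would be infinite. What you need is simply $\iprods{n_i,p^k-u}\le0$, which holds because $p^k\in\Bc\subseteq\Cc_i$. That is exactly why one evaluates at the projection $p^k$.
\end{itemize}
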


For the norm-valued quantities in (ii), (iii), and (vi), the corresponding squared norms converge at the rates $\BigOs{1/k^2}$ and $\SmallOs{1/k^2}$, respectively. 
Item (v) already concerns the squared FBS residual.

\begin{proof}
	First, we can show that
	\begin{equation*} 
	\arraycolsep=0.2em
	\begin{array}{lcl}	
		\norms{\mbf{K}}^2 = \norms{\mbf{K}^2} = \frac{1}{2}\norms{\Id - W} = \frac{1}{2}\lambda_{\max}(\Id - W) = \frac{1 - \lambda_{\min}(W)}{2} > 0,
	\end{array}
	\end{equation*}
	where we have used $\Id \succeq W$ from Definition~\ref{def:mixing_matrix} in the last inequality.
	Using this relation, the condition $2\sqrt{1+\omega}\tau L + \frac{1}{2}\tau \sigma (1 - \lambda_{\min}(W)) \leq 1$ is equivalent to $2\sqrt{1+\omega}\tau L + \tau \sigma \norms{\mbf{K}}^2 \leq 1$, leading to $\tau \sigma \norms{\mbf{K}}^2 = \frac{1}{2}\sigma\tau\norms{\Id - W} < 1$, and thus Lemma~\ref{le:DGE_assumption} holds.
	Moreover, we can easily show that the conditions in Corollary~\ref{co:FKM_scheme3_convergence} hold for $\eta = 1$.
	Therefore, all the results from Corollary~\ref{co:FKM_scheme3_convergence} (or Theorem~\ref{th:FKM1_convergence}) can be applied to Algorithm~\ref{alg:ND_DFFP}.
	
	\vspace{0.75ex}
	\noindent$\mathrm{(i)}$~\textbf{The convergence of the local iterate sequences.}
	By Theorem~\ref{th:FKM1_convergence}(iii), both sequences $\sets{\mbf{x}^k}$ and $\sets{\mbf{y}^k}$ converge to $\mbf{x}^{\star} = [\mbf{u}^{\star}, \mbf{v}^{\star}]$, where $\mbf{u^{\star}}$ is a solution of \eqref{eq:GE2}.
	By Lemma~\ref{le:consensus}, $\mbf{u}^{\star} = [u^{\star}, \cdots, u^{\star}]^\top$ and $u^{\star}$ is a solution to \eqref{eq:DGE}.
	Since $\mbf{x}^k \to \mbf{x}^{\star}$ and $\mbf{y}^k \to \mbf{x}^{\star}$ as $k \to \infty$, we obtain $\mbf{u}^k \to \mbf{u}^{\star}$ and $\hat{\mbf{u}}^k \to \mbf{u}^{\star}$, thus $u^k_{[i]} \to u^{\star}$ and $\hat{u}^k_{[i]} \to u^{\star}$ for all $i \in [n]$.
	
	\vspace{0.75ex}
	\noindent$\mathrm{(ii)}$~\textbf{The convergence of consensus errors.}
	For $\mathbbm{w}^k \in \mbb{G}\mbf{x}^k + \mbb{T}\mbf{x}^k$, from Theorem~\ref{th:FKM1_convergence}, we have $\norms{\mbb{P}^{-1}\mathbbm{w}^k}_{\mbb{P}} = \BigOs{1/k}$ and $\norms{\mbb{P}^{-1}\mathbbm{w}^k}_{\mbb{P}} = \SmallOs{1/k}$.
	By definition, we get
	\begin{equation*}
	\arraycolsep=0.2em
	\begin{array}{lcl}	
		\norms{\mbb{P}^{-1}\mathbbm{w}^k}_{\mbb{P}}^2 &=& \iprods{\mathbbm{w}^k, \mbb{P}^{-1}\mathbbm{w}^k} \geq \lambda_{\min}(\mbb{P}^{-1})\norms{\mathbbm{w}^k}^2 = \frac{1}{\lambda_{\max}(\mbb{P})} \norms{\mathbbm{w}^k}^2,
	\end{array}
	\end{equation*}
	leading to $\norms{\mathbbm{w}^k}^2 \leq \lambda_{\max}(\mbb{P})\norms{ \mbb{P}^{-1}\mathbbm{w}^k }_{\mbb{P}}^2$.
	Thus, we conclude that $\norms{\mathbbm{w}^k} = \BigOs{1/k}$ and $\norms{\mathbbm{w}^k} = \SmallOs{1/k}$.
	However, since $\norms{\mathbbm{w}^k}^2 = \norms{\mbf{G}\mbf{u}^k + \boldsymbol{\xi}^k + \mbf{K}^\top \mbf{v}^k}^2 + \norms{\mbf{K}\mbf{u}^k}^2$ for $\boldsymbol{\xi}^k \in \mbf{T}\mbf{u}^k$, we also get
	\begin{equation}\label{eq:DFKM_th1_proof1}
	\arraycolsep=0.2em
	\begin{array}{rcl}	
		\norms{\mbf{G}\mbf{u}^k + \boldsymbol{\xi}^k + \mbf{K}^\top \mbf{v}^k} = \BigOs{1/k} 
		\qquad &\text{and}& \qquad 
		\norms{\mbf{G}\mbf{u}^k + \boldsymbol{\xi}^k + \mbf{K}^\top \mbf{v}^k} = \SmallOs{1/k}, \vspace{1ex}\\
		\norms{\mbf{K}\mbf{u}^k} = \BigOs{1/k} 
		\qquad & \text{and} & \qquad 
		\norms{\mbf{K}\mbf{u}^k} = \SmallOs{1/k}.
	\end{array}
	\end{equation}
	By a similar argument as above, we can show that $\norms{\hat{\mathbbm{w}}^k} = \BigOs{1/k}$ and $\norms{\hat{\mathbbm{w}}^k} = \SmallOs{1/k}$.
	However, since $\norms{\hat{\mathbbm{w}}_u^k} \leq \norms{\hat{\mathbbm{w}}^k}$, we also have $\norms{\hat{\mathbbm{w}}_u^k} = \BigOs{1/k}$ and $\norms{\hat{\mathbbm{w}}_u^k} = \SmallOs{1/k}$.
	Then, we can derive from the fourth line of \eqref{eq:DFKM_scheme} that
	\begin{equation*}
	\arraycolsep=0.2em
	\begin{array}{rcl}	
		\norms{\mbf{K}\hat{\mbf{u}}^k} = \norms{\mbf{K}\mbf{u}^{k+1} + \mbf{K}(\hat{\mbf{w}}_u^{k+1} - \hat{\mbf{w}}_u^k)} \leq \norms{\mbf{K}\mbf{u}^{k+1}} + \norms{\mbf{K}}\big(\norms{\hat{\mbf{w}}_u^{k+1}} + \norms{\hat{\mbf{w}}_u^k}\big)
	\end{array}
	\end{equation*}
	Combining this bound with the above results, we obtain 
	\begin{equation*}
		\arraycolsep=0.2em
		\begin{array}{rcl}				
			\norms{\mbf{K}\hat{\mbf{u}}^k} = \BigOs{1/k} 
			\qquad &\text{and}& \qquad 
			\norms{\mbf{K}\hat{\mbf{u}}^k} = \SmallOs{1/k}.
		\end{array}
	\end{equation*}
	Now, we can write $\mbf{u}^k = (\mbf{u}^k - \boldsymbol{1}\bar{u}^k) + \boldsymbol{1}\bar{u}^k$, where $\bar{u}^k := \frac{1}{n} \sum_{i=1}^n u_{[i]}^k$.
	Take an arbitrary $\mbf{s} = (s^\top, \cdots, s^\top)^\top \in \mathrm{Null}(\mbf{K})$, then we have
	\begin{equation*}
		\iprods{\mbf{u}^k - \boldsymbol{1}\bar{u}^k, \mbf{s}} 
		= \sum_{i=1}^n \iprods{u_{[i]}^k - \bar{u}^k, s} 
		= \iprod{\sum_{i=1}^n (u_{[i]}^k - \bar{u}^k), s} 
		= \iprods{0, s} = 0.
	\end{equation*}
	Thus, $\mbf{u}^k - \boldsymbol{1}\bar{u}^k \in \mathrm{Null}(\mbf{K})^{\perp}$.
	However,  one can easily prove that for any $\mbf{s} \in \mathrm{Null}(\mbf{K})^{\perp}$, we have $\norms{\mbf{K}\mbf{s}} \geq \sigma_{\min}^+(\mbf{K})\norms{\mbf{s}}$, where $\sigma_{\min}^+(\mbf{K}) > 0$ is the smallest positive singular value of $\mbf{K}$.
	Setting $\mbf{s} := \mbf{u}^k - \boldsymbol{1}\bar{u}^k$, we obtain
	\begin{equation*}
		\norms{\Pi\mbf{u}^k}^2 = \norms{\mbf{u}^k - \boldsymbol{1}\bar{u}^k}^2 \leq \frac{1}{\sigma_{\min}^+(\mbf{K})^2}\norms{\mbf{K}(\mbf{u}^k - \boldsymbol{1}\bar{u}^k)}^2 = \frac{1}{\sigma_{\min}^+(\mbf{K})^2}\norms{\mbf{K}\mbf{u}^k}^2,
	\end{equation*}
	where the last equality holds because $\mbf{K}\boldsymbol{1}\bar{u}^k = 0$ due to $\boldsymbol{1}\bar{u}^k \in \mathrm{Null}(\mbf{K})$.
	Thus, we conclude that the consensus error also converges to zero at the rates
	\begin{equation*}
	\arraycolsep=0.2em
	\begin{array}{rcl}	
		\norms{\Pi\mbf{u}^k} = \BigOs{1/k} \qquad &\text{and}& \qquad \norms{\Pi\mbf{u}^k}  = \SmallOs{1/k}.
	\end{array}
	\end{equation*}
	Moreover, since $\norms{\Pi\mbf{u}^k}^2 = \norms{\mbf{u}^k - \boldsymbol{1}\bar{u}^k}^2 = \sum_{i=1}^n \norms{u_{[i]}^k - \bar{u}^k}^2$, we also have
	\begin{equation*}
		\arraycolsep=0.2em
		\begin{array}{rcl}	
			\norms{u_{[i]}^k - \bar{u}^k} = \BigOs{1/k} \quad &\text{and}& \quad \norms{u_{[i]}^k - \bar{u}^k} = \SmallOs{1/k} \quad \text{for all $i \in [n]$}.
		\end{array}
	\end{equation*}
	The consensus of $\hat{\mbf{u}}^k$ can also be obtained similarly:
	\begin{equation*}
	\arraycolsep=0.2em
	\begin{array}{rcl}	
		\norms{\Pi\hat{\mbf{u}}^k} = \BigOs{1/k} \qquad &\text{and}& \qquad \norms{\Pi\hat{\mbf{u}}^k} = \SmallOs{1/k}.
	\end{array}
	\end{equation*}
	 \vspace{0.75ex}
	\noindent$\mathrm{(iii)}$~\textbf{The convergence of aggregated residuals.}
	Let us denote by $\mbf{w}_u := \mbf{G}\mbf{u}^k + \boldsymbol{\xi}^k + \mbf{K}^\top \mbf{v}^k$ for $\boldsymbol{\xi}^k \in \mbf{T}\mbf{u}^k$.
	Then, since $\boldsymbol{1}^\top \mbf{K}^\top = (\mbf{K} \boldsymbol{1}) ^\top = 0$, we have
	\begin{equation*}
	\arraycolsep=0.2em
	\begin{array}{rcl}	
		\boldsymbol{1}^\top \mbf{w}_u = \boldsymbol{1}^\top (\mbf{G}\mbf{u}^k + \boldsymbol{\xi}^k) + \boldsymbol{1}^\top \mbf{K}^\top \mbf{v}^k = \sum_{i=1}^n (G_i u_{[i]}^k + \xi_i^k),
	\end{array}
	\end{equation*}
	for $\xi_i^k \in T_i u_{[i]}^k$.
	Thus, by the Cauchy-Schwarz inequality, we can show that
	\begin{equation*}
	\arraycolsep=0.2em
	\begin{array}{rcl}	
		\norms{\sum_{i=1}^n (G_i u_{[i]}^k + \xi_i^k)}^2 = \norms{\boldsymbol{1}^\top \mbf{w}_u}^2 \leq \norms{\boldsymbol{1}}^2 \norms{\mbf{w}_u}^2 = n\norms{\mbf{w}_u}^2 = n \norms{\mbf{G}\mbf{u}^k + \boldsymbol{\xi}^k + \mbf{K}^\top \mbf{v}^k}^2.
	\end{array}
	\end{equation*}
	Combining this relation and the first line of \eqref{eq:DFKM_th1_proof1}, we obtain
	\begin{equation*}
	\arraycolsep=0.2em
	\begin{array}{rcl}	
		\norms{\sum_{i=1}^n (G_i u_{[i]}^k + \xi_i^k)} = \BigOs{1/k} 
		\qquad &\text{and}& \qquad 
		\norms{\sum_{i=1}^n (G_i u_{[i]}^k + \xi_i^k)} = \SmallOs{1/k}.
	\end{array}
	\end{equation*}
	\vspace{0.75ex}
	\noindent$\mathrm{(iv)}$~\textbf{The convergence of the restricted gap values.}
	We consider two cases as follows.
	
	\noindent\textbf{Case $\mathrm{(i)}$.}~
	Under Assumption~\ref{as:restricted_dual_gap},
	let us denote by $\mathrm{Gap}_{\Bc}(\bar{u}^k) := \sup_{u \in {\Bc}} \Gc(u, \bar{u}^k)$, 
	where $\Gc(u, \bar{u}^k) := \sum_{i=1}^n \iprods{G_i u + \xi_i, \bar{u}^k - u}$. 
	By inserting $u_{[i]}^k$, we obtain
	\begin{equation*} 
	\arraycolsep=0.2em
	\begin{array}{lcl}
	\Gc(u, \bar{u}^k) = \sum_{i=1}^n \iprods{G_i u + \xi_i, u_{[i]}^k - u} + \sum_{i=1}^n \iprods{G_i u + \xi_i, \bar{u}^k - u_{[i]}^k}.
	\end{array}
	\end{equation*}
	Since $G_i + T_i$ is monotone, for ${\xi}_i^k \in T_i {u}_i^k$, we have $\iprods{G_i u + \xi_i, {u}_i^k - u} \leq \iprods{G_i {u}_i^k + {\xi}_i^k, {u}_i^k - u}$.
	Substituting this relation into the last inequality, we get
	\begin{equation*}
	\arraycolsep=0.2em
	\begin{array}{lcl}
		\Gc(u, \bar{u}^k) &\leq& \sum_{i=1}^n \iprods{G_i {u}_i^k + {\xi}_i^k, {u}_i^k - u} + \sum_{i=1}^n \iprods{G_i u + \xi_i, \bar{u}^k - u_i^k} \vspace{1ex}\\
		&=& \sum_{i=1}^n \iprods{G_i {u}_i^k + {\xi}_i^k, {u}_i^k - \bar{u}^k} + \sum_{i=1}^n \iprods{G_i {u}_i^k + {\xi}_i^k, \bar{u}^k - u} + \sum_{i=1}^n \iprods{G_i u + \xi_i, \bar{u}^k - u_i^k} \vspace{1ex}\\
		&=& \iprod{\sum_{i=1}^n (G_i {u}_i^k + {\xi}_i^k), \bar{u}^k - u} + \sum_{i=1}^n \iprod{(G_i {u}_i^k + {\xi}_i^k) - (G_i u + \xi_i), {u}_i^k - \bar{u}^k} \vspace{1ex}\\
		&\leq& \norms{\sum_{i=1}^n (G_i {u}_i^k + {\xi}_i^k)} \norms{\bar{u}^k - u} + \sum_{i=1}^n \left( \norms{G_i {u}_i^k + {\xi}_i^k} + \norms{G_i u + \xi_i} \right) \norms{{u}_i^k - \bar{u}^k}.
	\end{array}
	\end{equation*}
	Taking supremum on both sides of this inequality over ${\Bc}$, we obtain
	\begin{equation*}
	\arraycolsep=0.2em
	\begin{array}{lcl}
		\mathrm{Gap}_B(\bar{u}^k) &\leq& \norms{\sum_{i=1}^n (G_i {u}_i^k + {\xi}_i^k)} \sup_{u \in B} \norms{\bar{u}^k - u} \vspace{1ex}\\
		&& + {~} \sum_{i=1}^n \sup_{u \in B} \left( \norms{G_i {u}_i^k + {\xi}_i^k} + \norms{G_i u + \xi_i} \right) \norms{{u}_i^k - \bar{u}^k}.
	\end{array}
	\end{equation*}
	Since $\sets{u_i^k}$ converges to $u^{\star}$ for all $i \in [n]$, $\bar{u}^k = \frac{1}{n}\sum_{i=1}^n u_i^k$ converges to $u^{\star}$, and thus there exists $M_1 > 0$ such that $\sup_{u \in {\Bc}} \norms{\bar{u}^k - u} \leq M_1$.
	Next, we have $\mbf{G}\mbf{u}^k + \boldsymbol{\xi}^k = (\mbf{G}\mbf{u}^k + \boldsymbol{\xi}^k + \mbf{K}^\top \mbf{v}^k) - \mbf{K}^\top \mbf{v}^k$, where the first term $\mbf{G}\mbf{u}^k + \boldsymbol{\xi}^k + \mbf{K}^\top \mbf{v}^k$ converges to zero as proved in part (b) and the second term $\mbf{K}^\top \mbf{v}^k$ converges to $\mbf{K}^\top \mbf{v}^{\star}$, leading to the boundedness of $\mbf{G}\mbf{u}^k + \boldsymbol{\xi}^k$.
	Thus, there exists $M_2 > 0$ such that $\norms{G_i {u}_i^k + {\xi}_i^k} \leq M_2$ for all $i \in [n]$.
	Finally, since ${\Bc} \subset \mathrm{int}(\dom{T_i})$, by Lemma~\ref{le:monotone_local_bounded}, there exists $M_3 > 0$ such that $\norms{G_i u + \xi_i} \leq M_3$ for all $i = 1, \cdots, n$.
	Substituting these bounds into the last inequality, we get
	\begin{equation*}
	\arraycolsep=0.2em
	\begin{array}{lcl}
		\mathrm{Gap}_{\Bc}(\bar{u}^k) &\leq& M_1 \norms{\sum_{i=1}^n (G_i {u}_i^k + {\xi}_i^k)} + \sum_{i=1}^n (M_2 + M_3) \norms{{u}_i^k - \bar{u}^k}.
	\end{array}
	\end{equation*}
	Combining this relation and the results in part (ii) and (iii), we conclude that 
	\begin{equation*}
	\arraycolsep=0.2em
	\begin{array}{rcl}	
		\mathrm{Gap}_{\Bc}(\bar{u}^k) = \BigOs{1/k} \qquad &\text{and}& \qquad \mathrm{Gap}_B(\bar{u}^k) = \SmallOs{1/k}.
	\end{array}
	\end{equation*}
	\noindent\textbf{Case~$\mathrm{(ii)}$.}
	Under Assumption~\ref{as:boundary_regularity}, we have $\dom{T_i} = \Cc_i$, and thus $\Bc = \bigcap_{i=1}^n \Cc_i$ is nonempty, compact, and convex, and $\bigcap_{i=1}^n \ri{\Cc_i} \ne \emptyset$.
	By \cite[Corollary 3]{bauschke1999strong}, the collection $\sets{\Cc_1, \cdots, \Cc_n}$ is boundedly linear regular, i.e., for every bounded set $\Qc \subseteq \R^p$, there exists $\kappa_{\Qc} > 0$ such that 
	\begin{equation*}
		\dist{u, \Bc} \leq \kappa_{\Qc} \max_{i=1, \cdots, n} \dist{u, \Cc_i}, \qquad u \in \Qc.
	\end{equation*}
	Since $\bar{u}^k \to u^{\star}$, $\sets{\bar{u}^k}_{k \geq 0}$ is bounded.
	Taking $\Qc := \sets{\bar{u}^k}_{k \geq 0}$.
	Then, there exists $\kappa > 0$ such that 
	\begin{equation*}
	\arraycolsep=0.2em
	\begin{array}{rcl}	
		\dist{\bar{u}^k, \Bc} \leq \kappa \max\limits_{i=1, \cdots, n} \dist{\bar{u}^k, \Cc_i}, \qquad \forall k \geq 0.
	\end{array}
	\end{equation*}
	Next, let $p^k := \proj_{\Bc}(\bar{u}^k)$, then we have $\norms{p^k - \bar{u}^k} = \dist{\bar{u}^k, \Bc}$.
	Since $u_{[i]}^k \in \dom{T_i} = \Cc_i$, we obtain from the last inequality that
	\begin{equation*}
	\arraycolsep=0.2em
	\begin{array}{rcl}	
		\norms{p^k - \bar{u}^k} \leq \kappa \max\limits_{i=1, \cdots, n} \dist{\bar{u}^k, \Cc_i} 
		\leq \kappa \max\limits_{i=1, \cdots, n} \norms{\bar{u}^k - u_{[i]}^k} 
		= \kappa \norms{\Pi\mbf{u}^k}_{\infty} \leq \kappa \norms{\Pi\mbf{u}^k}.
	\end{array}
	\end{equation*}
	Using this relation, triangle inequality, and Cauchy-Schwarz inequality, we can derive that
	\begin{equation*}
	\arraycolsep=0.2em
	\begin{array}{rcl}	
		\sum_{i=1}^n \norms{p^k - u_{[i]}^k} &\leq& \sum_{i=1}^n \big(\norms{p^k - \bar{u}^k} + \norms{\bar{u}^k - u_{[i]}^k}\big) \vspace{1ex}\\
		&=& n\norms{p^k - \bar{u}^k} + \sum_{i=1}^n \norms{\bar{u}^k - u_{[i]}^k} \vspace{1ex}\\
		&\leq& n\norms{p^k - \bar{u}^k} + \sqrt{n}\big(\sum_{i=1}^n \norms{\bar{u}^k - u_{[i]}^k}^2\big)^{1/2} \vspace{1ex}\\
		&\leq& (n\kappa + \sqrt{n})\norms{\Pi \mbf{u}^k}.
	\end{array}
	\end{equation*}
	Now, fix arbitrary $u \in \Bc$ and arbitrary $\xi_i := s_i + n_i \in T_iu$, where $s_i \in S_iu$ and $n_i \in \Nc_{\Cc_i}u$.
	Then, since $p^k \in \Bc$, we have $\iprods{n_i, p^k - u} \leq 0$, leading to
	\begin{equation}\label{eq:DFKM_th1_proof2}
	\arraycolsep=0.2em
	\begin{array}{lcl}	
		\iprods{G_iu + \xi_i, p^k - u} &=& \iprods{G_iu + s_i, p^k - u} + \iprods{n_i, p^k - u} \vspace{1ex}\\
		&\leq& \iprods{G_iu + s_i, p^k - u_{[i]}^k} + \iprods{G_iu + s_i, u_{[i]}^k - u}.
	\end{array}
	\end{equation}
	Since $0 \in \Nc_{\Cc_i}(u)$ for all $u \in \Cc_i$, $i = 1, \cdots, n$, we have $\gra{G_i + S_i} \subseteq \gra{\Phi_i}$.
	Thus, the monotonicity of $\Phi_i$ immediately implies that $G_i + S_i$ is monotone.
	Moreover, since $u \in \Bc \subseteq \Cc_i$ and $n_i^k \in \Nc_{\Cc_i}u_{[i]}^k$, we have $\iprods{n_i^k, u_{[i]}^k - u} \geq 0$.
	Using this relation and the monotonicity of $G_i + S_i$, we have
	\begin{equation}\label{eq:DFKM_th1_proof3}
	\arraycolsep=0.2em
	\begin{array}{lcl}	
		\iprods{G_i u_{[i]}^k + \xi_i^k, u_{[i]}^k - u} &=& \iprods{G_i u_{[i]}^k + s_i^k, u_{[i]}^k - u} + \iprods{n_i^k, u_{[i]}^k - u} \vspace{1ex}\\
		&\geq& \iprods{G_i u_{[i]}^k + s_i^k, u_{[i]}^k - u} \vspace{1ex}\\
		&\geq& \iprods{G_iu + s_i, u_{[i]}^k - u}.
	\end{array}
	\end{equation}
	From \eqref{eq:DFKM_th1_proof1}, since $\norms{\mbf{G}\mbf{u}^k + \boldsymbol{\xi}^k + \mbf{K}^\top \mbf{v}^k} \to 0$ and $\mbf{v}^k \to \mbf{v}^{\star}$, we obtain
	\begin{equation*}
	\arraycolsep=0.1em
	\begin{array}{lcl}	
		\norms{\mbf{G}\mbf{u}^k + \boldsymbol{\xi}^k} &=& \norms{\mbf{G}\mbf{u}^k + \boldsymbol{\xi}^k + \mbf{K}^\top \mbf{v}^k - \mbf{K}^\top \mbf{v}^k}  
		\leq  \norms{\mbf{G}\mbf{u}^k + \boldsymbol{\xi}^k + \mbf{K}^\top \mbf{v}^k} + \norms{\mbf{K}} \norms{\mbf{v}^k} < M_1 < +\infty.
	\end{array}
	\end{equation*}
	Since $G_i u_{[i]}^k + \xi_i^k$ is the $i$-th block of $\mbf{G}\mbf{u}^k + \boldsymbol{\xi}^k$, we also have 
	\begin{equation*}
	\arraycolsep=0.2em
	\begin{array}{lcl}	
		\norms{G_i u_{[i]}^k + \xi_i^k} \leq \norms{\mbf{G}\mbf{u}^k + \boldsymbol{\xi}^k} < M_1 < +\infty \qquad \forall i = 1, \cdots, n.
	\end{array}
	\end{equation*}
	Since $\Bc$ is compact, we have $D_{\Bc} := \sup_{u, u' \in \Bc} \norms{u - u'} < +\infty$.
	Then, since $p^k, u \in \Bc$, we have $\norms{p^k - u} \leq D_{\Bc}$.
	
	Using the continuity of $G_i$ (due to its Lipschitz continuity), the boundedness of $u \in \Bc$, and \eqref{eq:S_bounded}, there exists $M_2 > 0$ such that $\norms{G_iu + s_i} < M_2 < +\infty$.
	Substituting \eqref{eq:DFKM_th1_proof3} into \eqref{eq:DFKM_th1_proof2} and then using the Cauchy-Schwarz inequality and the last relation, we can show that
	\begin{equation*}
	\arraycolsep=0.2em
	\begin{array}{lcl}	
		\iprods{G_iu + \xi_i, p^k - u} &\leq& \norms{G_iu + s_i}\norms{p^k - u_{[i]}^k} + \iprods{G_i u_{[i]}^k + \xi_i^k, u_{[i]}^k - u} \vspace{1ex}\\
		&\leq& M_2\norms{p^k - u_{[i]}^k}  + \iprods{G_i u_{[i]}^k + \xi_i^k, u_{[i]}^k - u}.
	\end{array}
	\end{equation*}
	Summing the last inequality from $i = 1$ to $n$, we get
	\begin{equation*}
	\arraycolsep=0.2em
	\begin{array}{lcl}	
		\sum_{i=1}^n \iprods{G_iu + \xi_i, p^k - u} &\leq& \sum_{i=1}^n \iprods{G_i u_{[i]}^k + \xi_i^k, u_{[i]}^k - u} + M_2 \sum_{i=1}^n \norms{p^k - u_{[i]}^k} \vspace{1ex}\\
		&=& \sum_{i=1}^n \iprods{G_i u_{[i]}^k + \xi_i^k, u_{[i]}^k - p^k} +  \iprods{\sum_{i=1}^n (G_i u_{[i]}^k + \xi_i^k), p^k - u} \vspace{1ex}\\
		&& + {~} M_2 \sum_{i=1}^n \norms{p^k - u_{[i]}^k} \vspace{1ex}\\
		&\leq& \sum_{i=1}^n \norms{G_i u_{[i]}^k + \xi_i^k} \norms{u_{[i]}^k - p^k} + \norms{\sum_{i=1}^n (G_i u_{[i]}^k + \xi_i^k)} \norms{p^k - u} \vspace{1ex}\\
		&& + {~} M_2 \sum_{i=1}^n \norms{p^k - u_{[i]}^k} \vspace{1ex}\\
		&\leq& (M_1 + M_2) \sum_{i=1}^n \norms{u_{[i]}^k - p^k} + D_{\Bc}\norms{\sum_{i=1}^n (G_i u_{[i]}^k + \xi_i^k)} \vspace{1ex}\\
		&\leq& (M_1 + M_2) (n\kappa + \sqrt{n})\norms{\Pi \mbf{u}^k} + D_{\Bc}\norms{\sum_{i=1}^n (G_i u_{[i]}^k + \xi_i^k)}.
	\end{array}
	\end{equation*}
	Taking the supremum on both sides of the last inequality, we obtain
	\begin{equation*}
	\arraycolsep=0.2em
	\begin{array}{lcl}	
		\mathrm{Gap}_{\Bc}(p^k) &\leq& (M_1 + M_2) (n\kappa + \sqrt{n})\norms{\Pi \mbf{u}^k} + D_{\Bc}\norms{\sum_{i=1}^n (G_i u_{[i]}^k + \xi_i^k)}.
	\end{array}
	\end{equation*}
	Combining this relation and the results in part (ii) and (iii), we conclude that
	\begin{equation*}
	\arraycolsep=0.2em
	\begin{array}{rcl}	
		\mathrm{Gap}_{\Bc}(p^k) = \BigOs{1/k} \qquad &\text{and}& \qquad \mathrm{Gap}_B(p^k) = \SmallOs{1/k}.
	\end{array}
	\end{equation*}
	\vspace{0.75ex}
	\noindent$\mathrm{(v)}$~\textbf{The convergence of the FBS residual.}
	Let $\bar{G}u := \frac{1}{n}\sum_{i=1}^n G_iu$ and $\bar{T}u := \frac{1}{n}\sum_{i=1}^n T_iu$.
	Consider the FBS residual $\Gc_{\lambda}u := \frac{1}{\lambda}(u - J_{\lambda \bar{T}}(u - \lambda \bar{G}u))$.
	Under Assumption~\ref{as:restricted_dual_gap}, it is obvious that $\bar{G}$ is also $L$-Lipschitz continuous and $\bar{\Phi} := \bar{G} + \bar{T}$ is maximally monotone.
	We consider two cases corresponding to  Assumption~\ref{as:restricted_dual_gap} and Assumption~\ref{as:boundary_regularity}.
	
	\noindent\textbf{Case $\mathrm{(i)}$.}
	Under Assumption~\ref{as:restricted_dual_gap}, let $q^k := J_{\lambda \bar{T}}(\bar{u}^k - \lambda \bar{G}\bar{u}^k)$.
	Since $\sets{\bar{u}^k}$ converges to $u^{\star}$ and $J_{\lambda \bar{T}}$ is continuous, we have $\lim_{k \to \infty} q^k = J_{\lambda \bar{T}}(u^{\star} - \lambda \bar{G}u^{\star})$.
	However, since $u^{\star}$ is a solution of \eqref{eq:DGE}, we also have $0 \in \bar{G}u^{\star} + \bar{T}u^{\star}$, meaning that $u^{\star}$ is also a solution of $0 \in \bar{G}u^{\star} + \bar{T}u^{\star}$.
	This implies $u^{\star} = J_{\lambda \bar{T}}(u^{\star} - \lambda \bar{G}u^{\star})$, and therefore, $\sets{q^k}$ converges to $u^{\star}$.
	In particular, both $\sets{q^k}$ and $\sets{\bar{u}^k - q^k}$ are bounded, and thus there exists $M_1 > 0$ such that $\norms{\bar{u}^k - q^k} \leq M_1$ and $\norms{q^k - u^{\star}} \leq M_1$ for all $k \geq 0$.
	
	Next, by the definition of $q^k$, we have $\frac{1}{\lambda}(\bar{u}^k - q^k) - \bar{G}\bar{u}^k \in \bar{T}q^k = \frac{1}{n}\sum_{i=1}^n T_i q^k$.
	Thus, there exist $\zeta_i^k \in T_i q^k$ for all $i = 1, \cdots, n$, such that $\frac{1}{n} \sum_{i=1}^n \zeta_i^k = \frac{1}{\lambda}(\bar{u}^k - q^k) - \bar{G}\bar{u}^k$.
	Let $\breve{w}_i^k := G_i q^k + \zeta_i^k \in \Phi_i q^k$.
	Then, we have $\sum_{i=1}^n \breve{w}_i^k = \frac{n}{\lambda}(\bar{u}^k - q^k) + n(\bar{G}q^k - \bar{G}\bar{u}^k)$.
	Since $\sets{\bar{u}^k - q^k}$ is bounded and $\bar{G}$ is Lipschitz continuous, we also have $\sets{\sum_{i=1}^n \breve{w}_i^k}$ is bounded, i.e., there exists $M_2 > 0$ such that $\norms{\sum_{i=1}^n \breve{w}_i^k} \leq M_2$ for all $k \geq 0$.
	
	We next show that each $\sets{\breve{w}_i^k}$ is bounded for all $i = 1, \cdots, n$.
	Recall the interior solution $u^{\dagger}$ from Assumption~\ref{as:restricted_dual_gap}, there exists $\delta > 0$ such that $B_{\delta}(u^{\dagger}) \subset \bigcap_{i=1}^n \mathrm{int}(\dom{\Phi_i})$, where $B_{\delta}(u^{\dagger}) = \sets{u \in \R^p: \norms{u - u^{\dagger}} \leq \delta}$.
	By Lemma~\ref{le:monotone_local_bounded}, there exists $M_3 > 0$ such that 
	\begin{equation*}
	\arraycolsep=0.2em
	\begin{array}{rcl}	
		\norms{\hat{w}_i} \leq M_3, \quad \hat{w}_i \in \Phi_iy, \quad y \in B_{\delta}(u^{\dagger}), \quad i = 1, \cdots, n.
	\end{array}
	\end{equation*}
	For $\breve{w}_i^k \neq 0$, let $e_i^k := \frac{\breve{w}_i^k}{\norms{\breve{w}_i^k}}$, define $y_i^k := u^{\dagger} + \delta e_i^k \in B_{\delta}(u^{\dagger})$, and choose $\hat{w}_i^k \in \Phi_iy^k$.
	By the monotonicity of $\Phi_i$, we have $0 \leq \iprods{\breve{w}_i^k - \hat{w}_i^k, q^k - y_i^k} = \iprods{\breve{w}_i^k - \hat{w}_i^k, q^k - u^{\dagger} - \delta e_i^k}$.
	Expanding this expression with a notice that $\iprods{\breve{w}_i^k, e_i^k} = \norms{\breve{w}_i^k}$ and then rearranging the result, we obtain
	\begin{equation*}
	\arraycolsep=0.2em
	\begin{array}{rcl}	
		\delta \norms{\breve{w}_i^k} &\leq& \iprods{\breve{w}_i^k, q^k - u^{\dagger}} - \iprods{\hat{w}_i^k, q^k - u^{\dagger}} + \delta\iprods{\hat{w}_i^k, e_i^k} \vspace{1ex}\\
		&\leq& \iprods{\breve{w}_i^k, q^k - u^{\dagger}} + \norms{\hat{w}_i^k} \norms{q^k - u^{\dagger}} + \delta\norms{\hat{w}_i^k}\norms{e_i^k} \vspace{1ex}\\
		&\leq& \iprods{\breve{w}_i^k, q^k - u^{\dagger}} + M_3\norms{q^k - u^{\dagger}} + \delta M_3.
	\end{array}
	\end{equation*}
	Summing the last inequality from $i=1$ to $n$, we get
	\begin{equation*}
	\arraycolsep=0.2em
	\begin{array}{rcl}	
		\delta \sum_{i=1}^n \norms{\breve{w}_i^k} &\leq& \iprods{\sum_{i=1}^n \breve{w}_i^k, q^k - u^{\dagger}} + nM_3\norms{q^k - u^{\dagger}} + n\delta M_3 \vspace{1ex}\\
		&\leq& \norms{\sum_{i=1}^n \breve{w}_i^k}(\norms{q^k - u^{\star}} + \norms{u^{\star} - u^{\dagger}}) + nM_3(\norms{q^k - u^{\star}} + \norms{u^{\star} - u^{\dagger}}) + n\delta M_3 \vspace{1ex}\\
		&\leq& (M_2 + nM_3)(M_1 + \norms{u^{\star} - u^{\dagger}}) + n\delta M_3.
	\end{array}
	\end{equation*}
	Combining the boundedness of $\sets{G_i u_{[i]}^k+\xi_i^k}$ established in part (iv) with the preceding bound on $\sum_{i=1}^n \norms{\breve{w}_i^k} = \sum_{i=1}^n \norms{G_i q^k + \zeta_i^k}$, there exists $M_4 > 0$ such that $\sum_{i=1}^n \big(\norms{G_i u_{[i]}^k + \xi_i^k} + \norms{G_i q^k + \zeta_i^k}\big) \leq M_4$ for all $k \geq 0$.
	
	Now, for $\xi_i^k \in T_iu_{[i]}^k$, by the monotonicity of $G_i + T_i$, we have
	\begin{equation*}
	\arraycolsep=0.2em
	\begin{array}{rcl}	
		0 &\leq& \sum_{i=1}^n \iprods{(G_i u_{[i]}^k + \xi_i^k) - (G_i q^k + \zeta_i^k), u_{[i]}^k - q^k} \vspace{1ex}\\
		&=& \sum_{i=1}^n \iprods{(G_i u_i^k + \xi_i^k) - (G_i q^k + \zeta_i^k), (u_i^k - \bar{u}^k) + (\bar{u}^k - q^k)}.
		\end{array}
	\end{equation*}
	Partially expanding this inner product and then using $\sum_{i=1}^n \zeta_i^k = \frac{n}{\lambda}(\bar{u}^k - q^k) - n\bar{G}\bar{u}^k$, we get
	\begin{equation*}
		\arraycolsep=0.2em
		\begin{array}{rcl}	
			\frac{n}{\lambda} \norms{\bar{u}^k - q^k}^2 &\leq& \iprod{\sum_{i=1}^n (G_i u_{[i]}^k + \xi_i^k), \bar{u}^k - q^k} + n \iprods{\bar{G}\bar{u}^k - \bar{G}q^k, \bar{u}^k - q^k} \vspace{1ex}\\
			&& + {~} \sum_{i=1}^n \iprod{(G_i u_{[i]}^k + \xi_i^k) - (G_i q^k + \zeta_i^k), u_i^k - \bar{u}^k}.
		\end{array}
	\end{equation*}
	From the $L$-Lipschitz continuity of $G_i$, we have 
	$n \iprods{\bar{G}\bar{u}^k - \bar{G}q^k, \bar{u}^k - q^k} = \sum_{i=1}^n \iprods{G_i \bar{u}^k - G_i q^k, \bar{u}^k - q^k} \leq n L\norms{\bar{u}^k - q^k}^2$.
	Substituting this bound into the last inequality, we obtain
	\begin{equation*}
	\arraycolsep=0.2em
	\begin{array}{rcl}	
		n(\frac{1}{\lambda} - L) \norms{\bar{u}^k - q^k}^2 &\leq& \iprods{\sum_{i=1}^n (G_i u_{[i]}^k + \xi_i^k), \bar{u}^k - q^k} +  \sum_{i=1}^n \iprods{(G_i u_{[i]}^k + \xi_i^k) - (G_i q^k + \zeta_i^k), u_i^k - \bar{u}^k} \vspace{1ex}\\
		
		&\leq& \norms{\sum_{i=1}^n (G_i u_{[i]}^k + \xi_i^k)} \norms{\bar{u}^k - q^k} +  \sum_{i=1}^n \big(\norms{G_i u_{[i]}^k + \xi_i^k} + \norms{G_i q^k + \zeta_i^k}\big) \norms{u_i^k - \bar{u}^k} \vspace{1ex}\\
		
		&\leq& M_1 \norms{\sum_{i=1}^n (G_i u_{[i]}^k + \xi_i^k)} + M_4 \sum_{i=1}^n \norms{u_i^k - \bar{u}^k}.
		\end{array}
	\end{equation*}
	Combining this inequality with the results of part $\mathrm{(ii)}$ and $\mathrm{(iii)}$, we conclude that
	\begin{equation*}
		\arraycolsep=0.2em
		\begin{array}{rcl}	
			\norms{\bar{u}^k - q^k}^2 = \BigOs{1/k} \qquad \text{and} \qquad \norms{\bar{u}^k - q^k}^2 = \SmallOs{1/k}.
		\end{array}
	\end{equation*}
	Finally, since $\norms{\Gc_{\lambda}\bar{u}^k}^2 = \frac{1}{\lambda^2}\norms{\bar{u}^k - J_{\lambda \bar{T}}(\bar{u}^k - \lambda \bar{G}\bar{u}^k)}^2 = \frac{1}{\lambda^2}\norms{\bar{u}^k - q^k}^2$, we also get
	\begin{equation*}
	\arraycolsep=0.2em
	\begin{array}{rcl}	
		\norms{\Gc_{\lambda}\bar{u}^k}^2 = \BigOs{1/k} \qquad \text{and} \qquad \norms{\Gc_{\lambda}\bar{u}^k}^2 = \SmallOs{1/k}.
	\end{array}
	\end{equation*}
	\vspace{0.75ex}
	\noindent
	\textbf{Case $\mathrm{(ii)}$.}
	Under Assumption~\ref{as:boundary_regularity}, let $q^k := J_{\lambda \bar{T}}(p^k - \lambda \bar{G}p^k)$, where $p^k := \proj_{\Bc}(\bar{u}^k)$ as defined in part $\mathrm{(iv)}$.
	Then, there exist $\zeta_i^k \in T_i q^k$ such that $\frac{1}{n}\sum_{i=1}^n \zeta_i^k = \frac{1}{\lambda}(p^k - q^k) - \bar{G}p^k$.
	Using the definition of the restricted gap function, the last expression, Cauchy-Schwarz inequality, and the $L$-Lipschitz continuity of $\bar{G}$, we have
	\begin{equation*}
	\arraycolsep=0.2em
	\begin{array}{rcl}	
		\mathrm{Gap}_{\Bc}(p^k) &\geq& \sum_{i=1}^n \iprods{G_iq^k + \zeta_i^k, p^k - q^k} \vspace{1ex}\\
		&=& n\iprods{\bar{G}q^k + \frac{1}{\lambda}(p^k - q^k) - \bar{G}p^k, p^k - q^k} \vspace{1ex}\\
		&=& \frac{n}{\lambda}\norms{p^k - q^k}^2 - n\iprods{\bar{G}p^k - \bar{G}q^k, p^k - q^k} \vspace{1ex}\\
		&\geq&  \frac{n}{\lambda}\norms{p^k - q^k}^2 - n\norms{\bar{G}p^k - \bar{G}q^k}\norms{p^k - q^k} \vspace{1ex}\\
		&\geq&  n\big(\frac{1}{\lambda} - L\big)\norms{p^k - q^k}^2.
	\end{array}
	\end{equation*}
	Combining this with the results in part (iv), we conclude that
	\begin{equation*}
	\arraycolsep=0.2em
	\begin{array}{rcl}	
		\norms{p^k - q^k}^2 = \BigOs{1/k} \qquad \text{and} \qquad \norms{p^k - q^k}^2 = \SmallOs{1/k}.
	\end{array}
	\end{equation*}
	Finally, since $\norms{\Gc_{\lambda}p^k}^2 = \frac{1}{\lambda^2}\norms{p^k - J_{\lambda \bar{T}}(p^k - \lambda \bar{G}p^k)}^2 = \frac{1}{\lambda^2}\norms{p^k - q^k}^2$, we also get
	\begin{equation*}
	\arraycolsep=0.2em
	\begin{array}{rcl}	
		\norms{\Gc_{\lambda}p^k}^2 = \BigOs{1/k} \qquad \text{and} \qquad \norms{\Gc_{\lambda}p^k}^2 = \SmallOs{1/k}.
	\end{array}
	\end{equation*}
\vspace{0.75ex}
\noindent$\mathrm{(vi)}$~\textbf{The convergence of operator residual $\norms{\sum_{i=1}^n G_i \bar{u}^k}$ when $T_i = 0$ for all $i \in [n]$.}
Using the triangle inequality and the $L$-Lipschitz continuity of $G_i$, we have
\begin{equation*}
\arraycolsep=0.2em
\begin{array}{rcl}	
	\norms{\sum_{i=1}^n G_i \bar{u}^k} &=& \norm{\sum_{i=1}^n G_i \bar{u}^k - \sum_{i=1}^n G_i u_{[i]}^k + \sum_{i=1}^n G_i u_{[i]}^k } \vspace{1ex}\\
	&\leq& \sum_{i=1}^n \norms{G_i \bar{u}^k - G_i u_{[i]}^k} + \norm{\sum_{i=1}^n G_i u_{[i]}^k} \vspace{1ex}\\
	&\leq& L\sum_{i=1}^n \norms{\bar{u}^k - u_{[i]}^k} + \norm{\sum_{i=1}^n G_i u_{[i]}^k}.
\end{array}
\end{equation*}
From the results of parts (b) and (c), the last inequality implies that
\begin{equation*}
\arraycolsep=0.2em
\begin{array}{rcl}	
	\norms{\sum_{i=1}^n G_i \bar{u}^k} = \BigOs{1/k} \qquad \text{and} \qquad \norms{\sum_{i=1}^n G_i \bar{u}^k} = \SmallOs{1/k},
\end{array}
\end{equation*}
which completes the proof.
\end{proof}

\beforesubsubsec
\subsubsection{The co-coercive case}\label{subsubsec:ND_case2}
\aftersubsubsec
Now, we design a distributed FFP method to solve  \eqref{eq:DGE} under the following assumption.

\begin{assumption}\label{as:DGE_cocoercive}
	The distributed composite inclusion \eqref{eq:DGE} satisfies the following conditions:
	\begin{compactitem}
		\item[(i)] (\textbf{Local co-coercivity}) $G_i$ is $\frac{1}{L}$-co-coercive, for all $i \in [n]$.
		\item[(ii)] (\textbf{Local maximal monotonicity}) $T_i$ is maximally monotone for all $i \in [n]$.
	\end{compactitem}
\end{assumption}
First, we establish the following lemma to transform \eqref{eq:DGE} to \eqref{eq:precond_GE}.
The proof of this lemma is given in Appendix~\ref{apdx:le:DGE_cocoercive}.
\begin{lemma}\label{le:DGE_cocoercive}
	Let $G$ and $T$ in \eqref{eq:DGE} satisfy  Assumption~\ref{as:DGE_cocoercive} and $\tau \sigma \norms{\Id - W} < 2$. 
	Then
	\begin{compactitem}
		\item[$\mathrm{(i)}$] 
		The operator $\mbb{P}^{-1} \mbb{G}$ in \eqref{eq:precond_GE} is $\frac{1}{\tilde{L}}$-co-coercive w.r.t. $\norms{\cdot}_{\mbb{P}}$, where $\tilde{L} := \frac{2\tau L}{2 - \tau \sigma \norms{\Id - W}}$.
		
		\item[$\mathrm{(ii)}$] 
		The operator $\mbb{P}^{-1} \mbb{T}$ in \eqref{eq:precond_GE} is maximally monotone w.r.t. $\iprods{\cdot, \cdot}_{\mbb{P}}$.
	\end{compactitem}
\end{lemma}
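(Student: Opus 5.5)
The plan is to verify (i) and (ii) separately, working in the $\mbb{P}$-geometry. The starting point is the observation that, for any single-valued or set-valued $\mathbb{A}$, $\iprods{\mbb{P}^{-1}\mathbb{A}\mbf{x} - \mbb{P}^{-1}\mathbb{A}\mbf{x}', \mbf{x} - \mbf{x}'}_{\mbb{P}} = \iprods{\mathbb{A}\mbf{x} - \mathbb{A}\mbf{x}', \mbf{x} - \mbf{x}'}$. Monotonicity and co-coercivity statements with respect to $\iprods{\cdot,\cdot}_{\mbb{P}}$ therefore reduce to Euclidean inner products of $\mbb{G}$ and $\mbb{T}$.

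First I will record two facts about $\mbb{P}$. Since $\mbf{K}^\top\mbf{K} = \frac{\Id - W}{2}$, we have $\norms{\mbf{K}}^2 = \frac{1}{2}\norms{\Id - W}$. A Schur-complement computation (or Young's inequality on the cross term $2\iprods{\mbf{K}\mbf{u},\mbf{v}}$) then gives $\mbb{P} \succeq \begin{bmatrix}(\frac{1}{\tau} - \sigma\norms{\mbf{K}}^2)\Id & 0 \\ 0 & 0\end{bmatrix}$. Consequently $\norms{\mbf{x}}_{\mbb{P}}^2 \geq \frac{1 - \tau\sigma\norms{\mbf{K}}^2}{\tau}\norms{\mbf{u}}^2 = \frac{2 - \tau\sigma\norms{\Id - W}}{2\tau}\norms{\mbf{u}}^2$, and $\mbb{P} \succ 0$ because $\tau\sigma\norms{\Id-W} < 2$. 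The second fact is that $\mbb{P}^{-1}\mbb{G}\mbf{x} - \mbb{P}^{-1}\mbb{G}\mbf{x}' = \mbb{P}^{-1}[\mbf{g}; 0]$ with $\mbf{g} := \mbf{G}\mbf{u} - \mbf{G}\mbf{u}'$. Hence $\norms{\mbb{P}^{-1}\mbb{G}\mbf{x} - \mbb{P}^{-1}\mbb{G}\mbf{x}'}_{\mbb{P}}^2 = \iprods{[\mbf{g};0], \mbb{P}^{-1}[\mbf{g};0]} = \iprods{\mbf{g}, (\mbb{P}^{-1})_{11}\mbf{g}}$. The $(1,1)$ block of $\mbb{P}^{-1}$ is $(\frac{1}{\tau}\Id - \sigma\mbf{K}^\top\mbf{K})^{-1} \preceq \frac{\tau}{1 - \tau\sigma\norms{\mbf{K}}^2}\Id = \frac{2\tau}{2 - \tau\sigma\norms{\Id-W}}\Id = \frac{\tilde{L}}{L}\Id$.

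For (i), blockwise co-coercivity of the $G_i$ gives $\iprods{\mbf{g}, \mbf{u} - \mbf{u}'} \geq \frac{1}{L}\norms{\mbf{g}}^2$. Combining this with the bound on the $(1,1)$ block yields $\iprods{\mbb{P}^{-1}\mbb{G}\mbf{x} - \mbb{P}^{-1}\mbb{G}\mbf{x}', \mbf{x} - \mbf{x}'}_{\mbb{P}} = \iprods{\mbf{g}, \mbf{u} - \mbf{u}'} \geq \frac{1}{L}\norms{\mbf{g}}^2 \geq \frac{1}{\tilde{L}}\norms{\mbb{P}^{-1}\mbb{G}\mbf{x} - \mbb{P}^{-1}\mbb{G}\mbf{x}'}_{\mbb{P}}^2$, which is exactly $\frac{1}{\tilde{L}}$-co-coercivity in $\norms{\cdot}_{\mbb{P}}$.

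For (ii), write $\mbb{T} = \mathbb{A} + \mathbb{S}$ with $\mathbb{A}\mbf{x} := [\mbf{T}\mbf{u}; 0]$ and $\mathbb{S}\mbf{x} := [\mbf{K}^\top\mbf{v}; -\mbf{K}\mbf{u}]$. The operator $\mathbb{A}$ is maximally monotone, being a product of the maximally monotone $T_i$ with the zero operator. The operator $\mathbb{S}$ is linear and skew, so it is monotone, continuous, and has full domain. The sum $\mbb{T}$ is therefore maximally monotone in the Euclidean sense, by the standard sum rule in which one summand has full domain. Monotonicity of $\mbb{P}^{-1}\mbb{T}$ in $\iprods{\cdot,\cdot}_{\mbb{P}}$ follows from the identity above. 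For maximality I would use Minty's criterion in the Hilbert space $(\R^{2n\times p}, \iprods{\cdot,\cdot}_{\mbb{P}})$: the requirement is that $\Id + \mbb{P}^{-1}\mbb{T}$ be surjective, which is equivalent to surjectivity of $\mbb{P} + \mbb{T}$. That in turn holds because $\mbb{P} + \mbb{T}$ is the sum of a strongly monotone, full-domain, continuous linear map $\mbb{P}$ and the maximally monotone $\mbb{T}$; such a sum is maximally monotone and strongly monotone, hence surjective. Alternatively, a graph-extension argument using the fact that graphs correspond under the linear bijection $(\mbf{x},\mbf{y}) \mapsto (\mbf{x}, \mbb{P}\mbf{y})$ works directly.

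I expect the main obstacle to be the sharp bound on $(\mbb{P}^{-1})_{11}$, which is what produces the constant $\tilde{L}$ exactly. The key step there is to identify the $(1,1)$ block as the inverse of the Schur complement $\frac{1}{\tau}\Id - \sigma\mbf{K}^\top\mbf{K}$, and then to use $\mbf{K}^\top\mbf{K} \preceq \norms{\mbf{K}}^2\Id$ together with $\norms{\mbf{K}}^2 = \frac{1}{2}\norms{\Id - W}$.
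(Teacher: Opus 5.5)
Your proposal is correct and matches the paper's proof. Part (i) is the paper's argument exactly: the identity $\iprods{\mbb{P}^{-1}\mbb{G}\mbf{x} - \mbb{P}^{-1}\mbb{G}\mbf{x}', \mbf{x}-\mbf{x}'}_{\mbb{P}} = \iprods{\mbf{G}\mbf{u}-\mbf{G}\mbf{u}', \mbf{u}-\mbf{u}'}$, the $\frac{1}{L}$-co-coercivity of $\mbf{G}$, and the bound $\big(\frac{1}{\tau}\Id - \sigma\mbf{K}^\top\mbf{K}\big)^{-1} \preceq \frac{\tau}{1-\tau\sigma\norms{\mbf{K}}^2}\Id$ on the $(1,1)$ block of $\mbb{P}^{-1}$. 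For (ii), the paper also shows Euclidean maximal monotonicity of $\mbb{T}$ and transfers it to $\iprods{\cdot,\cdot}_{\mbb{P}}$ by citing a variable-metric lemma of Combettes--V\~{u}; your splitting $\mbb{T} = \mathbb{A} + \mathbb{S}$ with $\mathbb{S}$ skew, together with the Minty argument via surjectivity of $\mbb{P} + \mbb{T}$, simply makes those two steps explicit.
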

Next, utilizing the parameters from Corollary~\ref{co:FKM_scheme2+_convergence}, the scheme \eqref{eq:DFKM_scheme} reduces to
\begin{equation}\label{eq:DFKM_scheme2}
\tag{ND-DFFP2}
\arraycolsep=0.2em
\left\{
\begin{array}{lcl}	
	\mbf{d}^k &:=& \frac{1}{2}(\hat{\mbf{u}}^k - W\hat{\mbf{u}}^k), \vspace{1ex}\\
	\mbf{u}^{k+1} &:=& J_{\tau \mbf{T}} \left(\hat{\mbf{u}}^k - \tau \tilde{\mbf{v}}^k - \tau \mbf{G}\hat{\mbf{u}}^k  -  2 \tau \sigma \mbf{d}^k\right), \vspace{1ex}\\
	\hat{\mbf{w}}^{k+1} &:=& \hat{\mbf{u}}^k - \mbf{u}^{k+1}, \vspace{1ex}\\
	\hat{\mbf{u}}^{k+1} &:=& \hat{\mbf{u}}^k + \theta_k \left(\hat{\mbf{u}}^k - \hat{\mbf{u}}^{k-1} \right) - \hat{\eta}_k\hat{\mbf{w}}^{k+1} + \hat{\gamma}_k \hat{\mbf{w}}^k , \vspace{1ex}\\
	\tilde{\mbf{v}}^{k+1} &:=& \tilde{\mbf{v}}^k + \theta_k \left(\tilde{\mbf{v}}^k - \tilde{\mbf{v}}^{k-1} \right) + \sigma (\hat{\eta}_k\mbf{d}^k - \hat{\gamma}_k \mbf{d}^{k-1}),
\end{array}
\right.
\end{equation}
where the iterates are initialized as $\hat{\mbf{u}}^{-1} = \hat{\mbf{u}}^0 = \mbf{u}^0$, $\tilde{\mbf{v}}^{-1} = \tilde{\mbf{v}}^0 = 0$, $\hat{\mbf{w}}^0 = 0$, and $\mbf{d}^{-1} = 0$, and the parameters are updated according to \eqref{eq:DFKM2_params}.
Then, the convergence of \eqref{eq:DFKM_scheme2} can be stated in the following theorem.
\begin{theorem}[\textbf{Co-coercive setting}]\label{th:DFKM_convergence_cocoercive}
	For \eqref{eq:DGE}, suppose that Assumption~\ref{as:DGE_cocoercive} holds. 
	Let $\sets{(\mbf{u}^k, \tilde{\mbf{v}}^k)}$ be generated by \eqref{eq:DFKM_scheme2} using $t_0$ given in \eqref{eq:t0_cond2} and
	\begin{equation}\label{eq:DFKM2_params}
		\arraycolsep=0.2em
		\hspace{-2ex}
		\begin{array}{c}
			r > 1, \qquad \nu \in (0, r-1), \qquad 			
			\tau L + \tau \sigma (1 - \lambda_{\min}(W)) < 2, \vspace{1ex}\\
			t_{k} := k + t_0, \quad \theta_k := \frac{t_k - r}{t_{k+1}}, \qquad  \hat{\eta}_k = \frac{t_{k+1} - r}{t_{k+1}} + \frac{\nu (t_k - 1)}{t_k t_{k+1}}, \qquad \text{and} \qquad \hat{\gamma}_k = \theta_k.
		\end{array}
		\hspace{-2ex}
	\end{equation}
	Then, the following statements hold.
	\begin{compactitem}
	\item[$~~~\mathrm{(i)}$~\textbf{$($Local iterates$)$}]
	The local iterates $\sets{u_{[i]}^k}$ and $\sets{\hat{u}_{[i]}^k}$ converge to the same $u^\star \in \zer{\Phi}$ of the original \eqref{eq:DGE} problem for all nodes $i \in [n]$.
	
	\item[$~~~\mathrm{(ii)}$~\textbf{$($Consensus errors$)$}]
	The consensus errors $\norms{\Pi \mbf{u}^k }$ and $\norms{\Pi \hat{\mbf{u}}^k }$ converge to zero at the rate of $\SmallOs{1/k}$.
	
	\item[$~~~\mathrm{(iii)}$~\textbf{$($Aggregated residual$)$}]
	The aggregated residual $\norms{\sum_{i=1}^n G_i u_{[i]}^k + \xi_i^k}$ converges to zero at the rate of $\SmallOs{1/k}$, where $\xi_i^k \in Tu_{[i]}^k$.
	
	\item[$~~~\mathrm{(iv)}$~\textbf{$($Restricted gap$)$}]
	If, in addition, Assumption~\ref{as:restricted_dual_gap} holds, then the restricted gap value $\mathrm{Gap}_B(\bar{u}^k)$ converges to zero at the rate of $\SmallOs{1/k}$, where $\bar{u}^k := \frac{1}{n}\sum_{i=1}^n u_{[i]}^k$.
	Otherwise, if, in addition, Assumption~\ref{as:boundary_regularity} holds, then $\mathrm{Gap}_B(p^k)$ converges to zero at the rate of $\SmallOs{1/k}$, where $p^k := \proj_{\Bc}(\bar{u}^k)$.
	
	\item[$~~~\mathrm{(v)}$~\textbf{$($FBS residual$)$}]
	For a given $\lambda \in (0, \frac{1}{L})$, if, in addition, Assumption~\ref{as:restricted_dual_gap} holds, then $\norms{\Gc_{\lambda}\bar{u}^k}^2$ converges to zero at the rate of $\SmallOs{1/k}$.
	Otherwise, if, in addition, Assumption~ \ref{as:boundary_regularity} holds, then $\norms{\Gc_{\lambda}p^k}^2$ converges to zero at the rate of $\SmallOs{1/k}$, where $p^k := \proj_{\Bc}(\bar{u}^k)$.
	
	\item[$~~~\mathrm{(vi)}$~\textbf{$($Special case$)$}]
	If $T_i = 0$ for all $i \in [n]$ $($i.e., \eqref{eq:DGE} reduces to \eqref{eq:DNE}$)$, then $\norms{\sum_{i=1}^n G_i \bar{u}^k}$ converges to zero at the rate of $\SmallOs{1/k}$.
	\end{compactitem}
\end{theorem}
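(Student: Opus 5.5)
The plan mirrors the proof of Theorem~\ref{th:DFKM_convergence}, with Corollary~\ref{co:FKM_scheme2+_convergence} (hence Theorem~\ref{th:FKM2_convergence}) replacing Corollary~\ref{co:FKM_scheme3_convergence}.

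\textbf{Verifying the hypotheses.} With $\mbf{K}^\top\mbf{K} = \frac{\Id - W}{2}$ we have $\norms{\mbf{K}}^2 = \frac{1-\lambda_{\min}(W)}{2}$. Hence the condition $\tau L + \tau\sigma(1-\lambda_{\min}(W)) < 2$ in \eqref{eq:DFKM2_params} reads $\frac{1}{2}\tau L + \tau\sigma\norms{\mbf{K}}^2 < 1$. This gives $\tau\sigma\norms{\Id - W} < 2$, so Lemma~\ref{le:DGE_cocoercive} applies. It shows that $\mbb{P}^{-1}\mbb{G}$ is $\frac{1}{\tilde{L}}$-co-coercive and $\mbb{P}^{-1}\mbb{T}$ is maximally monotone (so $\rho = 0$) in the $\mbb{P}$-geometry. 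The same inequality yields $\tilde{L} = \frac{2\tau L}{2-\tau\sigma\norms{\Id-W}} < 2$, i.e. $\eta = 1 < 2/\tilde{L}$, as in the proof of Theorem~\ref{th:FKM_3OP_convergence2}.

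\textbf{Identifying the scheme.} Next I would check that \eqref{eq:DFKM_scheme2} is exactly \eqref{eq:FKM_scheme2+} with $\eta = 1$ applied to \eqref{eq:precond_GE}. This follows from the derivation in Steps~1--6 of Subsection~\ref{subsec:DN_alg_derivation} with $\gamma_k = \beta_k = 0$ (so $\hat{\lambda}_k$ drops out and $\mbf{s}^k = \hat{\mbf{u}}^k$). I also need the initialization $\hat{\mbf{w}}^0 = 0$, $\tilde{\mbf{v}}^0 = 0$ to match $y^{-1} = y^0 = x^0$, $\hat{w}^0 = 0$ with $\mbf{v}^0 = 0$. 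Since $\mbb{P}$ is a fixed positive definite matrix, Euclidean and $\mbb{P}$-norms are equivalent. Corollary~\ref{co:FKM_scheme2+_convergence} then gives three facts: $\mbf{x}^k, \mbf{y}^k \to \mbf{x}^\star = [\mbf{u}^\star,\mbf{v}^\star]$; the bound $\norms{\mathbbm{w}^k} = \SmallOs{1/k}$ for some $\mathbbm{w}^k \in \mbb{G}\mbf{x}^k + \mbb{T}\mbf{x}^k$; and the analogous bound for $\hat{\mathbbm{w}}^k$.

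\textbf{Items (i)--(vi).} Item (i) follows from Lemma~\ref{le:consensus}. For item (ii), the relation $\norms{\mathbbm{w}^k}^2 = \norms{\mbf{G}\mbf{u}^k + \boldsymbol{\xi}^k + \mbf{K}^\top\mbf{v}^k}^2 + \norms{\mbf{K}\mbf{u}^k}^2$ gives \eqref{eq:DFKM_th1_proof1}. Then $\norms{\Pi\mbf{u}^k} \le \norms{\mbf{K}\mbf{u}^k}/\sigma_{\min}^+(\mbf{K})$. The same bound for $\hat{\mbf{u}}^k$ follows from $\hat{\mbf{u}}^k = \mbf{u}^{k+1} + \hat{\mbf{w}}^{k+1}$ and $\norms{\hat{\mbf{w}}^{k+1}} = \SmallOs{1/k}$. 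Item (iii) uses $\boldsymbol{1}^\top\mbf{K}^\top = 0$ together with Cauchy--Schwarz. Items (iv)--(vi) repeat the arguments of Theorem~\ref{th:DFKM_convergence} verbatim. In (v), I would take $\lambda \in (0,\frac{1}{L})$; co-coercive $G_i$ are $L$-Lipschitz, and since $T_i$ is maximally monotone, $\Phi_i = G_i + T_i$ is maximally monotone. These are exactly the properties that argument uses, namely the monotonicity of $\Phi_i$, the local boundedness from Lemma~\ref{le:monotone_local_bounded}, and the Lipschitz continuity of $\bar{G}$.

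\textbf{Main obstacle.} The hardest step is certifying that the reduced scheme and its initialization coincide exactly with \eqref{eq:FKM_scheme2+} on \eqref{eq:precond_GE}. In particular, the $\tilde{\mbf{v}}$ recursion with $\mbf{d}^{-1} = 0$ must reproduce $\mbf{K}^\top\hat{\mbf{v}}^k$, and the parameter restrictions of Corollary~\ref{co:FKM_scheme2+_convergence} must be met with $\rho = 0$ and $\eta = 1 < 2/\tilde{L}$. The latter is where the stepsize condition in \eqref{eq:DFKM2_params} is used; everything downstream is inherited.
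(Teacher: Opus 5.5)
Your proposal is correct and follows the same route as the paper. The paper's proof only says it mirrors Theorem~\ref{th:DFKM_convergence} with Corollary~\ref{co:FKM_scheme2+_convergence} in place of Corollary~\ref{co:FKM_scheme3_convergence}. Your details are the ones it omits: the condition $\tau L + \tau\sigma(1-\lambda_{\min}(W)) < 2$ is equivalent to $\tilde{L} < 2$, so $\eta = 1 < 2/\tilde{L}$ with $\rho = 0$; the scheme and its initialization match under $\gamma_k = \beta_k = 0$; and $\hat{\mbf{u}}^k = \mbf{u}^{k+1} + \hat{\mbf{w}}^{k+1}$ gives the consensus bound for $\hat{\mbf{u}}^k$.
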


\begin{proof}
	The proof of this theorem is similar to Theorem~\ref{th:DFKM_convergence}, but using Corollary~\ref{co:FKM_scheme2+_convergence} (or Theorem~\ref{th:FKM2_convergence}).
	Thus, we omit the details here.
\end{proof}

\beforesec
\section{Distributed FFP Algorithm with NI Heterogeneous Stepsizes}\label{sec:NI-DFKM}
\aftersec
Finally, we develop a new distributed fast fixed-point-based (FFP) algorithm where each agent uses its own stepsize depending solely on the local information and independent of the network.
We name this algorithm a \textit{distributed FFP algorithm with network-independent stepsizes}.

\beforesubsec
\subsection{Reformulation of \eqref{eq:DGE}}\label{subsec:NI_reform}
\aftersubsec
Our first step is to reformulate the decentralized composite inclusion \eqref{eq:DGE} into a centralized three-operator inclusion.
The following lemma from \cite{tam2025decentralised} provides such a reformulation.

\begin{lemma}[\cite{tam2025decentralised}]\label{le:DFKM_3op_reform}
	Let $\gamma_1, \cdots, \gamma_n > 0$, $\mbf{\Gamma} := \diag{\gamma_1, \cdots, \gamma_n}$, and $\mbf{u} = (u_{[1]}, \cdots, u_{[n]}) \in \R^{n\times p}$, where $u_{[i]} \in \R^p$ is the local variable of agent $i \in [n]$.
	Define $T: \R^{n\times p} \rightrightarrows \R^{n\times p}$ and $G: \R^{n\times p} \to \R^{n\times p}$ as in \eqref{eq:GE_notation}.
	Let $\mbf{K} := \left(\frac{\Id - \mbf{W}}{2}\right)^{1/2} \succeq 0$ and $\mbf{M} := (\kappa \Id - \mbf{K}\mbf{\Gamma}^2 \mbf{K})^{1/2} \succ 0$, where $\kappa > \norms{\mbf{\Gamma} \mbf{K}^2 \mbf{\Gamma}}$.
	Let ${A}, {C}: \R^{n\times p} \times \R^{n\times p} \rightrightarrows \R^{n\times p} \times \R^{n\times p}$ and ${B}: \R^{n\times p} \times \R^{n\times p} \to \R^{n\times p} \times \R^{n\times p}$:
	\begin{equation}\label{eq:DFKM_3op_operators}
	\arraycolsep=0.2em
	\begin{array}{lcl}
		{A}(\mbf{a}, \breve{\mbf{a}}) & := & \left(\mbf{\Gamma} T(\mbf{\Gamma} \mbf{a}), \ \Nc_{\sets{0}}(\breve{\mbf{a}}) \right), \vspace{1ex}\\
		{B}(\mbf{a}, \breve{\mbf{a}}) & := & \left(\mbf{\Gamma} G (\mbf{\Gamma} \mbf{a}), \ 0 \right), \vspace{1ex}\\
		{C}(\mbf{a}, \breve{\mbf{a}}) & := & \left(\mbf{\Gamma} \mbf{K} \Nc_{\sets{0}}(\mbf{K} \mbf{\Gamma} \mbf{a} + \mbf{M}\breve{\mbf{a}}), \ 
		\mbf{M}\Nc_{\sets{0}}(\mbf{K}\mbf{\Gamma} \mbf{a} + \mbf{M}\breve{\mbf{a}}) \right).
	\end{array}
	\end{equation}
	Then, the following statements hold.
	\begin{compactitem}
		\item[$\mathrm{(i)}$] 
		The point $(\mbf{a}, \breve{\mbf{a}}) \in \R^{n\times p} \times \R^{n\times p}$ satisfies
		\begin{equation}\label{eq:le_DFKM_3op_reform1}
		\arraycolsep=0.2em
		\begin{array}{lcl}
			0 \in ({A} + {B} + {C})(\mbf{a}, \breve{\mbf{a}}) \subset \R^{n\times p} \times \R^{n\times p}
		\end{array}
		\end{equation}
		if and only if $\mbf{a} = \mbf{\Gamma}^{-1}(u, \cdots, u)$ for some $u \in \R^p$ and $u$ solves $0 \in \sum_{i=1}^n\big(G_iu + T_iu\big)$.
		
		\item[$\mathrm{(ii)}$] 
		Suppose $T_i$ for all $i \in [n]$ are maximally monotone, and $G_i$ are monotone and $L_i$-Lipschitz continuous for all $i \in [n]$.
		Then, ${A}$ and ${C}$ are maximally monotone, and ${B}$ is monotone and $L_B$-Lipschitz continuous with $L_B = \max_{ i \in [n]} \sets{\gamma_i^2 L_i}$.
	\end{compactitem}
\end{lemma}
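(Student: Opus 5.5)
For (i), I would unpack $0 \in (A+B+C)(\mbf{a},\breve{\mbf{a}})$ componentwise. The second component says $0 \in \Nc_{\sets{0}}(\breve{\mbf{a}}) + \mbf{M}\mbf{y}$ for some $\mbf{y} \in \Nc_{\sets{0}}(\mbf{K}\mbf{\Gamma}\mbf{a}+\mbf{M}\breve{\mbf{a}})$. This forces $\breve{\mbf{a}} = 0$. Since $\Nc_{\sets{0}}(z)$ is the whole space at $z=0$ and empty otherwise, it also forces $\mbf{K}\mbf{\Gamma}\mbf{a} + \mbf{M}\breve{\mbf{a}} = 0$, so $\mbf{K}\mbf{\Gamma}\mbf{a}=0$. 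As $\mathrm{Null}(\mbf{K}) = \mathrm{Null}(\Id-\mbf{W}) = \mathcal{D}$, this gives $\mbf{\Gamma}\mbf{a} = (u,\dots,u)$ for some $u$. The first component reads $0 \in \mbf{\Gamma}\big(T(\mbf{\Gamma}\mbf{a}) + G(\mbf{\Gamma}\mbf{a})\big) + \mbf{\Gamma}\mbf{K}\mbf{y}$, where $\mbf{y}$ is arbitrary but tied to the second component. Multiplying by $\mbf{\Gamma}^{-1}$ gives $0 \in \mbf{T}\mbf{u} + \mbf{G}\mbf{u} + \mbf{K}\mbf{y}$ together with $\mbf{M}\mbf{y} \in -\Nc_{\sets{0}}(0)$, which is unrestricted. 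Since $\mbf{y}$ ranges over all of $\R^{n\times p}$, $\mathrm{range}(\mbf{K}) = \mathcal{D}^\perp$, and $\mbf{K}=\mbf{K}^\top$, the condition becomes $0 \in \mbf{G}\mbf{u}+\mbf{T}\mbf{u} + \mathcal{D}^\perp$. That is exactly \eqref{eq:GE2}, because $\mbf{K}^\top\Nc_{\sets{0}}(\mbf{K}\mbf{u})$ equals $\mathcal{D}^\perp$ when $\mbf{u}\in\mathcal{D}$. Lemma~\ref{le:consensus} then gives the equivalence with $u$ solving \eqref{eq:DGE}. The converse runs backwards: given $u$, pick $\xi_i\in T_iu$ with $\sum_i(G_iu+\xi_i)=0$, so $-(\mbf{G}\mbf{u}+\boldsymbol{\xi})\in\mathcal{D}^\perp=\mathrm{range}(\mbf{K})$. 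Choose $\mbf{y}$ accordingly and set $\breve{\mbf{a}}=0$, with the needed element of $\Nc_{\sets{0}}(0)$ taken as $-\mbf{M}\mbf{y}$.

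For (ii), I would treat the three operators one at a time. For $A$, I write $A = \mathcal{S}^\top \mathcal{T}\mathcal{S}$ with $\mathcal{S}=\diag{\mbf{\Gamma},\Id}$ invertible and symmetric, and $\mathcal{T}=\mbf{T}\times\Nc_{\sets{0}}$. Here $\mathcal{T}$ is maximally monotone as a product of maximally monotone operators, and composition $\mathcal{S}^\top\mathcal{T}\mathcal{S}$ with an invertible linear map preserves maximal monotonicity, since the graph is a linear bijective image. For $C$, I write $C = \mathcal{K}^\top \Nc_{\sets{0}}(\mathcal{K}\,\cdot)$ with $\mathcal{K} := [\mbf{K}\mbf{\Gamma},\ \mbf{M}]$, using the symmetry of $\mbf{K}$, $\mbf{M}$, and $\mbf{\Gamma}$. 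This is the subdifferential of the convex function $\delta_{\sets{0}}\circ\mathcal{K}$, provided the chain rule holds. The key point is that $\mathcal{K}$ is surjective: $\mathcal{K}\mathcal{K}^\top = \mbf{K}\mbf{\Gamma}^2\mbf{K} + \mbf{M}^2 = \kappa\Id \succ 0$. So the qualification condition holds, $C = \partial(\delta_{\sets{0}}\circ\mathcal{K})$, and $C$ is maximally monotone. For $B$, monotonicity follows from $\iprods{\mbf{\Gamma}G(\mbf{\Gamma}\mbf{a})-\mbf{\Gamma}G(\mbf{\Gamma}\mbf{a}'), \mbf{a}-\mbf{a}'} = \sum_i\iprods{G_i(\gamma_ia_i)-G_i(\gamma_ia_i'),\gamma_ia_i-\gamma_ia_i'}\ge0$. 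Lipschitz continuity follows blockwise from $\norms{\gamma_iG_i(\gamma_ia_i)-\gamma_iG_i(\gamma_ia_i')}\le\gamma_i^2L_i\norms{a_i-a_i'}$, which gives $L_B=\max_i\gamma_i^2L_i$.

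I expect the main obstacle to be the maximal monotonicity of $C$, and in particular justifying the chain rule $\partial(\delta_{\sets{0}}\circ\mathcal{K}) = \mathcal{K}^\top\Nc_{\sets{0}}(\mathcal{K}\cdot)$. The surjectivity identity $\mathcal{K}\mathcal{K}^\top=\kappa\Id$ resolves this. I also need $\mbf{M}$ well-defined, which requires $\kappa\Id - \mbf{K}\mbf{\Gamma}^2\mbf{K}\succ0$. This follows from $\kappa>\norms{\mbf{\Gamma}\mbf{K}^2\mbf{\Gamma}}=\norms{\mbf{K}\mbf{\Gamma}^2\mbf{K}}$, since $\norms{XX^\top}=\norms{X^\top X}$.
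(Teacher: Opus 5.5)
Your proof is correct. The paper gives no argument of its own for this lemma; it imports the statement from \cite{tam2025decentralised}, so there is no in-paper route to compare against. Your approach matches how the paper itself handles $C$ in Step~4 of Subsection~\ref{subsec:NI_DFFP_derivation}: there it writes $C(\cdot)=\mbf{E}^\top\Nc_{\sets{0}}(\mbf{E}\,\cdot)$ with $\mbf{E}=[\mbf{K}\mbf{\Gamma},\ \mbf{M}]$ and uses $\mbf{E}\mbf{E}^\top=\kappa\Id$, which is exactly your $\mathcal{K}$.

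Three small remarks.

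\begin{enumerate}
\item For $A$, ``linear bijective image'' is not by itself a reason for maximality to transfer. What makes it work is that the map $(y,w)\mapsto(\mathcal{S}^{-1}y,\mathcal{S}^\top w)$ preserves the pairing, $\iprods{\mathcal{S}^\top w,\mathcal{S}^{-1}y}=\iprods{w,y}$. Hence a point monotonically related to $\gra{A}$ pulls back to a point monotonically related to $\gra{\mathcal{T}}$, and maximality of $\mathcal{T}$ gives the conclusion.

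\item For $C$, surjectivity of $\mathcal{K}$ is sufficient but not needed. For any linear $\mathcal{K}$ in finite dimensions, $\delta_{\sets{0}}\circ\mathcal{K}=\delta_{\mathrm{Null}(\mathcal{K})}$. Its subdifferential on $\mathrm{Null}(\mathcal{K})$ is $\mathrm{Null}(\mathcal{K})^\perp=\mathrm{range}(\mathcal{K}^\top)=\mathcal{K}^\top\Nc_{\sets{0}}(0)$, so $C=\Nc_{\mathrm{Null}(\mathcal{K})}$ directly.

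\item In (i), you correctly read the two components of $C$ as sharing a single multiplier $\mbf{y}$. You also correctly set $\breve{\mbf{a}}=0$ in the converse direction, a condition the statement leaves implicit: for $\breve{\mbf{a}}\neq 0$ the set $A(\mbf{a},\breve{\mbf{a}})$ is empty.
\end{enumerate}
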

This lemma serves as a key template, which allows us to apply any suitable method to three-operator inclusions to developing new distributed fixed-point-based methods for solving the distributed composite monotone inclusion \eqref{eq:DGE}.

\beforesubsec
\subsection{Derivation of the algorithm}\label{subsec:NI_DFFP_derivation}
\aftersubsec
In what follows, we apply \eqref{eq:FKM_3OP_scheme} to the three-operator inclusion \eqref{eq:le_DFKM_3op_reform1} to derive a new distributed algorithm for the distributed composite monotone inclusion \eqref{eq:DGE}.

First, let us initialize the variable $\breve{\mbf{a}}^0 = 0$.
Next, we perform the following steps.
\begin{compactitem}
	\item 
	\textbf{Step 1:} 
	Let $u = (\mbf{a}, \breve{\mbf{a}})$, $\hat{u} = (\hat{\mbf{a}}, \hat{\breve{\mbf{a}}})$, $v = (\mbf{b}, \breve{\mbf{b}})$, 
	$\hat{v} = (\hat{\mbf{b}}, \hat{\breve{\mbf{b}}})$, $z_u = (z_{\mbf{a}}, z_{\breve{\mbf{a}}})$, 
	and $z_v = (z_{\mbf{b}}, z_{\breve{\mbf{b}}})$ in $\R^{n\times p} \times \R^{n\times p}$. 
	We update
	\begin{equation*}
	\arraycolsep=0.2em
	\begin{array}{lcllcl}
		\hat{\mbf{a}}^k &=& \frac{t_k - r}{t_k} \mbf{a}^k + \frac{r}{t_k} z_{\mbf{a}}^k, 
		\qquad \text{and} \qquad
		\hat{\breve{\mbf{a}}}^k &=& \frac{t_k - r}{t_k} \breve{\mbf{a}}^k + \frac{r}{t_k} z_{\breve{\mbf{a}}}^k, \vspace{1ex}\\
		\hat{\mbf{b}}^k &=& \frac{t_k - r}{t_k} \mbf{b}^k + \frac{r}{t_k} z_{\mbf{b}}^k, 
		\qquad \text{and} \qquad
		\hat{\breve{\mbf{b}}}^k &=& \frac{t_k - r}{t_k} \breve{\mbf{b}}^k + \frac{r}{t_k} z_{\breve{\mbf{b}}}^k.
	\end{array}
	\end{equation*}
	
	\item \textbf{Step 2:} 
	Let us denote by $y_u = (y_{\mbf{a}}, y_{\breve{\mbf{a}}})$, $\tilde{w}_u = (\tilde{w}_{\mbf{a}}, \tilde{w}_{\breve{\mbf{a}}})$, and $\tilde{w}_v = (\tilde{w}_{\mbf{b}}, \tilde{w}_{\breve{\mbf{b}}})$ in $\R^{n\times p} \times \R^{n\times p}$.
	We update
	\begin{equation*}
	\arraycolsep=0.2em
	\begin{array}{lcl}
		y_{\mbf{a}}^k = \hat{\mbf{a}}^k - \frac{\gamma_k - \beta_k}{1 - \tau \sigma} (\tau \tilde{w}_{\mbf{a}}^k + \tau \sigma \tilde{w}_{\mbf{b}}^k)
		\qquad \text{and} \qquad 
		y_{\breve{\mbf{a}}}^k = \hat{\breve{\mbf{a}}}^k - \frac{\gamma_k - \beta_k}{1 - \tau \sigma} (\tau \tilde{w}_{\breve{\mbf{a}}}^k + \tau \sigma \tilde{w}_{\breve{\mbf{b}}}^k).
	\end{array}
	\end{equation*}
	
	\item \textbf{Step 3:} 
	By the definition of ${A}(\mbf{a}, \breve{\mbf{a}}) = (\mbf{\Gamma} T(\mbf{\Gamma} \mbf{a}), \Nc_{\sets{0}}(\breve{\mbf{a}}))$, we have $J_{\tau A} = (J_{\tau \mbf{\Gamma} T \mbf{\Gamma}}, 0)$. 
	Thus, using ${B}(\mbf{a}, \breve{\mbf{a}}) = (\mbf{\Gamma} G (\mbf{\Gamma} \mbf{a}), 0)$, we update
	\begin{equation*}
	\arraycolsep=0.2em
	\begin{array}{lcl}
		\mbf{a}^{k+1} = J_{\tau \mbf{\Gamma} T \mbf{\Gamma}} (\hat{\mbf{a}}^k - \tau \hat{\mbf{b}}^k - \tau \mbf{\Gamma} G (\mbf{\Gamma} y_{\mbf{a}}^k) + \tau \beta_k \tilde{w}_{\mbf{a}}^k)
		\qquad \text{and} \qquad
		\breve{\mbf{a}}^{k+1} = 0.
	\end{array}
	\end{equation*}
	By induction, we can prove that $\breve{\mbf{a}}^k = 0$, $\hat{\breve{\mbf{a}}}^k = 0$ (and $z_{\breve{\mbf{a}}}^k = 0$ in the later step).
	This proof is nontrivial and can be found in Appendix~\ref{le:NI_DFFP_derivation3}.
	
	\item 
	\textbf{Step 4:} Let $\tilde{v} = (\tilde{\mbf{b}}, \tilde{\breve{\mbf{b}}})$ and $v = (\mbf{b}, \breve{\mbf{b}})$  in $\R^{n\times p} \times \R^{n\times p}$.
	First, we update 
\begin{equation*}
\arraycolsep=0.2em
\begin{array}{lcl}
	\tilde{\mathbf{b}}^{k+1} = \hat{\mathbf{b}}^k + \sigma(2\mathbf{a}^{k+1} - \hat{\mathbf{a}}^k) + \sigma\beta_k \tilde{w}_{\mathbf{b}}^k
	\ \  \text{and} \ \
	\tilde{\breve{\mbf{b}}}^{k+1} = \hat{\breve{\mbf{b}}}^k + \sigma(2\breve{\mbf{a}}^{k+1} - \hat{\breve{\mbf{a}}}^k) + \sigma\beta_k \tilde{w}_{\breve{\mbf{b}}}^k. 
\end{array}
\end{equation*}
	Next, to obtain the update of $v^{k+1}$, we need an explicit formula for $J_{\sigma^{-1} C}$.
	Let $\mbf{E} = \begin{bmatrix}
		\mbf{K}\mbf{\Gamma} & \mbf{M}
	\end{bmatrix}$, then for $\mbf{v} = \begin{bmatrix}
		\mbf{b} \\ \breve{\mbf{b}}
	\end{bmatrix}$, we can write $C(\mbf{v}) = \mbf{E}^\top \Nc_{\sets{0}}(\mbf{E}\mbf{v})$.
	By definition, $\mbf{r} = J_{\sigma^{-1}C}(\mbf{t})$ if and only if $\mbf{t} \in \mbf{r} + \sigma^{-1}C(\mbf{r})$, or equivalently, $\mbf{t} \in \mbf{r} + \mbf{E}^\top \Nc_{\sets{0}} (\mbf{E}\mbf{r})$.
	Thus, we must have $\mbf{E}\mbf{r} = 0$, and there exists $\mbf{s} \in \Nc_{\sets{0}} (\mbf{E}\mbf{r})$ such that $\mbf{t} = \mbf{r} + \mbf{E}^\top\mbf{s}$.
	Multiplying this by $\mbf{E}$ and using $\mbf{E}\mbf{r} = 0$ and $\mbf{E}\mbf{E}^\top = \mbf{K}\mbf{\Gamma}^2\mbf{K} + \mbf{M}^2 = \kappa \Id$, we obtain $\mbf{E}\mbf{t} = \kappa \mbf{s}$, or $\mbf{s} = \kappa^{-1}\mbf{E}\mbf{t}$.
	Therefore, we can show that
	\begin{equation*}
		J_{\sigma^{-1}C}(\mbf{t}) = \mbf{r} = \mbf{t} - \mbf{E}^\top \mbf{s} = \mbf{t} - \kappa^{-1}\mbf{E}^\top \mbf{E}\mbf{t} = (\Id - \kappa^{-1}\mbf{E}^\top \mbf{E})\mbf{t}.
	\end{equation*}
	Using this expression, we obtain
	\begin{equation}\label{eq:DFKM_ni_proof1}
	\arraycolsep=0.2em
	\begin{array}{lcl}
		v^{k+1} &=& \tilde{v}^{k+1} - \sigma \left( \frac{1}{\sigma}\tilde{v}^{k+1} - \frac{\kappa^{-1}}{\sigma} \mbf{E}^\top \mbf{E}\tilde{v}^{k+1} \right) = \kappa^{-1}\mbf{E}^\top \mbf{E} \tilde{v}^{k+1},
	\end{array}
	\end{equation}
	where we have the following expression from the fifth line of \eqref{eq:FKM_3OP_scheme}:
	\begin{equation}\label{eq:DFKM_ni_proof2}
	\arraycolsep=0.2em
	\begin{array}{lcl}
		\mbf{E}\tilde{v}^{k+1} = \mbf{E}\hat{v}^k + \sigma \mbf{E}(2u^{k+1} - \hat{u}^k) + \sigma \beta_k \mbf{E}\tilde{w}_v^k.
	\end{array}
	\end{equation}
	Let us break this expression down.
	First, since $u^{k+1} = \begin{bmatrix}
		\mbf{a}^{k+1} \\ 0
	\end{bmatrix}$ and $\hat{u}^k = \begin{bmatrix}
		\hat{\mbf{a}}^k \\ 0
	\end{bmatrix}$, we have 
	\begin{equation}\label{eq:DFKM_ni_proof3}
	\arraycolsep=0.2em
	\begin{array}{lcl}
		\mbf{E}(2u^{k+1} - \hat{u}^k) = \mbf{K} \mbf{\Gamma} (2\mbf{a}^{k+1} - \hat{\mbf{a}}^k).
	\end{array}
	\end{equation}
	Next, from the fifth line of \eqref{eq:FKM_3OP_scheme}, we get
	\begin{equation*}
		\arraycolsep=0.2em
		\begin{array}{lcl}
			\frac{1}{\sigma}(\tilde{v}^{k} - v^k) = \frac{1}{\sigma}(\hat{v}^{k-1} - v^k) + 2u^{k} - \hat{u}^{k-1} + \beta_{k-1} \tilde{w}_v^{k-1},
		\end{array}
	\end{equation*}
	From the eighth line of \eqref{eq:FKM_3OP_scheme}, we have $\tilde{w}_v^k = -(\hat{u}^{k-1} - u^k) + \frac{1}{\sigma}(\hat{v}^{k-1} - v^k) + \beta_{k-1} \tilde{w}_v^{k-1}$.
	Taking the difference of the last two relations and rearranging the result, we obtain
	\begin{equation*}
	\arraycolsep=0.2em
	\begin{array}{lcl}
		\frac{1}{\sigma}(\tilde{v}^{k} - v^k) = \tilde{w}_v^k + u^k.
	\end{array}
	\end{equation*}
	Since $v^k = J_{\sigma C^{-1}}(\tilde{v}^k)$, we have $\frac{1}{\sigma}(\tilde{v}^k - v^k) \in C^{-1}(v^k)$, which implies that
	\begin{equation*}
	\arraycolsep=0.2em
	\begin{array}{lcl}
		v^k \in C\left(\frac{1}{\sigma}(\tilde{v}^k - v^k)\right) = \mbf{E}^\top \Nc_{\sets{0}} (\mbf{E}(\tilde{w}_v^k + u^k)).
	\end{array}
	\end{equation*}
	Thus, we must have $\mbf{E}(\tilde{w}_v^k + u^k) = 0$, which implies that
	\begin{equation}\label{eq:DFKM_ni_proof4}
	\arraycolsep=0.2em
	\begin{array}{lcl}
		\mbf{E}\tilde{w}_v^k = - \mbf{E}u^k = -\begin{bmatrix}
			\mbf{K}\mbf{\Gamma} & \mbf{M}
		\end{bmatrix} \begin{bmatrix}
		\mbf{a}^k \\ 0
		\end{bmatrix} = - \mbf{K}\mbf{\Gamma} \mbf{a}^k.
	\end{array}
	\end{equation}
	Substituting \eqref{eq:DFKM_ni_proof3} and \eqref{eq:DFKM_ni_proof4} into \eqref{eq:DFKM_ni_proof2}, we get
	\begin{equation}\label{eq:DFKM_ni_proof5}
	\arraycolsep=0.2em
	\begin{array}{lcl}
		\mbf{E}\tilde{v}^{k+1}  &=& \mbf{E}\hat{v}^k + \sigma \mbf{K} \mbf{\Gamma} (2\mbf{a}^{k+1} - \hat{\mbf{a}}^k - \beta_k \mbf{a}^k).
	\end{array}
	\end{equation}
	Substituting this into \eqref{eq:DFKM_ni_proof1}, we get
	\begin{equation}\label{eq:DFKM_ni_proof6}
	\arraycolsep=0.2em
	\begin{array}{lcl}
		v^{k+1} &=& \kappa^{-1}\mbf{E}^\top\mbf{E} \hat{v}^k + \kappa^{-1}\sigma \mbf{E}^\top \mbf{K} \mbf{\Gamma} (2\mbf{a}^{k+1} - \hat{\mbf{a}}^k - \beta_k \mbf{a}^k).
	\end{array}
	\end{equation}
	Now, substituting $\kappa^{-1}\mbf{E}^\top\mbf{E}\hat{v}^k = \hat{v}^k$ from Lemma~\ref{le:NI_DFFP_derivation2} into \eqref{eq:DFKM_ni_proof6}, we get
	\begin{equation*} 
	\arraycolsep=0.2em
	\begin{array}{lcl}
		v^{k+1} &=& \hat{v}^k + \kappa^{-1}\sigma \mbf{E}^\top \mbf{K} \mbf{\Gamma} (2\mbf{a}^{k+1} - \hat{\mbf{a}}^k - \beta_k \mbf{a}^k).
	\end{array}
	\end{equation*}
	This expression can be explicitly rewritten as
	\begin{equation*} 
	\arraycolsep=0.2em
	\begin{array}{lcl}
		\mbf{b}^{k+1} &=& \hat{\mbf{b}}^k + \kappa^{-1}\sigma \mbf{\Gamma} \mbf{K}^2 \mbf{\Gamma} (2\mbf{a}^{k+1} - \hat{\mbf{a}}^k - \beta_k \mbf{a}^k), \vspace{1ex}\\
		\breve{\mbf{b}}^{k+1} &=& \utilde{\hat{\mbf{b}}}^k + \kappa^{-1}\sigma \mbf{M} \mbf{K} \mbf{\Gamma} (2\mbf{a}^{k+1} - \hat{\mbf{a}}^k - \beta_k \mbf{a}^k)
	\end{array}
	\end{equation*}
	
	\item \textbf{Step 5:} 
	We update
	\begin{equation*}
	\arraycolsep=0.2em
	\left\{
	\begin{array}{lcl}
		\tilde{w}_{\mbf{a}}^{k+1} = \frac{1}{\tau}(\hat{\mbf{a}}^k - \mbf{a}^{k+1}) - (\hat{\mbf{b}}^k - \mbf{b}^{k+1}) + \beta_k \tilde{w}_{\mbf{a}}^k, \vspace{1ex}\\
		\tilde{w}_{\breve{\mbf{a}}}^{k+1} = \frac{1}{\tau}(\hat{\breve{\mbf{a}}}^k - \breve{\mbf{a}}^{k+1}) - (\hat{\breve{\mbf{b}}}^k - \breve{\mbf{b}}^{k+1}) + \beta_k \tilde{w}_{\breve{\mbf{a}}}^k, \vspace{1ex}\\
		\tilde{w}_{\mbf{b}}^{k+1} = -(\hat{\mbf{a}}^k - \mbf{a}^{k+1}) + \frac{1}{\sigma}(\hat{\mbf{b}}^k - \mbf{b}^{k+1}) + \beta_k \tilde{w}_{\mbf{b}}^k, \vspace{1ex}\\
		\tilde{w}_{\breve{\mbf{b}}}^{k+1} = -(\hat{\breve{\mbf{a}}}^k - \breve{\mbf{a}}^{k+1}) + \frac{1}{\sigma}(\hat{\utilde{\bf{b}}}^k - \breve{\mbf{b}}^{k+1}) + \beta_k \tilde{w}_{\breve{\mbf{b}}}^k.
	\end{array}
	\right.
	\end{equation*}
	
	\item \textbf{Step 6:} 
	We update
	\begin{equation*}
	\arraycolsep=0.2em
	\begin{array}{lcl}
		z_{\mbf{a}}^{k+1} = z_{\mbf{a}}^k - \frac{\nu_k}{r(1-\tau\sigma)}(\tau\tilde{w}_{\mbf{a}}^{k+1} + \tau\sigma\tilde{w}_{\mbf{b}}^{k+1})
		\ \ \text{and} \ \
		z_{\breve{\mbf{a}}}^{k+1} = z_{\breve{\mbf{a}}}^k - \frac{\nu_k}{r(1-\tau\sigma)}(\tau\tilde{w}_{\breve{\mbf{a}}}^{k+1} + \tau\sigma\tilde{w}_{\breve{\mbf{b}}}^{k+1}), \vspace{1ex}\\ 
		z_{\mbf{b}}^{k+1} = z_{\mbf{b}}^k - \frac{\nu_k}{r(1-\tau\sigma)}(\tau\sigma\tilde{w}_{\mbf{a}}^{k+1} + \sigma\tilde{w}_{\mbf{b}}^{k+1})
		\ \ \text{and} \ \
		z_{\breve{\mbf{b}}}^{k+1} = z_{\breve{\mbf{b}}}^k - \frac{\nu_k}{r(1-\tau\sigma)}(\tau\sigma\tilde{w}_{\breve{\mbf{a}}}^{k+1} + \sigma\tilde{w}_{\breve{\mbf{b}}}^{k+1}).
	\end{array}
	\end{equation*}	
\end{compactitem}
Putting everything together, we obtain the detailed scheme \eqref{eq:DFKM_NI_scheme_init} in Appendix~\ref{apdx:DFKM_NI_scheme_init}.
Next, we also use appropriate linear variable transformations and the facts that $\mbf{\Gamma}^2 = \mbf{\Lambda} = \diag{\alpha_1, \cdots, \alpha_n}$ and $\mbf{K} = \left(\frac{\Id -  W}{2}\right)^{1/2}$, to obtain a simplified scheme \eqref{eq:DFKM_NI_scheme} in Appendix~\ref{apdx:DFKM_NI_scheme_init}.

\begin{algorithm}[ht]\caption{(Network-Independent Fast Fixed-Point-Based Algorithm for \eqref{eq:DGE})}\label{alg:NI_DFFP}
	\normalsize
	\begin{algorithmic}[1]
		\STATE\label{step:A2_i0}{\bfseries Initialization:} 
		Choose $\mbf{x}^0 \in \dom{\mbf{T}}$ and $\boldsymbol{\xi}^0 \in \mbf{T}\mbf{x}^0$ arbitrarily. Set $\mbf{w}_{\mbf{x}}^0 := \mbf{G}\mbf{x}^0 + \mbf{\xi}^0$.
		\STATE\hspace{0ex}Choose parameters $r > 0$, $\tau > 0$, and $\sigma > 0$ with $\tau \sigma < 1$.
		\STATE\hspace{0ex}Choose stepsizes $\alpha_1, \cdots, \alpha_n \in \R_{++}$ and set $\mbf{\Lambda} = \diag{\alpha_1, \cdots, \alpha_n}$.
		\STATE\hspace{0ex}Choose a mixing matrix $W$ and parameter $2\kappa > \norms{\mbf{\Lambda}^{1/2}(\Id - \mbf{W})\mbf{\Lambda}^{1/2}}$.
		\STATE Set $\mbf{s}_{\mbf{x}}^0 = \mbf{x}^0$, $\mbf{v}^0 = \mbf{s}_{\mbf{v}}^0 = \frac{\sigma}{2\kappa}\mbf{\Lambda}(\Id - \mbf{W})(\mbf{x}^0 - \tau \mbf{\Lambda}\mbf{w}_{\mbf{x}}^0)$, 
		$\mbf{r}_{\mbf{x}}^0 = \mbf{\Lambda}\mbf{w}_{\mbf{x}}^0 + \mbf{v}^0$, and $\mbf{r}_{\mbf{v}}^0 = -\tau \mbf{\Lambda}\mbf{w}_{\mbf{x}}^0 - \frac{1}{\sigma}\mbf{v}^0$. \hspace{-1cm}
		\STATE\hspace{0ex}\label{step:A2_loop}{\bfseries For $k = 0$ to $k_{\max}$ do}
		\vspace{0.25ex}   
		\STATE\hspace{3ex}Update  $t_k$, $\gamma_k$, $\beta_k$, and $\nu_k$ according to \eqref{eq:NI_DFKM_params}.
		\STATE\hspace{3ex}Update the iterates as
		\begin{equation*}
			\tag{\ref{eq:DFKM_NI_scheme}}
			\arraycolsep=0.2em
			\left\{
			\begin{array}{lcl}
				\hat{\mbf{x}}^k & := & \frac{t_k - r}{t_k} \mbf{x}^k + \frac{r}{t_k} \mbf{s}_{\mbf{x}}^k, \vspace{1ex}\\
				\hat{\mbf{v}}^k & := & \frac{t_k - r}{t_k} \mbf{v}^k + \frac{r}{t_k} \mbf{s}_{\mbf{v}}^k, \vspace{1ex}\\
				\mbf{t}^k & := & \hat{\mbf{x}}^k - \frac{\gamma_k - \beta_k}{1 - \tau \sigma} (\tau \mbf{r}_{\mbf{x}}^k + \tau \sigma \mbf{r}_{\mbf{v}}^k), \vspace{1ex}\\			
				\mbf{x}^{k+1} & := & J_{\tau \mbf{\Lambda} T} (\hat{\mbf{x}}^k - \tau \hat{\mbf{v}}^k - \tau \mbf{\Lambda} G{\mbf{t}^k} + \tau \beta_k \mbf{r}_{\mbf{x}}^k), \vspace{1ex}\\
				\mbf{v}^{k+1} & := & \hat{\mbf{v}}^k + \frac{\sigma}{2\kappa} \mbf{\Lambda} (\Id - W) (2\mbf{x}^{k+1} - \hat{\mbf{x}}^k - \beta_k \mbf{x}^k), \vspace{1ex}\\			
				\mbf{r}_{\mbf{x}}^{k+1} & := & \frac{1}{\tau}(\hat{\mbf{x}}^k - \mbf{x}^{k+1}) - (\hat{\mbf{v}}^k - \mbf{v}^{k+1}) + \beta_k \mbf{r}_{\mbf{x}}^k, \vspace{1ex}\\
				\mbf{r}_{\mbf{v}}^{k+1} & := & -(\hat{\mbf{x}}^k - \mbf{x}^{k+1}) + \frac{1}{\sigma}(\hat{\mbf{v}}^k - \mbf{v}^{k+1}) + \beta_k \mbf{r}_{\mbf{v}}^k, \vspace{1ex}\\			
				\mbf{s}_{\mbf{x}}^{k+1} & := & \mbf{s}_{\mbf{x}}^k - \frac{\nu_k}{r(1-\tau\sigma)}(\tau\mbf{r}_{\mbf{x}}^{k+1} + \tau\sigma\mbf{r}_{\mbf{v}}^{k+1}), \vspace{1ex}\\ 
				\mbf{s}_{\mbf{v}}^{k+1} & := & \mbf{s}_{\mbf{v}}^k - \frac{\nu_k}{r(1-\tau\sigma)}(\tau\sigma \mbf{r}_{\mbf{x}}^{k+1} + \sigma\mbf{r}_{\mbf{v}}^{k+1}).
			\end{array}
			\right.
		\end{equation*}
		\STATE\hspace{0ex}{\bfseries End For}
	\end{algorithmic}
\end{algorithm}

In particular, if $\gamma_k = \beta_k = 0$, then the scheme \eqref{eq:DFKM_NI_scheme} reduces to
\begin{equation}\label{eq:DFKM_NI_scheme2}\tag{NI-DFFP2}
	\arraycolsep=0.2em
	\left\{
	\begin{array}{lcl}
		\hat{\mbf{x}}^k & := & \frac{t_k - r}{t_k} \mbf{x}^k + \frac{r}{t_k} \mbf{s}_{\mbf{x}}^k, \vspace{1ex}\\
		\hat{\mbf{v}}^k & := & \frac{t_k - r}{t_k} \mbf{v}^k + \frac{r}{t_k} \mbf{s}_{\mbf{v}}^k, \vspace{1ex}\\
		\mbf{x}^{k+1} & := & J_{\tau \mbf{\Lambda} T} (\hat{\mbf{x}}^k - \tau \hat{\mbf{v}}^k - \tau \mbf{\Lambda} G \hat{\mbf{x}}^k), \vspace{1ex}\\
		\mbf{v}^{k+1} & := & \hat{\mbf{v}}^k + \frac{\sigma}{2\kappa} \mbf{\Lambda} (\Id - \mbf{W}) (2\mbf{x}^{k+1} - \hat{\mbf{x}}^k), \vspace{1ex}\\			
		\mbf{r}_{\mbf{x}}^{k+1} & := & \frac{1}{\tau}(\hat{\mbf{x}}^k - \mbf{x}^{k+1}) - (\hat{\mbf{v}}^k - \mbf{v}^{k+1}), \vspace{1ex}\\
		\mbf{r}_{\mbf{v}}^{k+1} & := & -(\hat{\mbf{x}}^k - \mbf{x}^{k+1}) + \frac{1}{\sigma}(\hat{\mbf{v}}^k - \mbf{v}^{k+1}), \vspace{1ex}\\			
		\mbf{s}_{\mbf{x}}^{k+1} & := & \mbf{s}_{\mbf{x}}^k - \frac{\nu_k}{r(1-\tau\sigma)}(\tau\mbf{r}_{\mbf{x}}^{k+1} + \tau\sigma\mbf{r}_{\mbf{v}}^{k+1}), \vspace{1ex}\\ 
		\mbf{s}_{\mbf{v}}^{k+1} & := & \mbf{s}_{\mbf{v}}^k - \frac{\nu_k}{r(1-\tau\sigma)}(\tau\sigma \mbf{r}_{\mbf{x}}^{k+1} + \sigma\mbf{r}_{\mbf{v}}^{k+1}),
	\end{array}
	\right.
\end{equation}
where the iterates are initialized as $\mbf{s}_{\mbf{x}}^0 = \mbf{x}^0$ and $\mbf{v}^0 = \mbf{s}_{\mbf{v}}^0 = 0$.
This scheme is simpler and possesses fewer parameters than \eqref{eq:DFKM_NI_scheme}.
However, as we will see in Subsection~\ref{subsec:NI_convergence}, it requires stronger assumptions to converge. 

\beforesubsec
\subsection{Convergence analysis}\label{subsec:NI_convergence}
\aftersubsec
The convergence of \eqref{eq:DFKM_NI_scheme} and \eqref{eq:DFKM_NI_scheme2} can be stated as follows.

\begin{theorem}[\textbf{Monotone and Lipschitz continuous case}]\label{th:NI_DFKM_convergence}
	For \eqref{eq:DGE}, suppose that $G_i$ is monotone and $L_i$-Lipschitz continuous, and $T_i$ is maximally monotone for $i \in [n]$. 
	Let $\sets{\mbf{x}^k}$ be generated by \eqref{eq:DFKM_NI_scheme} using $t_0$ given in \eqref{eq:FKM_key_est2_t0} and
	\begin{equation}\label{eq:NI_DFKM_params}
		\arraycolsep=0.2em
		\hspace{-2ex}
		\begin{array}{c}
			r > 2, \quad \delta \in (0, r - 2), \quad \omega := r^2 - 3r + 3, \quad 0 < \nu < r - 2 - \delta, \vspace{1ex}\\
			0 < \alpha_i \leq \frac{1 - \tau \sigma}{2 \sqrt{1+\omega} \tau L_i}, \quad 2\kappa > \norms{\mbf{\Lambda}^{1/2}(\Id -  \mbf{W})\mbf{\Lambda}^{1/2}},  \vspace{1ex}\\
			t_{k} := k +  t_0, \quad \beta_k := \frac{\delta(t_k - r)}{2(r-1)t_k}, \quad \nu_k := \frac{\nu(t_k - 1)}{t_k}, \quad \text{and} \quad \gamma_k := 1,
		\end{array}
		\hspace{-2ex}
	\end{equation}
	where $\mbf{\Lambda} = \diag{\alpha_1, \cdots, \alpha_n}$.
	Then, the following statements hold.
	\begin{compactitem}
	\item[$~~~\mathrm{(i)}$~\textbf{$($Local iterates$)$}]
	The local iterates $\sets{x_{[i]}^k}$ converge to the same $u^\star \in \zer{\Phi}$ of the original \eqref{eq:DGE} problem for all nodes $i \in [n]$.
	
 	\item[$~~~\mathrm{(ii)}$~\textbf{$($Consensus error$)$}]
	The consensus error $\norms{\Pi \mbf{x}^k }$ converges to zero at the rate of $\SmallOs{1/k}$.
	
	\item[$~~~\mathrm{(iii)}$~\textbf{$($Aggregated residual$)$}]
	The aggregated residual $\norms{\sum_{i=1}^n G_i x_{[i]}^k + \xi_i^k}$ converges to zero at the rate of $\SmallOs{1/k}$, where $\xi_i^k \in Tx_{[i]}^k$.
		
	\item[$~~~\mathrm{(iv)}$~\textbf{$($Restricted gap$)$}]
	If, in addition, Assumption~\ref{as:restricted_dual_gap} holds, then the restricted gap value $\mathrm{Gap}_B(\bar{x}^k)$ converges to zero at the rate of $\SmallOs{1/k}$, where $\bar{x}^k := \frac{1}{n}\sum_{i=1}^n x_{[i]}^k$.
	Otherwise, if, in addition, Assumption~ \ref{as:boundary_regularity} holds, then $\mathrm{Gap}_B(p^k)$ converges to zero at the rate of $\SmallOs{1/k}$, where $p^k := \proj_{\Bc}(\bar{x}^k)$.

	\item[$~~~\mathrm{(v)}$~\textbf{$($FBS residual$)$}]
	For a given $\lambda \in (0, \frac{1}{\bar{L}})$ with $\bar{L} = \frac{1}{n}\sum_{i=1}^n L_i$, if, in addition, Assumption~\ref{as:restricted_dual_gap} holds, then $\norms{\Gc_{\lambda}\bar{x}^k}^2$ converges to zero at the rate of $\SmallOs{1/k}$.
	Otherwise, if, in addition, Assumption~\ref{as:boundary_regularity} holds, then $\norms{\Gc_{\lambda}p^k}^2$ converges to zero at the rate of $\SmallOs{1/k}$, where $p^k := \proj_{\Bc}(\bar{x}^k)$.
	
	\item[$~~~\mathrm{(vi)}$~\textbf{$($Special case$)$}]
	If $T_i = 0$ for all $i \in [n]$ $($i.e., \eqref{eq:DGE} reduces to \eqref{eq:DNE}$)$, then $\norms{\sum_{i=1}^n G_i \bar{x}^k}$ converges to zero at the rate of $\SmallOs{1/k}$.
	\end{compactitem}
\end{theorem}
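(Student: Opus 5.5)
The plan is to reduce everything to Theorem~\ref{th:FKM_3OP_convergence}, applied to the three-operator inclusion \eqref{eq:le_DFKM_3op_reform1} of Lemma~\ref{le:DFKM_3op_reform}. The primal variable is $\mbf{a}$ with $\mbf{x} = \mbf{\Gamma}\mbf{a}$ and $\mbf{\Gamma}^2 = \mbf{\Lambda}$. Once the rates and limits are available in the $(\mbf{a},\breve{\mbf{a}})$ and $(\mbf{b},\breve{\mbf{b}})$ variables, the arguments for parts (iv)--(vi) in the proof of Theorem~\ref{th:DFKM_convergence} should transfer almost verbatim.

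\textbf{Step 1: verify the hypotheses.} By Lemma~\ref{le:DFKM_3op_reform}(ii), $A$ and $C$ are maximally monotone, and $B$ is monotone and $L_B$-Lipschitz with $L_B = \max_i \alpha_i L_i$. The condition $\alpha_i \le \frac{1-\tau\sigma}{2\sqrt{1+\omega}\tau L_i}$ gives $2\tau L_B\sqrt{1+\omega} + \tau\sigma \le 1$. This is exactly the stepsize condition in \eqref{eq:FKM_3OP_th1_params}. The condition $2\kappa > \norms{\mbf{\Lambda}^{1/2}(\Id - \mbf{W})\mbf{\Lambda}^{1/2}}$ is the requirement $\kappa > \norms{\mbf{\Gamma}\mbf{K}^2\mbf{\Gamma}}$, so $\mbf{M}$ is well defined. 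Since \eqref{eq:DFKM_NI_scheme} is, up to the invertible linear change of variables described in Subsection~\ref{subsec:NI_DFFP_derivation} and Appendix~\ref{apdx:DFKM_NI_scheme_init}, exactly \eqref{eq:FKM_3OP_scheme} applied to $A+B+C$, Theorem~\ref{th:FKM_3OP_convergence} applies.

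\textbf{Step 2: iterate convergence and consensus.} The sequence $\mbf{a}^k$ converges to a zero of $A+B+C$. By Lemma~\ref{le:DFKM_3op_reform}(i), this limit has the form $\mbf{\Gamma}^{-1}(u^\star,\dots,u^\star)$ with $u^\star \in \zer{\Phi}$. Hence $\mbf{x}^k = \mbf{\Gamma}\mbf{a}^k \to \mathbf{1}u^{\star\top}$, which gives (i).

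For the rates I would use the residual from the proof of Theorem~\ref{th:FKM_3OP_convergence}, via \eqref{eq:FKM_3OP_convergence_proof2}. There exist $\zeta^k \in Au^k$ and $\vartheta^k \in C^{-1}v^k$ with
\begin{equation*}
\norms{\zeta^k + Bu^k + v^k} = \SmallOs{1/k}
\quad\text{and}\quad
\norms{\vartheta^k - u^k} = \SmallOs{1/k}.
\end{equation*}
Here $u^k = (\mbf{a}^k,0)$, which holds by the induction fact $\breve{\mbf{a}}^k = 0$. The inclusion $\vartheta^k \in C^{-1}v^k$ forces $\mbf{K}\mbf{\Gamma}\vartheta^k_{\mbf{a}} + \mbf{M}\vartheta^k_{\breve{\mbf{a}}} = 0$. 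Combining this with $\norms{\vartheta^k - u^k} = \SmallOs{1/k}$ and $u^k = (\mbf{a}^k,0)$ yields $\norms{\mbf{K}\mbf{x}^k} = \norms{\mbf{K}\mbf{\Gamma}\mbf{a}^k} = \SmallOs{1/k}$. The smallest-positive-singular-value argument from Theorem~\ref{th:DFKM_convergence}(ii) then gives $\norms{\Pi\mbf{x}^k} = \SmallOs{1/k}$, which is (ii).

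\textbf{Step 3: the aggregated residual (iii).} The first block of $\zeta^k + Bu^k + v^k$ equals $\mbf{\Gamma}(\mbf{G}\mbf{x}^k + \boldsymbol{\xi}^k) + \mbf{b}^k$ with $\boldsymbol{\xi}^k \in \mbf{T}\mbf{x}^k$. Multiplying by $\mathbf{1}^\top\mbf{\Gamma}^{-1}$, which has bounded norm, would give the aggregated residual, provided the dual contribution $\mathbf{1}^\top\mbf{\Gamma}^{-1}\mbf{b}^k$ vanishes.

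This is the step I expect to be the main obstacle. I need the invariant $\mbf{b}^k \in \range{\mbf{\Gamma}\mbf{K}}$. I would prove it by induction from the update $\mbf{b}^{k+1} = \hat{\mbf{b}}^k + \kappa^{-1}\sigma\mbf{\Gamma}\mbf{K}^2\mbf{\Gamma}(\cdot)$, together with Lemma~\ref{le:NI_DFFP_derivation2} (which gives $\kappa^{-1}\mbf{E}^\top\mbf{E}\hat{v}^k = \hat{v}^k$) and the $z$-updates. Equivalently, I would use that $v^k \in \range{\mbf{E}^\top}$, so $\mbf{b}^k = \mbf{\Gamma}\mbf{K}\mbf{s}$ for some $\mbf{s}$. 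Then $\mathbf{1}^\top\mbf{\Gamma}^{-1}\mbf{b}^k = (\mbf{K}\mathbf{1})^\top\mbf{s} = 0$.

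With (ii) and (iii) established, (iv), (v), and (vi) follow by repeating the corresponding parts of Theorem~\ref{th:DFKM_convergence}. Those arguments need only three ingredients. The first is boundedness of $\mbf{G}\mbf{x}^k + \boldsymbol{\xi}^k$, which follows since $\mbf{b}^k$ converges. The second is convergence of $\bar{x}^k$. The third is the monotonicity of each $G_i + T_i$. For the FBS residual in (v), replace $L$ with $\bar{L}$, the Lipschitz constant of $\bar{G}$.
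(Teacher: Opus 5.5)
Your proposal is correct and follows essentially the same route as the paper. You verify the hypotheses of Theorem~\ref{th:FKM_3OP_convergence} via Lemma~\ref{le:DFKM_3op_reform}, obtain $\norms{\mbf{K}\mbf{x}^k} = \SmallOs{1/k}$ from $\mbf{E}\vartheta^k = 0$ and \eqref{eq:FKM_3OP_convergence_proof2}, and remove the dual term from the aggregated residual by writing $\mbf{b}^k = \mbf{\Gamma}\mbf{K}\mbf{s}^k$ using $v^k \in \mbf{E}^\top\Nc_{\sets{0}}(\mbf{E}\vartheta^k)$, after which (iv)--(vi) transfer from Theorem~\ref{th:DFKM_convergence}. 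The paper obtains the invariant $\mbf{b}^k \in \range{\mbf{\Gamma}\mbf{K}}$ directly from the inclusion $v^k \in C\vartheta^k$, which is exactly your ``equivalently'' option, so the induction argument you anticipated as the main obstacle is not needed.
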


\begin{proof}
	For the sake of clarity, we divide this proof into the following steps.
	
	\vspace{0.75ex}
	\noindent\textbf{(i)~The convergence of iterate sequences.}
	Denote $\mbf{\Gamma} = \mbf{\Lambda}^{1/2} \succ 0$, $\mbf{K} = (\frac{\Id - W}{2})^{1/2} \succeq 0$, 
	and $\mbf{M} = (\kappa\Id - \mbf{K}\mbf{\Gamma}^2 \mbf{K})^{1/2} \succ 0$.
	From Lemma~\ref{le:DFKM_nihs1}, $\mbf{M}$ is well-defined.
	Besides, for the operators $A$, $B$, and $C$ defined in \eqref{eq:DFKM_3op_operators}, by Lemma~\ref{le:DFKM_3op_reform}, we know that $A$ and $C$ are maximally monotone, and $B$ is monotone and $L$-Lipschitz continuous with $L_B = \max_i \sets{\alpha_i L_i}$.
	Moreover, $(\mbf{a}^{\star}, \breve{\mbf{a}}^{\star}) \in \Hc^n \times \Hc^n$ satisfies
	\begin{equation*} 
		\arraycolsep=0.2em
		\begin{array}{lcl}
			0 \in ({A} + {B} + {C})(\mbf{a}^{\star}, \breve{\mbf{a}}^{\star}) \subset \Hc^n \times \Hc^n
		\end{array}
	\end{equation*}
	if and only if $\mbf{a}^{\star} = \Gamma^{-1}(u^{\star}, \cdots, u^{\star})$ for some $u^{\star} \in \R^p$ and $u^{\star}$ solves $0 \in \sum_{i=1}^n\big(G_iu^{\star} + T_iu^{\star}\big)$.
	
	Note that since the stepsizes $\alpha_i \leq \frac{1 - \tau \sigma}{2 \sqrt{1+\omega} \tau L_i}$ for all $i \in [n]$, we have $2\sqrt{1+\omega}\tau L_B + \tau \sigma \leq 1$.
	Thus, applying Theorem~\ref{th:FKM_3OP_convergence}, we conclude that $\sets{\mbf{a}^k}$, $\sets{y_{\mbf{a}}^k}$, $\sets{z_{\mbf{a}}^k}$, and $\sets{\hat{\mbf{a}}^k}$ converge to $\mbf{a}^{\star} = \Gamma^{-1}(u^{\star}, \cdots, u^{\star})$.
	Since $\mbf{x}^k = \Gamma \mbf{a}^k$, $\hat{\mbf{x}}^k = \Gamma \hat{\mbf{a}}^k$, $\mbf{t}^k = \Gamma y_{\mbf{a}}^k$, and $\mbf{s}_{\mbf{z}}^k = \Gamma z_{\mbf{a}}^k$, we also get that $\sets{\mbf{x}^k}$, $\sets{\mbf{t}^k}$, $\sets{\mbf{s}_{\mbf{z}}^k}$, and $\sets{\hat{\mbf{x}}^k}$ converge to $\mbf{u}^{\star} = (u^{\star}, \cdots, u^{\star})$, where $u^{\star} \in \zer{\sum_{i=1}^n (G_i + T_i)}$.
		
	\vspace{0.75ex}
	\noindent\textbf{(ii)~The convergence of consensus error.}
	From \eqref{eq:FKM_3OP_convergence_proof2} of Theorem~\ref{th:FKM_3OP_convergence}, we know that 
	\begin{equation}\label{eq:DFKM_NI_convergence_proof1}
	\arraycolsep=0.2em
	\begin{array}{ccc}
		\norms{\vartheta^k - u^k} = \BigOs{1/k} \quad &\text{and}& \quad \norms{\vartheta^k - u^k} = \SmallOs{1/k},
	\end{array}
	\end{equation}
	where $\vartheta^k \in C^{-1}v^k$, which is equivalent to $v^k \in C\vartheta^k$.
	By the definition of $C$, we have $C\vartheta^k = \mbf{E}^\top \Nc_{\sets{0}} (\mbf{E}\vartheta^k)$, where $\mbf{E} = \begin{bmatrix}
		\mbf{K}\mbf{\Gamma}, & \mbf{M}
	\end{bmatrix}$.
	Thus, for the existence of $v^k$, we must have $\mbf{E}\vartheta^k = 0$.
	Since $\mbf{E}\mbf{E}^\top = \kappa \Id$, we have $\norms{\mbf{E}} = \sqrt{\lambda_{\max}(\mbf{E}\mbf{E}^\top)} = \sqrt{\kappa}$.
	Then, we can derive that
	\begin{equation}\label{eq:DFKM_NI_convergence_proof2}
	\arraycolsep=0.2em
	\begin{array}{ccc}
		\norms{\mbf{E}u^k} = \norms{\mbf{E}(\vartheta^k - u^k)} \leq \norms{\mbf{E}}\norms{\vartheta^k - u^k} = \sqrt{\kappa}\norms{\vartheta^k - u^k}.
	\end{array}
	\end{equation}
	Recall that $\mbf{E} = \begin{bmatrix} \mbf{K}\mbf{\Gamma}, & \mbf{M} \end{bmatrix}$, 
	$u^k = \begin{bmatrix} \mbf{a}^k \\ 0 \end{bmatrix}$, 
	and $\mbf{x}^k = \Gamma \mbf{a}^k$, we can compute that $\mbf{E}u^k = \mbf{K}\mbf{\Gamma} \mbf{a}^k = \mbf{K}\mbf{x}^k$.
	Combining this fact, \eqref{eq:DFKM_NI_convergence_proof1}, and \eqref{eq:DFKM_NI_convergence_proof2}, we get
	\begin{equation}\label{eq:DFKM_NI_convergence_proof3}
	\arraycolsep=0.2em
	\begin{array}{ccc}
		\norms{\mbf{K}\mbf{x}^k} = \BigOs{1/k} \quad &\text{and}& \quad \norms{\mbf{K}\mbf{x}^k} = \SmallOs{1/k},
	\end{array}
	\end{equation}
	Using these results, similarly to part (b) of Theorem~\ref{th:DFKM_convergence}, 
	we can also prove that the consensus error also converges to zero as follows:
	\begin{equation*}
	\arraycolsep=0.2em
	\begin{array}{rcl}	
		\norms{\Pi\mbf{x}^k} = \BigOs{1/k} \qquad &\text{and}& \qquad \norms{\Pi\mbf{x}^k} = \SmallOs{1/k}.
	\end{array}
	\end{equation*}	
	Moreover, since $\norms{\Pi\mbf{x}^k}^2 = \norms{\mbf{x}^k - \boldsymbol{1}\bar{x}^k}^2 = \sum_{i=1}^n \norms{x_{[i]}^k - \bar{x}^k}^2$, we also have
	\begin{equation*}
	\arraycolsep=0.2em
	\begin{array}{rcl}	
		\norms{x_i^k - \bar{x}^k} = \BigOs{1/k} \quad & \text{and} & \quad \norms{x_i^k - \bar{x}^k} = \SmallOs{1/k} \quad \text{for all $i \in [n]$}.
	\end{array}
	\end{equation*}
	
	\vspace{0.75ex}
	\noindent\textbf{(iii)~The convergence of aggregated  residuals.}
	From \eqref{eq:FKM_3OP_convergence_proof2} of Theorem~\ref{th:FKM_3OP_convergence}, we have
	\begin{equation*} 
	\arraycolsep=0.2em
	\begin{array}{ccc}
		\norms{\zeta^k + Bu^k + v^k} = \BigOs{1/k} \quad &\text{and}& \quad \norms{\zeta^k + Bu^k + v^k} = \SmallOs{1/k}.
	\end{array}
	\end{equation*}
	Using $Bu^k = \begin{bmatrix}
		\mbf{\Gamma} G(\mbf{\Gamma} \mbf{a}^k) \\ 0
	\end{bmatrix} = \begin{bmatrix}
		\mbf{\Gamma} G\mbf{x}^k \\ 0
	\end{bmatrix}$, $\zeta^k = \begin{bmatrix}
		\mbf{\Gamma} \boldsymbol{\xi}^k \\ 0
	\end{bmatrix}$ for $\boldsymbol{\xi}^k \in T(\mbf{\Gamma} \mbf{a}^k) = T\mbf{x}^k$, and $v^k = \begin{bmatrix}
		\mbf{b}^k \\ \breve{\mbf{b}}^k
	\end{bmatrix}$, we obtain from the last relation that
	\begin{equation*} 
	\arraycolsep=0.2em
	\begin{array}{ccc}
		\norms{\mbf{\Gamma} \boldsymbol{\xi}^k + \mbf{\Gamma} G\mbf{x}^k + \mbf{b}^k} = \BigOs{1/k} \quad &\text{and}& \quad \norms{\mbf{\Gamma} \boldsymbol{\xi}^k + \mbf{\Gamma} G\mbf{x}^k + \mbf{b}^k} = \SmallOs{1/k}.
	\end{array}
	\end{equation*}
	Moreover, since $\norms{\boldsymbol{\xi}^k + G\mbf{x}^k + \mbf{\Gamma}^{-1}\mbf{b}^k} = \norms{\mbf{\Gamma}^{-1}(\mbf{\Gamma} \boldsymbol{\xi}^k + \mbf{\Gamma} G\mbf{x}^k + \mbf{b}^k)} \leq \norms{\mbf{\Gamma}^{-1}} \norms{\mbf{\Gamma} \boldsymbol{\xi}^k + \mbf{\Gamma} G\mbf{x}^k + \mbf{b}^k} = (\min_i \sqrt{\alpha_i})^{-1} \norms{\mbf{\Gamma} \boldsymbol{\xi}^k + \mbf{\Gamma} G\mbf{x}^k + \mbf{b}^k}$, we deduce from the last relation that
	\begin{equation*} 
	\arraycolsep=0.2em
	\begin{array}{ccc}
		\norms{\boldsymbol{\xi}^k + G\mbf{x}^k + \mbf{\Gamma}^{-1}\mbf{b}^k} = \BigOs{1/k} \quad &\text{and}& \quad \norms{\boldsymbol{\xi}^k + G\mbf{x}^k + \mbf{\Gamma}^{-1}\mbf{b}^k} = \SmallOs{1/k}.
	\end{array}
	\end{equation*}
	Now, since $v^k \in C\vartheta^k = \mbf{E}^\top \Nc_{\sets{0}}(\mbf{E}\vartheta^k)$, there exists some $\mbf{s}^k \in \Nc_{\sets{0}}(0)$ such that $v^k = \mbf{E}^\top\mbf{s}^k$.
	Using $\mbf{E}^\top = \begin{bmatrix}
		\Gamma K \\ M
	\end{bmatrix}$, we get $\mbf{b}^k = \mbf{\Gamma} \mbf{K} s^k$, thus $\mbf{\Gamma}^{-1}\mbf{b^k} = \mbf{K}\mbf{s}^k$.
	Substituting this relation into the last expression, we get
	\begin{equation*} 
	\arraycolsep=0.2em
	\begin{array}{ccc}
		\norms{\boldsymbol{\xi}^k + G\mbf{x}^k + \mbf{K}\mbf{s}^k} = \BigOs{1/k} \quad &\text{and}& \quad \norms{\boldsymbol{\xi}^k + G\mbf{x}^k + \mbf{K}\mbf{s}^k} = \SmallOs{1/k}.
	\end{array}
	\end{equation*}
	Utilizing these results, similar to part (c) of Theorem~\ref{th:DFKM_convergence}, we can also prove that the aggregated residual converges to zero as
	\begin{equation*}
		\arraycolsep=0.2em
		\begin{array}{rcl}	
			\norms{\sum_{i=1}^n (G_i x_{[i]}^k + \xi_i^k)} = \BigOs{1/k} 
			\qquad &\text{and}& \qquad 
			\norms{\sum_{i=1}^n (G_i x_{[i]}^k + \xi_i^k)} = \SmallOs{1/k}.
		\end{array}
	\end{equation*}
	Finally, it is easy to prove that the average operator $\bar{G}$ is $\bar{L}$-Lipschitz continuous with $\bar{L} = \frac{1}{n}\sum_{i=1}^n L_i$.
	Then, using the above convergence results together with Theorem \ref{th:FKM_3OP_convergence}(ii), all the ingredients required in the proofs of Theorem ~\ref{th:DFKM_convergence}(iv)–(vi) are satisfied. 
	Hence, parts (iv)–(vi) follow by similar arguments, and we omit the details.
\end{proof}

\begin{theorem}[\textbf{Co-coercive case}]\label{th:NI_DFKM2_convergence}
	For \eqref{eq:DGE}, suppose that $G_i$ is $\frac{1}{L}$-co-coercive and $T_i$ is maximally monotone for all $i \in [n]$. 
	Let $\sets{\mbf{x}^k}$ be generated by \eqref{eq:DFKM_NI_scheme2} using  $t_0$ given in \eqref{eq:t0_cond2} and the following parameters:
	\begin{equation}\label{eq:NI-DFKM2_params}
		\arraycolsep=0.2em
		\begin{array}{c}
			r > 1, \qquad 0 < \nu < r - 1, \qquad 0 < \alpha_i < \frac{2(1 - \tau \sigma)}{\tau L_i}, \vspace{1ex}\\
			2\kappa > \norms{\mbf{\Lambda}^{1/2}(\Id - W)\mbf{\Lambda}^{1/2}}, \qquad 
			t_{k} := k +  t_0, \qquad \text{and} \qquad \nu_k := \frac{\nu(t_k - 1)}{t_k},
		\end{array}
	\end{equation}
	where $\mbf{\Lambda} := \diag{\alpha_1, \cdots, \alpha_n}$.
	Then, the following statements hold.
	\begin{compactenum}
	\item[$~~~\mathrm{(i)}$~\textbf{$($Local iterates$)$}]
	The local iterates $\sets{x_{[i]}^k}$ converge to the same $u^\star \in \zer{\Phi}$ of the original \eqref{eq:DGE} problem for all nodes $i \in [n]$.
	
	\item[$~~~\mathrm{(ii)}$~\textbf{$($Consensus error$)$}]
	The consensus error $\norms{\Pi \mbf{x}^k }$ converges to zero at the rate of $\SmallOs{1/k}$.
	
	\item[$~~~\mathrm{(iii)}$~\textbf{$($Aggregated residual$)$}]
	The aggregated residual $\norms{\sum_{i=1}^n G_i x_{[i]}^k + \xi_i^k}$ converges to zero at the rate of $\SmallOs{1/k}$, where $\xi_i^k \in Tx_{[i]}^k$.
	
	\item[$~~~\mathrm{(iv)}$~\textbf{$($Restricted gap$)$}]
	If, in addition, Assumption~\ref{as:restricted_dual_gap} holds, then $\mathrm{Gap}_B(\bar{x}^k)$ converges to zero at the rate of $\SmallOs{1/k}$, where $\bar{x}^k := \frac{1}{n}\sum_{i=1}^n x_{[i]}^k$.
	Otherwise, if, in addition, Assumption~\ref{as:boundary_regularity} holds, then $\mathrm{Gap}_B(p^k)$ converges to zero at the rate of $\SmallOs{1/k}$, where $p^k := \proj_{\Bc}(\bar{x}^k)$.
	
	\item[$~~~\mathrm{(v)}$~\textbf{$($FBS residual$)$}]
	For a given $\lambda \in (0, \frac{1}{\bar{L}})$ with $\bar{L} = \frac{1}{n}\sum_{i=1}^n L_i$, if, in addition, Assumption~\ref{as:restricted_dual_gap} holds, then $\norms{\Gc_{\lambda}\bar{x}^k}^2$ converges to zero at the rate of $\SmallOs{1/k}$.
	Otherwise, if, in addition, Assumption~\ref{as:boundary_regularity} holds, then $\norms{\Gc_{\lambda}p^k}^2$ converges to zero at the rate of $\SmallOs{1/k}$, where $p^k := \proj_{\Bc}(\bar{x}^k)$.
	
	\item[$~~~\mathrm{(vi)}$~\textbf{$($Special case$)$}]
	If $T_i = 0$ for all $i \in [n]$ $($i.e., \eqref{eq:DGE} reduces to \eqref{eq:DNE}$)$, then $\norms{\sum_{i=1}^n G_i \bar{x}^k}$ converges to zero at the rate of $\SmallOs{1/k}$.
	\end{compactenum}
\end{theorem}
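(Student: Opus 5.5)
The plan mirrors the proof of Theorem~\ref{th:NI_DFKM_convergence}, replacing Theorem~\ref{th:FKM_3OP_convergence} by its co-coercive counterpart Theorem~\ref{th:FKM_3OP_convergence2}. Set $\mbf{\Gamma} := \mbf{\Lambda}^{1/2}$, $\mbf{K} := (\frac{\Id - W}{2})^{1/2}$ and $\mbf{M} := (\kappa\Id - \mbf{K}\mbf{\Gamma}^2\mbf{K})^{1/2}$. The condition $2\kappa > \norms{\mbf{\Lambda}^{1/2}(\Id - W)\mbf{\Lambda}^{1/2}}$ is exactly $\kappa > \norms{\mbf{\Gamma}\mbf{K}^2\mbf{\Gamma}}$, so $\mbf{M} \succ 0$ is well defined. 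We then form $A$, $B$, $C$ as in \eqref{eq:DFKM_3op_operators}.

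\textbf{Step 1: operator properties.} Lemma~\ref{le:DFKM_3op_reform}(ii) gives that $A$ and $C$ are maximally monotone. It does not address co-coercivity of $B$, so we prove it directly. For $\mbf{a}, \mbf{a}'$ put $\mbf{x} = \mbf{\Gamma}\mbf{a}$ and $\mbf{x}' = \mbf{\Gamma}\mbf{a}'$. Then
\begin{equation*}
\iprods{\mbf{\Gamma}G\mbf{x} - \mbf{\Gamma}G\mbf{x}', \mbf{a} - \mbf{a}'} = \sum_{i=1}^n \iprods{G_ix_{[i]} - G_ix'_{[i]}, x_{[i]} - x'_{[i]}} \geq \sum_{i=1}^n \tfrac{1}{L_i}\norms{G_ix_{[i]} - G_ix'_{[i]}}^2 ,
\end{equation*}
and the right-hand side is at least $\sum_{i=1}^n \frac{1}{\alpha_iL_i}\norms{\sqrt{\alpha_i}(G_ix_{[i]} - G_ix'_{[i]})}^2$. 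Hence $B$ is $\frac{1}{L_B}$-co-coercive with $L_B := \max_i \alpha_iL_i$ (for a common $L$, read $L_i = L$). The stepsize rule $\alpha_i < \frac{2(1-\tau\sigma)}{\tau L_i}$ gives $\frac{1}{2}\tau L_B + \tau\sigma < 1$, which is the condition in \eqref{eq:FKM_3OP_th2_params}.

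\textbf{Step 2: identifying the scheme.} Next, \eqref{eq:DFKM_NI_scheme2} must be checked to be \eqref{eq:FKM_3OP_scheme2} applied to $A+B+C$ under the same change of variables used for \eqref{eq:DFKM_NI_scheme} with $\gamma_k = \beta_k = 0$. This uses the same invariants: $\breve{\mbf{a}}^k = \hat{\breve{\mbf{a}}}^k = z_{\breve{\mbf{a}}}^k = 0$, and the resolvent formula $J_{\sigma^{-1}C} = \Id - \kappa^{-1}\mbf{E}^\top\mbf{E}$. The initialization $\mbf{v}^0 = \mbf{s}_{\mbf{v}}^0 = 0$ must be compatible with the requirement $\hat{w}^0 = 0$ of \eqref{eq:FKM_scheme2+}. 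Theorem~\ref{th:FKM_3OP_convergence2} then yields convergence of $\mbf{a}^k, \hat{\mbf{a}}^k, z_{\mbf{a}}^k$ to $\mbf{a}^\star = \mbf{\Gamma}^{-1}(u^\star,\dots,u^\star)$, by Lemma~\ref{le:DFKM_3op_reform}(i). It also yields the $\BigOs{1/k}$ and $\SmallOs{1/k}$ bounds on $\norms{\zeta^k + Bu^k + v^k}$ and $\norms{\vartheta^k - u^k}$, by the analogue of \eqref{eq:FKM_3OP_convergence_proof2}, which holds because its derivation only used the $\BigOs{1/k^2}$ and $\SmallOs{1/k^2}$ residual rates and Lemma~\ref{le:norm_equivalence}. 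Since $\mbf{x}^k = \mbf{\Gamma}\mbf{a}^k$, item (i) follows.

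\textbf{Step 3: items (ii)--(vi).} These follow verbatim from the proof of Theorem~\ref{th:NI_DFKM_convergence}. From $\mbf{E}\vartheta^k = 0$ and $\norms{\mbf{E}} = \sqrt{\kappa}$ we get $\norms{\mbf{K}\mbf{x}^k} \leq \sqrt{\kappa}\norms{\vartheta^k - u^k}$. Combined with $\norms{\mbf{K}\mbf{s}} \geq \sigma^+_{\min}(\mbf{K})\norms{\mbf{s}}$ on $\mathrm{Null}(\mbf{K})^\perp$, this gives the consensus rate (ii). Writing $\mbf{b}^k = \mbf{\Gamma}\mbf{K}\mbf{s}^k$ and using $\boldsymbol{1}^\top\mbf{K} = 0$ gives the aggregated residual (iii). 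Items (iv)--(vi) repeat the arguments of Theorem~\ref{th:DFKM_convergence}(iv)--(vi). These need only the boundedness of the iterates and of $G_ix_{[i]}^k + \xi_i^k$, both supplied by Step 2, together with monotonicity of $G_i + T_i$, which holds because co-coercive $G_i$ is monotone and $L_i$-Lipschitz. We use $\bar{L}$-Lipschitz continuity of $\bar{G}$ for (v).

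The main obstacle is the bookkeeping in Step 2, namely confirming that the reduced scheme and its zero initialization are exactly the three-operator scheme \eqref{eq:FKM_3OP_scheme2} in transformed variables. I would handle it by specializing the derivation and the induction lemma for $\breve{\mbf{a}}^k = 0$ from the $\beta_k \neq 0$ case.
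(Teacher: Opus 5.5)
Your proposal is correct and takes the same route as the paper: check $\frac{1}{2}\tau L_B + \tau\sigma < 1$ from the stepsize rule, invoke Theorem~\ref{th:FKM_3OP_convergence2}, and repeat the arguments of Theorem~\ref{th:NI_DFKM_convergence} for the remaining items. Your direct proof that $B$ is $\frac{1}{L_B}$-co-coercive with $L_B = \max_i \alpha_i L_i$ fills in a step the paper leaves implicit, since Lemma~\ref{le:DFKM_3op_reform}(ii) only gives monotonicity and Lipschitz continuity of $B$.
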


\begin{proof}
	Since $0 < \alpha_i < \frac{2(1 - \tau \sigma)}{\tau L_i}$, we can show that $\frac{1}{2}\tau L_B + \tau \sigma < 1$.
	Therefore, all the conditions of \eqref{eq:FKM_3OP_th2_params} in Theorem~\ref{th:FKM_3OP_convergence2} are satisfied.
	The remainder of this proof is very similar to that of Theorem~\ref{th:NI_DFKM_convergence}, and hence is omitted. 
\end{proof}

\begin{remark}
	Although Algorithm~\ref{alg:NI_DFFP} requires $2\kappa > \norms{\mbf{\Lambda}^{1/2}(\Id - \mbf{W})\mbf{\Lambda}^{1/2}}$, in practice, we can choose a valid value $\kappa = \max_{ i \in [n]} \alpha_i$ which does not requires spectral information of the network.
	More importantly, the local stepsize bound $0 < \alpha_i \leq \frac{1 - \tau \sigma}{2 \sqrt{1+\omega} \tau L_i}$ or $0 < \alpha_i < \frac{2(1 - \tau \sigma)}{\tau L_i}$ depends only on $L_i$, allowing each agent to exploit its own local information instead of using a common stepsize determined by the worst local operator and network topology.
\end{remark}
Theorems \ref{th:NI_DFKM_convergence} and \ref{th:NI_DFKM2_convergence} show that our methods are optimal w.r.t. the restricted gap metric both for Lipschitz continuous and co-coercive operators, see \cite{Nesterov2007a}, answering positively to the two questions posed in the introduction: design fully distributed algorithms having optimal rates in the original primal space.

\beforesec
\section{Numerical Experiments} \label{sec:numerical_experiments}
\aftersec
We conduct extensive numerical experiments to evaluate the algorithms developed in the previous sections for the distributed composite monotone inclusion \eqref{eq:DGE}.
The two examples are adapted from \cite{tam2025decentralised} and extended to cover additional settings.
For completeness and readability, we restate the necessary details to make the presentation self-contained.
All algorithms are implemented in Python and run on the Longleaf computing cluster using NVIDIA A100 (40GB) and L40 (80GB) GPUs.

\beforesubsec
\subsection{Bilinear matrix games}\label{subsec:exam1_bilinear_matrix_game}
\aftersubsec
We consider a zero-sum matrix game between two teams of $n$ players, where team one has the payoff matrix $M := \sum_{i=1}^n M_i \in \R^{p_2 \times p_1}$ and team two has the payoff matrix $-M$.
Each player in team one is paired with a player from team two, where pair $i$ is associated with the payoff matrices $(M_i, -M_i)$.
A Nash equilibrium of this game can be characterized as a saddle point of the following bilinear minimax problem:
\begin{equation}\label{eq:decentralized_matrix_game}
\min\limits_{u \in \Delta_{p_1}} \max\limits_{v \in \Delta_{p_2}}\Big\{ \Lc(u, v) :=  \sum_{i=1}^n \iprods{M_i u, v} \Big\},
\end{equation}
where $\Delta_p := \sets{ u = (u_1, \cdots, u_p) \in \R^p_+ : \sum_{j=1}^p u_j = 1}$ denotes the unit simplex in $\R^p$.

For all $i \in [n] := \sets{1, \cdots, n}$, if we define
\begin{equation*}
\arraycolsep=0.2em
\begin{array}{c}
x := \begin{bmatrix} u \\ v \end{bmatrix} \in \mathbb{R}^{p_1 + p_2}, \quad
G_i(x) := \begin{bmatrix} 0 & M_i^{\top} \\ - M_i & 0 \end{bmatrix} x, \quad \text{and} \quad 
T_i(x) := \begin{bmatrix} \mathcal{N}{\Delta{p_1}}(u) \\ \mathcal{N}{\Delta{p_2}}(v) \end{bmatrix},
\end{array}
\end{equation*}
then \eqref{eq:decentralized_matrix_game} can be formulated as a special case of \eqref{eq:DGE}.

\vspace{0.75ex}
\noindent\textbf{$\mathrm{(a)}$~Data generation.}
We choose $p_1 = p_2 = p = 100$ and construct the matrices $M_i \in \R^{p \times p}$ as $M_i = s_i \Id - K_i$, where $s_i > \norms{K_i}$ and $K_i$ has positive entries.
Under this construction, the bilinear matrix game problem \eqref{eq:decentralized_matrix_game} is \textit{completely mixed} \cite{cohen1986perturbation} and therefore admits a unique Nash equilibrium \cite{raghavan1978completely}.
We compute this equilibrium using the \texttt{linprog} solver from \texttt{SciPy} and use it to evaluate the relative optimality gap.
Specifically, we generate $K_i = i U_i$, where the entries of $U_i \in \R^{p \times p}$ are independently sampled from $\mathcal{U}(0,1)$, and set $s_i := 1.1\norms{K_i}$.

\vspace{0.75ex}
\noindent\textbf{$\mathrm{(b)}$~Network topologies.}
We consider three different communication networks: cycle, barbell, and 2D grid networks, with a total of $n$ agents, where $n \in \sets{10, 20, 50, 100, 500}$.
For the mixing matrix $W$, we use a Laplacian-based constant edge-weight matrix with $\gamma := 0.505\lambda_{\max}(\Lc)$.

\vspace{0.75ex}
\noindent\textbf{$\mathrm{(c)}$~Algorithms and parameters.}
We implement two proposed algorithms: Algorithm~\ref{alg:ND_DFFP} (called \texttt{ND-DFFP}) and Algorithm~\ref{alg:NI_DFFP} (called \texttt{NI-DFFP}), with the parameters chosen as follows:
\begin{compactitem}
	\item 
	For \texttt{ND-DFFP}: $r := 3.3$, $\delta := 10^{-6}$, $\nu := r - 2 - 1.001\delta$, $\sigma := 10^2$, $\tau := \frac{2}{4\sqrt{1 + \omega}L_{\max} + \sigma (1-\lambda_{\min}(W))}$.
	
	\item 
	For \texttt{NI-DFFP}: $r := 6.2$, $\delta := 0.115$, $\nu := r - 2 - 1.001\delta$, $\tau := 0.1$, $\sigma := 2.8$, the heterogeneous stepsizes $\alpha_i := \frac{0.99(1 - \tau \sigma)}{2\sqrt{1+\omega}\tau L_i}$, and $\kappa := 0.5(1+10^{-10})\norms{\Lambda^{1/2}(\Id - W)\Lambda^{1/2}}$.
\end{compactitem}
Here, recall that $\Lambda := \diag{\alpha_1, \cdots, \alpha_n}$.

For benchmarking, we also implement four algorithms in \cite{tam2025decentralised}: one is \eqref{eq:malitsky2026_alg} and three different variants of \eqref{eq:tam2026_alg}.
We choose  the parameters as proposed in \cite{tam2025decentralised}:
\begin{compactitem}
	\item \texttt{NI-DFB} with stepsizes $\alpha_i := \frac{0.9}{8L_i}$ and parameter $\beta = \beta_{\max} := 0.9(\max_i\sets{\alpha_i})^{-1}$.
	\item \texttt{NI-DFB} with stepsizes $\alpha_i := \frac{0.9}{8L_i}$ and parameter $\beta = \beta_{\text{norm}} := 0.45\norms{\Lambda^{1/2}(\Id - W)\Lambda^{1/2}}^{-1}$.
	\item \texttt{NI-DFB} with stepsize $\alpha_i := \frac{0.9}{8\max_i \sets{L_i}}$ and parameter $\beta = \beta_{\max} := 0.9(\max_i\sets{\alpha_i})^{-1}$.
	\item \texttt{ND-DFRB} with stepsize $\alpha := \frac{0.9(1 + \lambda_{\min}(W))}{4\max_i\sets{L_i}}$.
\end{compactitem}
\vspace{0.75ex}
\noindent\textbf{$\mathrm{(d)}$~Reported metrics.}
The performance of each algorithm is evaluated across four metrics:
\begin{compactenum}
	\item 
	Relative consensus error (\texttt{Cons.~Err.}): 
	$\frac{\norms{\mbf{x}^k - \mbf{1}\bar{x}^k}}{\max\sets{\norms{\mbf{x}^0 - \mbf{1}\bar{x}^0}, 1.0}}$, 
	where $\mbf{x}^k = (x_{[1]}^k, \cdots, x_{[n]}^k)$ and $\bar{x}^k := \frac{1}{n}\sum_{i=1}^n x_{[i]}^k$.
	
	\item 
	Relative aggregated residual (\texttt{Agg.~Res.}): 
	$\frac{\norms{\sum_{i=1}^n G_i x_i^k + \xi_i^k}}{\max\sets{\norms{\sum_{i=1}^n G_i x_i^0 + \xi_i^0}, 1.0}}$.
	
	\item 
	Relative FBS residual evaluated at the average iterate (\texttt{FBS~Res.}): 
	$\frac{\norms{\Gc_{\tau}(\bar{x}^k)}}{\max\sets{\norms{\Gc_{\tau}(\bar{x}^0)}, 1.0}}$.
	
	\item 
	Relative optimality gap (\texttt{Opt.~Gap}): 
	$\frac{\norms{\mbf{x}^k - \mbf{1}x^{\star}}}{\norms{\mbf{1}x^{\star}}}$.
\end{compactenum}
For each experiment, we run all algorithms on $50$ problem instances and report the mean performance metrics against the number of iterations.

\vspace{0.75ex}
\noindent\textbf{$\mathrm{(e)}$~Numerical results.}
The numerical results of all the algorithms after $10{,}000$ iterations are reported in Tables~\ref{tab:matrix_games_results_cycle}, \ref{tab:matrix_games_results_grid2d}, and \ref{tab:matrix_games_results_barbell}.
In addition, Figure~\ref{fig:cycle20_matrix_games} illustrates the performance of the six methods on the cycle network with $20$ agents.

\begin{table}[ht]
	\centering
	\caption{Numerical results for the cycle network after $10^4$ iterations, averaged over $50$ instances.}
	\label{tab:matrix_games_results_cycle}
	\resizebox{\textwidth}{!}{
		\begin{tabular}{llcccccc}
			\toprule
			$n$ & {Metrics} & \texttt{NI-DFFP} & \texttt{ND-DFFP} & \makecell{\texttt{NI-DFB}\\($\beta = \beta_{\max}$)} & \makecell{\texttt{NI-DFB}\\($\beta = \beta_{\text{norm}}$)} & \makecell{\texttt{NI-DFB}\\($\alpha_1 = \cdots = \alpha_n$)} & \texttt{ND-DFRB} \\
			\midrule
			\multirow{4}{*}{10} & Cons. Err. & $1.88 \times 10^{-14}$ & $1.56 \times 10^{-7}$ & $3.02 \times 10^{-16}$ & $\mathbf{1.70 \times 10^{-16}}$ & $1.80 \times 10^{-10}$ & $1.63 \times 10^{-6}$ \\
			& Agg. Res. & $3.37 \times 10^{-12}$ & $9.91 \times 10^{-7}$ & $\mathbf{2.78 \times 10^{-12}}$ & $3.56 \times 10^{-12}$ & $2.98 \times 10^{-8}$ & $9.03 \times 10^{-1}$ \\
			& FBS Res. & $2.26 \times 10^{-11}$ & $4.38 \times 10^{-5}$ & $\mathbf{1.47 \times 10^{-12}}$ & $1.91 \times 10^{-12}$ & $3.88 \times 10^{-7}$ & $7.39 \times 10^{-1}$ \\
			& Opt. Gap & $1.95 \times 10^{-12}$ & $1.53 \times 10^{-6}$ & $\mathbf{3.53 \times 10^{-14}}$ & $4.53 \times 10^{-14}$ & $9.21 \times 10^{-9}$ & $5.21 \times 10^{-1}$ \\
			\midrule
			\multirow{4}{*}{20} & Cons. Err. & $\mathbf{1.10 \times 10^{-6}}$ & $1.17 \times 10^{-5}$ & $4.21 \times 10^{-4}$ & $5.99 \times 10^{-5}$ & $4.76 \times 10^{-4}$ & $5.88 \times 10^{-6}$ \\
			& Agg. Res. & $\mathbf{3.91 \times 10^{-8}}$ & $1.41 \times 10^{-6}$ & $2.44 \times 10^{-5}$ & $6.00 \times 10^{-6}$ & $7.21 \times 10^{-4}$ & $9.16 \times 10^{-1}$ \\
			& FBS Res. & $\mathbf{3.52 \times 10^{-5}}$ & $1.94 \times 10^{-3}$ & $5.78 \times 10^{-2}$ & $8.10 \times 10^{-3}$ & $5.83 \times 10^{-2}$ & $8.71 \times 10^{-1}$ \\
			& Opt. Gap & $\mathbf{8.56 \times 10^{-6}}$ & $8.64 \times 10^{-5}$ & $3.22 \times 10^{-3}$ & $4.80 \times 10^{-4}$ & $3.54 \times 10^{-3}$ & $5.26 \times 10^{-1}$ \\
			\midrule
			\multirow{4}{*}{50} & Cons. Err. & $4.10 \times 10^{-4}$ & $1.29 \times 10^{-3}$ & $2.18 \times 10^{-3}$ & $1.84 \times 10^{-3}$ & $2.03 \times 10^{-3}$ & $\mathbf{3.61 \times 10^{-5}}$ \\
			& Agg. Res. & $\mathbf{6.85 \times 10^{-7}}$ & $1.38 \times 10^{-5}$ & $2.18 \times 10^{-5}$ & $1.95 \times 10^{-5}$ & $9.04 \times 10^{-4}$ & $9.22 \times 10^{-1}$ \\
			& FBS Res. & $\mathbf{2.79 \times 10^{-2}}$ & $2.34 \times 10^{-1}$ & $3.05 \times 10^{-1}$ & $2.94 \times 10^{-1}$ & $2.92 \times 10^{-1}$ & $9.98 \times 10^{-1}$ \\
			& Opt. Gap & $\mathbf{3.10 \times 10^{-3}}$ & $9.51 \times 10^{-3}$ & $1.60 \times 10^{-2}$ & $1.36 \times 10^{-2}$ & $1.49 \times 10^{-2}$ & $5.29 \times 10^{-1}$ \\
			\midrule
			\multirow{4}{*}{100} & Cons. Err. & $1.41 \times 10^{-3}$ & $1.84 \times 10^{-3}$ & $2.94 \times 10^{-3}$ & $2.52 \times 10^{-3}$ & $2.85 \times 10^{-3}$ & $\mathbf{1.46 \times 10^{-4}}$ \\
			& Agg. Res. & $\mathbf{5.69 \times 10^{-7}}$ & $8.84 \times 10^{-6}$ & $1.92 \times 10^{-5}$ & $1.62 \times 10^{-5}$ & $8.35 \times 10^{-4}$ & $9.22 \times 10^{-1}$ \\
			& FBS Res. & $\mathbf{1.46 \times 10^{-1}}$ & $2.97 \times 10^{-1}$ & $4.25 \times 10^{-1}$ & $4.17 \times 10^{-1}$ & $4.20 \times 10^{-1}$ & $1.07$ \\
			& Opt. Gap & $\mathbf{1.04 \times 10^{-2}}$ & $1.34 \times 10^{-2}$ & $2.13 \times 10^{-2}$ & $1.84 \times 10^{-2}$ & $2.07 \times 10^{-2}$ & $5.30 \times 10^{-1}$ \\
			\midrule
			\multirow{4}{*}{500} & Cons. Err. & $\mathbf{2.20 \times 10^{-3}}$ & $2.75 \times 10^{-3}$ & $4.50 \times 10^{-3}$ & $3.98 \times 10^{-3}$ & $4.32 \times 10^{-3}$ & $5.64 \times 10^{-3}$ \\
			& Agg. Res. & $\mathbf{4.09 \times 10^{-7}}$ & $6.00 \times 10^{-6}$ & $1.59 \times 10^{-5}$ & $1.22 \times 10^{-5}$ & $2.53 \times 10^{-4}$ & $8.63 \times 10^{-1}$ \\
			& FBS Res. & $\mathbf{1.84 \times 10^{-1}}$ & $4.06 \times 10^{-1}$ & $6.26 \times 10^{-1}$ & $6.42 \times 10^{-1}$ & $5.93 \times 10^{-1}$ & $9.91 \times 10^{-1}$ \\
			& Opt. Gap & $\mathbf{1.58 \times 10^{-2}}$ & $1.98 \times 10^{-2}$ & $3.23 \times 10^{-2}$ & $2.86 \times 10^{-2}$ & $3.09 \times 10^{-2}$ & $5.07 \times 10^{-1}$ \\
			\bottomrule
		\end{tabular}
	}
\end{table}

\begin{table}[ht]
	\centering
	\caption{Numerical results for the 2D Grid network after $10^4$ iterations, averaged over $50$ instances.}
	\label{tab:matrix_games_results_grid2d}
	\resizebox{\textwidth}{!}{
		\begin{tabular}{llcccccc}
			\toprule
			$n$ & {Metrics} & \texttt{NI-DFFP} & \texttt{ND-DFFP} & \makecell{\texttt{NI-DFB}\\($\beta = \beta_{\max}$)} & \makecell{\texttt{NI-DFB}\\($\beta = \beta_{\text{norm}}$)} & \makecell{\texttt{NI-DFB}\\($\alpha_1 = \cdots = \alpha_n$)} & \texttt{ND-DFRB} \\
			\midrule
			\multirow{4}{*}{10} & Cons. Err. & $1.02 \times 10^{-14}$ & $1.81 \times 10^{-7}$ & $5.14 \times 10^{-13}$ & $\mathbf{1.31 \times 10^{-16}}$ & $4.09 \times 10^{-10}$ & $1.43 \times 10^{-6}$ \\
			& Agg. Res. & $3.38 \times 10^{-12}$ & $9.84 \times 10^{-7}$ & $\mathbf{3.18 \times 10^{-12}}$ & $3.47 \times 10^{-12}$ & $1.75 \times 10^{-8}$ & $9.03 \times 10^{-1}$ \\
			& FBS Res. & $2.26 \times 10^{-11}$ & $4.07 \times 10^{-5}$ & $4.40 \times 10^{-11}$ & $\mathbf{1.95 \times 10^{-12}}$ & $2.24 \times 10^{-7}$ & $7.39 \times 10^{-1}$ \\
			& Opt. Gap & $1.94 \times 10^{-12}$ & $1.71 \times 10^{-6}$ & $4.23 \times 10^{-12}$ & $\mathbf{4.61 \times 10^{-14}}$ & $6.20 \times 10^{-9}$ & $5.20 \times 10^{-1}$ \\
			\midrule
			\multirow{4}{*}{20} & Cons. Err. & $5.78 \times 10^{-10}$ & $1.79 \times 10^{-6}$ & $1.03 \times 10^{-5}$ & $\mathbf{3.99 \times 10^{-13}}$ & $1.92 \times 10^{-5}$ & $2.13 \times 10^{-6}$ \\
			& Agg. Res. & $9.71 \times 10^{-11}$ & $1.05 \times 10^{-6}$ & $1.57 \times 10^{-6}$ & $\mathbf{2.68 \times 10^{-12}}$ & $2.39 \times 10^{-5}$ & $9.16 \times 10^{-1}$ \\
			& FBS Res. & $1.04 \times 10^{-8}$ & $2.87 \times 10^{-4}$ & $8.06 \times 10^{-4}$ & $\mathbf{3.19 \times 10^{-11}}$ & $1.11 \times 10^{-3}$ & $8.71 \times 10^{-1}$ \\
			& Opt. Gap & $4.86 \times 10^{-9}$ & $1.44 \times 10^{-5}$ & $8.66 \times 10^{-5}$ & $\mathbf{3.35 \times 10^{-12}}$ & $1.58 \times 10^{-4}$ & $5.26 \times 10^{-1}$ \\
			\midrule
			\multirow{4}{*}{50} & Cons. Err. & $7.49 \times 10^{-6}$ & $7.81 \times 10^{-5}$ & $1.03 \times 10^{-3}$ & $4.72 \times 10^{-4}$ & $1.00 \times 10^{-3}$ & $\mathbf{4.13 \times 10^{-6}}$ \\
			& Agg. Res. & $\mathbf{7.72 \times 10^{-8}}$ & $2.75 \times 10^{-6}$ & $1.81 \times 10^{-5}$ & $1.32 \times 10^{-5}$ & $6.77 \times 10^{-4}$ & $9.23 \times 10^{-1}$ \\
			& FBS Res. & $\mathbf{2.16 \times 10^{-4}}$ & $1.31 \times 10^{-2}$ & $1.50 \times 10^{-1}$ & $7.46 \times 10^{-2}$ & $1.37 \times 10^{-1}$ & $9.97 \times 10^{-1}$ \\
			& Opt. Gap & $\mathbf{5.94 \times 10^{-5}}$ & $6.17 \times 10^{-4}$ & $7.83 \times 10^{-3}$ & $3.75 \times 10^{-3}$ & $7.60 \times 10^{-3}$ & $5.30 \times 10^{-1}$ \\
			\midrule
			\multirow{4}{*}{100} & Cons. Err. & $3.24 \times 10^{-5}$ & $6.12 \times 10^{-4}$ & $1.51 \times 10^{-3}$ & $8.21 \times 10^{-4}$ & $1.57 \times 10^{-3}$ & $\mathbf{1.37 \times 10^{-5}}$ \\
			& Agg. Res. & $\mathbf{1.31 \times 10^{-7}}$ & $1.31 \times 10^{-5}$ & $1.82 \times 10^{-5}$ & $1.05 \times 10^{-5}$ & $8.02 \times 10^{-4}$ & $9.25 \times 10^{-1}$ \\
			& FBS Res. & $\mathbf{3.28 \times 10^{-3}}$ & $2.15 \times 10^{-1}$ & $3.69 \times 10^{-1}$ & $3.15 \times 10^{-1}$ & $3.77 \times 10^{-1}$ & $1.06$ \\
			& Opt. Gap & $\mathbf{2.66 \times 10^{-4}}$ & $4.97 \times 10^{-3}$ & $1.16 \times 10^{-2}$ & $6.55 \times 10^{-3}$ & $1.18 \times 10^{-2}$ & $5.32 \times 10^{-1}$ \\
			\midrule
			\multirow{4}{*}{500} & Cons. Err. & $7.93 \times 10^{-4}$ & $1.56 \times 10^{-3}$ & $3.94 \times 10^{-3}$ & $2.01 \times 10^{-3}$ & $3.83 \times 10^{-3}$ & $\mathbf{5.46 \times 10^{-5}}$ \\
			& Agg. Res. & $\mathbf{2.14 \times 10^{-7}}$ & $5.70 \times 10^{-6}$ & $1.84 \times 10^{-5}$ & $1.01 \times 10^{-5}$ & $3.35 \times 10^{-4}$ & $9.29 \times 10^{-1}$ \\
			& FBS Res. & $\mathbf{1.83 \times 10^{-1}}$ & $4.09 \times 10^{-1}$ & $6.58 \times 10^{-1}$ & $6.11 \times 10^{-1}$ & $6.52 \times 10^{-1}$ & $1.02$ \\
			& Opt. Gap & $\mathbf{5.97 \times 10^{-3}}$ & $1.16 \times 10^{-2}$ & $2.82 \times 10^{-2}$ & $1.46 \times 10^{-2}$ & $2.74 \times 10^{-2}$ & $5.35 \times 10^{-1}$ \\
			\bottomrule
		\end{tabular}
	}
\end{table}

\begin{table}[ht]
	\centering
	\caption{Numerical results for the Barbell network after $10^4$ iterations, averaged over $50$ instances.}
	\label{tab:matrix_games_results_barbell}
	\resizebox{\textwidth}{!}{
		\begin{tabular}{llcccccc}
			\toprule
			$n$ & {Metrics} & \texttt{NI-DFFP} & \texttt{ND-DFFP} & \makecell{\texttt{NI-DFB}\\($\beta = \beta_{\max}$)} & \makecell{\texttt{NI-DFB}\\($\beta = \beta_{\text{norm}}$)} & \makecell{\texttt{NI-DFB}\\($\alpha_1 = \cdots = \alpha_n$)} & \texttt{ND-DFRB} \\
			\midrule
			\multirow{4}{*}{10} & Cons. Err. & $4.81 \times 10^{-9}$ & $2.45 \times 10^{-6}$ & $1.43 \times 10^{-6}$ & $\mathbf{8.64 \times 10^{-11}}$ & $1.59 \times 10^{-5}$ & $7.43 \times 10^{-6}$ \\
			& Agg. Res. & $5.68 \times 10^{-10}$ & $1.12 \times 10^{-6}$ & $2.54 \times 10^{-7}$ & $\mathbf{3.22 \times 10^{-11}}$ & $7.39 \times 10^{-5}$ & $9.03 \times 10^{-1}$ \\
			& FBS Res. & $2.21 \times 10^{-7}$ & $4.02 \times 10^{-4}$ & $2.41 \times 10^{-4}$ & $\mathbf{1.46 \times 10^{-8}}$ & $1.79 \times 10^{-3}$ & $7.39 \times 10^{-1}$ \\
			& Opt. Gap & $4.17 \times 10^{-8}$ & $2.11 \times 10^{-5}$ & $1.24 \times 10^{-5}$ & $\mathbf{7.49 \times 10^{-10}}$ & $1.29 \times 10^{-4}$ & $5.20 \times 10^{-1}$ \\
			\midrule
			\multirow{4}{*}{20} & Cons. Err. & $5.32 \times 10^{-5}$ & $6.97 \times 10^{-5}$ & $1.47 \times 10^{-3}$ & $1.30 \times 10^{-3}$ & $1.34 \times 10^{-3}$ & $\mathbf{4.26 \times 10^{-5}}$ \\
			& Agg. Res. & $\mathbf{3.01 \times 10^{-7}}$ & $4.83 \times 10^{-6}$ & $1.90 \times 10^{-5}$ & $2.05 \times 10^{-5}$ & $1.33 \times 10^{-3}$ & $9.15 \times 10^{-1}$ \\
			& FBS Res. & $\mathbf{3.98 \times 10^{-3}}$ & $2.25 \times 10^{-2}$ & $2.40 \times 10^{-1}$ & $2.26 \times 10^{-1}$ & $2.17 \times 10^{-1}$ & $8.70 \times 10^{-1}$ \\
			& Opt. Gap & $\mathbf{4.57 \times 10^{-4}}$ & $5.95 \times 10^{-4}$ & $1.24 \times 10^{-2}$ & $1.12 \times 10^{-2}$ & $1.11 \times 10^{-2}$ & $5.25 \times 10^{-1}$ \\
			\midrule
			\multirow{4}{*}{50} & Cons. Err. & $1.51 \times 10^{-3}$ & $1.60 \times 10^{-3}$ & $2.16 \times 10^{-3}$ & $2.15 \times 10^{-3}$ & $2.10 \times 10^{-3}$ & $\mathbf{5.44 \times 10^{-4}}$ \\
			& Agg. Res. & $\mathbf{5.86 \times 10^{-7}}$ & $8.20 \times 10^{-6}$ & $1.78 \times 10^{-5}$ & $1.70 \times 10^{-5}$ & $9.80 \times 10^{-4}$ & $9.14 \times 10^{-1}$ \\
			& FBS Res. & $\mathbf{1.63 \times 10^{-1}}$ & $3.10 \times 10^{-1}$ & $3.33 \times 10^{-1}$ & $3.47 \times 10^{-1}$ & $3.28 \times 10^{-1}$ & $1.02$ \\
			& Opt. Gap & $\mathbf{1.19 \times 10^{-2}}$ & $1.23 \times 10^{-2}$ & $1.67 \times 10^{-2}$ & $1.67 \times 10^{-2}$ & $1.63 \times 10^{-2}$ & $5.25 \times 10^{-1}$ \\
			\midrule
			\multirow{4}{*}{100} & Cons. Err. & $\mathbf{1.63 \times 10^{-3}}$ & $1.76 \times 10^{-3}$ & $2.46 \times 10^{-3}$ & $2.43 \times 10^{-3}$ & $2.41 \times 10^{-3}$ & $8.18 \times 10^{-3}$ \\
			& Agg. Res. & $\mathbf{5.73 \times 10^{-7}}$ & $3.10 \times 10^{-6}$ & $2.02 \times 10^{-5}$ & $1.93 \times 10^{-5}$ & $8.11 \times 10^{-4}$ & $8.59 \times 10^{-1}$ \\
			& FBS Res. & $\mathbf{1.64 \times 10^{-1}}$ & $3.11 \times 10^{-1}$ & $4.00 \times 10^{-1}$ & $3.97 \times 10^{-1}$ & $4.09 \times 10^{-1}$ & $1.09$ \\
			& Opt. Gap & $\mathbf{1.31 \times 10^{-2}}$ & $1.45 \times 10^{-2}$ & $2.15 \times 10^{-2}$ & $2.11 \times 10^{-2}$ & $2.09 \times 10^{-2}$ & $5.04 \times 10^{-1}$ \\
			\midrule
			\multirow{4}{*}{500} & Cons. Err. & $\mathbf{2.61 \times 10^{-3}}$ & $2.73 \times 10^{-3}$ & $3.06 \times 10^{-3}$ & $3.03 \times 10^{-3}$ & $3.06 \times 10^{-3}$ & $3.66 \times 10^{-2}$ \\
			& Agg. Res. & $\mathbf{3.75 \times 10^{-7}}$ & $2.29 \times 10^{-6}$ & $1.26 \times 10^{-5}$ & $1.20 \times 10^{-5}$ & $4.06 \times 10^{-4}$ & $5.24 \times 10^{-1}$ \\
			& FBS Res. & $\mathbf{1.72 \times 10^{-1}}$ & $3.85 \times 10^{-1}$ & $5.22 \times 10^{-1}$ & $5.22 \times 10^{-1}$ & $5.37 \times 10^{-1}$ & $9.35 \times 10^{-1}$ \\
			& Opt. Gap & $\mathbf{2.80 \times 10^{-2}}$ & $2.94 \times 10^{-2}$ & $3.05 \times 10^{-2}$ & $3.04 \times 10^{-2}$ & $3.03 \times 10^{-2}$ & $4.16 \times 10^{-1}$ \\
			\bottomrule
		\end{tabular}
	}
\end{table}

\begin{figure}[h]
	\centering
	\includegraphics[width=\textwidth]{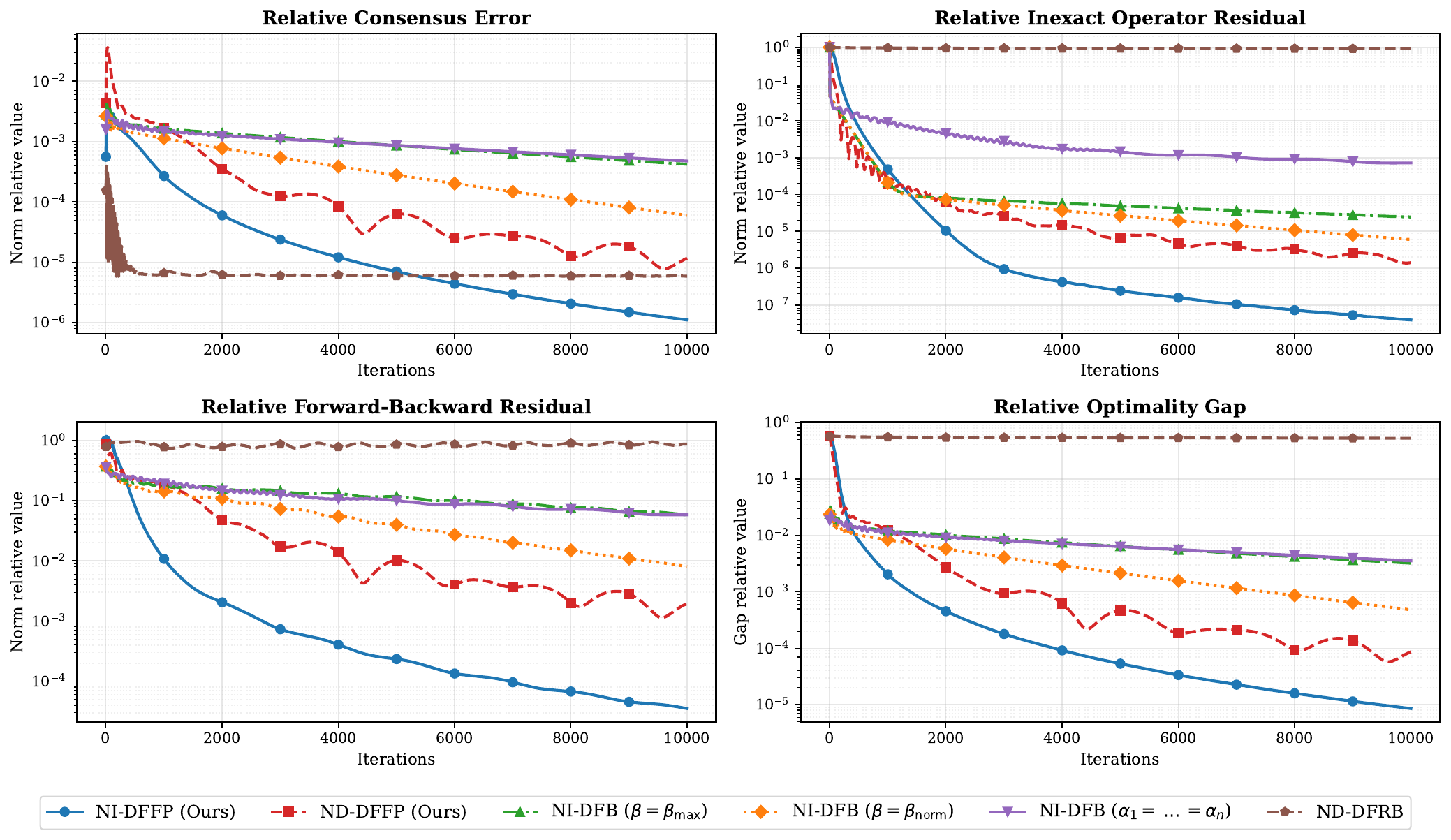}
	\caption{Comparison of the $6$ algorithms for solving \eqref{eq:decentralized_matrix_game} over a cycle $C(20)$ network}
	\label{fig:cycle20_matrix_games}
\end{figure}

As we can observe from our results, for a small-scale network with $n = 10$, the competitor \texttt{NI-DFB}, particularly with heterogeneous stepsizes, initially demonstrates highly competitive performance, achieving fast convergence toward an optimal solution and low residual values.
Nevertheless, our proposed algorithms also perform well. 
In particular, \texttt{NI-DFFP} exhibits competitive performance compared with \texttt{NI-DFB} using $\beta = \beta_{\max}$.

As the network size increases to an intermediate regime with $n \geq 20$ (or $n \geq 50$ for 2D grid networks), a distinct shift in performance becomes evident.
Our proposed algorithms consistently outperform the benchmark methods across the main evaluation metrics.
Notably, this advantage is maintained even when compared with baseline methods that use network-independent stepsizes.
These results suggest that our parameter choices remain effective as the network grows and that the proposed algorithms are less sensitive to the structural bottlenecks and information-mixing delays arising in larger and more complex networks.

In large-scale network scenarios, particularly at $n = 500$, the deteriorating algebraic connectivity of the cycle graph significantly affects the performance of all methods.
The convergence rates degrade across all evaluated algorithms, reflecting the communication limitations and unfavorable spectral properties of large, sparsely connected cycle networks.
Nevertheless, our algorithms exhibit greater robustness under these challenging conditions.
Despite the overall slowdown in convergence, the proposed methods maintain a clear and consistent performance advantage over the competing methods throughout the iteration history.

Furthermore, the empirical observations on cycle graphs remain largely consistent across two structurally different network topologies: 2D Grid and Barbell networks.
The 2D Grid network exhibits a different algebraic connectivity decay rate from the cycle graph, yet our proposed \texttt{NI-DFFP} algorithm consistently achieves faster convergence and lower optimality gaps for intermediate and large network sizes.
More importantly, the Barbell graph presents a severe structural bottleneck due to the sparse bridge connecting two densely connected subgraphs, which substantially slows network-wide information diffusion.
Despite this challenging topology, our algorithms remain competitive and robust.
While the convergence of all evaluated methods deteriorates at the large scale, $n = 500$, due to this communication bottleneck, the proposed methods continue to maintain a clear performance advantage over the baselines throughout the iteration history.
The consistency of these results across structurally diverse networks indicates that the accelerated technique is not specific to a particular topology.
Instead, it demonstrates robust performance under a broad range of network-induced communication delays and spectral limitations.

\beforesubsec
\subsection{Virtual power plants}\label{subsec:EX2_virtual_PP}
\aftersubsec
We consider a virtual power plant (VPP) consisting of $n$ noncooperative players, each operating a power bank, which are connected via the local power grid (further details on VPPs can be found in \cite{wang2019review}).
In each time period $t \in [p]$, players must decide how much electricity to buy (charging their power banks) and how much to sell (discharging their power banks) to satisfy their personal electricity usage while minimizing their total cost over the time horizon.
We seek a vector that determines when and how much players charge and discharge their power banks such that players have no incentive to individually deviate from this schedule.

\vspace{0.75ex}
\noindent\textbf{$\mathrm{(a)}$~Mathematical model.}
We represent a charging schedule for the player population using $u = (u_1, \cdots, u_n)^\top \in \R^{2p} \times \cdots \times \R^{2p}$, where the charging ($u_i^+$) and discharging ($u_i^-$) decisions for player $i$ are denoted as
\begin{equation*}
\arraycolsep=0.2em
\begin{array}{lcl}
	u_i = \begin{bmatrix} u_i^+ \\ u_i^- \end{bmatrix} \in \R^{2p}, \quad 
	u_i^+ = \begin{bmatrix} u_i^+(1) \\ \vdots \\ u_i^+(p) \end{bmatrix} \in \R^p, \quad 
	u_i^- = \begin{bmatrix} u_i^-(1) \\ \vdots \\ u_i^-(p) \end{bmatrix} \in \R^p,
\end{array}
\end{equation*}
where $u_i^+(t) \in \R$ (resp. $u_i^-(t)$) denotes the amount of electricity drawn from the grid (resp. discharged to the grid) by player $i \in [n]$ in time period $t \in [p]$.
Under this notation, the VPP coordination problem can be formulated as an aggregative game, where the problem for player $i$ is given by

\begin{equation}\label{eq:VPP_playeri}
	\arraycolsep=0.2em
	\left\{
	\begin{array}{ccl}
		\min\limits_{u_i \in \R^{2p}} && g_i(u_i) + f_i(u_i, \sum_{j=1}^n u_j),  \vspace{1ex}\\
		\text{s.t.} && u_i \in \Omega_i, \vspace{1ex}\\
		&& M\sum_{j=1}^n u_j \leq b,
	\end{array}
	\right.
\end{equation}
in which the elements of \eqref{eq:VPP_playeri} are defined as
\begin{equation}\label{eq:VPP_playeri_elements}
\arraycolsep=0.2em
\begin{array}{c}
	g_i(u_i) = u_i^\top Q_i u_i + p_i^\top u_i, \quad f_i\left(u_i, \sum_{j=1}^n u_j\right) = \iprod{\begin{bmatrix} \Id & -\Id \end{bmatrix} u_i, \begin{bmatrix} \Id & -\Id \end{bmatrix} \sum_{j=1}^n u_j + m}, \vspace{1ex}\\
	\Omega_i = \set{u_i \in \R^{2p} \mid 0 \leq u_i \leq \begin{bmatrix} \bar{u}_i^+ \\ \bar{u}_i^- \end{bmatrix}, \ l_i^{\text{low}} \leq \begin{bmatrix} e_i^+ R - \frac{1}{e_i^-}R\end{bmatrix} u_i \leq l_i^\text{up}}, \vspace{1ex}\\
	M = \begin{bmatrix} \Id & -\Id \\ -\Id & \Id \end{bmatrix} \in \R^{2p \times 2p}, \quad b = \begin{bmatrix} K - m \\ m \end{bmatrix} \in \R^{2p},
\end{array}
\end{equation}
where $R = (r_{ij}) \in \R^{p \times p}$ is a lower triangular matrix with $r_{ij} = 1$ for $i \geq j$ and $0$ otherwise.
The details of this formulation (particularly $Q_i \in \R^{2p \times 2p}$, $p_i \in \R^{2p}_+$, $m = (m(t)) \in \R^p$, $\bar{u}_i^+ = (\bar{u}_i^+(t)) \in \R^p$, $\bar{u}_i^- = (\bar{u}_i^-(t)) \in \R^p$, $l_i^{\text{low}} = (l_i^{\text{low}}(t)) \in \R^p_+$, $l_i^{\text{up}} = (l_i^{\text{up}}(t)) \in \R^p_+$, $e_i^+, e_i^- \in (0,1)$, and $K = (K(t)) \in \R^p$) can be found in \cite{tam2025decentralised}.  

According to \cite[Proposition 5.1]{tam2025decentralised}, $u = (u_1, \cdots, u_n) \in \R^{2p} \times \cdots \times \R^{2p}$ is a generalized Nash equilibrium of \eqref{eq:VPP_playeri} if and only if there exists a dual variable $v = (v_1, \cdots, v_n) \in \R^{2p} \times \cdots \times \R^{2p}$ such that $(u, v)$ satisfies
\begin{equation}\label{eq:VPP_inclusion}
\begin{bmatrix} 0 \\ 0 \end{bmatrix} \in \begin{bmatrix}
	\partial (g_i + \delta_{\Omega_i})(u_i) \\ \Nc_{\R^{2p}_+}(v_i)
\end{bmatrix} + \begin{bmatrix}
\nabla_{u_i} f_i(u_i, \sum_{j=1}^n u_j) + M^\top v_i \\
b - M\sum_{j=1}^n u_j
\end{bmatrix}, \quad \forall i \in [n].
\end{equation}
Thus, let $\Hc = \R^{2p} \times \R^{2p}$ and define the operators $G_i: \Hc^n \to \Hc^n$ and $T_i: \Hc^n \rightrightarrows \Hc^n$ as
\begin{equation*} 
\arraycolsep=0.2em
\begin{array}{c}
	z_i = \begin{bmatrix} u_i \\ v_i \end{bmatrix} \in \Hc, \quad 
	z = \begin{bmatrix} z_1 \\ \vdots \\ z_n \end{bmatrix} \in \Hc^n, \quad
	G_i(z) = \begin{bmatrix} G_{i1} (z_1, \sum_{j=1}^n z_j) \\ \vdots \\ G_{in} (z_n, \sum_{j=1}^n z_j) \end{bmatrix}, \quad 
	T_i(z) = \begin{bmatrix} T_{i1} (z_1) \\ \vdots \\ T_{in} (z_n) \end{bmatrix},
\end{array}
\end{equation*}
where the operators $G_{ij}: \Hc^2 \to \Hc^2$ and $T_{ij}: \Hc \rightrightarrows \Hc$ are defined by
\begin{equation*} 
\arraycolsep=0.2em
\begin{array}{rclrcl}
	G_{ii}(z_i, \sum_{j=1}^n z_j) &=& \begin{bmatrix}
		\nabla_{u_i} f_i(u_i, s) + \nabla_s f_i(u_i, s) + M^\top v_i \\ b - Ms
	\end{bmatrix},  &\quad
	G_{ij}(z_i, \sum_{j=1}^n z_j) &=& 0 \quad \forall j \ne i, \vspace{1ex}\\
	T_{ii}(z_i) &=& \begin{bmatrix} \partial (g_i + \delta_{\Omega_i}) (u_i) \\ \Nc_{\R^{2p}_+}(v_i) \end{bmatrix}, \quad & T_{ij}(z_i) &=& 0 \quad \forall j \ne i,
\end{array}
\end{equation*}
where $s = \sum_{j=1}^n u_j$ in $G_{ii}(z_i, \sum_{j=1}^n z_j)$.
Then, \eqref{eq:VPP_inclusion} is exactly a special case of \eqref{eq:DGE}.

\vspace{0.75ex}
\noindent\textbf{$\mathrm{(b)}$~Data generation.}
The matrix $Q_i = \diag{q_1, \cdots, q_{2p}}$ has its diagonal entries drawn from $U(0.1, 4)$, while the elements of $p_i$ are sampled from $U(0.2, 2)$.
The entries of $c_i^+$ and $c_i^-$ are generated from $U(0,2)$ and $U(-2,0)$, respectively.
For $\bar{u}_i^+$ and $\bar{u}_i^-$, each entry is independently set to $0$ with probability $0.2$, and otherwise sampled from $U(1,5)$ with probability $0.8$.
The efficiencies $e_i^{\pm}$ are drawn from $U(0.5, 1)$, whereas the elements of $l_i^{\text{low}}$ and $l_i^{\text{up}}$ are sampled from $U(0,1)$ and $U(1,3)$, respectively.
The non-VPP demand $m(t)$ follows \cite{ma2010decentralized}, and the grid capacity is set to $K(t) = 0.55 + \max_t m(t)$.

\vspace{0.75ex}
\noindent\textbf{$\mathrm{(c)}$~Network topologies.}
We continue utilizing three types of communication networks: cycle, barbell, and 2D grid network, with a total of $n$ agents, $n \in \sets{20, 40, 60, 80, 100}$, and the Laplacian-based mixing matrix with $\gamma = 0.505\lambda_{\max}(\Lc)$.

\vspace{0.75ex}
\noindent\textbf{$\mathrm{(d)}$~Algorithms and parameters.}
We implement two proposed algorithms: \texttt{ND-DFFP} and \texttt{NI-DFFP}, with the parameters chosen as follows:
\begin{compactitem}
	\item For \texttt{ND-DFFP}: $r = 3.0$, $\delta = 10^{-5}$, $\nu = r - 2 - 1.001\delta$, $\sigma = 10$, $\tau = \frac{2}{4\sqrt{1 + \omega}L_{\max} + \sigma (1-\lambda_{\min}(W))}$.
	\item For \texttt{NI-DFFP}: $r = 2.8$, $\delta = 0.115$, $\nu = r - 2 - 1.001\delta$, $\tau = 0.3$, $\sigma = \frac{1}{30}$, heterogeneous stepsizes $\alpha_i = \frac{0.99(1 - \tau \sigma)}{2\sqrt{1+\omega}\tau L_i}$, and $\kappa = (1+10^{-10})\norms{\Lambda^{1/2}((I-W)/2)\Lambda^{1/2}}$.
\end{compactitem}
Here, recall that $\Lambda = \diag{\alpha_1, \cdots, \alpha_n}$.
For benchmarking, we also implement \eqref{eq:malitsky2026_alg} and one variant of \eqref{eq:tam2026_alg}, with the parameters chosen as proposed in \cite{tam2025decentralised}:
\begin{compactitem}
	\item \texttt{NI-DFB} with heterogeneous stepsizes $\alpha_i = \frac{0.9}{8L_i}$ and parameter $\beta = 0.9\norms{\Lambda^{1/2}((I-W)/2)\Lambda^{1/2}}^{-1}$.
	\item \texttt{ND-DFRB} with stepsize $\alpha = \frac{0.9(1 + \lambda_{\min}(W))}{4\max_i\sets{L_i}}$.
\end{compactitem}

\vspace{0.75ex}
\noindent\textbf{$\mathrm{(e)}$~Numerical results.}
The performance of each algorithm is evaluated using the relative optimality gap: $\frac{\norms{\mbf{z}^k - \mbf{1}z^{\star}}}{\norms{\mbf{1}z^{\star}}}$, where $z^{\star}$ is computed using a centralized QP solver from \texttt{CvxOpt}.
Each test was repeated five times, and we report the mean performance across the five runs.
The numerical results after $5{,}000$ iterations are reported in 
Tables~\ref{tab:VPP_cycle_results}, \ref{tab:VPP_grid_results}, and \ref{tab:VPP_barbell_results}.
Figure~\ref{fig:grid100_VPP} illustrates the performance of the four methods operating on a grid network of $100$ agents.

\begin{table*}[ht]
	\centering
	\caption{Numerical results of relative optimality gap and execution time for the Cycle graph.}
	\label{tab:VPP_cycle_results}
	\resizebox{\textwidth}{!}{
		\begin{tabular}{l c c c c c c c c}
			\toprule
			& \multicolumn{2}{c}{NI-DFFP (Ours)} & \multicolumn{2}{c}{ND-DFFP (Ours)} & \multicolumn{2}{c}{NI-DFB} & \multicolumn{2}{c}{ND-DFRB} \\
			\cmidrule(lr){2-3} \cmidrule(lr){4-5} \cmidrule(lr){6-7} \cmidrule(lr){8-9}
			$n$ & Gap & Time (s) & Gap & Time (s) & Gap & Time (s) & Gap & Time (s) \\
			\midrule
			20 & $3.77_{{\pm 0.36}} \times 10^{-1}$ & $253.54_{{\pm 1.28}}$ & $2.47_{{\pm 0.25}} \times 10^{-1}$ & $236.58_{{\pm 1.06}}$ & $\mathbf{2.37}_{{\pm 0.29}} \times 10^{-1}$ & $255.07_{{\pm 1.65}}$ & $3.52_{{\pm 0.18}} \times 10^{0}$ & $\mathbf{199.13}_{{\pm 1.14}}$ \\
			40 & $8.71_{{\pm 0.53}} \times 10^{-1}$ & $480.36_{{\pm 1.49}}$ & $7.69_{{\pm 0.60}} \times 10^{-1}$ & $\mathbf{443.87}_{{\pm 1.51}}$ & $\mathbf{7.53}_{{\pm 0.63}} \times 10^{-1}$ & $496.42_{{\pm 1.52}}$ & $2.76_{{\pm 0.14}} \times 10^{0}$ & $450.52_{{\pm 1.05}}$ \\
			60 & $\mathbf{8.76}_{{\pm 0.43}} \times 10^{-1}$ & $591.07_{{\pm 3.10}}$ & $9.46_{{\pm 0.50}} \times 10^{-1}$ & $\mathbf{548.45}_{{\pm 2.52}}$ & $9.52_{{\pm 0.51}} \times 10^{-1}$ & $637.19_{{\pm 1.28}}$ & $2.31_{{\pm 0.10}} \times 10^{0}$ & $617.79_{{\pm 1.81}}$ \\
			80 & $\mathbf{8.69}_{{\pm 0.45}} \times 10^{-1}$ & $923.29_{{\pm 6.61}}$ & $1.00_{{\pm 0.05}} \times 10^{0}$ & $\mathbf{861.26}_{{\pm 3.27}}$ & $1.03_{{\pm 0.05}} \times 10^{0}$ & $1021.05_{{\pm 3.59}}$ & $2.06_{{\pm 0.09}} \times 10^{0}$ & $1016.47_{{\pm 3.29}}$ \\
			100 & $\mathbf{8.90}_{{\pm 0.44}} \times 10^{-1}$ & $945.32_{{\pm 2.37}}$ & $1.10_{{\pm 0.05}} \times 10^{0}$ & $\mathbf{897.07}_{{\pm 2.12}}$ & $1.06_{{\pm 0.05}} \times 10^{0}$ & $1066.86_{{\pm 2.69}}$ & $1.89_{{\pm 0.08}} \times 10^{0}$ & $1070.50_{{\pm 4.12}}$ \\
			\bottomrule
		\end{tabular}
	}
\end{table*}

\begin{table*}[ht]
	\centering
	\caption{Numerical results of relative optimality gap and execution time for the Grid graph.}
	\label{tab:VPP_grid_results}
	\resizebox{\textwidth}{!}{
		\begin{tabular}{l c c c c c c c c}
			\toprule
			& \multicolumn{2}{c}{NI-DFFP (Ours)} & \multicolumn{2}{c}{ND-DFFP (Ours)} & \multicolumn{2}{c}{NI-DFB} & \multicolumn{2}{c}{ND-DFRB} \\
			\cmidrule(lr){2-3} \cmidrule(lr){4-5} \cmidrule(lr){6-7} \cmidrule(lr){8-9}
			$n$ & Gap & Time (s) & Gap & Time (s) & Gap & Time (s) & Gap & Time (s) \\
			\midrule
			20 & $3.70_{{\pm 0.36}} \times 10^{-1}$ & $211.55_{{\pm 1.36}}$ & $2.46_{{\pm 0.25}} \times 10^{-1}$ & $197.56_{{\pm 0.31}}$ & $\mathbf{2.37}_{{\pm 0.29}} \times 10^{-1}$ & $213.04_{{\pm 0.57}}$ & $3.52_{{\pm 0.18}} \times 10^{0}$ & $\mathbf{161.17}_{{\pm 0.86}}$ \\
			40 & $8.57_{{\pm 0.53}} \times 10^{-1}$ & $398.43_{{\pm 1.21}}$ & $7.67_{{\pm 0.60}} \times 10^{-1}$ & $370.36_{{\pm 0.93}}$ & $\mathbf{7.54}_{{\pm 0.64}} \times 10^{-1}$ & $406.25_{{\pm 1.87}}$ & $2.76_{{\pm 0.14}} \times 10^{0}$ & $\mathbf{330.98}_{{\pm 1.37}}$ \\
			60 & $\mathbf{8.59}_{{\pm 0.42}} \times 10^{-1}$ & $581.67_{{\pm 2.66}}$ & $9.41_{{\pm 0.50}} \times 10^{-1}$ & $540.19_{{\pm 2.31}}$ & $9.54_{{\pm 0.51}} \times 10^{-1}$ & $609.34_{{\pm 2.44}}$ & $2.30_{{\pm 0.10}} \times 10^{0}$ & $\mathbf{536.68}_{{\pm 0.65}}$ \\
			80 & $\mathbf{8.60}_{{\pm 0.45}} \times 10^{-1}$ & $765.07_{{\pm 3.65}}$ & $1.00_{{\pm 0.05}} \times 10^{0}$ & $\mathbf{717.98}_{{\pm 3.12}}$ & $1.02_{{\pm 0.05}} \times 10^{0}$ & $820.48_{{\pm 2.67}}$ & $2.05_{{\pm 0.09}} \times 10^{0}$ & $766.01_{{\pm 3.36}}$ \\
			100 & $\mathbf{8.72}_{{\pm 0.43}} \times 10^{-1}$ & $938.39_{{\pm 2.52}}$ & $1.01_{{\pm 0.05}} \times 10^{0}$ & $\mathbf{892.67}_{{\pm 5.69}}$ & $1.04_{{\pm 0.05}} \times 10^{0}$ & $1037.44_{{\pm 3.30}}$ & $1.86_{{\pm 0.07}} \times 10^{0}$ & $997.49_{{\pm 3.82}}$ \\
			\bottomrule
		\end{tabular}
	}
\end{table*}

\begin{table*}[ht]
	\centering
	\caption{Numerical results of relative optimality gap and execution time for the Barbell graph.}
	\label{tab:VPP_barbell_results}
	\resizebox{\textwidth}{!}{
		\begin{tabular}{l c c c c c c c c}
			\toprule
			 & \multicolumn{2}{c}{NI-DFFP (Ours)} & \multicolumn{2}{c}{ND-DFFP (Ours)} & \multicolumn{2}{c}{NI-DFB} & \multicolumn{2}{c}{ND-DFRB} \\
			\cmidrule(lr){2-3} \cmidrule(lr){4-5} \cmidrule(lr){6-7} \cmidrule(lr){8-9}
			$n$ & Gap & Time (s) & Gap & Time (s) & Gap & Time (s) & Gap & Time (s) \\
			\midrule
			20 & $3.93_{{\pm 0.35}} \times 10^{-1}$ & $212.14_{{\pm 1.09}}$ & $2.49_{{\pm 0.25}} \times 10^{-1}$ & $197.77_{{\pm 0.53}}$ & $\mathbf{2.36}_{{\pm 0.29}} \times 10^{-1}$ & $214.29_{{\pm 0.81}}$ & $3.52_{{\pm 0.18}} \times 10^{0}$ & $\mathbf{178.48}_{{\pm 0.40}}$ \\
			40 & $9.15_{{\pm 0.54}} \times 10^{-1}$ & $485.86_{{\pm 2.54}}$ & $7.78_{{\pm 0.61}} \times 10^{-1}$ & $\mathbf{446.34}_{{\pm 1.29}}$ & $\mathbf{7.68}_{{\pm 0.63}} \times 10^{-1}$ & $495.41_{{\pm 0.81}}$ & $2.76_{{\pm 0.15}} \times 10^{0}$ & $461.46_{{\pm 1.00}}$ \\
			60 & $\mathbf{9.84}_{{\pm 0.48}} \times 10^{-1}$ & $595.80_{{\pm 5.03}}$ & $1.10_{{\pm 0.05}} \times 10^{0}$ & $\mathbf{548.54}_{{\pm 3.20}}$ & $1.03_{{\pm 0.05}} \times 10^{0}$ & $611.72_{{\pm 0.69}}$ & $2.29_{{\pm 0.10}} \times 10^{0}$ & $588.22_{{\pm 1.09}}$ \\
			80 & $\mathbf{1.05}_{{\pm 0.05}} \times 10^{0}$ & $774.57_{{\pm 2.50}}$ & $1.20_{{\pm 0.06}} \times 10^{0}$ & $\mathbf{719.63}_{{\pm 3.39}}$ & $1.13_{{\pm 0.06}} \times 10^{0}$ & $801.80_{{\pm 0.56}}$ & $2.05_{{\pm 0.09}} \times 10^{0}$ & $791.51_{{\pm 1.92}}$ \\
			100 & $1.21_{{\pm 0.06}} \times 10^{0}$ & $951.15_{{\pm 4.08}}$ & $1.37_{{\pm 0.06}} \times 10^{0}$ & $\mathbf{893.60}_{{\pm 3.20}}$ & $\mathbf{1.06}_{{\pm 0.05}} \times 10^{0}$ & $994.48_{{\pm 2.32}}$ & $1.89_{{\pm 0.08}} \times 10^{0}$ & $1001.43_{{\pm 4.69}}$ \\
			\bottomrule
		\end{tabular}
	}
\end{table*}

\begin{figure}[h]
	\centering
	\includegraphics[width=\textwidth]{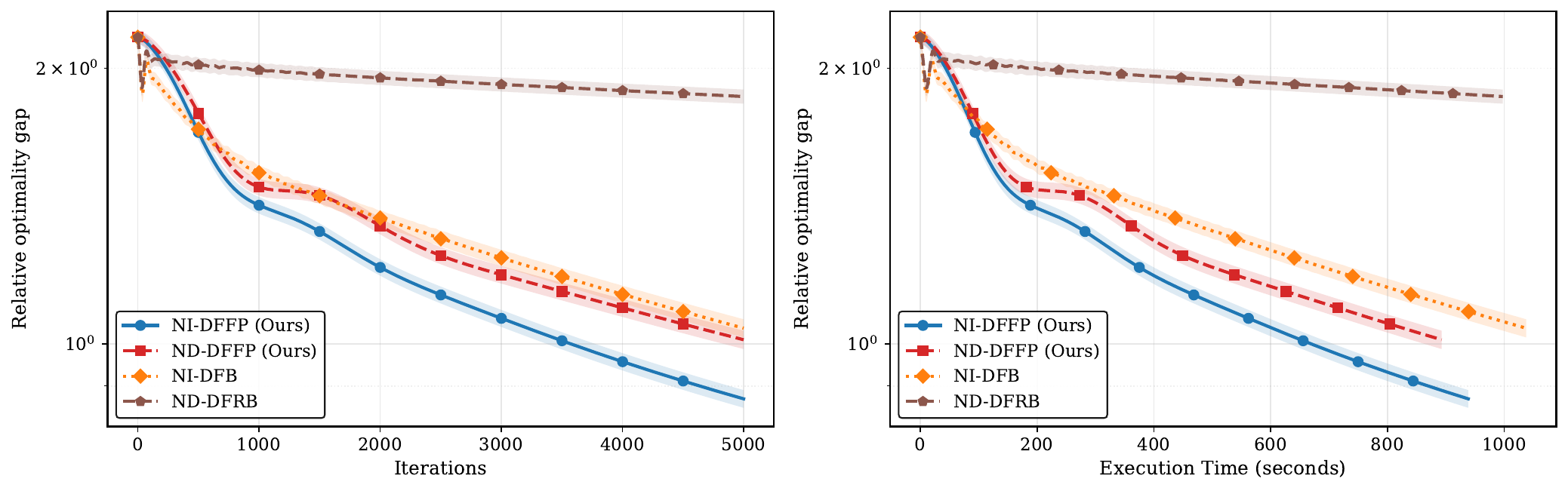}
	\caption{Comparison of the $4$ algorithms for solving \eqref{eq:VPP_playeri} over a grid network of $100$ agents}
	\label{fig:grid100_VPP}
\end{figure}

The numerical experiments for the VPP coordination problem reinforce the scalability and stability trends previously observed in the distributed matrix game example.
On the one hand, \texttt{NI-DFFP} exhibits increasingly competitive performance as the network size grows, particularly for sparse network topologies. 
While all algorithms perform comparably in small networks ($n=20$), the performance gap widens significantly as the dimension scales up. 
For $n \geq 40$, the baseline methods (\texttt{ND-DFRB} and \texttt{NI-DFB}) experience severe deceleration, whereas \texttt{NI-DFFP} maintains a rapid, consistent descent. 
\texttt{ND-DFRB} struggles to converge competitively across all tested scales in these topologies. 

On the other hand, the barbell graph introduces a severe communication bottleneck between two dense cliques. 
The benchmark \texttt{NI-DFB} outperforms \texttt{NI-DFFP} for $n=100$ in this particular configuration.
However, this descent comes at the cost of severe numerical instability; as the network scales to $n \geq 60$, \texttt{NI-DFB}'s convergence trajectory becomes highly erratic and oscillatory. 
In contrast, \texttt{NI-DFFP} continues to outperform \texttt{NI-DFB} at intermediate network sizes and delivers a more stable convergence path despite the extreme network bottleneck. 

\vspace{1ex}
\noindent\textbf{Acknowledgements.}
This work is  partially supported by the National Science Foundation (NSF), grant no. RTG-DMS-2134107 and the Office of Naval Research (ONR), grant No. N00014-23-1-2588 (2023-2026).

\vspace{1ex}
\noindent{\textbf{Conflicts of interest/competing interests.}}
The authors declare that they have no conflicts of interest or competing interests related to this work.

\vspace{1ex}
\noindent{\textbf{Data availability.}}
This paper uses only synthetic data.
The procedure for generating synthetic data is clearly described in the paper.

\appendix
\beforesec
\section{Appendix: Technical Lemmas and Preliminary Results}\label{apdx:sec:tech_lemmas}
\aftersec
This appendix recalls necessary technical lemmas and provides a proof of Lemma~\ref{le:gap_function}.

\beforesubsec
\subsection{Technical lemmas}\label{apdx:subsec:tech_lemmas}
\aftersubsec
First, we recall necessary technical lemmas used in the convergence analysis of this paper. 

\begin{lemma}[\cite{Bauschke2011}, Lemma 5.31]\label{le:A1}
	Let $\sets{u_k}$, $\sets{v_k}$, $\sets{\gamma_k}$, and $\sets{\varepsilon_k}$ be nonnegative sequences such that $\sum_{k=0}^{\infty}\gamma_k < +\infty$ and $\sum_{k=0}^{\infty}\varepsilon_k < +\infty$.
	In addition, for all $k \geq 0$, we assume that
	\begin{equation*}
		u_{k+1} \leq (1+\gamma_k) u_k - v_k + \varepsilon_k.
	\end{equation*}
	Then, we conclude that $\lim_{k \to \infty} u_k$ exists and $\sum_{k=0}^\infty v_k < +\infty$.
\end{lemma}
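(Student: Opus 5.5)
Since $\sum_k \varepsilon_k$ and $\sum_k \gamma_k$ converge, the natural plan is to absorb the multiplicative perturbation into a normalizing product and then apply the monotone convergence theorem. Set $\Pi_k := \prod_{j=0}^{k-1}(1+\gamma_j)$ with $\Pi_0 := 1$. Using $1+\gamma_j \le e^{\gamma_j}$, we get $1 \le \Pi_k \le \exp\big(\sum_{j=0}^\infty \gamma_j\big) =: \Pi_\infty < +\infty$. Moreover, $\Pi_k$ is nondecreasing and therefore converges to some finite limit $\Pi^\ast \in [1, \Pi_\infty]$.

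Dividing the recursion by $\Pi_{k+1} = (1+\gamma_k)\Pi_k$, we obtain
\begin{equation*}
\frac{u_{k+1}}{\Pi_{k+1}} \le \frac{u_k}{\Pi_k} - \frac{v_k}{\Pi_{k+1}} + \frac{\varepsilon_k}{\Pi_{k+1}}.
\end{equation*}
We then define
\begin{equation*}
a_k := \frac{u_k}{\Pi_k} - \sum_{j=0}^{k-1}\frac{\varepsilon_j}{\Pi_{j+1}}.
\end{equation*}
The inequality above gives $a_{k+1} \le a_k - v_k/\Pi_{k+1} \le a_k$, so $\{a_k\}$ is nonincreasing. It is also bounded below, because $u_k \ge 0$ and $\Pi_{j+1} \ge 1$ imply $a_k \ge -\sum_{j=0}^\infty \varepsilon_j > -\infty$. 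Consequently $\lim_k a_k$ exists and is finite. The partial sums $\sum_{j<k} \varepsilon_j/\Pi_{j+1}$ are nondecreasing and bounded, so they also converge. Hence $\lim_k u_k/\Pi_k$ exists, and multiplying by $\Pi_k \to \Pi^\ast$ shows that $\lim_k u_k$ exists.

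For the summability of $v_k$, we telescope $a_{k+1} \le a_k - v_k/\Pi_{k+1}$ from $k = 0$ to $K$. This gives $\sum_{k=0}^K v_k/\Pi_{k+1} \le a_0 - a_{K+1} \le u_0 + \sum_j \varepsilon_j$. Since $\Pi_{k+1} \le \Pi_\infty$, it follows that $\sum_k v_k \le \Pi_\infty\big(u_0 + \sum_j \varepsilon_j\big) < +\infty$.

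There is no real obstacle in this argument. The only point that needs care is the choice of normalization by $\Pi_k$: it turns the recursion into a monotone one. One must also check the two bounds $1 \le \Pi_k \le \Pi_\infty$ explicitly, since both the lower bound on $a_k$ and the final summability estimate depend on them.
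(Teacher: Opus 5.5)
Your proof is correct and complete. The paper gives no proof of its own here. The lemma is quoted from \cite{Bauschke2011} and used as a black box, so there is no in-paper argument to match.

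The points you flag all check out:
\begin{itemize}
\item The bound $1\le\Pi_k\le\Pi_\infty$ follows from $\gamma_j\ge 0$ and $1+\gamma_j\le e^{\gamma_j}$.
\item The inequality $a_{k+1}\le a_k-v_k/\Pi_{k+1}$ holds exactly after subtracting the partial sum.
\item The lower bound $a_k\ge-\sum_j\varepsilon_j$ uses $u_k\ge 0$ and $\Pi_{j+1}\ge 1$.
\item Since $a_0=u_0$, the telescoped estimate gives $\sum_k v_k\le\Pi_\infty\big(u_0+\sum_j\varepsilon_j\big)$.
\end{itemize}
The limit of $u_k$ you obtain is finite, which is the intended meaning of ``exists''.

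For comparison, the other standard argument avoids the normalization. Drop $-v_k$ and show by induction that $u_k\le\Pi_\infty\big(u_0+\sum_j\varepsilon_j\big)=:\rho$ for all $k$. Then read the recursion as $u_{k+1}\le u_k-v_k+(\rho\gamma_k+\varepsilon_k)$. This is the case $\gamma_k\equiv 0$ with a new summable error term. The sequence $u_k-\sum_{j<k}(\rho\gamma_j+\varepsilon_j)$ is then nonincreasing and bounded below, which yields both conclusions at once.

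That version makes an explicit a priori bound on $\{u_k\}$ visible and never has to pass back through $\Pi_k\to\Pi^\ast$. Yours handles boundedness, convergence and summability with a single monotone sequence. Either one is an adequate self-contained replacement for the citation.
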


\begin{lemma}[\cite{TranDinh2025a}, Lemma 29]\label{le:A2}
Let $\sets{\xi_k}$ be a nonnegative sequence and $s \in \R$ such that $\lim_{k \to \infty} k^{s+1} \xi_k$ exists and $\sum_{k=0}^{\infty} k^s \xi_k < +\infty$.
Then, we have $\lim_{k \to \infty} k^{s+1}\xi_k = 0$.
\end{lemma}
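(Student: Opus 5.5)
The plan is to argue by contradiction, using the divergence of the harmonic series. Set $c := \lim_{k\to\infty} k^{s+1}\xi_k$, which exists by hypothesis and satisfies $c \geq 0$ because $\xi_k \geq 0$. Suppose, for contradiction, that $c > 0$.

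Then there is an index $k_0 \geq 1$ such that $k^{s+1}\xi_k \geq \frac{c}{2}$ for all $k \geq k_0$. Dividing by $k > 0$ gives
\begin{equation*}
k^s \xi_k \geq \frac{c}{2k} \quad \text{for all } k \geq k_0 .
\end{equation*}
Summing over $k \geq k_0$, the right-hand side is a tail of the harmonic series and therefore diverges. Consequently $\sum_{k=0}^\infty k^s\xi_k = +\infty$, which contradicts the summability hypothesis. Hence $c = 0$, which is exactly the claim $\lim_{k\to\infty} k^{s+1}\xi_k = 0$.

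A few small points need care, but none is a genuine obstacle.
\begin{itemize}
\item The $k=0$ term $0^s\xi_0$ may be undefined when $s \leq 0$. Only the tail $k \geq 1$ matters for both the hypothesis and the conclusion, so I will read the series starting from $k = 1$, or equivalently adopt any convention for that single term.
\item The argument never uses the sign of $s$; it only uses $k > 0$.
\item The existence of the limit is essential. Without it, $\liminf$ could be $0$ while $\limsup$ is positive, and the contradiction would not go through. In the proof I will point out exactly where this existence is used, namely in choosing $k_0$.
\end{itemize}
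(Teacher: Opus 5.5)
Your argument is correct, and your handling of the $k=0$ term is fine. The paper gives no proof of its own here, since the lemma is quoted from \cite{TranDinh2025a}, and your harmonic-series contradiction (if $c>0$ then $k^s\xi_k \geq \frac{c}{2k}$ eventually, contradicting summability) is the standard proof; if one also allowed $c=+\infty$, the same reasoning with $k^{s+1}\xi_k \geq 1$ eventually covers that case as well.
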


\begin{lemma}[see \cite{Golub1996}, Sect. 8.1]\label{le:norm_equivalence}
	Let $P \in \R^{p \times p}$ be a symmetric positive definite matrix.
	Then 
	\begin{equation}\label{eq:norm_equivalence}
		\lambda_{\min}(P) \norms{P^{-1}z}_P^2 \leq \norms{z}_2^2 \leq \lambda_{\max}(P) \norms{P^{-1}z}_P^2, \quad \textrm{for any $z \in \R^p$},
	\end{equation}
	where $\lambda_{\min}(P)$ and $\lambda_{\max}(P)$ are the smallest and largest eigenvalues of $P$, respectively.
\end{lemma}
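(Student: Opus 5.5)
This is the standard weighted-norm equivalence for a symmetric positive definite $P$. The plan is to reduce both inequalities to the Rayleigh-quotient bounds for $P^{-1}$.

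\textbf{Step 1 (rewrite the weighted norm).} For any $z \in \R^p$, set $y := P^{-1}z$. By the definition $\norms{x}_P^2 = \iprods{Px, x}$, we get
\begin{equation*}
\norms{P^{-1}z}_P^2 = \iprods{PP^{-1}z, P^{-1}z} = \iprods{z, P^{-1}z}.
\end{equation*}
Hence the claim is equivalent to
\begin{equation*}
\lambda_{\min}(P)\,\iprods{z, P^{-1}z} \leq \norms{z}_2^2 \leq \lambda_{\max}(P)\,\iprods{z, P^{-1}z}.
\end{equation*}

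\textbf{Step 2 (spectral bounds for $P^{-1}$).} Since $P$ is symmetric positive definite, diagonalize it as $P = Q\,\diag{\lambda_1,\dots,\lambda_p}\,Q^\top$ with $Q$ orthogonal and all $\lambda_j > 0$. Then $P^{-1} = Q\,\diag{\lambda_1^{-1},\dots,\lambda_p^{-1}}\,Q^\top$. Writing $c := Q^\top z$, which satisfies $\norms{c} = \norms{z}$, we have
\begin{equation*}
\iprods{z, P^{-1}z} = \sum_{j=1}^p \lambda_j^{-1} c_j^2.
\end{equation*}
Each weight satisfies $\lambda_{\max}(P)^{-1} \leq \lambda_j^{-1} \leq \lambda_{\min}(P)^{-1}$. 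Summing therefore gives
\begin{equation*}
\frac{\norms{z}^2}{\lambda_{\max}(P)} \leq \iprods{z, P^{-1}z} \leq \frac{\norms{z}^2}{\lambda_{\min}(P)}.
\end{equation*}

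\textbf{Step 3 (conclude).} Multiply the right inequality of Step 2 by $\lambda_{\min}(P) > 0$ to get the lower bound in \eqref{eq:norm_equivalence}. Multiply the left inequality by $\lambda_{\max}(P)$ to get the upper bound.

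There is no real obstacle. The only point needing care is the identity in Step 1, which converts the $P$-weighted norm of $P^{-1}z$ into the quadratic form of $P^{-1}$. It relies on the symmetry of $P$, which is also what makes the orthogonal diagonalization in Step 2 available.
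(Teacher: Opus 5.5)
Your proof is correct and follows essentially the same route as the paper. The paper only cites Golub--Van Loan for this lemma, but in the proof of Theorem~\ref{th:DFKM_convergence}(ii) it argues exactly as you do, writing $\norms{P^{-1}w}_P^2 = \iprods{w, P^{-1}w}$ and bounding it by the extreme eigenvalues of $P^{-1}$. One small correction to your closing remark: the Step~1 identity needs only $PP^{-1} = \Id$, not symmetry; symmetry is what licenses the orthogonal diagonalization in Step~2.
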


\begin{lemma}[\cite{Bauschke2011}, Corollary 21.19]\label{le:monotone_local_bounded}
	Let $T: \R^p \rightrightarrows \R^p$ be monotone and $\Xc$ be a compact subset in $\mathrm{int}(\dom{T})$. 
	Then, $T(\Xc)$ is bounded.
\end{lemma}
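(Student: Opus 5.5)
The plan is the classical two-stage argument: first show that a monotone operator is locally bounded at every interior point of its domain, then pass from local to uniform boundedness on $\Xc$ by compactness. The first stage only needs finitely many fixed points of $\gra{T}$ together with convexity, so no maximality or Baire-category argument is required.

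\textbf{Step 1 (a simplex around each interior point).} Fix $x_0 \in \Xc \subset \mathrm{int}(\dom{T})$. I choose $p+1$ affinely independent points $y_0,\dots,y_p \in \dom{T}$ whose simplex $S := \mathrm{conv}\{y_0,\dots,y_p\}$ contains a ball $B_{2\varepsilon}(x_0)$; for instance, take a small regular simplex centred at $x_0$ inside a ball contained in $\dom{T}$. For each $j$, fix one element $v_j \in Ty_j$. For every $x \in B_{\varepsilon}(x_0)$ we then have $B_{\varepsilon}(x) \subset S$.

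\textbf{Step 2 (local bound from monotonicity).} Let $x \in B_{\varepsilon}(x_0)$ and $u \in Tx$. Monotonicity gives $\iprods{u - v_j, x - y_j} \geq 0$, hence
\begin{equation*}
\iprods{u, y_j - x} \leq \iprods{v_j, y_j - x} \leq C := \max_j \norms{v_j}\big(\norms{y_j - x_0} + \varepsilon\big).
\end{equation*}
Take any unit vector $e$. Since $x + \varepsilon e \in S$, we can write $x + \varepsilon e = \sum_j \lambda_j y_j$ with $\lambda_j \geq 0$ and $\sum_j \lambda_j = 1$. Therefore
\begin{equation*}
\varepsilon \iprods{u, e} = \sum_j \lambda_j \iprods{u, y_j - x} \leq C .
\end{equation*}
Choosing $e = u/\norms{u}$ yields $\norms{u} \leq C/\varepsilon$. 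Thus $T\big(B_{\varepsilon}(x_0)\big)$ is bounded, with radius $\varepsilon = \varepsilon(x_0)$ and bound $C/\varepsilon = C(x_0)/\varepsilon(x_0)$.

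\textbf{Step 3 (compactness).} The open balls $B_{\varepsilon(x_0)}(x_0)$, $x_0 \in \Xc$, cover $\Xc$. Extract a finite subcover and take the maximum of the finitely many bounds; this bounds $T(\Xc)$.

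The only delicate point is Step 1–2. The naive argument, which tests monotonicity against all points of a ball, fails because it would need a uniform bound on $Ty$ over that ball, and this is exactly what we are trying to prove. Restricting to the finitely many vertices $y_j$ avoids this: each vertex contributes a single fixed $v_j$, and convexity of $S$ transfers the vertex inequalities to every direction $e$.
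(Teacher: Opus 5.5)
Your proof is correct and complete. The paper gives no proof of its own here; it imports the result from \cite{Bauschke2011}, where it is stated in a general real Hilbert space. In that setting the local-boundedness step at points of $\mathrm{int}(\dom{T})$ cannot come from finitely many graph points, because the convex hull of finitely many points has empty interior in infinite dimensions. It therefore rests on Baire-category-type tools, for example continuity of a lower semicontinuous convex function built from $\gra{T}$ on the interior of its domain. The passage to compact sets is the same finite-subcover argument as your Step 3.

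Your simplex argument gives up that generality in exchange for an elementary, self-contained proof. It uses only the $p+1$ fixed pairs $(y_j,v_j)\in\gra{T}$, monotonicity, and convexity, which is all the paper needs since it works in $\R^p$. It also gives a little more than stated. Step 2 only uses that the vertices $y_j$ lie in $\dom{T}$, not the whole simplex. So the same computation with any finite set of points of $\dom{T}$ (a polytope instead of a simplex) yields local boundedness on $\mathrm{int}(\mathrm{conv}(\dom{T}))$, which is the classical finite-dimensional form of this result.
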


\begin{lemma}[\cite{Rockafellar1997}, Corollary 12.44]\label{le:sum_maximal_monotone}
	Let $T = T_1 + T_2$ for $T_i: \R^p \rightrightarrows \R^p$ maximally monotone for $i=1,2$.
	If $\ri{\dom{T_1}} \cap \ri{\dom{T_2}} \ne \emptyset$, then $T$ is maximally monotone.
\end{lemma}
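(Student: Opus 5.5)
The plan is to use Fitzpatrick functions and convex duality rather than a Yosida-regularization argument. The reason is that $\dom{T_1}$ and $\dom{T_2}$ may have different affine hulls, and then the bounded-perturbation estimates behind the Yosida route are awkward. For a maximally monotone $S$, the Fitzpatrick function is
\begin{equation*}
F_S(x,u) := \sup_{(y,v)\in\gra{S}}\big\{\iprods{x,v} + \iprods{y,u} - \iprods{y,v}\big\}.
\end{equation*}
It is proper, convex, and lower semicontinuous. It satisfies $F_S(x,u)\geq\iprods{x,u}$ everywhere, with equality exactly on $\gra{S}$. We will use the Burachik--Svaiter/Simons characterization: if a proper convex lsc function $h$ on $\R^p\times\R^p$ satisfies $h\geq\iprods{\cdot,\cdot}$ and $h^{*}(u,x)\geq\iprods{x,u}$, then $\{(x,u): h(x,u)=\iprods{x,u}\}$ is the graph of a maximally monotone operator.

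\textbf{Step 1 (candidate representative).} Define
\begin{equation*}
h(x,u) := \inf_{u_1+u_2=u}\big\{F_{T_1}(x,u_1) + F_{T_2}(x,u_2)\big\},
\end{equation*}
the partial infimal convolution in the second variable. Adding the inequalities $F_{T_i}(x,u_i)\geq\iprods{x,u_i}$ gives $h\geq\iprods{\cdot,\cdot}$. Whenever $u_i\in T_ix$, we have $h(x,u_1+u_2)\leq\iprods{x,u_1+u_2}$. Hence $\gra{T_1+T_2}$ lies in the equality set of $h$, so once that set is shown to be maximally monotone, maximality of $T_1+T_2$ follows.

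\textbf{Step 2 (conjugate and the qualification condition).} The key computation is
\begin{equation*}
h^{*}(v,y) = \min_{v_1+v_2=v}\big\{F_{T_1}^{*}(v_1,y) + F_{T_2}^{*}(v_2,y)\big\},
\end{equation*}
with the minimum attained. This is Fenchel--Rockafellar duality for a sum composed with the linear map $(x,u_1,u_2)\mapsto(x,u_1+u_2)$. In finite dimensions its constraint qualification reduces to $\ri{\mathrm{P}_x\dom{F_{T_1}}}\cap\ri{\mathrm{P}_x\dom{F_{T_2}}}\neq\emptyset$, where $\mathrm{P}_x$ denotes projection onto the $x$-variable. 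To verify this, use the standard sandwich $\dom{T_i}\subseteq\mathrm{P}_x\dom{F_{T_i}}\subseteq\overline{\mathrm{conv}}\,\dom{T_i}$. For maximally monotone $T_i$ in $\R^p$, the set $\overline{\dom{T_i}}$ is convex and $\ri{\dom{T_i}}=\ri{\overline{\dom{T_i}}}$. Therefore both inclusions have the same relative interior, namely $\ri{\dom{T_i}}$. The assumption $\ri{\dom{T_1}}\cap\ri{\dom{T_2}}\neq\emptyset$ is then exactly the needed qualification.

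\textbf{Step 3 (conclusion).} The standard facts $F_{T_i}^{*}(v_i,y)\geq\iprods{y,v_i}$ (recall $F_S^*$ restricted to the transposed graph recovers $\iprods{\cdot,\cdot}$) combine with the attained minimum from Step 2 to give $h^{*}(v,y)\geq\iprods{y,v}$. The characterization quoted above then makes the equality set of $h$ the graph of a maximally monotone operator $S\supseteq T_1+T_2$. Finally, take $h(x,u)=\iprods{x,u}$. Exactness of the infimal convolution, which holds under the same qualification, gives $u=u_1+u_2$ with $F_{T_1}(x,u_1)+F_{T_2}(x,u_2)=\iprods{x,u_1}+\iprods{x,u_2}$. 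Since each term dominates its counterpart, each equality holds separately, so $u_i\in T_ix$ and $S=T_1+T_2$.

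The main obstacle is Step 2. One must justify carefully the identification of relative interiors between the domain of $T_i$ and the projection of $\dom{F_{T_i}}$. This relies on the finite-dimensional fact that maximally monotone operators have nearly convex domains. One must also get the exactness of both the infimal convolution and its conjugate from the ri-condition. The rest is bookkeeping with the properties of Fitzpatrick functions.
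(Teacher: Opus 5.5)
The paper gives no proof of this lemma; it is quoted from Rockafellar--Wets, so your Fitzpatrick-function argument stands on its own. Steps 1 and 2 are sound. The attained-minimum formula for $h^*$ is Fenchel--Rockafellar duality for $(x,u_1,u_2)\mapsto F_{T_1}(x,u_1)+F_{T_2}(x,u_2)$ pushed forward by $(x,u_1,u_2)\mapsto(x,u_1+u_2)$. Near-convexity of $\dom{T_i}$ then correctly reduces its qualification to $\ri{\dom{T_1}}\cap\ri{\dom{T_2}}\neq\emptyset$.

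\textbf{The gap is in Step 3: you never show $h$ is lower semicontinuous.} The characterization you invoke requires it, and a partial infimal convolution of closed convex functions need not be closed. The hypothesis matters. Take $Q=[0,\infty)\times(-\infty,0]$ and $h=\delta_{Q\setminus\{0\}}$ on $\R\times\R$:
\begin{itemize}
\item $h$ is convex, since $0$ is an extreme point of $Q$;
\item $h(x,u)\ge xu$ and $h^*(v,y)=\delta_Q^*(v,y)\ge yv$;
\item yet $\{h(x,u)=xu\}$ is $\gra{\Nc_{[0,\infty)}}$ with the origin removed, which is not maximal.
\end{itemize}
Your appeal to exactness of the primal infimum ``under the same qualification'' has the same problem. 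That is the dual Fenchel--Rockafellar instance, not the one from Step 2. To get closedness and attainment you must show $h=(h^*)^*$ using the Step 2 formula, with the minimum attained. The qualification then involves the first-argument projections of $\dom{\sigma_{T_i}}$, where $\sigma_{T_i}(x,u):=F_{T_i}^*(u,x)$. This holds in $\R^p$ once you repeat your sandwich argument with $\gra{T_i}\subseteq\dom{\sigma_{T_i}}\subseteq\overline{\mathrm{conv}}\,\gra{T_i}$, but you have not supplied it.

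\textbf{A cleaner repair avoids closedness of $h$ altogether.} Apply the characterization to
$g(x,u):=h^*(u,x)=\min_{u_1+u_2=u}\{\sigma_{T_1}(x,u_1)+\sigma_{T_2}(x,u_2)\}$, which is automatically closed and convex.
\begin{itemize}
\item Step 2 gives $g(x,u)\ge\iprods{x,u}$.
\item $g^*(v,y)=h^{**}(y,v)=(\mathrm{cl}\,h)(y,v)\ge\iprods{y,v}$, because $h\ge\iprods{\cdot,\cdot}$ and the duality product is continuous.
\item $\{g=\iprods{\cdot,\cdot}\}=\gra{T_1+T_2}$. For $\supseteq$, use $\sigma_{T_i}(x,u_i)\le\iprods{x,u_i}$ when $u_i\in T_ix$. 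For $\subseteq$, use the attained minimum together with $\sigma_{T_i}\ge F_{T_i}\ge\iprods{\cdot,\cdot}$ and $\{F_{T_i}=\iprods{\cdot,\cdot}\}=\gra{T_i}$.
\end{itemize}
The chain $\sigma_{T_i}\ge F_{T_i}\ge\iprods{\cdot,\cdot}$ follows because $F_{T_i}$ is closed, convex and equal to $\iprods{\cdot,\cdot}$ on $\gra{T_i}$. It is also the correct justification of your inequality $F_{T_i}^*(v_i,y)\ge\iprods{y,v_i}$; equality on the transposed graph alone does not give it. This repair uses only the duality computation you have already done.
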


\begin{lemma}[see \cite{tam2025decentralised}]\label{le:DFKM_nihs1}
	Let $S, T: \R^p \to \R^p$ be self-adjoint operators and $\kappa \in \R_{++}$ be given.
	If $\norms{TS^2T} < \kappa$, then $\kappa\Id - ST^2S \succ 0$.
\end{lemma}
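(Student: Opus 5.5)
We will show that $\kappa\Id - ST^2S$ is positive definite by bounding the Rayleigh quotient of $ST^2S$ from above by $\norms{TS^2T}$. Since $S$ and $T$ are self-adjoint, $ST^2S = (TS)^{*}(TS)$ and $TS^2T = (ST)^{*}(ST) = (TS)(TS)^{*}$. Hence both operators are self-adjoint and positive semidefinite.

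The first key step is the standard identity $\norms{X^{*}X} = \norms{X}^2 = \norms{XX^{*}}$ for any linear map $X$ on $\R^p$. We apply it with $X := TS$. This identity follows from $\norms{X^{*}X} = \sup_{\norms{u}=1}\iprods{X^{*}Xu,u} = \sup_{\norms{u}=1}\norms{Xu}^2 = \norms{X}^2$, together with $\norms{X} = \norms{X^{*}}$. Consequently $\norms{ST^2S} = \norms{TS}^2 = \norms{(TS)^{*}}^2 = \norms{ST}^2 = \norms{TS^2T}$.

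For the second step, take any $u \neq 0$. Then
\[
\iprods{(\kappa\Id - ST^2S)u, u} = \kappa\norms{u}^2 - \norms{TSu}^2 \geq \big(\kappa - \norms{TS}^2\big)\norms{u}^2 = \big(\kappa - \norms{TS^2T}\big)\norms{u}^2 > 0,
\]
where the last inequality uses the hypothesis $\norms{TS^2T} < \kappa$. This gives $\kappa\Id - ST^2S \succ 0$.

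No step is a genuine obstacle. The only point needing care is the transfer between $ST^2S$ and $TS^2T$, that is, the equality $\norms{X^{*}X} = \norms{XX^{*}}$. This can alternatively be seen from the fact that $X^{*}X$ and $XX^{*}$ share the same nonzero eigenvalues.
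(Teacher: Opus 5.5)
Your proof is correct and complete. Writing $ST^2S=(TS)^{*}(TS)$ and $TS^2T=(TS)(TS)^{*}$, the identity $\norms{X^{*}X}=\norms{X}^2=\norms{XX^{*}}$ gives $\iprods{ST^2Su,u}=\norms{TSu}^2\le\norms{TS^2T}\norms{u}^2<\kappa\norms{u}^2$ for all $u\neq 0$.

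The paper does not prove this lemma; it only cites \cite{tam2025decentralised}. Your argument is therefore a self-contained version of the standard proof. As a side remark, $ST^2S\succeq 0$ and $\norms{ST^2S}=\norms{TS^2T}$, so the hypothesis $\norms{TS^2T}<\kappa$ is in fact equivalent to the conclusion.
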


\begin{lemma}[see \cite{Golub1996}]
	Let $\mbf{L} \in \R^{m \times n}$ be a matrix with $0 < \mathrm{rank}(\mbf{L}) \leq \min\sets{m,n}$.
	Then, for any $\mbf{s} \in \mathrm{Null}(\mbf{L})^{\perp}$, we have 
	\begin{equation*}
		\norms{\mbf{L}\mbf{s}} \geq \sigma_{\min}^+(\mbf{L})\norms{\mbf{s}}, 
	\end{equation*}		
	where $\sigma_{\min}^+(\mbf{L})$ is the smallest positive singular value of $\mbf{L}$.
\end{lemma}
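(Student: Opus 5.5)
The plan is to prove the bound through a singular value decomposition of $\mbf{L}$, which reduces the inequality to an elementary comparison of weighted sums of squares. Let $\rho := \mathrm{rank}(\mbf{L}) > 0$ and write the compact SVD $\mbf{L} = \sum_{j=1}^{\rho} \sigma_j \mbf{p}_j \mbf{q}_j^\top$. Here $\sigma_1 \geq \cdots \geq \sigma_\rho > 0$ are the positive singular values, so $\sigma_\rho = \sigma_{\min}^+(\mbf{L})$. The vectors $\{\mbf{p}_j\}_{j=1}^{\rho} \subset \R^m$ and $\{\mbf{q}_j\}_{j=1}^{\rho} \subset \R^n$ are orthonormal families. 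I would extend $\{\mbf{q}_j\}_{j=1}^{\rho}$ to an orthonormal basis $\{\mbf{q}_j\}_{j=1}^{n}$ of $\R^n$.

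The first key step is to identify the two subspaces. We have $\mathrm{Null}(\mbf{L}) = \mathrm{span}\{\mbf{q}_{\rho+1}, \cdots, \mbf{q}_n\}$, and therefore $\mathrm{Null}(\mbf{L})^{\perp} = \mathrm{span}\{\mbf{q}_1, \cdots, \mbf{q}_\rho\} = \range{\mbf{L}^\top}$.

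\begin{itemize}
\item For the inclusion $\supseteq$: each $\mbf{q}_j$ with $j > \rho$ is orthogonal to every $\mbf{q}_i$ with $i \leq \rho$, hence $\mbf{L}\mbf{q}_j = 0$.
\item For the inclusion $\subseteq$: if $\mbf{L}\mbf{s} = 0$, then $\sum_{j\le\rho} \sigma_j \iprods{\mbf{q}_j, \mbf{s}}\mbf{p}_j = 0$. Since the $\mbf{p}_j$ are linearly independent and $\sigma_j > 0$, this forces $\iprods{\mbf{q}_j,\mbf{s}} = 0$ for all $j \leq \rho$.
\end{itemize}

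The next step is the computation. Take $\mbf{s} \in \mathrm{Null}(\mbf{L})^\perp$ and expand it as $\mbf{s} = \sum_{j=1}^{\rho} c_j \mbf{q}_j$ with $c_j := \iprods{\mbf{q}_j, \mbf{s}}$. Then $\mbf{L}\mbf{s} = \sum_{j=1}^{\rho} \sigma_j c_j \mbf{p}_j$. Orthonormality of the $\mbf{p}_j$ and of the $\mbf{q}_j$ (Parseval) gives
\begin{equation*}
\norms{\mbf{L}\mbf{s}}^2 = \sum_{j=1}^{\rho} \sigma_j^2 c_j^2 \geq \sigma_\rho^2 \sum_{j=1}^{\rho} c_j^2 = \sigma_{\min}^+(\mbf{L})^2 \norms{\mbf{s}}^2.
\end{equation*}
Taking square roots yields the claim.

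An equivalent route is a Rayleigh-quotient argument for $\mbf{L}^\top\mbf{L}$ restricted to its invariant subspace $\mathrm{Null}(\mbf{L})^\perp$, on which the eigenvalues are exactly $\sigma_1^2, \cdots, \sigma_\rho^2$. The only genuinely nontrivial point is the identification of $\mathrm{Null}(\mbf{L})^\perp$ with the span of the right singular vectors that carry positive singular values. Once that is in place, the inequality is immediate.
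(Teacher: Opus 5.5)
Your SVD argument is correct and matches the standard proof behind the paper's citation of Golub and Van Loan; the paper gives no proof of its own. When the paper applies the lemma to $\mbf{K}$ acting on $\mbf{s}\in\R^{n\times p}$ with the Frobenius norm, your bound carries over column by column, since $\mathrm{Null}(\mbf{K})^{\perp}$ in that space consists exactly of the matrices whose columns lie in $\mathrm{Null}(\mbf{K})^{\perp}\subset\R^{n}$.
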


\subsection{Proof of Lemma~\ref{le:gap_function} --- Properties of the gap function}\label{apdx:le:gap_function}
	\noindent$\mathrm{(i)}$~
	First, due to the construction of $\Bc$, the supremum in the definition of $\mathrm{Gap}_{\Bc}$ is taken over a nonempty set.
	Second, let $v \in \dom{\Phi}$ and $\xi_i^v \in T_iv$ arbitrarily, $i = 1, \cdots, n$.
	Then, for any $u \in \Bc$ and $\xi_i \in T_iu$, using the monotonicity of $\Phi_i$ and Cauchy-Schwarz inequality, we have
	\begin{equation*}
	\begin{array}{lcl}
		\sum_{i=1}^n \iprods{G_iu + \xi_i, v - u} \leq \iprods{\sum_{i=1}^n (G_iv + \xi_i^v), v - u} \leq \norms{\sum_{i=1}^n (G_iv + \xi_i^v)}\norms{v-u}.
	\end{array}
	\end{equation*}
	Taking supremum over $u \in \Bc$ and $\xi_i \in T_iu$ both sides of the last inequality and then 
	\begin{equation*}
	\begin{array}{lcl}
		\mathrm{Gap}_{\Bc}(v) \leq \norms{\sum_{i=1}^n (G_iv + \xi_i^v)} \sup_{u \in \Bc}\norms{v-u} < +\infty,
	\end{array}
	\end{equation*}
	where the last inequality holds due to the boundedness of $\Bc$.
	Therefore, $\mathrm{Gap}_{\Bc}$ is well-defined.
	
	Next, under Assumption~\ref{as:restricted_dual_gap} and Assumption~\ref{as:boundary_regularity}$\mathrm{(i)}$, we observe that $\zer{\Phi} \cap \Bc \ne \emptyset$.
	Take $u^{\dagger} \in \zer{\Phi} \cap \Bc$ arbitrarily.
	Then, there exist $\xi_i^{\dagger} \in T_i u^{\dagger}$, $i = 1, \cdots, n$, such that $\sum_{i=1}^n (G_iu^{\dagger} + \xi_i^{\dagger}) = 0$.
	Hence, for all $v \in \dom{\Phi}$, we can easily show from \eqref{eq:res_gap_func} that
	\begin{equation*}
		\arraycolsep=0.2em
		\begin{array}{lcl}
			\mathrm{Gap}_{\Bc}(v)  \geq&\sum_{i=1}^n \iprods{G_i u^{\dagger} + \xi_i^{\dagger}, v - u^{\dagger}} 
			= \iprods{\sum_{i=1}^n (G_i u^{\dagger} + \xi_i^{\dagger}), v - u^{\dagger}} 
			= 0. 
		\end{array}
	\end{equation*}
	\vspace{1ex}
	\noindent$\mathrm{(ii)}$~
	Suppose that $u^{\star} \in \zer{\Phi}$, then there exist $\xi_i^{\star} \in T_iu^{\star} $ such that $\sum_{i=1}^n (G_iu^{\star} + \xi_i^{\star}) = 0$.
	Let $u \in \Bc$ and $\xi_i \in T_i u$ arbitrarily, $i = 1, \cdots, n$.
	By the monotonicity of $G_i + T_i$, we can show that
	\begin{equation*}
		\arraycolsep=0.2em
		\begin{array}{lcl}
			\sum_{i=1}^n \iprods{G_i u + \xi_i, u^{\star} - u} \leq \sum_{i=1}^n \iprods{ G_iu^{\star} + \xi_i^{\star}, u^{\star} - u} =  \iprods{\sum_{i=1}^n (G_iu^{\star} + \xi_i^{\star}), u^{\star} - u} = 0.
		\end{array}
	\end{equation*}
	Taking supremum of both sides over all $u \in {\Bc}$ and $\xi_i \in T_iu$, we obtain $\mathrm{Gap}_{\Bc}(u^{\star}) \leq 0$.
	However, since $\mathrm{Gap}_{\Bc}(u^{\star}) \geq 0$ from part (i), we must have $\mathrm{Gap}_{\Bc}(u^{\star}) = 0$.
	
	\vspace{1ex}
	\noindent$(\mathrm{iii})$~Suppose that $u^{\star} \in {\Bc}$ and $\mathrm{Gap}_{\Bc}(u^{\star}) = 0$.
	By the definition of the restricted gap function, for every $u \in {\Bc}$ and $\xi_i \in T_i u$, we have $\sum_{i=1}^n \iprods{G_i u + \xi_i, u^{\star} - u} \leq  0$, which is equivalent to the following Minty's variational inequality:
	\begin{equation}\label{eq:le_gap_function_proof1}
		\arraycolsep=0.2em
		\begin{array}{lcl}
			\sum_{i=1}^n \iprods{G_i u + \xi_i, u - u^{\star}} \geq 0 \qquad \forall u \in \Bc, \quad \xi_i \in T_i u.
		\end{array}
	\end{equation}
	Next, since $\dom{\Phi_i} = \dom{T_i}$, under either Assumption~\ref{as:restricted_dual_gap} or Assumption~\ref{as:boundary_regularity}$\mathrm{(i)}$, we always have $\bigcap_{i=1}^n \ri{\dom{\Phi_i}} \ne \emptyset$.
	Since $\Phi_i$ is maximally monotone for all $i = 1, \cdots, n$, applying Lemma~\ref{le:sum_maximal_monotone}, we can show that $\Phi := \sum_{i=1}^n \Phi_i$ is also maximally monotone.
	Moreover, $\dom{\Phi} = \bigcap_{i=1}^n \dom{T_i}$.
	Now, we consider two cases as follows.
	
	\vspace{1ex}
	\noindent\textbf{\textit{Case 1. Interior-solution case.}}
	Let $\Nc_{\Bc}$ be the normal cone of $\Bc$, then since $\Bc$ is closed and convex set, $\Nc_{\Bc}$ is maximally monotone.
	From the construction of $\Bc$ under Assumption~\ref{as:restricted_dual_gap}, we have 
	\begin{equation*}
	\begin{array}{lcl}
		\emptyset \ne \ri{\Bc} \subseteq \Bc \subset \bigcap_{i=1}^n \mathrm{int}(\dom{T_i}) = \mathrm{int}\big(\bigcap_{i=1}^n \dom{T_i}\big) = \mathrm{int}\big(\dom{\Phi}\big) \subseteq \ri{\dom{\Phi}},
	\end{array}
	\end{equation*}
	Thus, applying Lemma~\ref{le:sum_maximal_monotone}, the operator $\Psi := \Phi + \Nc_B$ is maximally monotone.
	
	Now, let $u \in \Bc$, $\xi_i \in T_i u$, and $w_u \in \Nc_{\Bc}(u)$ arbitrarily.
	Using the definition of normal cone and $u^{\star} \in \Bc$, we have $\iprods{w_u, u - u^{\star}} \geq 0$.
	Adding this inequality to \eqref{eq:le_gap_function_proof1} yields
	\begin{equation*}
	\arraycolsep=0.2em
	\begin{array}{lcl}
		\iprod{\sum_{i=1}^n (G_i u + \xi_i) + w_u - 0, u - u^{\star}} \geq 0.
	\end{array}
	\end{equation*}
	Since this holds for every $u \in \Bc$, $\xi_i \in T_iu$, and $w_u \in \Nc_{\Bc}(u)$, by the maximality of $\Phi + \Nc_{\Bc}$, we can conclude that the point $(u^{\star}, 0) \in \gra{\Phi + \Nc_{\Bc}}$, or equivalently, $0 \in \Phi u^{\star} + \Nc_{\Bc}(u^{\star})$.	
	Thus, there exists $\xi_i^{\star} \in T_i u^{\star}$ such that $-\sum_{i=1}^n (G_i u^{\star} + \xi_i^{\star}) \in \Nc_B(u^{\star})$, leading to
	\begin{equation}\label{eq:le_gap_function_proof2}
		\arraycolsep=0.2em
		\begin{array}{lcl}
			\sum_{i=1}^n \iprods{G_i u^{\star} + \xi_i^{\star}, u - u^{\star}} \geq 0 \quad \text{for all}\  u \in \Bc.
		\end{array}
	\end{equation}    
	Particularly, considering $u^{\dagger} \in \Bc$, the last inequality yields
	\begin{equation*}
	\arraycolsep=0.2em
	\begin{array}{lcl}
		\sum_{i=1}^n \iprods{G_i u^{\star} + \xi_i^{\star}, u^{\star} - u^{\dagger}} \leq 0.
	\end{array}
	\end{equation*}   
	However, since $\sum_{i=1}^n (G_iu^{\dagger} + \xi_i^{\dagger}) = 0$ for $\xi_i^{\dagger} \in T_iu^{\dagger}$, the monotonicity of $\Phi$ gives
	\begin{equation*}
	\arraycolsep=0.2em
	\begin{array}{lcl}
		\sum_{i=1}^n \iprods{G_i u^{\star} + \xi_i^{\star},  u^{\star} - u^{\dagger}} \geq 0.
	\end{array}
	\end{equation*}    
	Therefore, we must have 
	\begin{equation}\label{eq:le_gap_function_proof3}
	\arraycolsep=0.2em
	\begin{array}{lcl}
		\sum_{i=1}^n \iprods{G_i u^{\star} + \xi_i^{\star},  u^{\star} - u^{\dagger}} = 0.
	\end{array}
	\end{equation}    
	Finally, we will show that $\sum_{i=1}^n (G_i u^{\star} + \xi_i^{\star}) = 0$.
	Since $u^{\dagger} \in \mathrm{int}(\Bc)$, there exists $\varepsilon > 0$ sufficiently small such that $u_{\varepsilon} := u^{\dagger} - \varepsilon\sum_{i=1}^n (G_i u^{\star} + \xi_i^{\star}) \in \Bc$.
	Using $u_{\varepsilon} \in \Bc$ in \eqref{eq:le_gap_function_proof2}, we obtain
	\begin{equation*}
	\arraycolsep=0.2em
	\begin{array}{lcl}
		0 &\leq& \sum_{i=1}^n \iprods{G_i u^{\star} + \xi_i^{\star}, u^{\dagger} - \varepsilon\sum_{i=1}^n (G_i u^{\star} + \xi_i^{\star}) - u^{\star}} \vspace{1ex}\\
		&=& \sum_{i=1}^n \iprods{G_i u^{\star} + \xi_i^{\star}, u^{\dagger} - u^{\star}} - \varepsilon\norms{\sum_{i=1}^n (G_i u^{\star} + \xi_i^{\star})}^2 \vspace{1ex}\\
		&\stackrel{\eqref{eq:le_gap_function_proof3}}{=}& - \varepsilon\norms{\sum_{i=1}^n (G_i u^{\star} + \xi_i^{\star})}^2 \leq 0.
	\end{array}
	\end{equation*} 
	Therefore, we have $\norms{\sum_{i=1}^n (G_i u^{\star} + \xi_i^{\star})}^2 = 0$, or equivalently, $\sum_{i=1}^n (G_i u^{\star} + \xi_i^{\star}) = 0$.
	Since $\xi_i^{\star} \in T_iu^{\star}$, the last equality implies that $0 \in \sum_{i=1}^n (G_i u^{\star} + T_i u^{\star})$, and thus $u^{\star} \in \zer{\Phi}$.
	
	\vspace{1ex}
	\noindent\textbf{\textit{Case 2. Bounded common-domain case.}}
	From the construction of $\Bc$ under Assumption~\ref{as:boundary_regularity}$\mathrm{(i)}$, we have
	\begin{equation*}
	\begin{array}{lcl}
		\Bc = \bigcap_{i=1}^n \dom{T_i} = \dom{\Phi}.
	\end{array}
	\end{equation*}
	Therefore, the inequality \eqref{eq:le_gap_function_proof1} holds on the entire domain $\dom{\Phi}$, leading to
	\begin{equation*} 
	\arraycolsep=0.2em
	\begin{array}{lcl}
		\iprods{\sum_{i=1}^n (G_i u + \xi_i) - 0, u - u^{\star}} \geq 0 \qquad \forall u \in \dom{\Phi}, \quad \xi_i \in T_i u.
	\end{array}
	\end{equation*}
	Since $\Phi$ is maximally monotone, the last inequality implies that $(u^{\star}, 0) \in \gra{\Phi}$.
	Thus, we have $0 \in \Phi(u^{\star})$, or equivalently, $u^{\star} \in \zer{\Phi}$.
\Eproof		

\beforesec
\section{Appendix: Proof of Technical Lemmas in Section~\ref{sec:FKM}}\label{apdx:sec:FKM_alg}
\aftersec
This appendix presents the full proof of technical lemmas in Section~\ref{sec:FKM}.

\beforesubsec
\subsection{Proof of Lemma~\ref{le:FKM_key_est1} --- Descent lemma for the Lyapunov function}\label{apdx:le:FKM_key_est1}
\aftersubsec
From the first and the third lines of \eqref{eq:FKM_scheme2}, we have $t_k x^{k+1} = (t_k - r)x^k + rz^k - \eta t_k \hat{w}^{k+1} + \beta_k t_k \hat{w}^k$.
	Using this expression and the fourth line of \eqref{eq:FKM_scheme2}, we can show that
	\begin{equation}\label{eq:FKM_key_est1_proof1}
		\arraycolsep=0.2em
		\left\{
		\begin{array}{lcl}
			t_k (t_k - r) (x^{k+1} - x^k) &=& r(t_k - r)(z^k - x^k) - \eta t_k (t_k - r)\hat{w}^{k+1} + \beta_k t_k (t_k - r) \hat{w}^k, \vspace{1ex}\\
			t_k (t_k - r) (x^{k+1} - x^k) &=& r t_k (z^k - x^{k+1}) - \eta t_k^2 \hat{w}^{k+1} + \beta_k t_k^2 \hat{w}^k \vspace{1ex}\\
			&=& r t_k (z^{k+1} - x^{k+1}) - (\eta t_k - \nu_k) t_k \hat{w}^{k+1} + \beta_k t_k^2 \hat{w}^k.
		\end{array}
		\right.
	\end{equation}
	From Assumption~\ref{as:FKM_assumption}, we have
	\begin{equation*}
		\arraycolsep=0.2em
		\begin{array}{lcl}
			t_k (t_k - r) \iprods{w^{k+1}, x^{k+1} - x^k} - t_k (t_k - r)\iprods{w^k, x^{k+1} - x^k} &\geq& 0.
		\end{array}
	\end{equation*}
	Substituting \eqref{eq:FKM_key_est1_proof1} into the last inequality, and then using $\hat{w}^k = w^k + e^k$, we can show that
	\begin{equation*}
		\arraycolsep=0.2em
		\begin{array}{lcl}
			\hat{\Tc}_{[1]} &:=& r(t_k - r) \iprods{w^k, x^k - z^k} - rt_k \iprods{w^{k+1}, x^{k+1} - z^{k+1}} \vspace{1ex}\\
			&\geq& (\eta t_k - \nu_k) t_k \iprods{w^{k+1}, \hat{w}^{k+1}} - \beta_k t_k^2 \iprods{w^{k+1}, \hat{w}^k} - \eta t_k (t_k - r)\iprods{w^k, \hat{w}^{k+1}} \vspace{1ex}\\
			&& + {~} \beta_k t_k (t_k - r)\iprods{w^k, \hat{w}^k} \vspace{1ex}\\
			&=& (\eta t_k - \nu_k) t_k \iprods{w^{k+1}, \hat{w}^{k+1}} - \beta_k t_k^2 \iprods{w^{k+1}, w^k} - \eta t_k (t_k - r)\iprods{w^k, \hat{w}^{k+1}} \vspace{1ex}\\
			&& + {~}  \beta_k t_k (t_k - r)\norms{w^k}^2  -  \beta_k t_k \iprods{t_k w^{k+1} - (t_k - r)w^k, e^k}.
		\end{array}
	\end{equation*}
	Using Young's inequality, for any $c_1 > 0$, we have
	\begin{equation*}
		\arraycolsep=0.2em
		\begin{array}{lcl}
			- \iprods{t_k w^{k+1} - (t_k - r)w^k, e^k} &\geq&  - \frac{c_1 \eta}{2\beta_k t_k} \norms{t_k w^{k+1} - (t_k - r)w^k}^2 - \frac{\beta_k t_k}{2c_1 \eta} \norms{e^k}^2 \vspace{1ex}\\
			&=& -\frac{c_1 \eta t_k}{2\beta_k}\norms{w^{k+1}}^2 + \frac{c_1 \eta (t_k - r)}{\beta_k}\iprods{w^{k+1}, w^k} \vspace{1ex}\\
			&& - {~} \frac{c_1 \eta (t_k - r)^2}{2\beta_k t_k}\norms{w^k}^2 - \frac{\beta_k t_k}{2c_1 \eta} \norms{e^k}^2.
		\end{array}
	\end{equation*}
	Substituting this relation into $\hat{\Tc}_{[1]}$, we obtain
	\begin{equation*}
		\arraycolsep=0.2em
		\begin{array}{lcl}
			\hat{\Tc}_{[1]} &:=& r(t_k - r) \iprods{w^k, x^k - z^k} - rt_k \iprods{w^{k+1}, x^{k+1} - z^{k+1}} \vspace{1ex}\\
			&\geq& (\eta t_k - \nu_k) t_k \iprods{w^{k+1}, \hat{w}^{k+1}} - \eta t_k (t_k - r)\iprods{w^k, \hat{w}^{k+1}}  - \frac{t_k (t_k - r)}{2} \left[\frac{c_1 \eta (t_k - r)}{t_k} - 2\beta_k \right] \norms{w^k}^2\vspace{1ex}\\
			&&  - {~} \frac{\beta_k^2 t_k^2}{2c_1 \eta}\norms{e^k}^2 - \frac{c_1 \eta t_k^2}{2}\norms{w^{k+1}}^2 
			+  t_k [c_1 \eta (t_k - r) - \beta_k t_k] \iprods{w^{k+1}, w^k}.
		\end{array}
	\end{equation*}
	Let us choose $\beta_k := \frac{c_1 \eta(t_k - r)}{t_k}$.
	Then, by $t_0 > r$ from \eqref{eq:FKM_key_est1_params}, we have $\beta_k > 0$.
	Moreover, we have $c_1 \eta (t_k - r) - \beta_k t_k = 0$.
	Partially substituting this $\beta_k$ and the identity $\iprods{w^k, \hat{w}^{k+1}} = \frac{1}{2}\left(\norms{w^k}^2 + \norms{\hat{w}^{k+1}}^2 - \norms{\hat{w}^{k+1} - w^k}^2\right)$ into $\hat{\Tc}_{[1]}$, we can further evaluate it as
	\begin{equation*}
		\arraycolsep=0.2em
		\begin{array}{lcl}
			\hat{\Tc}_{[1]} &:=& r(t_k - r) \iprods{w^k, x^k - z^k} - rt_k \iprods{w^{k+1}, x^{k+1} - z^{k+1}} \vspace{1ex}\\
			&\geq& (\eta t_k - \nu_k) t_k \iprods{w^{k+1}, \hat{w}^{k+1}} - \frac{\eta t_k (t_k - r)}{2}\norms{\hat{w}^{k+1}}^2 + \frac{\eta t_k (t_k - r)}{2}\norms{\hat{w}^{k+1} - w^k}^2 - \frac{\beta_k^2 t_k^2}{2c_1 \eta}\norms{e^k}^2  \vspace{1ex}\\
			&& - {~} \frac{\eta t_k (t_k - r)}{2} \left[ 1 - \frac{c_1 (t_k - r)}{t_k} \right] \norms{w^k}^2 - \frac{c_1 \eta t_k^2}{2}\norms{w^{k+1}}^2.
		\end{array}
	\end{equation*}
	From the second and the third lines of \eqref{eq:FKM_scheme2}, we have
	\begin{equation*}
		\arraycolsep=0.2em
		\begin{array}{lcl}
			x^{k+1} - y^k &:=& \eta (\hat{w}^k - \hat{w}^{k+1}) = \eta [(w^k - \hat{w}^{k+1}) + e^k].
		\end{array}
	\end{equation*}
	Using the $L$-Lipschitz continuity of $G$, the last expression, Young's inequality, and the identity $-2\iprods{w^k - \hat{w}^{k+1}, w^k} = \norms{\hat{w}^{k+1}}^2 - \norms{w^k}^2 - \norms{w^k - \hat{w}^{k+1}}^2$, for any $\omega > 0$, we can show that
	\begin{equation*}
		\arraycolsep=0.2em
		\begin{array}{lcl}
			\norms{w^{k+1} - \hat{w}^{k+1}}^2 &=& (1+\omega)\norms{Gx^{k+1} - Gy^k}^2 - \omega \norms{Gx^{k+1} - Gy^k}^2 \vspace{1ex}\\
			&\leq& (1+\omega)L^2 \norms{x^{k+1} - y^k}^2 - \omega \norms{e^{k+1}}^2 \vspace{1ex}\\
			&=& (1+\omega)L^2 \eta^2 \norms{(w^k - \hat{w}^{k+1}) + e^k}^2  - \omega \norms{e^{k+1}}^2\vspace{1ex}\\
			&\leq& 2(1+\omega)L^2 \eta^2 \norms{w^k - \hat{w}^{k+1}}^2 + 2(1+\omega)L^2\eta^2 \norms{e^k}^2  - \omega \norms{e^{k+1}}^2.
		\end{array}
	\end{equation*}
	Expanding the left-hand side of this inequality and then rearranging the result, we get
	\begin{equation*}
		\arraycolsep=0.2em
		\begin{array}{lcl}
			0 &\geq& \norms{w^{k+1}}^2 + \norms{\hat{w}^{k+1}}^2 - 2\iprods{w^{k+1}, \hat{w}^{k+1}} - M^2 \eta^2 \norms{w^k - \hat{w}^{k+1}}^2 - M^2\eta^2 \norms{e^k}^2 + \omega\norms{e^{k+1}}^2,
		\end{array}
	\end{equation*}
	where $M^2 := 2(1+\omega)L^2$.	

	Suppose that $0 < \nu_k \leq \eta t_k$.
	Multiplying the last inequality by $\frac{(\eta t_k - \nu_k)t_k}{2} \geq 0$ then adding the result to $\hat{\Tc}_{[1]}$, we obtain
	\begin{equation*}
		\arraycolsep=0.2em
		\begin{array}{lcl}
			\hat{\Tc}_{[1]} &:=& r(t_k - r) \iprods{w^k, x^k - z^k} - rt_k \iprods{w^{k+1}, x^{k+1} - z^{k+1}} \vspace{1ex}\\
			&\geq&  \frac{t_k}{2} (r \eta - \nu_k) \norms{\hat{w}^{k+1}}^2 + \frac{t_k}{2}[\eta (t_k - r) - M^2 \eta^2 (\eta t_k - \nu_k)] \norms{\hat{w}^{k+1} - w^k}^2 \vspace{1ex}\\
			&& - {~} \frac{\eta t_k (t_k - r)}{2} \left[ 1 - \frac{c_1 (t_k - r)}{t_k} \right] \norms{w^k}^2 + \frac{t_k}{2}[(1-c_1)\eta t_k - \nu_k]\norms{w^{k+1}}^2 \vspace{1ex}\\
			&& - {~}  \frac{t_k}{2} \left[ \frac{\beta_k^2 t_k}{c_1 \eta} + M^2 \eta^2 (\eta t_k - \nu_k) \right]\norms{e^k}^2 + \frac{\omega (\eta t_k - \nu_k)t_k}{2}\norms{e^{k+1}}^2.
		\end{array}
	\end{equation*}
	Using $t_{k+1} = t_k + 1$, we obtain from the last inequality that 
	\begin{equation*}
		\arraycolsep=0.2em
		\begin{array}{lcl}
			\Tc_{[1]} &:=& r(t_k - r) \iprods{w^k, x^k - z^k} - r(t_{k+1}-r) \iprods{w^{k+1}, x^{k+1} - z^{k+1}} \vspace{1ex}\\
			&=&\hat{\Tc}_{[1]} + r(r-1)\iprods{w^{k+1}, x^{k+1} - z^{k+1}} \vspace{1ex}\\
			&\geq& r(r-1)\iprods{w^{k+1}, x^{k+1} - z^{k+1}} + \frac{t_k (r \eta - \nu_k)}{2} \norms{\hat{w}^{k+1}}^2 \vspace{1ex}\\
			&& + {~} \frac{t_k}{2}[\eta (t_k - r) - M^2 \eta^2 (\eta t_k - \nu_k)] \norms{\hat{w}^{k+1} - w^k}^2 \vspace{1ex}\\
			&& - {~} \frac{\eta t_k (t_k - r)}{2} \left[ 1 - \frac{c_1 (t_k - r)}{t_k} \right] \norms{w^k}^2 + \frac{t_k}{2}[(1-c_1)\eta t_k - \nu_k]\norms{w^{k+1}}^2 \vspace{1ex}\\
			&& - {~} \frac{t_k}{2} \left[ \frac{\beta_k^2 t_k}{c_1 \eta} + M^2 \eta^2 (\eta t_k - \nu_k) \right]\norms{e^k}^2  +  \frac{\omega (\eta t_k - \nu_k)t_k}{2}\norms{e^{k+1}}^2.
		\end{array}
	\end{equation*}
	Next, utilizing $r(z^{k+1} - z^k) = -\nu_k \hat{w}^{k+1}$ from the last line of \eqref{eq:FKM_scheme2} and Young's inequality, for any $c_k > 0$, we can derive that
	\begin{equation*}
		\arraycolsep=0.2em
		\begin{array}{lcl}
			\hat{\Tc}_{[2]} &:=& \frac{r^2(r-1)}{2\nu_k}\norms{z^k - \xopt}^2 - \frac{r^2(r-1)}{2\nu_k}\norms{z^{k+1} - \xopt}^2 \vspace{1ex}\\
			&=& -\frac{r^2(r-1)}{\nu_k}\iprods{z^{k+1} - z^k, z^{k+1} - \xopt} + \frac{r^2(r-1)}{2\nu_k}\norms{z^{k+1} - z^k}^2 \vspace{1ex}\\
			&=& r(r-1)\iprods{\hat{w}^{k+1}, z^{k+1} - \xopt} + \frac{(r-1)\nu_k}{2}\norms{\hat{w}^{k+1}}^2 \vspace{1ex}\\
			&=& r(r-1)\iprods{w^{k+1} + e^{k+1}, z^{k+1} - \xopt} +  \frac{\nu_k(r-1)}{2}\norms{\hat{w}^{k+1}}^2 \vspace{1ex}\\
			&\geq&  r(r-1)\iprods{w^{k+1}, z^{k+1} - \xopt} - \frac{r^2(r-1)c_k}{2}\norms{z^{k+1} - \xopt}^2 + \frac{\nu_k(r-1)}{2}\norms{\hat{w}^{k+1}}^2 - \frac{r-1}{2c_k}\norms{e^{k+1}}^2.
		\end{array}
	\end{equation*}
	Let us choose $\nu_k := \frac{\nu(t_k - 1)}{t_k}$ for some $\nu > 0$.
	Then, by $t_0 > 1$ from \eqref{eq:FKM_key_est1_params}, we get $\nu_k > 0$.
	In addition, we have $\nu_k \leq \nu \leq \eta t_0 \leq \eta t_k$ due to the choice of $t_0$ as in \eqref{eq:FKM_key_est1_params}.
	Since $t_{k+1} = t_k + 1$, we can easily prove that
	\begin{equation*}
		\arraycolsep=0.2em
		\begin{array}{lcl}
			\frac{1}{2\nu_k} - \frac{1}{2\nu_{k+1}} = \frac{1}{2\nu}\left(\frac{t_k}{t_k - 1} - \frac{t_{k+1}}{t_{k+1}-1} \right) = \frac{1}{2\nu t_k (t_k-1)}.
		\end{array}
	\end{equation*}
	Let us choose $c_k := \frac{1}{\nu_k} - \frac{1}{\nu_{k+1}} = \frac{1}{\nu t_k (t_k-1)} > 0$.
	Then, we have $\frac{c_k}{2} - \frac{1}{2\nu_k} = -\frac{1}{2\nu_{k+1}}$.
	Using this relation, we obtain from $\hat{\Tc}_{[2]}$ that
	\begin{equation*}
		\arraycolsep=0.2em
		\begin{array}{lcl}
			\Tc_{[2]} &:=& \frac{r^2(r-1)}{2\nu_k}\norms{z^k - \xopt}^2 - \frac{r^2(r-1)}{2\nu_{k+1}}\norms{z^{k+1} - \xopt}^2 \vspace{1ex}\\
			&\geq&  r(r-1)\iprods{w^{k+1}, z^{k+1} - \xopt} + \frac{(r-1)\nu (t_k-1)}{2t_k}\norms{\hat{w}^{k+1}}^2 - \frac{(r-1)\nu t_k (t_k-1)}{2}\norms{e^{k+1}}^2.
		\end{array}
	\end{equation*}
	Adding $\Tc_{[2]}$ to $\Tc_{[1]}$, we have
	\begin{equation*}
		\arraycolsep=0.2em
		\begin{array}{lcl}
			\Tc_{[3]} &:=& r(t_k - r) \iprods{w^k, x^k - z^k} + \frac{r^2(r-1)}{2\nu_k}\norms{z^k - \xopt}^2 - r(t_{k+1}-r) \iprods{w^{k+1}, x^{k+1} - z^{k+1}} \vspace{1ex}\\
			&& - {~} \frac{r^2(r-1)}{2\nu_{k+1}}\norms{z^{k+1} - \xopt}^2 \vspace{1ex}\\
			&\geq& r(r-1)\iprods{w^{k+1}, x^{k+1} - \xopt} + \left[\frac{t_k (r \eta - \nu_k)}{2} + \frac{(r-1)\nu (t_k-1)}{2t_k} \right]\norms{\hat{w}^{k+1}}^2 \vspace{1ex}\\
			&& + {~} \frac{t_k}{2}[\eta (t_k - r) - M^2 \eta^2 (\eta t_k - \nu_k)] \norms{\hat{w}^{k+1} - w^k}^2 \vspace{1ex}\\
			&& - {~} \frac{\eta t_k (t_k - r)}{2} \left[ 1 - \frac{c_1 (t_k - r)}{t_k} \right] \norms{w^k}^2 + \frac{t_k}{2}[(1-c_1)\eta t_k - \nu_k]\norms{w^{k+1}}^2 \vspace{1ex}\\
			&& - {~} \frac{t_k}{2} \left[ \frac{\beta_k^2 t_k}{c_1 \eta} + M^2 \eta^2 (\eta t_k - \nu_k) \right]\norms{e^k}^2 + \frac{t_k}{2}\big[\omega (\eta t_k - \nu_k)
			- (r-1)\nu (t_k-1)\big]\norms{e^{k+1}}^2.
		\end{array}
	\end{equation*}
	Let us introduce the following coefficients:
	\begin{equation*}
	\arraycolsep=0.2em
	\left\{\begin{array}{lcllcl}
		a_k &:=& \eta t_k (t_k - r) \left[ 1 - \frac{c_1 (t_k - r)}{t_k} \right], 
		\qquad
		&\hat{a}_{k+1} &:=& t_k [(1-c_1)\eta t_k - \nu_k], \vspace{1ex}\\
		b_k &:=& t_k (r \eta - \nu_k) + \frac{(r-1)\nu (t_k-1)}{t_k}, \qquad
		&\phi_k &:=& t_k[\eta (t_k - r) - M^2 \eta^2 (\eta t_k - \nu_k)], \vspace{1ex}\\
		d_k &:=& t_k \left[ \frac{\beta_k^2 t_k}{c_1 \eta} + M^2 \eta^2 (\eta t_k - \nu_k) \right], 
		\qquad \qquad
		&\hat{d}_{k+1} &:=& t_k \big[\omega (\eta t_k - \nu_k) - (r-1)\nu (t_k-1)\big],
	\end{array}\right.
	\end{equation*}
	and potential function
	\begin{equation*}
		\arraycolsep=0.2em
		\begin{array}{lcl}
			\Pc_k &:=& \frac{a_k}{2}\norms{w^k}^2 + r(t_k - r) \iprods{w^k, x^k - z^k} + \frac{r^2(r-1)}{2\nu_k}\norms{z^k - \xopt}^2 + \frac{d_k}{2}\norms{e^k}^2.
		\end{array}
	\end{equation*}
	Then, the last inequality implies  exactly \eqref{eq:FKM_key_est1}.
\Eproof	

\beforesubsec
\subsection{Proof of Lemma~\ref{le:FKM_key_est2} --- Simplified descent lemma}\label{apdx:le:FKM_key_est2}
\aftersubsec
	First, using the coefficients from Lemma~\ref{le:FKM_key_est1}, we can compute that
	\begin{equation*}
		\arraycolsep=0.2em
		\left\{\begin{array}{lcl}
			\hat{a}_{k+1} - a_{k+1}  &=& [(r - 2(r-1)c_1 - 2)\eta - \nu]t_{k+1} + [1 + (r^2 - 1)c_1]\eta + 2\nu, \vspace{1ex}\\
			b_k  &=& (r\eta - \nu)t_{k+1} - (r\eta - 2\nu) + \frac{(r-1)\nu (t_k - 1)}{t_k}, \vspace{1ex}\\
			\phi_k &=& (1 - M^2 \eta^2)\eta t_k^2 + (M^2 \eta \nu - r) \eta t_k - M^2 \eta^2 \nu, \vspace{1ex}\\
			\hat{d}_{k+1} - d_{k+1} &=& [\omega - c_1 - \frac{(r-1)\nu}{\eta} - M^2 \eta^2] \eta t_{k+1}^2 + [2(\omega - r + 1) - M^2 \eta^2]\nu \vspace{1ex}\\
			&& - {~} \big[2(\omega - rc_1) \eta + \big(\omega - 3(r-1) - M^2 \eta^2\big)\nu \big] t_{k+1}  + (\omega - r^2 c_1) \eta.
		\end{array}\right.
	\end{equation*}
	To achieve our goal, we need to guarantee the following conditions:
	\begin{equation}\label{eq:FKM_key_est2_proof1}
	\arraycolsep=0.2em
	\begin{array}{ll}
		&\left(r - 2(r-1)c_1 - 2\right)\eta > \nu, \quad 
		r\eta > \nu,	\quad
		1 - 2(1+\omega)L^2 \eta^2 > 0, \vspace{1ex}\\
		& \omega - c_1 - \frac{(r-1)\nu}{\eta} - 2(1+\omega)L^2\eta^2 > 0.
	\end{array}
	\end{equation}
	Let us choose $c_1 := \frac{\delta}{2(r-1)}$ for some small $\delta > 0$.
	Then, the first condition of \eqref{eq:FKM_key_est2_proof1} becomes $\nu < (r - 2 - \delta)\eta$, provided that $r > 2$ and $\delta < r - 2$.
	The second condition of \eqref{eq:FKM_key_est2_proof1} can be guaranteed by the first condition.
	If we choose $\eta>0$ such that $2(1 + \omega)L^2 \eta^2 \leq \frac{1}{2}$, then the third condition of \eqref{eq:FKM_key_est2_proof1} holds.
	Finally, utilizing $c_1 := \frac{\delta}{2(r-1)}$, the upper bound of $\nu$, and $2(1 + \omega)L^2 \eta^2 \leq \frac{1}{2}$, we can see that the fourth condition of \eqref{eq:FKM_key_est2_proof1} holds if
	\begin{equation*}
		\arraycolsep=0.2em
		\begin{array}{lcl}
			\omega \geq \frac{\delta}{2(r-1)} + (r-1)(r-2-\delta) + \frac{1}{2}.
		\end{array}
	\end{equation*}
	Since $\delta < r-2 < r-1$, the last condition holds if $\omega \geq (r-1)(r-2) + 1 - \delta(r-1)$.
	Moreover, it is easy to show that the choice $\omega := (r-1)(r-2) + 1$ satisfies this condition.
	
	Under these choice of parameters, we can bound the coefficients in \eqref{eq:FKM_key_est1} as follows:
	\begin{equation*} 
		\arraycolsep=0.2em
		\left\{
		\begin{array}{lcl}
			\hat{a}_{k+1} - a_{k+1} &\geq& [(r-2-\delta)\eta - \nu]t_{k+1} + \frac{[2 + (r+1)\delta]\eta}{2} + 2\nu \geq [(r-2-\delta)\eta - \nu]t_{k+1}, \vspace{1ex}\\
			b_k &\geq& (2+\delta)\eta t_{k+1} - (r\eta - 2\nu) + \frac{(r-1)\nu (t_k - 1)}{t_k} \stackrel{\tiny\textcircled{1}}{\geq} \eta t_{k+1},	 \vspace{1ex}\\	
			\phi_k &\geq& \frac{\eta t_k^2}{2} - (r - M^2 \eta \nu)\eta t_k - M^2 \eta^2 \nu \stackrel{\tiny\textcircled{2}}{\geq} \frac{\eta t_k^2}{4}, \vspace{1ex}\\
			\hat{d}_{k+1} - d_{k+1} &\geq& \frac{\eta t_{k+1}^2}{2} - r\omega\eta t_{k+1} \stackrel{\tiny\textcircled{3}}{\geq}  \frac{\eta t_{k+1}^2}{4},
		\end{array}
		\right.
	\end{equation*}
	where {\tiny\textcircled{1}} holds due to $t_{k+1} \geq t_0 \geq  r\eta - 2\nu$, {\tiny\textcircled{2}} holds due to $t_k \geq t_0 \geq 4r$, and {\tiny\textcircled{3}} holds due to $t_{k+1} \geq t_0 \geq  4r\omega$ as in \eqref{eq:FKM_key_est2_t0}.
	Therefore, the expression \eqref{eq:FKM_key_est1} can be simplified to
	\begin{equation*}
		\arraycolsep=0.2em
		\begin{array}{lcl}
			\Pc_k - \Pc_{k+1} &\geq& \frac{[(r-2-\delta)\eta - \nu]t_{k+1}}{2} \norms{w^{k+1}}^2 + r(r-1)\iprods{w^{k+1}, x^{k+1} - \xopt} + \frac{\eta t_{k+1}}{2} \norms{\hat{w}^{k+1}}^2  \vspace{1ex}\\ 
			&& + {~} \frac{\eta t_k^2}{8} \norms{\hat{w}^{k+1} - w^k}^2 + \frac{\eta t_{k+1}^2}{8}\norms{e^{k+1}}^2.
		\end{array}
	\end{equation*}
	Substituting $\iprods{w^{k+1}, x^{k+1} - \xopt} \geq 0$ from Assumption \ref{as:FKM_assumption} into the last relation, we get \eqref{eq:FKM_key_est2}.
	Finally, we note that the choice of $t_0$ from \eqref{eq:FKM_key_est2_t0} satisfies the condition in Lemma~\ref{le:FKM_key_est1}.
\Eproof	

\beforesubsec
\subsection{Proof of Lemma~\ref{le:FKM_key_est3} --- Lower bound of Lyapunov function}\label{apdx:le:FKM_key_est3}
\aftersubsec
	Exploiting $\iprods{w^k, x^k - \xopt} \geq 0$ from Assumption~\ref{as:FKM_assumption}, we can show from \eqref{eq:FKM_Lyapunov_func} that
	\begin{equation}\label{eq:FKM_key_est3_proof1}
		\arraycolsep=0.2em
		\begin{array}{lcl}
			\Pc_k &=& \frac{a_k}{2}\norms{w^k}^2 - r(t_k - r)\iprods{w^k, z^k - \xopt} + \frac{r^2(r-1)t_k}{2\nu(t_k - 1)}\norms{z^k - \xopt}^2 \vspace{1ex}\\
			&& + {~} \frac{d_k}{2}\norms{e^k}^2 + r(t_k - r)\iprods{w^k, x^k - \xopt} \vspace{1ex}\\
			&=& \frac{a_k - c_2 (t_k - r)^2}{2} \norms{w^k}^2 + \frac{r^2}{2}\left[\frac{(r-1)t_k}{\nu(t_k - 1)} - \frac{1}{c_2}\right]\norms{z^k - \xopt}^2 + \frac{d_k}{2}\norms{e^k}^2  \vspace{1ex}\\
			&& + {~}  r(t_k - r)\iprods{w^k, x^k - \xopt} + \frac{1}{2c_2}\norms{c_2(t_k - r)w^k - r(z^k - \xopt)}^2. \vspace{1ex}\\
			&\geq& \frac{a_k - c_2 (t_k - r)^2}{2} \norms{w^k}^2 + \frac{r^2}{2}\left[\frac{(r-1)t_k}{\nu(t_k - 1)} - \frac{1}{c_2}\right]\norms{z^k - \xopt}^2 + \frac{d_k}{2}\norms{e^k}^2.
		\end{array}
	\end{equation}
	Let us choose $c_2 := \frac{\nu}{r-2}$. 
	Then, from the condition $\nu < [r-2-2(r-1)c_1]\eta$, we can show that $c_2 = \frac{\nu}{r-2} < [1 - \frac{2(r-1)c_1}{r-2}]\eta$.
	Using this relation and $c_1 = \frac{\delta}{2(r-1)}$, we can bound
	\begin{equation*}
		\arraycolsep=0.2em
		\begin{array}{lcl}
			a_k - c_2 (t_k - r)^2  
			&=&  [(1-c_1)\eta - c_2] (t_k - r)^2 + r\eta (t_k - r) 
			\geq \frac{\delta \eta}{2(r-1)}(t_k - r)^2,
		\end{array}
	\end{equation*}
	and
	\begin{equation*}
		\arraycolsep=0.2em
		\begin{array}{lcl}
			\frac{(r-1)t_k}{\nu(t_k - 1)} - \frac{1}{c_2} = \frac{(r-1)t_k}{\nu(t_k - 1)} - \frac{r-2}{\nu} \geq \frac{r-1}{\nu} - \frac{r-2}{\nu} = \frac{1}{\nu}.
		\end{array}
	\end{equation*}
	Substituting these bounds into \eqref{eq:FKM_key_est3_proof1} and noting that $e^k = Gy^{k-1} - Gx^k$, we get \eqref{eq:FKM_key_est3}.
\Eproof	

\beforesubsec
\subsection{Proof of Lemma~\ref{le:FKM2_key_est1} --- Descent lemma for the co-coercive case}\label{apdx:le:FKM2_key_est1}
	From the first two lines of \eqref{eq:FKM2_scheme_reformulation}, we have $t_{k+1}y^{k+1} = (t_{k+1} - r)y^k + rz^{k+1} - \eta (t_{k+1} - r)\Gc_{\eta}y^k$.
	Using this expression and the third line of \eqref{eq:FKM2_scheme_reformulation}, we can show that
	\begin{equation}\label{eq:FKM2_key_est1_proof1}
		\arraycolsep=0.2em
		\hspace{-1ex}\left\{ 
		\begin{array}{lcl}
			t_{k+1}(t_{k+1} - r) (y^{k+1} - y^k)&=& r t_{k+1}(z^{k+1} - y^{k+1}) - \eta t_{k+1} (t_{k+1} - r) \Gc_{\eta}y^k, \vspace{1ex}\\
			t_{k+1}(t_{k+1} - r) (y^{k+1} - y^k)&=& r(t_{k+1} - r)(z^{k+1} - y^k) - \eta (t_{k+1} - r)^2 \Gc_{\eta}y^k \vspace{1ex}\\
			&=& r(t_{k+1} - r)(z^k - y^k) - [\eta (t_{k+1} - r) + \nu_k](t_{k+1} - r) \Gc_{\eta}y^k.
		\end{array}
		\right.\hspace{-1ex}
	\end{equation}
	From the $\bar{\beta}$-co-coercivity of $\Gc_{\eta}$, for any $\beta \in (0, \bar{\beta})$, we have
	\begin{equation*}
		\arraycolsep=0.2em
		\begin{array}{lcl}
			\tilde{\Tc}_{[1]} &:=& t_{k+1}(t_{k+1} - r)\iprods{\Gc_{\eta}y^{k+1}, y^{k+1} - y^k} - t_{k+1}(t_{k+1} - r)\iprods{\Gc_{\eta}y^k, y^{k+1} - y^k} \vspace{1ex}\\
			&\geq& (\bar{\beta} - \beta) t_{k+1}(t_{k+1} - r)\norms{\Gc_{\eta}y^{k+1} - \Gc_{\eta}y^k}^2 + \beta t_{k+1}(t_{k+1} - r)\norms{\Gc_{\eta}y^{k+1} - \Gc_{\eta}y^k}^2.
		\end{array}
	\end{equation*}
	Substituting \eqref{eq:FKM2_key_est1_proof1} into the last inequality, then rearranging the result, we obtain
	\begin{equation*}
		\arraycolsep=0.2em
		\hspace{-5ex}
		\begin{array}{lcl}
			\tilde{\Tc}_{[1]} &:=& r(t_{k+1} - r)\iprods{\Gc_{\eta}y^k, y^k - z^k} - rt_{k+1}\iprods{\Gc_{\eta}y^{k+1}, y^{k+1} - z^{k+1}} \vspace{1ex}\\
			&\geq& \eta t_{k+1}(t_{k+1} - r)\iprods{\Gc_{\eta}y^{k+1}, \Gc_{\eta}y^k} - [\eta (t_{k+1} - r) + \nu_k] (t_{k+1} - r) \norms{\Gc_{\eta}y^k}^2 \vspace{1ex}\\
			&& + {~} (\bar{\beta} - \beta) t_{k+1}(t_{k+1} - r)\norms{\Gc_{\eta}y^{k+1} - \Gc_{\eta}y^k}^2 + \beta t_{k+1}(t_{k+1} - r)\norms{\Gc_{\eta}y^{k+1} - \Gc_{\eta}y^k}^2.
		\end{array}
		\hspace{-5ex}
	\end{equation*}
	Using this inequality and $t_{k+1} = t_k + 1$, we can show that
	\begin{equation*}
		\arraycolsep=0.2em
		\hspace{-2ex}\begin{array}{lcl}
			\breve{\Tc}_{[1]} &:=& rt_k\iprods{\Gc_{\eta}y^k, y^k - z^k} - rt_{k+1}\iprods{\Gc_{\eta}y^{k+1}, y^{k+1} - z^{k+1}} \vspace{1ex}\\
			&\geq& \eta t_{k+1}(t_{k+1} - r)\iprods{\Gc_{\eta}y^{k+1}, \Gc_{\eta}y^k} - [\eta (t_{k+1} - r) + \nu_k] (t_{k+1} - r) \norms{\Gc_{\eta}y^k}^2 \vspace{1ex}\\
			&& + {~} (\bar{\beta} - \beta) t_{k+1}(t_{k+1} - r)\norms{\Gc_{\eta}y^{k+1} - \Gc_{\eta}y^k}^2 + \beta t_{k+1}(t_{k+1} - r)\norms{\Gc_{\eta}y^{k+1} - \Gc_{\eta}y^k}^2 \vspace{1ex}\\
			&& + {~} r(t_k - t_{k+1} + r)\iprods{\Gc_{\eta}y^k, y^k - z^k} \vspace{1ex}\\
			&=& \beta t_{k+1}(t_{k+1} - r)\norms{\Gc_{\eta}y^{k+1}}^2 - [(\eta - \beta) t_{k+1} + \nu_k - r\eta] (t_{k+1} - r) \norms{\Gc_{\eta}y^k}^2 \vspace{1ex}\\
			&& + {~} (\eta - 2\beta) t_{k+1}(t_{k+1} - r) \iprods{\Gc_{\eta}y^{k+1}, \Gc_{\eta}y^k} + (\bar{\beta} - \beta) t_{k+1}(t_{k+1} - r)\norms{\Gc_{\eta}y^{k+1} - \Gc_{\eta}y^k}^2\vspace{1ex}\\
			&& + {~} r(r-1)\iprods{\Gc_{\eta}y^k, y^k - z^k}.
		\end{array}\hspace{-5ex}
	\end{equation*}
	Next, utilizing $z^{k+1} - z^k = -\frac{\nu_k}{r}\Gc_{\eta}y^k$ from the third line of \eqref{eq:FKM2_scheme_reformulation}, we have
	\begin{equation*}
		\arraycolsep=0.1em
		\begin{array}{lcl}
			\breve{\Tc}_{[2]} &:=& \frac{r^2(r-1)}{2\nu_k} \norms{z^k - \xopt}^2 - \frac{r^2(r-1)}{2\nu_{k+1}} \norms{z^{k+1} - \xopt}^2 \vspace{1ex}\\
			&=& \frac{r^2(r-1)}{2\nu_k} \left(\norms{z^k - \xopt}^2 - \norms{z^{k+1} - \xopt}^2\right) + \frac{r^2(r-1)}{2}\left(\frac{1}{\nu_k} - \frac{1}{\nu_{k+1}}\right) \norms{z^{k+1} - \xopt}^2 \vspace{1ex}\\
			&=& -\frac{r^2(r-1)}{\nu_k} \iprods{z^{k+1} - z^k, z^k - \xopt} - \frac{r^2(r-1)}{2\nu_k} \norms{z^{k+1} - z^k}^2 + \frac{r^2(r-1)}{2}\left(\frac{1}{\nu_k} - \frac{1}{\nu_{k+1}}\right) \norms{z^{k+1} - \xopt}^2 \vspace{1ex}\\
			&=& r(r-1)\iprods{\Gc_{\eta}y^k, z^k - \xopt} - \frac{(r-1)\nu_k}{2} \norms{\Gc_{\eta}y^k}^2 + \frac{r^2(r-1)}{2}\left(\frac{1}{\nu_k} - \frac{1}{\nu_{k+1}}\right) \norms{z^{k+1} - \xopt}^2.
		\end{array}
	\end{equation*}
	Adding $\breve{\Tc}_{[1]}$ and $\breve{\Tc}_{[2]}$, we obtain
	\begin{equation*}
		\arraycolsep=0.2em
		\hspace{-1ex}\begin{array}{lcl}
			\breve{\Tc}_{[3]} &:=& rt_k\iprods{\Gc_{\eta}y^k, y^k - z^k} + \frac{r^2(r-1)}{2\nu_k} \norms{z^k - \xopt}^2 - rt_{k+1}\iprods{\Gc_{\eta}y^{k+1}, y^{k+1} - z^{k+1}} \vspace{1ex}\\
			&& - {~} \frac{r^2(r-1)}{2\nu_{k+1}} \norms{z^{k+1} - \xopt}^2 \vspace{1ex}\\
			&\geq& \beta t_{k+1}(t_{k+1} - r)\norms{\Gc_{\eta}y^{k+1}}^2 - \set{[(\eta - \beta) t_{k+1} + \nu_k - r\eta] (t_{k+1} - r) + \frac{(r-1)\nu_k}{2}} \norms{\Gc_{\eta}y^k}^2 \vspace{1ex}\\
			&& + {~} (\eta - 2\beta) t_{k+1}(t_{k+1} - r) \iprods{\Gc_{\eta}y^{k+1}, \Gc_{\eta}y^k} + (\bar{\beta} - \beta) t_{k+1}(t_{k+1} - r)\norms{\Gc_{\eta}y^{k+1} - \Gc_{\eta}y^k}^2\vspace{1ex}\\
			&& + {~} \frac{r^2(r-1)}{2}\left(\frac{1}{\nu_k} - \frac{1}{\nu_{k+1}}\right) \norms{z^{k+1} - \xopt}^2 + r(r-1)\iprods{\Gc_{\eta}y^k, y^k - \xopt}.
		\end{array}\hspace{-1ex}
	\end{equation*}
	Since $\beta := \frac{\eta}{2}$, we have $\eta - 2\beta = 0$, and the last inequality becomes
	\begin{equation*}
		\arraycolsep=0.2em
		\begin{array}{lcl}
			\breve{\Tc}_{[3]} &:=& rt_k\iprods{\Gc_{\eta}y^k, y^k - z^k} + \frac{r^2(r-1)}{2\nu_k} \norms{z^k - \xopt}^2 - rt_{k+1}\iprods{\Gc_{\eta}y^{k+1}, y^{k+1} - z^{k+1}} \vspace{1ex}\\
			&& - {~} \frac{r^2(r-1)}{2\nu_{k+1}} \norms{z^{k+1} - \xopt}^2 \vspace{1ex}\\
			&\geq& \frac{\eta t_{k+1}(t_{k+1} - r)}{2} \norms{\Gc_{\eta}y^{k+1}}^2 - \set{\left[\frac{\eta (t_{k+1} - 2r)}{2} + \nu_k\right] (t_{k+1} - r) + \frac{(r-1)\nu_k}{2}} \norms{\Gc_{\eta}y^k}^2 \vspace{1ex}\\
			&& + {~} \frac{(2\bar{\beta} - \eta) t_{k+1}(t_{k+1} - r)}{2} \norms{\Gc_{\eta}y^{k+1} - \Gc_{\eta}y^k}^2 + \frac{r^2(r-1)}{2}\left(\frac{1}{\nu_k} - \frac{1}{\nu_{k+1}}\right) \norms{z^{k+1} - \xopt}^2 \vspace{1ex}\\
			&& + {~} r(r-1)\iprods{\Gc_{\eta}y^k, y^k - \xopt}.
		\end{array}
	\end{equation*}
	Let us choose $\nu_k := \frac{\nu(t_k - 1)}{t_k}$.
	Then, we get $\frac{1}{\nu_k} - \frac{1}{\nu_{k+1}} = \frac{1}{\nu t_k (t_k - 1)}$.
	Moreover, we can further lower bound $\breve{\Tc}_{[3]}$ as follows:
	\begin{equation*}
		\arraycolsep=0.1em
		\begin{array}{lcl}
			\breve{\Tc}_{[3]} &:=& rt_k\iprods{\Gc_{\eta}y^k, y^k - z^k} + \frac{r^2(r-1)}{2\nu_k} \norms{z^k - \xopt}^2 - rt_{k+1}\iprods{\Gc_{\eta}y^{k+1}, y^{k+1} - z^{k+1}} \vspace{1ex}\\
			&& - {~} \frac{r^2(r-1)}{2\nu_{k+1}} \norms{z^{k+1} - \xopt}^2 \vspace{1ex}\\
			&\geq& \frac{\eta t_{k+1}(t_{k+1} - r)}{2} \norms{\Gc_{\eta}y^{k+1}}^2 - \left[\frac{\eta (t_{k+1} - 2r)(t_{k+1} - r)}{2} + \frac{\nu(t_{k} - \frac{r-1}{2})(t_k - 1)}{t_k}\right] \norms{\Gc_{\eta}y^k}^2 \vspace{1ex}\\
			&& + {~} \frac{(2\bar{\beta} - \eta) t_{k+1}(t_{k+1} - r)}{2} \norms{\Gc_{\eta}y^{k+1} - \Gc_{\eta}y^k}^2 + \frac{r^2(r-1)}{2\nu t_k (t_k - 1)} \norms{z^{k+1} - \xopt}^2 + r(r-1)\iprods{\Gc_{\eta}y^k, y^k - \xopt}.
		\end{array}
	\end{equation*}
	Using the definition of $\Lc_k$ from \eqref{eq:FKM2_Lyapunov_function}, the last inequality leads to
	\begin{equation}\label{eq:FKM2_key_est1_proof2}
		\arraycolsep=0.2em
		\begin{array}{lcl}
			\Lc_k - \Lc_{k+1} &\geq& \Big[\frac{\eta [t_k (t_k - r) - (t_{k+1} - 2r)(t_{k+1} - r)]}{2} - \frac{\nu(t_{k} - \frac{r-1}{2})(t_k - 1)}{t_k} \Big] \norms{\Gc_{\eta}y^k}^2  \vspace{1ex}\\
			&& + {~} r(r-1)\iprods{\Gc_{\eta}y^k, y^k - \xopt}  +  \frac{(2\bar{\beta} - \eta) t_{k+1}(t_{k+1} - r)}{2} \norms{\Gc_{\eta}y^{k+1} - \Gc_{\eta}y^k}^2 \vspace{1ex}\\
			&& + {~} \frac{r^2(r-1)}{2\nu t_k (t_k - 1)} \norms{z^{k+1} - \xopt}^2.
		\end{array}
	\end{equation}
	To guarantee $2\bar{\beta} - \eta > 0$, we require $\eta < 2\bar{\beta} = \frac{\eta (4 - L\eta) - 4\rho}{2(1 - L\rho)}$, which is equivalent to $2\rho < \eta < \frac{2}{L}$ as stated in \eqref{eq:FKM2_key_est1_params}.
	Since $(2\rho, \frac{2}{L}) \subset \left(\frac{2(1 - \sqrt{1 - L\rho})}{L}, \frac{2(1 + \sqrt{1 - L\rho})}{L}\right)$, $\eta$ also satisfies the condition of Lemma~\ref{le:FB_cocoercive}, which guarantee the $\bar{\beta}$-co-coercivity of $\Gc_{\eta}$.
	Moreover, we have
	\begin{equation*}
		\arraycolsep=0.2em
		\begin{array}{lcl}
			\hat{a}_k &:=& \frac{\eta [t_k (t_k - r) - (t_{k+1} - 2r)(t_{k+1} - r)]}{2} - \frac{\nu(t_{k} - \frac{r-1}{2})(t_k - 1)}{t_k} \vspace{1ex}\\
			&=& [(r-1)\eta - \nu]t_k - \frac{(r-1)(2r-1)\eta}{2} + \frac{\nu}{2}\left(r+1 - \frac{r-1}{t_k} \right) \vspace{1ex}\\
			&\geq& \frac{(r-1)\eta - \nu}{2}t_k,
		\end{array}
	\end{equation*}
	due to $0 \leq \nu < (r-1)\eta$ and $t_k \geq t_0 \geq \max\set{\frac{(r-1)(2r-1)\eta}{(r-1)\eta - \nu}, \frac{r-1}{r+1}}$.
	Using this bound and the relation $\iprods{\Gc_{\eta}y^k, y^k - \xopt} \geq \bar{\beta}\norms{\Gc_{\eta}y^k}^2$ from the $\bar{\beta}$-co-coercivity of $\Gc_{\eta}$, we obtain  from \eqref{eq:FKM2_key_est1_proof2} the desired estimate \eqref{eq:FKM2_key_est1}.
\Eproof	

\beforesubsec
\subsection{Proof of Lemma~\ref{le:FKM2_key_est2} --- Lower bound of Lyapunov function}\label{apdx:le:FKM2_key_est2}
\aftersubsec
	By $\nu_k := \frac{\nu(t_k - 1)}{t_k}$ and the $\bar{\beta}$-co-coercivity of $\Gc_{\eta}$, we can show from \eqref{eq:FKM2_Lyapunov_function} that
	\begin{equation*} 
		\arraycolsep=0.2em
		\begin{array}{lcl}
			\Lc_k &:=& \frac{\eta a_k}{2} \norms{\Gc_{\eta}y^k}^2 + rt_k \iprods{\Gc_{\eta}y^k, y^k - z^k} + \frac{r^2(r-1)t_k}{2\nu (t_k - 1)} \norms{z^k - \xopt}^2 \vspace{1ex}\\
			&=& \frac{\eta(2a_k - t_k^2)}{4}\norms{\Gc_{\eta}y^k}^2 + rt_k\iprods{\Gc_{\eta}y^k, y^k - \xopt} + \frac{r^2[(r-1)\eta t_k - 2\nu(t_k - 1)]}{2\nu\eta(t_k - 1)}  \norms{z^k - \xopt}^2 \vspace{1ex}\\
			&& + {~} \frac{1}{4\eta}\norms{2r(z^k - \xopt) - \eta t_k \Gc_{\eta}y^k}^2 \vspace{1ex}\\
			&\geq& \frac{\eta(2a_k - t_k^2) + 4r\bar{\beta}t_k}{4}\norms{\Gc_{\eta}y^k}^2 + \frac{r^2[((r-1)\eta  - 2\nu) t_k + 2\nu]}{2\nu\eta(t_k - 1)} \norms{z^k - \xopt}^2.
		\end{array}
	\end{equation*}
	Let $A_k := \eta(2a_k - t_k^2) + 4r\bar{\beta}t_k$.
	Then, $A_k = \eta t_k (t_k - 2r) + 4r\bar{\beta}t_k = \eta t_k^2 + 2r (2\bar{\beta} - \eta)t_k \geq \eta t_k^2$ due to $2\bar{\beta} - \eta > 0$.
	Using this bound, we obtain \eqref{eq:FKM2_key_est2} from the last inequality.
\Eproof	

\beforesec
\section{Proof of Technical Lemmas in Section~\ref{sec:ND_DFFP_alg}}\label{apdx:sec:ND_DFFP_alg}
\aftersec
This appendix provides the full proofs of technical results in Section~\ref{sec:ND_DFFP_alg}.

\beforesubsec
\subsection{Proof of Lemma~\ref{le:DGE_assumption} --- The monotone and Lipschitz continuous case}\label{apdx:le:DGE_assumption}
\aftersubsec
The proof of this lemma follows from standard arguments in primal-dual methods. However, due to the different structures of the operators involved, we provide a complete proof here.

\noindent$\mathrm{(i)}$~
	By the $L$-Lipschitz continuity of all $G_i$, for any $\mbf{u} = (u_{[1]}, \cdots, u_{[n]})$ and $\hat{\mbf{u}} = (\hat{u}_{[1]}, \cdots, \hat{u}_{[n]})$ in $\R^{np}$, we have 
	\begin{equation*}
		\arraycolsep=0.2em
		\begin{array}{lcl}	
			\norms{\mbf{G}\mbf{u} - \mbf{G}\hat{\mbf{u}}}^2 = \sum_{i=1}^n \norms{G_i u_{[i]} - G_i \hat{u}_{[i]}}^2 \leq L^2 \sum_{i=1}^n \norms{u_{[i]} - \hat{u}_{[i]}}^2 = L^2 \norms{\mbf{u} - \hat{\mbf{u}}}^2.
		\end{array}
	\end{equation*}
	Thus, $\mbf{G}$ is also $L$-Lipschitz continuous.
	
	Next, for any $\mbf{x} := [\mbf{u}^\top, \mbf{v}^\top]^\top$ and $\hat{\mbf{x}} := [\hat{\mbf{u}}^\top, \hat{\mbf{v}}^\top]^\top$, from the definition of $\mbb{G}$ in \eqref{eq:DFKM_components}, we have
	\begin{equation*}
		\arraycolsep=0.2em
		\begin{array}{lcl}	
			\norms{\mbb{G}\mbf{x} - \mbb{G}\hat{\mbf{x}}}^2 = \norms{\mbf{G}\mbf{u} - \mbf{G}\hat{\mbf{u}}}^2 \leq L^2\norms{\mbf{u} - \hat{\mbf{u}}}^2.
		\end{array}
	\end{equation*}
	Since $\norms{\mbf{u} - \hat{\mbf{u}}}^2 \leq \norms{\mbf{u} - \hat{\mbf{u}}}^2 + \norms{\mbf{v} - \hat{\mbf{v}}}^2 = \norms{\mbf{x} - \hat{\mbf{x}}}^2$, we obtain from the last inequality that
	\begin{equation*}
		\arraycolsep=0.2em
		\begin{array}{lcl}	
			\norms{\mbb{G}\mbf{x}_1 - \mbb{G}\mbf{x}_2}^2 \leq L^2 \norms{\mbf{x}_1 - \mbf{x}_2}^2.
		\end{array}
	\end{equation*}
	This shows that $\mbb{G}$ is also $L$-Lipschitz continuous.
	
	Now, on the one hand, exploiting the structure of $\mbb{G}$ and applying the standard formula for the inverse of a $2\times2$ block matrix to $\mbb{P}^{-1}$, we can show that 
	\begin{equation*}
		\arraycolsep=0.2em
		\begin{array}{lcl}	
			\norms{\mbb{P}^{-1}\mbb{G}\mbf{x} - \mbb{P}^{-1}\mbb{G}\hat{\mbf{x}}}_{\mbb{P}}^2 &=& \iprods{\mbb{P}^{-1}\mbb{G}\mbf{x} - \mbb{P}^{-1}\mbb{G}\hat{\mbf{x}}, \mbb{P}^{-1}\mbb{G}\mbf{x} - \mbb{P}^{-1}\mbb{G}\hat{\mbf{x}}}_{\mbb{P}} \vspace{1ex}\\
			&=& \iprods{\mbb{G}\mbf{x} - \mbb{G}\hat{\mbf{x}}, \mbb{P}^{-1}(\mbb{G}\mbf{x} - \mbb{G}\hat{\mbf{x}})} \vspace{1ex}\\
			&=& \tau \iprods{\mbf{G}\mbf{u} - \mbf{G}\hat{\mbf{u}}, (\Id - \tau \sigma \mbf{K}^\top \mbf{K})^{-1}(\mbf{G}\mbf{u} - \mbf{G}\hat{\mbf{u}})} \vspace{1ex}\\
			&\leq& \frac{\tau}{1 - \tau \sigma \norms{\mbf{K}}^2} \norms{\mbf{G}\mbf{u} - \mbf{G}\hat{\mbf{u}}}^2 \vspace{1ex}\\
			&\leq& \frac{\tau L^2}{1 - \tau \sigma \norms{\mbf{K}}^2} \norms{\mbf{u} - \hat{\mbf{u}}}^2.
		\end{array}
	\end{equation*}
	On the other hand, using $\mbb{P}$, Young's inequality, and the Cauchy-Schwarz inequality, we have
	\begin{equation*}
		\arraycolsep=0.2em
		\begin{array}{lcl}	
			\norms{\mbf{x} - \hat{\mbf{x}}}_{\mbb{P}}^2 &=& \iprods{\mbb{P}(\mbf{x} - \hat{\mbf{x}}), \mbf{x} - \hat{\mbf{x}}} \vspace{1ex}\\
			&=& \frac{1}{\tau}\norms{\mbf{u} - \hat{\mbf{u}}}^2 + 2 \iprods{\mbf{K}(\mbf{u} - \hat{\mbf{u}}), \mbf{v} - \hat{\mbf{v}}} + \frac{1}{\sigma}\norms{\mbf{v} - \hat{\mbf{v}}}^2 \vspace{1ex}\\
			&\geq& \frac{1}{\tau}\norms{\mbf{u} - \hat{\mbf{u}}}^2 - \sigma \norms{\mbf{K}(\mbf{u} - \hat{\mbf{u}})}^2 - \frac{1}{\sigma}\norms{\mbf{v} - \hat{\mbf{v}}}^2 + \frac{1}{\sigma}\norms{\mbf{v} - \hat{\mbf{v}}}^2 \vspace{1ex}\\
			&\geq& \frac{1}{\tau}\norms{\mbf{u} - \hat{\mbf{u}}}^2 - \sigma \norms{\mbf{K}}^2\norms{\mbf{u} - \hat{\mbf{u}}}^2 \vspace{1ex}\\
			&=& \frac{1 - \tau \sigma \norms{\mbf{K}}^2}{\tau} \norms{\mbf{u} - \hat{\mbf{u}}}^2.
		\end{array}
	\end{equation*}
	Substituting this relation into the last inequality, we obtain
	\begin{equation*}
		\arraycolsep=0.2em
		\begin{array}{lcl}	
			\norms{\mbb{P}^{-1}\mbb{G}\mbf{x} - \mbb{P}^{-1}\mbb{G}\hat{\mbf{x}}}_{\mbb{P}}^2 &\leq& \frac{\tau^2 L^2}{(1 - \tau \sigma \norms{\mbf{K}}^2)^2} \norms{\mbf{x} - \hat{\mbf{x}}}_{\mbb{P}}^2.
		\end{array}
	\end{equation*}
	This expression shows that $\mbb{P}^{-1}\mbb{G}$ is $\tilde{L}$-Lipschitz continuous w.r.t. the weighted norm $\norms{\cdot}_{\mbb{P}}$, where $\tilde{L} := \frac{\tau L}{1 - \tau \sigma \norms{\mbf{K}}^2} > 0$, provided that $\tau\sigma\norms{K}^2 < 1$.
	
	\vspace{1ex}
	\noindent$\mathrm{(ii)}$~
	Since each $\Phi_i := G_i + T_i$ is maximally monotone, for $w_i \in \Phi_i(u_{[i]})$, $\hat{w}_i \in \Phi_i(\hat{u}_{[i]})$, we have
	\begin{equation*}
		\iprods{w_i - \hat{w}_i, u_{[i]} - \hat{u}_{[i]}} \geq 0, \qquad \forall i \in [n].
	\end{equation*} 
	Now, let $\mbf{w} = (w_1, \cdots, w_n) \in \mbf{G}\mbf{u} + \mbf{T}\mbf{u}$, $\hat{\mbf{w}} = (\hat{w}_1, \cdots, \hat{w}_n) \in \mbf{G}\hat{\mbf{u}} + \mbf{T}\hat{\mbf{u}}$.
	Then, we have
	\begin{equation*}
		\arraycolsep=0.2em
		\begin{array}{lcl}	
			\iprods{\mbf{w} - \hat{\mbf{w}}, \mbf{u} - \hat{\mbf{u}}} &=& \sum_{i=1}^n \iprods{w_i - \hat{w}_i, u_{[i]} - \hat{u}_{[i]}} \geq 0.
		\end{array}
	\end{equation*}
	Thus, $\mbf{G} + \mbf{T}$ is monotone in $\R^{n \times p}$.
	Moreover, since each $\Phi_i$ is maximally monotone, we have $\range{\Id + \Phi_i} = \R^p$ for all $i$.
	Consequently, we can easily show that $\range{\Id + \mbf{G} + \mbf{T}} = \R^{n\times p}$.
	Therefore, Minty's theorem gives that $\mbf{G} + \mbf{T}$ is maximally monotone.
	
	Next, in the primal-dual space $\R^{2n \times p}$, for $\mbf{x} := [\mbf{u}^\top, \mbf{v}^\top]^\top \in \R^{2n \times p}$, we have 
	$$\mbb{G}(\mbf{x}) = \begin{bmatrix}
		\mbf{G}\mbf{u} \\ 0
	\end{bmatrix}, \qquad \mbb{T}(\mbf{x}) = \begin{bmatrix}
		\mbf{T}\mbf{u} + \mbf{K}^\top \mbf{v} \\ -\mbf{K}\mbf{u}
	\end{bmatrix}.$$
	Let $\mbf{x} := [\mbf{u}^\top, \mbf{v}^\top]^\top$, $\hat{\mbf{x}} := [\hat{\mbf{u}}^\top, \hat{\mbf{v}}^\top]^\top$, and take $\mathbbm{w} \in \mbb{G}(\mbf{x}) + \mbb{T}(\mbf{x})$, $\hat{\mathbbm{w}} \in \mbb{G}(\hat{\mbf{x}}) + \mbb{T}(\hat{\mbf{x}})$.
	If we denote $\Delta \mathbbm{w} := \mathbbm{w} - \hat{\mathbbm{w}}$, $\Delta \mbf{u} := \mbf{u} - \hat{\mbf{u}}$, $\Delta \mbf{v} := \mbf{v} - \hat{\mbf{v}}$, and $\Delta \mbf{w} := \mbf{w} - \hat{\mbf{w}}$, then  we can compute 
	\begin{equation*}
		\arraycolsep=0.2em
		\begin{array}{lcl}	
			\Delta \mathbbm{w} = \mathbbm{w} - \hat{\mathbbm{w}} = \begin{bmatrix}
				\Delta \mbf{w} + \mbf{K}^\top \Delta \mbf{v}\\
				-\mbf{K}\Delta \mbf{u}
			\end{bmatrix}.
		\end{array}
	\end{equation*}
	Using the maximal monotonicity of $\mbf{G} + \mbf{T}$ in $\R^{n \times p}$, we have
	\begin{equation*}
		\arraycolsep=0.2em
		\begin{array}{lcl}	
			\iprods{\mathbbm{w} - \hat{\mathbbm{w}}, \mbf{x} - \hat{\mbf{x}}} &=& \iprods{\Delta \mbf{w} + \mbf{K}^\top \Delta \mbf{v}, \Delta \mbf{u}} + \iprods{-\mbf{K}\Delta \mbf{u}, \Delta \mbf{v}} = \iprods{\Delta \mbf{w}_u, \Delta \mbf{u}} \geq 0.
		\end{array}
	\end{equation*}
	Hence, $\mbb{G} + \mbb{T}$ is monotone.
	The maximality of $\mbb{G} + \mbb{T}$ can be obtained by using the maximality of $\mbf{G} + \mbf{T}$.
	Thus, we conclude that $\mbb{G} + \mbb{T}$ is maximally monotone in $\R^{2n \times p}$.
	
	Finally, the last inequality also implies that
	\begin{equation*}
		\arraycolsep=0.2em
		\begin{array}{lcl}	
			\iprods{\mbb{P}^{-1}\mathbbm{w} - \mbb{P}^{-1}\hat{\mathbbm{w}}, \mbf{x} - \hat{\mbf{x}}}_{\mbb{P}} &=& \iprods{\mbb{P}(\mbb{P}^{-1}\mathbbm{w} - \mbb{P}^{-1}\hat{\mathbbm{w}}), \mbf{x} - \hat{\mbf{x}}} = \iprods{\mathbbm{w} - \hat{\mathbbm{w}}, \mbf{x} - \hat{\mbf{x}}} \geq 0.
		\end{array}
	\end{equation*}
	Moreover, since $P^{-1}$ is symmetric and positive definite, using \cite[Lemma 3.7$\mathrm{(i)}$]{combettes2014variable} and the last inequality, we conclude that $\mbb{P}^{-1}(\mbb{G} + \mbb{T})$ is maximally monotone with respect to $\iprods{\cdot, \cdot}_{\mbb{P}}$.
\Eproof	

\beforesubsec
\subsection{Proof of Lemma~\ref{le:DGE_cocoercive} --- The co-coercive case}\label{apdx:le:DGE_cocoercive}
\aftersubsec
	\vspace{0.75ex}
	\noindent
	$\mathrm{(i)}$~
	Since each $G_i$ is $\frac{1}{L}$-co-coercive, $\mbf{G}$ is also $\frac{1}{L}$-co-coercive.
	Thus, we can evaluate that
	\begin{equation*}
		\arraycolsep=0.2em
		\begin{array}{lcl}	
			\iprods{\mbb{P}^{-1}\mbb{G}\mbf{x}_1 - \mbb{P}^{-1}\mbb{G}\mbf{x}_2, \mbf{x}_1 - \mbf{x}_2}_{\mbb{P}} &=& \iprods{\mbb{P}(\mbb{P}^{-1}\mbb{G}\mbf{x}_1 - \mbb{P}^{-1}\mbb{G}\mbf{x}_2), \mbf{x}_1 - \mbf{x}_2} \vspace{1ex}\\
			&=& \iprods{\mbb{G}\mbf{x}_1 - \mbb{G}\mbf{x}_2, \mbf{x}_1 - \mbf{x}_2} \vspace{1ex}\\
			&=& \iprods{\mbf{G}\mbf{u}_1 - \mbf{G}\mbf{u}_2, \mbf{u}_1 - \mbf{u}_2} \vspace{1ex}\\
			&\geq& \frac{1}{L} \norms{\mbf{G}\mbf{u}_1 - \mbf{G}\mbf{u}_2}^2.
		\end{array}
	\end{equation*}
	Next, from Lemma~\ref{le:DGE_assumption}, we have proved that
	\begin{equation*}
		\arraycolsep=0.2em
		\begin{array}{lcl}	
			\norms{\mbb{P}^{-1}\mbb{G}\mbf{x}_1 - \mbb{P}^{-1}\mbb{G}\mbf{x}_2}_{\mbb{P}}^2 &\leq& \frac{\tau}{1 - \tau \sigma \norms{\mbf{K}}^2} \norms{\mbf{G}\mbf{u}_1 - \mbf{G}\mbf{u}_2}^2.
		\end{array}
	\end{equation*}
	Combining these relations, we obtain
	\begin{equation*}
		\arraycolsep=0.2em
		\begin{array}{lcl}	
			\iprods{\mbb{P}^{-1}\mbb{G}\mbf{x}_1 - \mbb{P}^{-1}\mbb{G}\mbf{x}_2, \mbf{x}_1 - \mbf{x}_2}_{\mbb{P}} &\geq& \frac{1 - \tau \sigma \norms{\mbf{K}}^2}{L\tau} \norms{\mbb{P}^{-1}\mbb{G}\mbf{x}_1 - \mbb{P}^{-1}\mbb{G}\mbf{x}_2}_{\mbb{P}}^2,
		\end{array}
	\end{equation*}
	proving the $\frac{1}{\tilde{L}}$-co-coercivity of $\mbb{P}^{-1}\mbb{G}$ with  $\tilde{L} := \frac{\tau L}{1 - \tau \sigma \norms{\mbf{K}}^2}$.
	
	\vspace{0.75ex}
	\noindent
	\noindent$\mathrm{(ii)}$
	This can be proved similarly as in Lemma~\ref{le:DGE_assumption}(ii), and thus we omit.
\Eproof	

\beforesec
\section{Supporting Results for Section~\ref{sec:NI-DFKM}}\label{apdx:sec:NI_DFKM}
\aftersec
This section presents the missing proofs of  technical results used in Section~\ref{sec:NI-DFKM}.

\beforesubsec
\subsection{Supporting results for the derivation of \eqref{eq:DFKM_NI_scheme}}\label{apdx:subsec:DFKM_NI_scheme}
\aftersubsec
\begin{lemma}\label{le:NI_DFFP_derivation2}
	Let $\sets{(\hat{v}^k, z^k)}$ be generated by \eqref{eq:DFKM_NI_scheme}.
	Then, we have $\hat{v}^k, z_v^k \in \mathrm{Im}(\mbf{E}^{\top})$ and $\kappa^{-1}\mbf{E}^\top\mbf{E}\hat{v}^k = \hat{v}^k$ for all $k \geq 0$.
\end{lemma}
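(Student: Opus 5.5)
The plan is a simultaneous induction on $k$ over the statements $v^k, \hat{v}^k, z_v^k, \tilde{w}_v^k \in \mathrm{Im}(\mbf{E}^\top)$. Since $\mbf{E}\mbf{E}^\top = \kappa\Id$, the operator $\kappa^{-1}\mbf{E}^\top\mbf{E}$ is the orthogonal projector onto $\mathrm{Im}(\mbf{E}^\top)$. Indeed, it is symmetric, and it is idempotent because $(\kappa^{-1}\mbf{E}^\top\mbf{E})^2 = \kappa^{-2}\mbf{E}^\top(\mbf{E}\mbf{E}^\top)\mbf{E} = \kappa^{-1}\mbf{E}^\top\mbf{E}$. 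It also fixes every vector $\mbf{E}^\top\mbf{s}$, since $\kappa^{-1}\mbf{E}^\top\mbf{E}\mbf{E}^\top\mbf{s} = \mbf{E}^\top\mbf{s}$. Consequently the identity $\kappa^{-1}\mbf{E}^\top\mbf{E}\hat{v}^k = \hat{v}^k$ follows immediately once $\hat{v}^k \in \mathrm{Im}(\mbf{E}^\top)$ is known, and everything reduces to tracking membership in this linear subspace.

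\textbf{Base case.} The dual initialization in Algorithm~\ref{alg:NI_DFFP} is $\mbf{v}^0 = \mbf{s}_{\mbf{v}}^0 = \frac{\sigma}{2\kappa}\mbf{\Lambda}(\Id - \mbf{W})(\cdot)$. In the lifted variables this is $v^0 = z_v^0 = \kappa^{-1}\sigma\mbf{E}^\top\mbf{K}\mbf{\Gamma}(\cdot)$, using $\mbf{\Gamma}\mbf{K}^2\mbf{\Gamma} = \frac12\mbf{\Lambda}(\Id-\mbf{W})$ for the $\mbf{b}$-block and $\mbf{M}\mbf{K}\mbf{\Gamma}(\cdot)$ for the $\breve{\mbf{b}}$-block. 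Both therefore lie in $\mathrm{Im}(\mbf{E}^\top)$. The initial $\tilde{w}_v^0$ is chosen consistently with $\mbf{P}\hat{w}^0 \in Gx^0 + Tx^0$; I would check directly that its dual block is $\mbf{E}^\top$ times something, or handle it as discussed below.

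\textbf{Induction step.} Three updates need to be checked.
\begin{itemize}
\item $\hat{v}^k = \frac{t_k-r}{t_k}v^k + \frac{r}{t_k}z_v^k$ is a linear combination of elements of the subspace, so it stays in $\mathrm{Im}(\mbf{E}^\top)$.
\item $v^{k+1} = \kappa^{-1}\mbf{E}^\top\mbf{E}\tilde{v}^{k+1}$ by \eqref{eq:DFKM_ni_proof1}, so it lies in $\mathrm{Im}(\mbf{E}^\top)$ automatically, whatever $\tilde{v}^{k+1}$ is. This is the key structural fact.
\item $z_v^{k+1} = z_v^k - \frac{\nu_k}{r(1-\tau\sigma)}(\tau\sigma\tilde{w}_u^{k+1} + \sigma\tilde{w}_v^{k+1})$, so here I must control the combination $\tau\tilde{w}_u^{k+1} + \tilde{w}_v^{k+1}$.
\end{itemize}

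For the third item I would rewrite the dual component of $\hat{w}^{k+1} = P^{-1}\tilde{w}^{k+1}$ using the FFP relation $\hat{w}^{k+1} = \hat{x}^k - x^{k+1} + \beta_k\hat{w}^k$. Its $v$-block is $\hat{w}_v^{k+1} = \hat{v}^k - v^{k+1} + \beta_k\hat{w}_v^k$, and $z_v^{k+1} = z_v^k - \frac{\nu_k}{r}\hat{w}_v^{k+1}$. So it suffices to carry $\hat{w}_v^k \in \mathrm{Im}(\mbf{E}^\top)$ in the induction hypothesis instead of $\tilde{w}_v^k$. This avoids mixing the primal block into the dual block. With that replacement, $\hat{w}_v^{k+1}$ is a combination of $\hat{v}^k$, $v^{k+1}$ and $\hat{w}_v^k$, all in the subspace, and then $z_v^{k+1}$ is as well.

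\textbf{Main obstacle.} The hardest step is the base case for $\hat{w}_v^0$, namely showing that the initialization choices $\mbf{r}_{\mbf{x}}^0, \mbf{r}_{\mbf{v}}^0$ in Algorithm~\ref{alg:NI_DFFP} make the dual block of $P^{-1}\tilde{w}^0$ lie in $\mathrm{Im}(\mbf{E}^\top)$. I would compute it explicitly from the formula for $P^{-1}$. I expect it to reduce to $v^0$ (analogous to $\hat{\mbf{w}}_v^0 = \mbf{v}^0$ in the ND initialization), which is already in the subspace. The remaining, easier point is to confirm that the breve-blocks ($\breve{\mbf{b}}$ components) are handled by the same block formulas. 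This holds because $\mbf{E}^\top$ stacks $\mbf{\Gamma}\mbf{K}$ over $\mbf{M}$, so the ranges of the two blocks stay coupled through a common vector $\mbf{s}$.
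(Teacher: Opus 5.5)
Your proposal is correct and is essentially the paper's proof. Your tracked quantity $\hat{w}_v^k$ is exactly the paper's $Q_k/(1-\tau\sigma)$ with $Q_k := \tau\sigma\tilde{w}_u^k + \sigma\tilde{w}_v^k$, and your recursion $\hat{w}_v^{k+1} = \hat{v}^k - v^{k+1} + \beta_k\hat{w}_v^k$ is the paper's $Q_{k+1} = (1-\tau\sigma)(\hat{v}^k - v^{k+1}) + \beta_k Q_k$. The induction then closes with the same facts: $v^{k+1} = \kappa^{-1}\mbf{E}^\top\mbf{E}\tilde{v}^{k+1}$, $\hat{v}^k$ being a linear combination of $v^k$ and $z_v^k$, and the projector identity.

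Two small slips in the base case, neither affecting the conclusion:
\begin{itemize}
\item The explicit computation gives $\hat{w}_v^0 = -v^0$, not $v^0$; the paper finds $Q_0 = -(1-\tau\sigma)v^0$.
\item The identity should read $\mbf{\Gamma}\mbf{K}^2\mbf{\Gamma} = \frac{1}{2}\mbf{\Gamma}(\Id - \mbf{W})\mbf{\Gamma}$. The factor $\mbf{\Lambda}$ appears only after the change of variables $\mbf{v} = \mbf{\Gamma}\mbf{b}$.
\end{itemize}
Your choice of the $\breve{\mbf{b}}$-block, giving $v^0 = z_v^0 = \frac{\sigma}{\kappa}\mbf{E}^\top\mbf{E}(\cdot)$, matches the paper's.
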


\begin{proof}
	In \eqref{eq:FKM_3OP_scheme}, let us denote $z_v^{k+1} = z_v^k - \frac{\nu_k}{r(1 - \tau\sigma)}Q_{k+1}$, where $Q_{k+1} := \tau \sigma \tilde{w}_u^{k+1} + \sigma \tilde{w}_v^{k+1}$.
	From the seventh and eighth lines of \eqref{eq:FKM_3OP_scheme}, we can derive that
	\begin{equation*}
		\arraycolsep=0.2em
		\begin{array}{lcl}
			Q_{k+1} &=& (1 - \tau \sigma)(\hat{v}^k - v^{k+1}) + \beta_k (\tau \sigma \tilde{w}_u^k + \sigma \tilde{w}_v^k) = (1 - \tau \sigma)(\hat{v}^k - v^{k+1}) + \beta_k Q_k.
		\end{array}
	\end{equation*}
	We will prove that $\hat{v}^k, z_v^k, Q_k \in \mathrm{Im}(\mbf{E}^{\top})$ for all $k \geq 0$ by induction.
	\begin{compactitem}
		\item \textit{The base step}: 
		From the initialization of \eqref{eq:FKM_scheme}, we have $z_v^0 = v^0$, and thus, $\hat{v}^0 = \frac{t_0 - r}{t_0}v^0 + \frac{r}{t_0}z_v^0 = v^0$.
		Let us choose $v^0 = \frac{\sigma}{\kappa}\mbf{E}^\top \mbf{E}\Big(u^0 - \tau \begin{bmatrix}
			\boldsymbol{\Gamma}\mbf{h}^0 \\ 0
		\end{bmatrix}\Big)$, then $v^0 \in \mathrm{Im}(\mbf{E}^\top)$, and thus $\hat{v}^0 = z_v^0 \in \mathrm{Im}(\mbf{E}^\top)$.
		Next, let $c^0 := u^0 - \tau \begin{bmatrix}
			\boldsymbol{\Gamma}\mbf{h}^0 \\ 0
		\end{bmatrix} - \frac{1}{\sigma}v^0$.
		Then, we can compute that
		\begin{equation*}
		\arraycolsep=0.2em
		\begin{array}{lcl}
			\mbf{E}c^0 = \mbf{E}\left(u^0 - \tau \begin{bmatrix}
				\boldsymbol{\Gamma}\mbf{h}^0 \\ 0
			\end{bmatrix}\right) - \frac{1}{\sigma}\mbf{E}v^0
			= \mbf{E}\left(u^0 - \tau \begin{bmatrix}
				\boldsymbol{\Gamma}\mbf{h}^0 \\ 0
			\end{bmatrix}\right) - \frac{1}{\kappa}\mbf{E}\mbf{E}^\top \mbf{E}\left(u^0 - \tau \begin{bmatrix}
			\boldsymbol{\Gamma}\mbf{h}^0 \\ 0
			\end{bmatrix}\right) = 0.
		\end{array}
		\end{equation*}
		Therefore, $v^0 \in Cc^0$, hence we can choose $\tilde{w}_u^0 = \begin{bmatrix}
			\boldsymbol{\Gamma}\mbf{h}^0 \\ 0
		\end{bmatrix} + v^0$ and $\tilde{w}_v^0 = c^0 - u^0 = -\tau \begin{bmatrix}
		\boldsymbol{\Gamma}\mbf{h}^0 \\ 0
		\end{bmatrix} - \frac{1}{\sigma}v^0$.
		Thus, we can compute that 
		\begin{equation*}
		\arraycolsep=0.2em
		\begin{array}{lcl}
			Q_0 &=& \tau\sigma\tilde{w}_u^0 + \sigma\tilde{w}_v^0 = \tau\sigma\left(\begin{bmatrix}
				\boldsymbol{\Gamma}\mbf{h}^0 \\ 0
			\end{bmatrix} + v^0\right) + \sigma\left(-\tau \begin{bmatrix}
			\boldsymbol{\Gamma}\mbf{h}^0 \\ 0
			\end{bmatrix} - \frac{1}{\sigma}v^0\right) = -(1 - \tau\sigma)v^0.
		\end{array}
		\end{equation*}
		Since $v^0 \in \mathrm{Im}(\mbf{E}^\top)$, the last expression implies that $Q_0 \in \mathrm{Im}(\mbf{E}^\top)$.
		
		\item \textit{The induction step}: 
		Since $v^{k+1} = \kappa^{-1}\mbf{E}^{\top}\mbf{E}\tilde{v}^{k+1} = \mbf{E}^{\top}\left(\kappa^{-1}\mbf{E}\tilde{v}^{k+1}\right)$, we have $v^{k+1} \in \mathrm{Im}(\mbf{E}^{\top})$.
		From the inductive hypothesis, we already have $\hat{v}^k, z_v^k, Q_k \in \mathrm{Im}(\mbf{E}^{\top})$.
		Thus, we conclude that $Q_{k+1} = (1 - \tau \sigma)(\hat{v}^k - v^{k+1}) + \beta_k Q_k \in \mathrm{Im}(\mbf{E}^{\top})$.
		Using this fact, we also get $z_v^{k+1} = z_v^k - \frac{\nu_k}{r(1 - \tau \sigma)}Q_{k+1} \in \mathrm{Im}(\mbf{E}^{\top})$, and $\hat{v}^{k+1} = \frac{t_{k+1} - r}{t_{k+1}} v^{k+1} + \frac{r}{t_{k+1}}z_v^{k+1} \in \mathrm{Im}(\mbf{E}^{\top})$.
	\end{compactitem}
	Finally, since $\hat{v}^k \in \mathrm{Im}(\mbf{E}^{\top})$, there exists $\xi^k$ such that $\hat{v}^k = \mbf{E}^{\top}\xi^k$.
	Then, using the fact that $\mbf{E}\mbf{E}^{\top} = \kappa\Id$, we have $\kappa^{-1}\mbf{E}^{\top}\mbf{E}\hat{v}^k = \kappa^{-1}\mbf{E}^{\top}\mbf{E}\mbf{E}^{\top}\xi^k = \kappa^{-1}\mbf{E}^{\top}(\kappa\Id)\xi^k = \mbf{E}^{\top}\xi^k = \hat{v}^k$.
\end{proof}

\begin{lemma}\label{le:NI_DFFP_derivation3}
	In the scheme \eqref{eq:DFKM_NI_scheme_init}, we always have $\breve{\mbf{a}}^k = 0$, $\hat{\breve{\mbf{a}}}^k = 0$, and $z_{\breve{\mbf{a}}}^k = 0$, for all $k \geq 0$.
\end{lemma}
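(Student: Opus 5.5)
We argue by strong induction on $k$, tracking the whole block of ``breve'' primal quantities together rather than each one separately. Concretely, we prove that for every $k \geq 0$
\[
\breve{\mbf{a}}^k = 0,\quad \hat{\breve{\mbf{a}}}^k = 0,\quad z_{\breve{\mbf{a}}}^k = 0,\quad\text{and}\quad \tau\tilde{w}_{\breve{\mbf{a}}}^k + \tau\sigma\tilde{w}_{\breve{\mbf{b}}}^k = 0 .
\]
The last identity is the auxiliary quantity that drives the $z_{\breve{\mbf{a}}}$ update in Step~6, so it must be carried along as part of the inductive hypothesis.

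\textbf{Base case.} Since $\breve{\mbf{a}}^0 = 0$ and $z_{\breve{\mbf{a}}}^0 = \breve{\mbf{a}}^0 = 0$ (from $z^0 = x^0$), Step~1 gives $\hat{\breve{\mbf{a}}}^0 = 0$. For the initial residuals we use the choice made in the proof of Lemma~\ref{le:NI_DFFP_derivation2}, namely $\tilde{w}_u^0 = [\boldsymbol{\Gamma}\mbf{h}^0; 0] + v^0$ and $\tilde{w}_v^0 = -\tau[\boldsymbol{\Gamma}\mbf{h}^0;0] - \frac{1}{\sigma}v^0$. Their breve components are $\tilde{w}_{\breve{\mbf{a}}}^0 = \breve{\mbf{b}}^0$ and $\tilde{w}_{\breve{\mbf{b}}}^0 = -\frac{1}{\sigma}\breve{\mbf{b}}^0$, so
\[
\tau\tilde{w}_{\breve{\mbf{a}}}^0 + \tau\sigma\tilde{w}_{\breve{\mbf{b}}}^0 = \tau\breve{\mbf{b}}^0 - \tau\breve{\mbf{b}}^0 = 0 .
\]

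\textbf{Inductive step.} Step~3 gives $\breve{\mbf{a}}^{k+1} = 0$ directly, because the second component of $J_{\tau A}$ is the resolvent of $\Nc_{\{0\}}$, which maps everything to $0$. Next we show the combination $\tau\tilde{w}_{\breve{\mbf{a}}} + \tau\sigma\tilde{w}_{\breve{\mbf{b}}}$ is preserved. Insert the Step~5 formulas with $\hat{\breve{\mbf{a}}}^k = \breve{\mbf{a}}^{k+1} = 0$:
\[
\tau\tilde{w}_{\breve{\mbf{a}}}^{k+1} + \tau\sigma\tilde{w}_{\breve{\mbf{b}}}^{k+1} = -\tau(\hat{\breve{\mbf{b}}}^k - \breve{\mbf{b}}^{k+1}) + \tau(\hat{\breve{\mbf{b}}}^k - \breve{\mbf{b}}^{k+1}) + \beta_k\big(\tau\tilde{w}_{\breve{\mbf{a}}}^k + \tau\sigma\tilde{w}_{\breve{\mbf{b}}}^k\big) = 0,
\]
where the $\breve{\mbf{b}}$ terms cancel exactly (the $\tau\sigma\cdot\frac{1}{\sigma}$ factor equals $\tau$) and the last term vanishes by hypothesis. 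Step~6 then yields $z_{\breve{\mbf{a}}}^{k+1} = z_{\breve{\mbf{a}}}^k - 0 = 0$. Finally, Step~1 at index $k+1$ gives $\hat{\breve{\mbf{a}}}^{k+1} = \frac{t_{k+1}-r}{t_{k+1}}\cdot 0 + \frac{r}{t_{k+1}}\cdot 0 = 0$.

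\textbf{Main obstacle.} The delicate point is that $\tilde{w}_{\breve{\mbf{a}}}^k$ and $\tilde{w}_{\breve{\mbf{b}}}^k$ need \emph{not} vanish individually: $\breve{\mbf{b}}^k$ is generally nonzero, being driven by $\mbf{M}\mbf{K}\boldsymbol{\Gamma}(\cdots)$ in Step~4. The induction closes only because we track their specific linear combination, in which the $\breve{\mbf{b}}$ contributions cancel identically. For the same reason, $y_{\breve{\mbf{a}}}^k = \hat{\breve{\mbf{a}}}^k - \frac{\gamma_k - \beta_k}{1-\tau\sigma}(\tau\tilde{w}_{\breve{\mbf{a}}}^k + \tau\sigma\tilde{w}_{\breve{\mbf{b}}}^k)$ also vanishes. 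The base case additionally depends on using the initialization of Lemma~\ref{le:NI_DFFP_derivation2}, so the first thing to confirm is that this initialization is consistent with \eqref{eq:DFKM_NI_scheme_init}.
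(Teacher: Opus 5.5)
Your proof is correct and is essentially the paper's argument. The combination $\tau\tilde{w}_{\breve{\mbf{a}}}^k + \tau\sigma\tilde{w}_{\breve{\mbf{b}}}^k$ that you carry through the induction is exactly the breve block of the paper's auxiliary quantity $R_k := \tau\tilde{w}_u^k + \tau\sigma\tilde{w}_v^k$. Its recursion $R_{k+1} = (1-\tau\sigma)(\hat{u}^k - u^{k+1}) + \beta_k R_k$ is the cancellation you verify by hand. Your base case, which uses the initialization from Lemma~\ref{le:NI_DFFP_derivation2}, and your inductive order ($\breve{\mbf{a}}^{k+1}$ via Step~3, then the $R$-block, then $z_{\breve{\mbf{a}}}^{k+1}$, then $\hat{\breve{\mbf{a}}}^{k+1}$) also match the paper.
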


\begin{proof}
	Denote by $z_u^{k+1} = z_u^k - \frac{\nu_k}{r(1 - \tau\sigma)}R_{k+1}$ from the scheme \eqref{eq:FKM_3OP_scheme}, where $R_{k+1} := \tau \tilde{w}_u^{k+1} + \tau \sigma \tilde{w}_v^{k+1}$.
	From  the seventh and eighth lines of \eqref{eq:FKM_3OP_scheme}, we can show that
	\begin{equation}\label{eq:NI_DFFP_derivation3_proof1}
	\arraycolsep=0.2em
	\begin{array}{lcl}
		R_{k+1}  &=& (1 - \tau \sigma)(\hat{u}^k - u^{k+1}) + \beta_k (\tau \tilde{w}_u^k + \tau \sigma \tilde{w}_v^k) 
		= (1 - \tau \sigma)(\hat{u}^k - u^{k+1}) + \beta_k R_k.
	\end{array}
	\end{equation}
	Let $u^k = (\mbf{a}^k, \breve{\mbf{a}}^k)$  and other notations be  as in Subsection~\ref{subsec:NI_DFFP_derivation}.
	Let $\utilde{R}_{k+1}$ be the second (fictitious) block of $R_k$ corresponding to $\breve{\mbf{a}}^k$.
	We will prove by induction that $\breve{\mbf{a}}^k = 0$, $\hat{\breve{\mbf{a}}}^k = 0$, $z_{\breve{\mbf{a}}}^k = 0$, and $\utilde{R}_k = 0$ for all $k \geq 0$.
	Indeed, we proceed as follows.
	\begin{compactitem}
		\item \textit{The base step:} 
		Since we initialize $\breve{\mbf{a}}^0 = 0$ and $z_{\breve{\mbf{a}}}^0 = 0$, we have $\hat{\breve{\mbf{a}}}^0 = \frac{t_0 - r}{t_0}\breve{\mbf{a}}^0 + \frac{r}{t_0}z_{\breve{\mbf{a}}}^0 = 0$.
		Moreover, from the base step of Lemma~\ref{le:NI_DFFP_derivation2}, we can compute that
		\begin{equation*}
		\arraycolsep=0.2em
		\begin{array}{lcl}
			R_0 &=& \tau\tilde{w}_u^0 + \tau\sigma\tilde{w}_v^0 = \tau\left(\begin{bmatrix}
				\boldsymbol{\Gamma}\mbf{h}^0 \\ 0
			\end{bmatrix} + v^0\right) + \tau\sigma\left(-\tau \begin{bmatrix}
				\boldsymbol{\Gamma}\mbf{h}^0 \\ 0
			\end{bmatrix} - \frac{1}{\sigma}v^0\right) = \tau(1 - \tau\sigma)\begin{bmatrix}
			\boldsymbol{\Gamma}\mbf{h}^0 \\ 0
			\end{bmatrix}.
		\end{array}
		\end{equation*}
		Thus, the second block $\utilde{R}_0 = 0$.
		\item \textit{The induction step:} 
		Assume that for $k \geq 0$, we have $\breve{\mbf{a}}^k = 0$, $\hat{\breve{\mbf{a}}}^k = 0$, $z_{\breve{\mbf{a}}}^k = 0$, and $\utilde{R}_k = 0$.
		From \textbf{Step 3} of Subsection~\ref{subsec:NI_DFFP_derivation}, we already know that $\breve{\mbf{a}}^{k+1} = 0$.
		Next, we can compute the fictitious block of \eqref{eq:NI_DFFP_derivation3_proof1} as $\utilde{R}_{k+1} = (1 - \tau \sigma)(\hat{\breve{\mbf{a}}}^k - \breve{\mbf{a}}^{k+1}) + \beta_k \utilde{R}_k = 0$.
		Moreover, we also get $z_{\breve{\mbf{a}}}^{k+1} = z_{\breve{\mbf{a}}}^a - \frac{\nu_k}{r(1 - \tau\sigma)}\utilde{R}_{k+1} = 0$ and $\hat{\breve{\mbf{a}}}^{k+1} = \frac{t_{k+1} - r}{t_{k+1}}\breve{\mbf{a}}^{k+1} + \frac{r}{t_{k+1}}z_{\breve{\mbf{a}}}^{k+1} = 0$.
	\end{compactitem}
	By induction, we complete the proof.
\end{proof}

\beforesubsec
\subsection{The initial form of \eqref{eq:DFKM_NI_scheme} and its simplification}\label{apdx:DFKM_NI_scheme_init}
\aftersubsec
After gathering all the derivation steps in Subsection~\ref{subsec:NI_DFFP_derivation}, we first come up with the following detailed scheme, which requires a further simplification:
\begin{equation}\label{eq:DFKM_NI_scheme_init}
	\arraycolsep=0.2em
	\left\{
	\begin{array}{lcl}
		\hat{\mbf{a}}^k &=& \frac{t_k - r}{t_k} \mbf{a}^k + \frac{r}{t_k} z_{\mbf{a}}^k, \vspace{1ex}\\
		\hat{\mbf{b}}^k &=& \frac{t_k - r}{t_k} \mbf{b}^k + \frac{r}{t_k} z_{\mbf{b}}^k, \vspace{1ex}\\
		\hat{\breve{\mbf{b}}}^k &=& \frac{t_k - r}{t_k} \breve{\mbf{b}}^k + \frac{r}{t_k} z_{\breve{\mbf{b}}}^k, \vspace{1ex}\\
		y_{\mbf{a}}^k &=& \hat{\mbf{a}}^k - \frac{\gamma_k - \beta_k}{1 - \tau \sigma} (\tau \tilde{w}_{\mbf{a}}^k + \tau \sigma \tilde{w}_{\mbf{b}}^k), \vspace{1ex}\\
		y_{\breve{\mbf{a}}}^k &=& \hat{\breve{\mbf{a}}}^k - \frac{\gamma_k - \beta_k}{1 - \tau \sigma} (\tau \tilde{w}_{\breve{\mbf{a}}}^k + \tau \sigma \tilde{w}_{\breve{\mbf{b}}}^k), \vspace{1ex}\\
		\mbf{a}^{k+1} &=& J_{\tau \mbf{\Gamma} T \mbf{\Gamma} } (\hat{\mbf{a}}^k - \tau \hat{\mbf{b}}^k - \tau \mbf{\Gamma} G (\mbf{\Gamma} y_{\mbf{a}}^k) + \tau \beta_k \tilde{w}_{\mbf{a}}^k), \vspace{1ex}\\
		\tilde{\mathbf{b}}^{k+1} &=& \hat{\mathbf{b}}^k + \sigma(2\mathbf{a}^{k+1} - \hat{\mathbf{a}}^k) + \sigma\beta_k \tilde{w}_{\mathbf{b}}^k, \vspace{1ex} \\
		\tilde{\breve{\mbf{b}}}^{k+1} &=& \hat{\breve{\mbf{b}}}^k + \sigma(2\breve{\mbf{a}}^{k+1} - \hat{\breve{\mbf{a}}}^k) + \sigma\beta_k \tilde{w}_{\breve{\mbf{b}}}^k, \vspace{1ex}\\
		\mbf{b}^{k+1} &=& \hat{\mbf{b}}^k + \kappa^{-1}\sigma \mbf{\Gamma} \mbf{K}^2 \mbf{\Gamma} (2\mbf{a}^{k+1} - \hat{\mbf{a}}^k - \beta_k \mbf{a}^k), \vspace{1ex}\\
		\breve{\mbf{b}}^{k+1} &=& \utilde{\hat{\mbf{b}}}^k + \kappa^{-1}\sigma \mbf{M} \mbf{K} \mbf{\Gamma} (2\mbf{a}^{k+1} - \hat{\mbf{a}}^k - \beta_k \mbf{a}^k), \vspace{1ex}\\
		\tilde{w}_{\mbf{a}}^{k+1} &=& \frac{1}{\tau}(\hat{\mbf{a}}^k - \mbf{a}^{k+1}) - (\hat{\mbf{b}}^k - \mbf{b}^{k+1}) + \beta_k \tilde{w}_{\mbf{a}}^k, \vspace{1ex}\\
		\tilde{w}_{\breve{\mbf{a}}}^{k+1} &=& \frac{1}{\tau}(\hat{\breve{\mbf{a}}}^k - \breve{\mbf{a}}^{k+1}) - (\hat{\breve{\mbf{b}}}^k - \breve{\mbf{b}}^{k+1}) + \beta_k \tilde{w}_{\breve{\mbf{a}}}^k, \vspace{1ex}\\
		\tilde{w}_{\mbf{b}}^{k+1} &=& -(\hat{\mbf{a}}^k - \mbf{a}^{k+1}) + \frac{1}{\sigma}(\hat{\mbf{b}}^k - \mbf{b}^{k+1}) + \beta_k \tilde{w}_{\mbf{b}}^k, \vspace{1ex}\\
		\tilde{w}_{\breve{\mbf{b}}}^{k+1} &=& -(\hat{\breve{\mbf{a}}}^k - \breve{\mbf{a}}^{k+1}) + \frac{1}{\sigma}(\hat{\utilde{\bf{b}}}^k - \breve{\mbf{b}}^{k+1}) + \beta_k \tilde{w}_{\breve{\mbf{b}}}^k, \vspace{1ex}\\
		z_{\mbf{a}}^{k+1} &=& z_{\mbf{a}}^k - \frac{\nu_k}{r(1-\tau\sigma)}(\tau\tilde{w}_{\mbf{a}}^{k+1} + \tau\sigma\tilde{w}_{\mbf{b}}^{k+1}), \vspace{1ex}\\ 
		z_{\mbf{b}}^{k+1} &=& z_{\mbf{b}}^k - \frac{\nu_k}{r(1-\tau\sigma)}(\tau\sigma\tilde{w}_{\mbf{a}}^{k+1} + \sigma\tilde{w}_{\mbf{b}}^{k+1}), \vspace{1ex}\\ 
		z_{\breve{\mbf{b}}}^{k+1} &=& z_{\breve{\mbf{b}}}^k - \frac{\nu_k}{r(1-\tau\sigma)}(\tau\sigma\tilde{w}_{\breve{\mbf{a}}}^{k+1} + \sigma\tilde{w}_{\breve{\mbf{b}}}^{k+1}).
	\end{array}
	\right.
\end{equation}
where $\breve{\mbf{a}}^k = 0$, $\hat{\breve{\mbf{a}}}^k = 0$, and $z_{\breve{\mbf{a}}}^k = 0$ for all $k \geq 0$ by Lemma~\ref{le:NI_DFFP_derivation3}.

Now, we simplify this scheme further as follows.
Since our main primal variable is $\mbf{a}$, we can remove all the unnecessary variables and obtain the following scheme:
\begin{equation*}
	\arraycolsep=0.2em
	\left\{
	\begin{array}{lcl}
		\hat{\mbf{a}}^k &=& \frac{t_k - r}{t_k} \mbf{a}^k + \frac{r}{t_k} z_{\mbf{a}}^k, \vspace{1ex}\\
		\hat{\mbf{b}}^k &=& \frac{t_k - r}{t_k} \mbf{b}^k + \frac{r}{t_k} z_{\mbf{b}}^k, \vspace{1ex}\\
		y_{\mbf{a}}^k &=& \hat{\mbf{a}}^k - \frac{\gamma_k - \beta_k}{1 - \tau \sigma} (\tau \tilde{w}_{\mbf{a}}^k + \tau \sigma \tilde{w}_{\mbf{b}}^k), \vspace{1ex}\\			
		\mbf{a}^{k+1} &=& J_{\tau \mbf{\Gamma} T \mbf{\Gamma}} (\hat{\mbf{a}}^k - \tau \hat{\mbf{b}}^k - \tau \mbf{\Gamma} G (\mbf{\Gamma} y_{\mbf{a}}^k) + \tau \beta_k \tilde{w}_{\mbf{a}}^k), \vspace{1ex}\\
		\mbf{b}^{k+1} &=& \hat{\mbf{b}}^k + \kappa^{-1}\sigma \mbf{\Gamma} \mbf{K}^2 \mbf{\Gamma} (2\mbf{a}^{k+1} - \hat{\mbf{a}}^k - \beta_k \mbf{a}^k), \vspace{1ex}\\			
		\tilde{w}_{\mbf{a}}^{k+1} &=& \frac{1}{\tau}(\hat{\mbf{a}}^k - \mbf{a}^{k+1}) - (\hat{\mbf{b}}^k - \mbf{b}^{k+1}) + \beta_k \tilde{w}_{\mbf{a}}^k, \vspace{1ex}\\
		\tilde{w}_{\mbf{b}}^{k+1} &=& -(\hat{\mbf{a}}^k - \mbf{a}^{k+1}) + \frac{1}{\sigma}(\hat{\mbf{b}}^k - \mbf{b}^{k+1}) + \beta_k \tilde{w}_{\mbf{b}}^k, \vspace{1ex}\\			
		z_{\mbf{a}}^{k+1} &=& z_{\mbf{a}}^k - \frac{\nu_k}{r(1-\tau\sigma)}(\tau\tilde{w}_{\mbf{a}}^{k+1} + \tau\sigma\tilde{w}_{\mbf{b}}^{k+1}), \vspace{1ex}\\ 
		z_{\mbf{b}}^{k+1} &=& z_{\mbf{b}}^k - \frac{\nu_k}{r(1-\tau\sigma)}(\tau\sigma\tilde{w}_{\mbf{a}}^{k+1} + \sigma\tilde{w}_{\mbf{b}}^{k+1}).
	\end{array}
	\right.
\end{equation*}
Next, let us define the following transformations:
\begin{equation*}
	\arraycolsep=0.2em
	\begin{array}{c}
		\mbf{x}^k := \mbf{\Gamma} \mbf{a}^k, \quad
		\hat{\mbf{x}}^k := \mbf{\Gamma} \hat{\mbf{a}}^k, \quad
		\mbf{v}^k := \mbf{\Gamma} \mbf{b}^k, \quad 
		\hat{\mbf{v}}^k := \mbf{\Gamma} \hat{\mbf{b}}^k, \quad
		\mbf{t}^k := \mbf{\Gamma} y_{\mbf{a}}^k, \vspace{1ex}\\
		\mbf{s}_{\mbf{x}}^k := \mbf{\Gamma} z_{\mbf{a}}^k, \quad 
		\mbf{s}_{\mbf{v}}^k := \mbf{\Gamma} z_{\mbf{b}}^k, \quad
		\mbf{r}_{\mbf{x}}^k := \mbf{\Gamma} \tilde{w}_{\mbf{a}}^{k}, \quad \text{and} \quad
		\mbf{r}_{\mbf{v}}^k := \mbf{\Gamma} \tilde{w}_{\mbf{b}}^k.
	\end{array}
\end{equation*}
Utilizing these transformation and $\mbf{\Gamma} J_{\tau \mbf{\Gamma} T \mbf{\Gamma}}(\cdot) = J_{\tau \mbf{\Gamma}^2 T}(\mbf{\Gamma}(\cdot))$, the last scheme reduces to
\begin{equation*}
	\arraycolsep=0.2em
	\left\{
	\begin{array}{lcl}
		\hat{\mbf{x}}^k &=& \frac{t_k - r}{t_k} \mbf{x}^k + \frac{r}{t_k} \mbf{s}_{\mbf{x}}^k, \vspace{1ex}\\
		\hat{\mbf{v}}^k &=& \frac{t_k - r}{t_k} \mbf{v}^k + \frac{r}{t_k} \mbf{s}_{\mbf{v}}^k, \vspace{1ex}\\
		\mbf{t}^k &=& \hat{\mbf{x}}^k - \frac{\gamma_k - \beta_k}{1 - \tau \sigma} (\tau \mbf{r}_{\mbf{x}}^k + \tau \sigma \mbf{r}_{\mbf{v}}^k), \vspace{1ex}\\			
		\mbf{x}^{k+1} &=& J_{\tau \mbf{\Gamma}^2 T} (\hat{\mbf{x}}^k - \tau \hat{\mbf{v}}^k - \tau \mbf{\Gamma}^2 G\mbf{t}^k + \tau \beta_k \mbf{r}_{\mbf{x}}^k), \vspace{1ex}\\
		\mbf{v}^{k+1} &=& \hat{\mbf{v}}^k + \kappa^{-1}\sigma \mbf{\Gamma}^2 \mbf{K}^2 (2\mbf{x}^{k+1} - \hat{\mbf{x}}^k - \beta_k \mbf{x}^k), \vspace{1ex}\\			
		\mbf{r}_{\mbf{x}}^{k+1} &=& \frac{1}{\tau}(\hat{\mbf{x}}^k - \mbf{x}^{k+1}) - (\hat{\mbf{v}}^k - \mbf{v}^{k+1}) + \beta_k \mbf{r}_{\mbf{x}}^k, \vspace{1ex}\\
		\mbf{r}_{\mbf{v}}^{k+1} &=& -(\hat{\mbf{x}}^k - \mbf{x}^{k+1}) + \frac{1}{\sigma}(\hat{\mbf{v}}^k - \mbf{v}^{k+1}) + \beta_k \mbf{r}_{\mbf{v}}^k, \vspace{1ex}\\			
		\mbf{s}_{\mbf{x}}^{k+1} &=& \mbf{s}_{\mbf{x}}^k - \frac{\nu_k}{r(1-\tau\sigma)}(\tau\mbf{r}_{\mbf{x}}^{k+1} + \tau\sigma\mbf{r}_{\mbf{v}}^{k+1}), \vspace{1ex}\\ 
		\mbf{s}_{\mbf{v}}^{k+1} &=& \mbf{s}_{\mbf{v}}^k - \frac{\nu_k}{r(1-\tau\sigma)}(\tau\sigma \mbf{r}_{\mbf{x}}^{k+1} + \sigma\mbf{r}_{\mbf{v}}^{k+1}).
	\end{array}
	\right.
\end{equation*}
Finally, by the facts that $\mbf{\Gamma}^2 = \mbf{\Lambda} = \diag{\alpha_1, \cdots, \alpha_n}$ and $\mbf{K} = \left(\frac{\Id - W}{2}\right)^{1/2}$, we get
\begin{equation}\label{eq:DFKM_NI_scheme}  
\tag{NI-DFFP}
	\arraycolsep=0.2em
	\left\{
	\begin{array}{lcl}
		\hat{\mbf{x}}^k &=& \frac{t_k - r}{t_k} \mbf{x}^k + \frac{r}{t_k} \mbf{s}_{\mbf{x}}^k, \vspace{1ex}\\
		\hat{\mbf{v}}^k &=& \frac{t_k - r}{t_k} \mbf{v}^k + \frac{r}{t_k} \mbf{s}_{\mbf{v}}^k, \vspace{1ex}\\
		\mbf{t}^k &=& \hat{\mbf{x}}^k - \frac{\gamma_k - \beta_k}{1 - \tau \sigma} (\tau \mbf{r}_{\mbf{x}}^k + \tau \sigma \mbf{r}_{\mbf{v}}^k), \vspace{1ex}\\			
		\mbf{x}^{k+1} &=& J_{\tau \mbf{\Lambda} T} (\hat{\mbf{x}}^k - \tau \hat{\mbf{v}}^k - \tau \mbf{\Lambda} G\mbf{t}^k + \tau \beta_k \mbf{r}_{\mbf{x}}^k), \vspace{1ex}\\
		\mbf{v}^{k+1} &=& \hat{\mbf{v}}^k + \frac{\sigma}{2\kappa} \mbf{\Lambda} (\Id - W) (2\mbf{x}^{k+1} - \hat{\mbf{x}}^k - \beta_k \mbf{x}^k), \vspace{1ex}\\			
		\mbf{r}_{\mbf{x}}^{k+1} &=& \frac{1}{\tau}(\hat{\mbf{x}}^k - \mbf{x}^{k+1}) - (\hat{\mbf{v}}^k - \mbf{v}^{k+1}) + \beta_k \mbf{r}_{\mbf{x}}^k, \vspace{1ex}\\
		\mbf{r}_{\mbf{v}}^{k+1} &=& -(\hat{\mbf{x}}^k - \mbf{x}^{k+1}) + \frac{1}{\sigma}(\hat{\mbf{v}}^k - \mbf{v}^{k+1}) + \beta_k \mbf{r}_{\mbf{v}}^k, \vspace{1ex}\\			
		\mbf{s}_{\mbf{x}}^{k+1} &=& \mbf{s}_{\mbf{x}}^k - \frac{\nu_k}{r(1-\tau\sigma)}(\tau\mbf{r}_{\mbf{x}}^{k+1} + \tau\sigma\mbf{r}_{\mbf{v}}^{k+1}), \vspace{1ex}\\ 
		\mbf{s}_{\mbf{v}}^{k+1} &=& \mbf{s}_{\mbf{v}}^k - \frac{\nu_k}{r(1-\tau\sigma)}(\tau\sigma \mbf{r}_{\mbf{x}}^{k+1} + \sigma\mbf{r}_{\mbf{v}}^{k+1}).
	\end{array}
	\right.
\end{equation}
This scheme is exactly the one implemented in Algorithm~\ref{alg:NI_DFFP}.

\vspace{0.75ex}
\noindent\textbf{Initialization.}
The scheme \eqref{eq:DFKM_NI_scheme} is initialized as follows.
First, we choose $\mbf{x}^0 \in \dom{\mbf{T}}$ and $\boldsymbol{\xi}^0 \in \mbf{T}\mbf{x}^0$, and set $\mbf{w}_{\mbf{x}}^0 = \mbf{G}\mbf{x}^0 + \mbf{\xi}^0$.
Let $u^0 = \begin{bmatrix}
	\mbf{\Gamma}^{-1}\mbf{x}^0 \\ 0
\end{bmatrix}$ and $g^0 = \begin{bmatrix}
	\mbf{\Gamma} \mbf{w}_{\mbf{x}}^0 \\ 0
\end{bmatrix} \in (A+B)u^0$.
From Step 2 of Subsection 3.4.2, we need to guarantee that $\tilde{w}^0 = P\hat{w}^0 \in Gx^0 + Tx^0$, where $\tilde{w}^0 = \begin{bmatrix}
	g^0 + v^0 \\ c^0 - u^0 
\end{bmatrix}$ with $c^0 \in C^{-1}v^0$.
If we choose $\hat{w}^0 = \begin{bmatrix}
	\tau g^0 \\ -v^0
\end{bmatrix}$, then the last condition implies $c^0 - u^0 = -\tau g^0 - \frac{1}{\sigma}v^0$, or equivalently, $u^0 - \tau g^0 = c^0 + \frac{1}{\sigma}v^0$.
Since $C = \mbf{E}^\top \Nc_{\sets{0}}\mbf{E}$, $\mbf{E}\mbf{E}^\top = \kappa \Id$, and $(\ker \mbf{E})^{\perp} = \mathrm{Im}(\mbf{E}^\top)$, the last condition is satisfied by the orthogonal decomposition:
\begin{equation*}
	\arraycolsep=0.2em
	\begin{array}{lcl}
		\frac{1}{\sigma}v^0 = \frac{1}{\kappa}\mbf{E}^\top \mbf{E}(u^0 - \tau g^0), \qquad c^0 = \big(\Id - \frac{1}{\kappa}\mbf{E}^\top \mbf{E}\big)(u^0 - \tau g^0).
	\end{array}
\end{equation*}
Hence, $v^0 = \frac{\sigma}{\kappa}\mbf{E}^\top \mbf{E}(u^0 - \tau g^0)$, where $u^0 - \tau g^0 = \begin{bmatrix}
	\mbf{\Gamma}^{-1}(\mbf{x}^0 - \tau \mbf{\Lambda}\mbf{w}_{\mbf{x}}^0) \\ 0
\end{bmatrix}$.
Since $\mbf{E} = \begin{bmatrix}
	\mbf{K}\mbf{\Gamma} & \mbf{M}
\end{bmatrix}$, we get $\mbf{E}(u^0 - \tau g^0) = \mbf{K}(\mbf{x}^0 - \tau \mbf{\Lambda}\mbf{w}_{\mbf{x}}^0)$, and thus the first block of $v^0 = (\mbf{b}^0, \breve{\mbf{b}}^0)$ is $\mbf{b}^0 = \frac{\sigma}{\kappa}\mbf{\Gamma}\mbf{K}^2(\mbf{x^0} - \tau \mbf{\Lambda}\mbf{w}_{\mbf{x}}^0)$.
Using the transformation in Appendix~\ref{apdx:DFKM_NI_scheme_init}, we get $\mbf{v}^0 = \mbf{\Gamma}\mbf{b}^0 = \frac{\sigma}{\kappa}\mbf{\Lambda}\mbf{K}^2(\mbf{x}^0 - \tau \mbf{\Lambda}\mbf{w}_{\mbf{x}}^0)$.
Since $\mbf{K}^2 = \frac{\Id - \mbf{W}}{2}$, we get  $\mbf{v}^0 = \frac{\sigma}{2\kappa}\mbf{\Lambda}(\Id - \mbf{W})(\mbf{x}^0 - \tau \mbf{\Lambda}\mbf{w}_{\mbf{x}}^0)$.
Since we choose $z^0 = x^0$ in \eqref{eq:FKM_scheme}, we eventually get $\mbf{s}_{\mbf{x}}^0 = \mbf{x}^0$ and $\mbf{s}_{\mbf{v}}^0 = \mbf{v}^0$.

Next, since the $\mbf{a}$-block of $\tilde{w}^0$ is $\tilde{w}_{\mbf{a}}^0 = \mbf{\Gamma}\mbf{w}_{\mbf{x}}^0 + \mbf{b}^0$, we have $\mbf{r}_{\mbf{x}}^0 = \mbf{\Gamma}\tilde{w}_{\mbf{a}}^0 = \mbf{\Lambda}\mbf{w}_{\mbf{x}}^0 + \mbf{v}^0$.

Finally, since the $\mbf{b}$-block of $\tilde{w}^0$ is $\tilde{w}_{\mbf{b}}^0 = -\tau \mbf{\Gamma} \mbf{w}_{\mbf{x}}^0 - \frac{1}{\sigma}\mbf{b}^0$, we have $\mbf{r}_{\mbf{v}}^0 = \mbf{\Gamma}\tilde{w}_{\mbf{b}}^0 = -\tau \mbf{\Lambda}\mbf{w}_{\mbf{x}}^0 - \frac{1}{\sigma}\mbf{v}^0$.

\bibliographystyle{plain}

\end{document}